\definecolor{blue}{rgb}{0,0,0.7}
\definecolor{red}{rgb}{0.75, 0, 0}
\newtheorem{theorem}{Theorem}[section]
\newtheorem{theorem-definition}[theorem]{Theorem-Definition}
\newtheorem{theorem-construction}[theorem]{Theorem-Construction}
\newtheorem{lemma-definition}[theorem]{Lemma--Definition}
\newtheorem{lemma-construction}[theorem]{Lemma--Construction}
\newtheorem{lemma}[theorem]{Lemma}
\newtheorem{proposition}[theorem]{Proposition}
\newtheorem{corollary}[theorem]{Corollary}
\newtheorem{conjecture}[theorem]{Conjecture}
\newtheorem{definition}[theorem]{Definition}
\newtheorem{example}[theorem]{Example}
\newenvironment{remark}[1][Remark.]{\begin{trivlist}
\item[\hskip \labelsep {\bfseries #1}]}{\end{trivlist}}
\newcommand{\old}[1]{}
\newcommand{\Z}{{\mathbb Z}}
\newcommand{\R}{{\mathbb R}}
\newcommand{\Q}{{\mathbb Q}}
\newcommand{\C}{{\mathbb C}}
\newcommand{\A}{{\rm A}}
\newcommand{\B}{{\rm B}}
\newcommand{\G}{{\rm G}}
\newcommand{\U}{{\rm U}}
\newcommand{\bS}{{{\Bbb S}}}
\newcommand{\lms}{\longmapsto}
\newcommand{\lra}{\longrightarrow}
\newcommand{\hra}{\hookrightarrow}
\newcommand{\ra}{\rightarrow}
\newcommand{\be}{\begin{equation}}
\newcommand{\ee}{\end{equation}}
\newcommand{\bt}{\begin{theorem}}
\newcommand{\et}{\end{theorem}}
\newcommand{\bd}{\begin{definition}}
\newcommand{\ed}{\end{definition}}
\newcommand{\bp}{\begin{proposition}}
\newcommand{\ep}{\end{proposition}}
\newcommand{\bl}{\begin{lemma}}
\newcommand{\el}{\end{lemma}}
\newcommand{\bc}{\begin{corollary}}
\newcommand{\ec}{\end{corollary}}
\newcommand{\bcon}{\begin{conjecture}}
\newcommand{\econ}{\end{conjecture}}
\newcommand{\la}{\label}
\begin{document}

\date{}

\title {Donaldson-Thomas transformations of moduli spaces of $\G$-local systems}
\author{Alexander Goncharov, Linhui Shen}

\maketitle

\tableofcontents
\begin{abstract}Kontsevich and Soibelman   
defined Donaldson-Thomas invariants of
 a 3d Calabi-Yau  category ${\cal C}$ equipped with a stability condition  \cite{KS1}. 
Any cluster variety gives rise to a family of such  categories. 
Their DT invariants are encapsulated in a  single formal 
automorphism of the cluster variety, called the {\it {\rm DT}-transformation}.

Let $\bS$ be an oriented surface with punctures,   
and a finite number of special points on the  boundary considered modulo isotopy.  
It give rise to a 
moduli space ${\cal X}_{{\rm PGL_m}, \bS}$, closely related to the moduli 
space of ${\rm PGL_m}$-local systems on $\bS$, which 
 carries a canonical cluster Poisson variety structure 
\cite{FG1}. 
For each puncture of $\bS$, 
there is a birational Weyl group action on the space ${\cal X}_{{\rm PGL_m}, \bS}$. 
We prove that it is given by cluster Poisson transformations. 
We prove a similar result for the involution $\ast$ of ${\cal X}_{{\rm PGL_m}, \bS}$ provided by 
dualising a local system on $\bS$.

Let  $\mu$ be 
the total number of punctures and special points, and $g(\bS)$ the genus of $\bS$. We assume that $\mu>0$.  We
 say that $\bS$ is admissible if $g(\bS) +\mu \geq 3$ and $\mu>1$ if $\bS$ has only punctures, and also when 
$\bS$ is an annulus with a special point on each boundary circle. 

Using a combinatorial characterization of a class of DT transformations due to B. Keller \cite{K13},  
we calculate the  {\rm DT}-transformation 
of  the  space ${\cal X}_{{\rm PGL_m}, \bS}$ for any admissible $\bS$.  
 
We show that the Weyl group and the involution  $\ast$ act by cluster transformations of
 the dual  moduli space ${\cal A}_{{\rm SL_m}, \bS}$, and calculate the  {\rm DT}-transformation 
of the space ${\cal A}_{{\rm SL}_m, \bS}$.

If $\bS$ admissible, combining the results above with the results of Gross, Hacking, Keel and Kontsevich \cite{GHKK}, we 
get a canonical basis in the space of regular functions on the cluster variety  ${\cal X}_{{\rm PGL_m}, \bS}$, and 
 in the Fomin-Zelevinsky upper cluster algebra with principal coefficients \cite{FZIV} related to the pair $({\rm SL_m}, \bS)$,   
 as predicted by Duality Conjectures \cite{FG2}. 

\end{abstract}

\section{Introduction} \la{sec1}

\subsection{Summary}

A {\it decorated surface} $\bS$ is an oriented topological surface  with $n$ {\it punctures} inside and 
a finite number of {\it special points} on the boundary, considered modulo isotopy. 
We assume that each boundary component has at least one special point, see Figure \ref{decsurf}. 
We define  {\it marked points} as either  punctures or special points. 
Denote by $\mu$ the number of marked points.  We assume that $\mu>0$. 
Filling the punctures by points, and the holes by discs, we get a compact closed surface. 
The genus $g(\bS)$ of $\bS$ is its genus. 
Denote by $\Gamma_\bS$ the mapping class group of $\bS$. 
 
\begin{figure}[ht]
\epsfxsize60pt
\centerline{\epsfbox{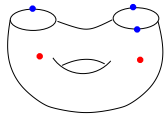}}
\caption{A decorated surface with three special points on the boundary and two punctures.}
\label{decsurf}
\end{figure}

Let $\G$ be a split semi-simple group over $\Q$. 
A pair $(\G, \bS)$  gives rise to 
a Poisson moduli space ${\cal X}_{\G, \bS}$, 
closely related to the moduli spaces of $\G$-local systems on $\bS$. 
If the center of $\G$ is trivial, the space ${\cal X}_{\G, \bS}$
has a 
natural cluster Poisson  structure, 
 defined for $\G={\rm PGL}_m$ in \cite{FG1}. 

There are three groups acting on the moduli space ${\cal X}_{\G, \bS}$:

1. The mapping class group $\Gamma_\bS$ acts by automorphisms of 
the moduli space ${\cal X}_{\G, \bS}$.

2. The group ${\rm Out}(\G)$ of outer automorphisms of $\G$  
acts  by automorphisms of  ${\cal X}_{\G, \bS}$.

3. The Weyl group $W^n$ acts by \underline{birational} automorphisms of the 
space ${\cal X}_{\G, \bS}$.

The actions of these three groups commute by the very definition. 

When do these groups act by cluster transformations of the space ${\cal X}_{{\rm PGL}_m, \bS}$? 
It was proved in \cite{FG1} that the action of the  group $\Gamma_\bS$ is cluster 
if  $\bS$ satisfies the following condition: 

\begin{itemize} 
\item[i)] One has $g(\bS) + \mu\geq 3$, or $\bS$ is an annulus with  two special points. 

\end{itemize}

We prove that the action of the group ${\rm Out}({\rm PGL}_m)$ is cluster  under the same 
assumptions, and that the action of the group $W^n$ is cluster if, in addition to i), 
$\bS$ has the following property:\footnote{There is one 
more minor exception: $\G = {\rm PGL}_2$ and $\bS$ is a sphere with three punctures. 
However in this case the longest element $(1,1,1) \in (\Z/2\Z)^3$ 
still acts by a cluster transformation.}  

\begin{itemize} 
\item[ii)] 
If $\bS$ has no special points, then it has more than one  puncture.

\end{itemize}

\bd \la{ADMDS}
A decorated surface $\bS$ is {\it admissible}, if it satisfies 
conditions $i)$ and $ii)$. 
\ed

We introduce a birational action of the group $W^n$ on the dual moduli space ${\cal A}_{{\rm SL}_m, \bS}$.

\bt \la{Theorem2} If $\bS$ is admissible, and  not a sphere with three punctures     
if $\G$ is of type  ${\rm A}_1$,   then the action of the group  $\Gamma_S \times W^n\times {\rm Out}(\G)$ 
 on  the spaces ${\cal X}_{{\rm PGL_m}, \bS}$ and ${\cal A}_{{\rm SL_m}, \bS}$ is cluster. 
\et

If $\bS$ has just one puncture, the $W$-action is not cluster at least if $\G = {\rm PGL}_2$. 

\vskip 3mm
We use Theorem \ref{Theorem2} in a crucial way to study the {\it Donaldson-Thomas transformations}.

Kontsevich and Soibelman   \cite{KS1}
defined Donaldson-Thomas invariants of
 a 3d Calabi-Yau  category equipped with a stability condition. 
Any cluster variety gives rise to a family of such categories. 
Their DT invariants are encapsulated in  single formal 
automorphism of the cluster variety, called the {\it {\rm DT}-transformation}. 

Let ${\bf w}_0\in W^n$ be the longest element, and  
${\bf r}$   the  clockwise rotation  of the  special points  on 
each boundary component of $\bS$ by one. The group ${\rm Out}(\G)$ contains a canonical involution  
$\ast$. 

\bt \la{THEmain.result.}
Let  $\bS$ be an  admissible decorated surface.  
Then the  {\rm DT}-transformation ${\rm DT}_{{\rm PGL}_m, \bS}$ 
of  the  space ${\cal X}_{{\rm PGL}_m, \bS}$ is  a cluster transformation. It is given by 
\be \la{DT=CGS}
{\rm DT}_{{\rm PGL}_m, \bS} = \ast \circ{\bf w}_0 \circ  {\bf r}. 
\ee 
The cluster transformation (\ref{DT=CGS}) is a cluster DT-transformation in the sense of Definition \ref{2.14.16.1}. 
\et

This implies that the {\rm DT}-transformation coincides with  
Gaiotto-Moore-Neitzke spectral generator, which encodes the count of BPS states 
in 4d ${\cal N}=2$ SUYM theories \cite{GMN1}-\cite{GMN5}. Thus, at least in certain cases, the   
{\rm DT}-invariants coincide with the GMN count of BPS states.  
We prove Theorem \ref{THEmain.result.} in Section \ref{secDT}.

\vskip 3mm

When $\bS$ is a triangle,  
we identify the tropicalized involution $\ast$ on the tropicalised space ${\cal A}_{{\rm SL_m}, \bS}$ with the 
Sch\"utzenberger involution.

\paragraph{An application to Duality Conjectures.} For any admissible  $\bS$, Theorems \ref{Theorem2} and \ref{THEmain.result.}, 
combined with  Theorem 0.10   of Gross, Hacking, Keel and Kontsevich \cite{GHKK}, deliver 
a canonical $\Gamma_\bS  \times W^n \times {\rm Out}(\G)$-equivariant linear basis in the  space of regular functions on the cluster Poisson variety  ${\cal X}_{{\rm PGL_m}, \bS}$, as predicted by Duality Conjectures \cite{FG2}. 

Precisely, the cluster Poisson variety structure on the moduli space 
${\cal X}_{{\rm PGL_m}, \bS}$ gives rise to a
$\Gamma_\bS$-equivariant  algebra 
${\cal O}_{\rm cl}({\cal X}_{{\rm  PGL_m}, \bS})$ of regular functions on the corresponding cluster  variety.\footnote{We abuse notation  denoting a moduli space and the corresponding cluster variety the same way,  although these are different geometric objects. 
The notation ${\cal O}_{\rm cl}({\cal Y})$ emphasises  that we  deal with the algebra of functions on a  
cluster variety ${\cal Y}$.} On the other hand, 
the cluster ${\cal A}$-variety structure on the moduli space ${\cal A}_{{\rm SL_m}, \bS}$ \cite{FG1} gives rise a $\Gamma_\bS$-equivariant set 
${\cal A}_{{\rm SL_m}, \bS}(\Z^t)$ 
of the integral tropical points. 

A cluster ${\cal A}$-variety ${\cal A}$ has a deformation  ${\cal A}_{\rm prin}$ over a  torus. 
 The algebra of regular 
function ${\cal O}_{\rm cl}({\cal A}_{\rm prin})$ 
 is the  Fomin-Zelevinsky upper cluster algebra with principal coefficients \cite{FZIV}. 
 We denote by ${\cal O}_{\rm cl}({\cal A}_{\rm prin}({{\rm SL_m}, \bS}))$ the one related to the pair 
 $({\rm SL_m}, \bS)$. 
  
\bt \la{THEmain.resultII.} Let  $\bS$ be an arbitrary admissible decorated surface.  
Then:

i) There is a canonical $\Gamma_\bS  \times W^n  \times {\rm Out}(\G)$-equivariant linear basis in the  space ${\cal O}_{\rm cl}({\cal X}_{{\rm PGL_m}, \bS})$ 
of regular functions on the cluster variety ${\cal X}_{{\rm PGL_m}, \bS}$, 
parametrized by the set  ${\cal A}_{{\rm SL_m}, \bS}(\Z^t)$.

ii) There is a canonical $  \Gamma_\bS  \times W^n \times {\rm Out}(\G)$-equivariant linear basis in the space ${\cal O}_{\rm cl}({\cal A}_{\rm prin}({{\rm SL_m}, \bS}))$. 
\et
A canonical basis in ${\cal O}_{\rm cl}({\cal X}_{{\rm PGL_m}, \bS})$ parametrized 
by the set ${\cal A}_{{\rm SL_m}, \bS}(\Z^t)$ just means that we have a canonical pairing
$$
{\bf I}: {\cal A}_{{\rm SL_m}, \bS}(\Z^t) \times {\cal X}_{{\rm PGL_m}, \bS} \lra {\Bbb A}^1.
$$
It assigns to a tropical point $l \in {\cal A}_{{\rm SL_m}, \bS}(\Z^t) $ a function on ${\cal X}_{{\rm PGL_m}, \bS}$, given by 
 the basis vector parametrized by $l$.  
The equivarianace  means that the pairing is $\Gamma_\bS  \times W^n  \times {\rm Out}(\G)$-invariant. 

Theorem \ref{THEmain.resultII.} follows immediately from Theorems \ref{Theorem2}, \ref{THEmain.result.} and  \ref{Th1.17}. 

A canonical basis in ${\cal O}_{\rm cl}({\cal X}_{{\rm PGL_2}, \bS})$ was defined by a different method in \cite[Section 12]{FG1}. 
\subsection{Definitions} \la{sec1.1}

\paragraph{The Poisson moduli space ${\cal X}_{\G, \bS}$.} 
The space ${\cal X}_{\G, \bS}$ parametrises $\G$-local systems 
 on $\bS$  with an additional data: a reduction to 
a Borel subgroup near every marked point, called a {\it framing}. 
As  the name suggests, it comes with additional structures: a 
 $\Gamma_\bS$-equivariant Poisson structure.

\paragraph{A birational automorphism ${\rm C}_{\G, \bS}$  of  the moduli space ${\cal X}_{\G, \bS}$.} 
A decorated surface has punctures and boundary components. 
We assume that each boundary component has at least one special point.

We introduce three  types of (birational) automorphisms of the spaces ${\cal X}_{\G, \bS}$.

\begin{enumerate} 

\item {\bf Punctures}. For each puncture on $\bS$ 
there is a birational action of the Weyl group  $W$ of $\G$ on the 
space  ${\cal X}_{\G, \bS}$, defined in \cite{FG1}.   
Namely, given a generic regular element $g \in \G$, 
the set of Borel subgroups containing $g$ is a principal homogeneous set of the Weyl group.  
So, given a puncture $p$  and a generic $\G$-local system ${\cal L}$ 
on $\bS$, the Weyl group acts  
by altering the 
reduction of ${\cal L}$ 
to a Borel subgroup near $p$, leaving the  $\G$-local system intact.

For example, for  a generic ${\rm SL}_m$-local system 
on $\bS$ the monodromy   around  $p$ has $m$ eigenlines. A reduction to Borel subgroup near 
$p$ just means that we order them. 
The symmetric group $S_m$ acts on the orderings.  

The actions at different punctures commute. So 
 the group $W^n$ acts birationally on ${\cal X}_{\G, \bS}$. 


\item {\bf Boundary components}. For each boundary component $h$ on $\bS$, consider an isomorphism $r_{{\bS}, h}$ of 
the moduli space ${\cal X}_{\G, \bS}$
provided by the  {\it rotation by one} 
of the special points on $h$ in the direction prescribed by the orientation of $\bS$. Namely,  we  rotate the surface near the boundary component, moving  
each special point 
to the next one, transporting framings.

The isomorphisms $r_{{\bS}, h}$ at different boundary components commute. 
We take their product over all boundary components:
$$
r_\bS:= \prod_h r_{{\bS}, h}.
$$
\begin{figure}[ht]
\epsfxsize50pt
\centerline{\epsfbox{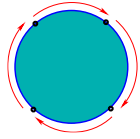}}
\caption{The rotation operator on a disc with four special points.}
\label{rotation}
\end{figure}

\item {\bf The involution $\ast$}. 
We define an involution $\ast$ acting on the space ${\cal X}_{\G, \bS}$. 
If $\G={\rm SL}_m$, it amounts to  dualising  
a local system on $\bS$, as well as  the framings at the marked points. 

For any group $\G$ it is provided by an outer automorphism of the group $\G$ defined as follows. 
 Let $\alpha_i$ $(i\in I)$ be simple positive roots. Let $i \to i^*$ be a Dynkin diagram automorphism such that 
$\alpha_{i^*}=-w_0(\alpha_i)$. Choose a pinning of $\G$. It  provides us with 
a Cartan subgroup ${\rm H} \in \G$ and one parametric subgroups $x_i(a)$ and $y_i(a)$, $i\in I$. 
These subgroups generate $\G$. It also provides 
 a lift of the Weyl group $W$ to $\G$,  $w \lms \overline w$,  
lifting the generator of each standard ${\rm SL}_2$ to the element
$\small{\begin{pmatrix}
0& 1 \\
-1 & 0\end{pmatrix}}$.  
 Then there is an involution $\ast: ~ \G\lra \G$: 
\be \la{inv}
\ast: ~ \G\lra \G, ~~~~
x_i(a)\lms x_{i^*}(a),\quad y_i(a)\lms y_{i^*}(a),\quad h\lms h^*=\overline{w}_0^{-1}h^{-1}\overline{w}_0, ~~\forall h\in {\rm H}.
\ee
For example, if $\G={\rm GL}_m$, then 
$
\ast(g)=\overline{w}_0 \cdot (g^t)^{-1}\cdot \overline{w}_0^{-1}$, where $g^t$ is the transpose of $g$. 
The involution $\ast$ preserves the Borel subgroup ${\rm B}$ generated by ${\rm H}$ and $\{x_i(a)\}_{i\in I}$. 
So it induces an involution of  the flag variety ${\cal B} \stackrel{\sim}{=}\G/{\rm B}$. Hence it acts on
 the moduli space  ${\cal X}_{\G, \bS}$.  
Abusing notation, all of them are denoted by $\ast$.


\end{enumerate}

\bd Let ${\bf w}_0= (w_0, ..., w_0)$ be the longest element of the Weyl group $W^n$. We set
\be \la{6.5.15.1}  
{\rm C}_{\G, \bS} := r_{\bS}\circ \ast \circ {\bf w}_0. 
\ee
\ed





Theorem \ref{2.27.15.1} below asserts that 
the transformation ${\rm C}_{{\rm PGL_m}, \bS}$ is a cluster transformation when $\bS$ 
is an admissible 
decorated surface. 
We observe that this is not the case when $\bS$ has a single puncture. Theorem \ref{2.27.15.1} is our main  tool to study DT-invariants. To  state it properly, 
let us review the background.  
 
 \subsection{Cluster nature of the Weyl group action and of 
the {$\ast$-involution}} \la{sec1.3dualII}



\paragraph{Quivers and quantum cluster varieties.} 
In this paper a {\it quiver} is an oriented graph without loops or 2-cycles,  
whose  vertices are labelled by a set ${\rm I}=\{1,\ldots, N\}$. 
See Figure \ref{quiver}.
\begin{figure}[ht]
\epsfxsize300pt
\centerline{\epsfbox{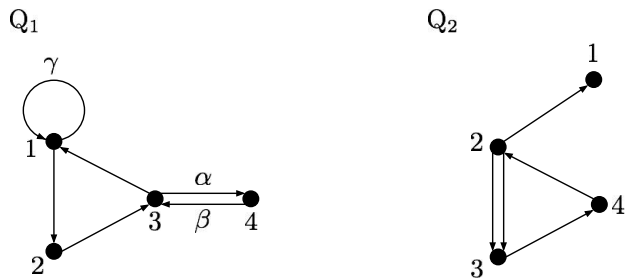}}
\caption{Graph ${\rm Q}_1$ has a 2-cycle $(\alpha, \beta)$ and a loop $\gamma$. 
It is not a quiver. Graph ${\rm Q}_2$ is a quiver. }
\label{quiver}
\end{figure}

A quiver  determines  a triple 
$$
(\Lambda,  \{e_v\}, (\ast,  \ast)),
$$ where  $\Lambda$  is a lattice  generated 
by  the vertices $\{v\}$ of the quiver, $\{e_v\}$ 
is the  basis parametrised 
by  the vertices, and $(\ast,  \ast)$ is a skewsymmetric integral bilinear form on $\Lambda$, 
uniquely defined by
$$
(e_v, e_w) := \#\{\mbox{arrows from $v$ to $w$}\}  - \#\{\mbox{arrows from $w$ to $v$}\}. 
$$
Vice verse, such a triple $(\Lambda, \{e_v\}, (\ast, \ast))$ 
determines a quiver, whose
 vertices are the basis vectors $e_v$, 
and vertices $v,w$ are related by an arrow with multiplicity $(e_v, e_w)$ if and only if $(e_v, e_w)>0$.

Any lattice $\Lambda$ with a bilinear skewsymmetric $\Z$-valued 
form $(\ast, \ast)$ gives rise to a {\it quantum torus algebra}  
${{\bf T}}_{\Lambda}$. It is an algebra over the ring of Laurent polynomials $\Z[q, q^{-1}]$ 
in  $q$ 
given by a free  $\Z[q, q^{-1}]$-module with a basis $ X_v$, $v \in \Lambda$, with the product relation
$$
X_{v_1} X_{v_2} = q^{(v_1, v_2)}X_{v_1+v_2}.
$$

Therefore a quiver ${\bf q}$ provides us with  a quantum torus algebra ${{\bf T}}_{\bf q}$. 
We think about it geometrically, as of the algebra of functions on a non-commutative space - a quantum torus ${{\rm T}}_{\bf q}$. 

\vskip 3mm
Any 
basis vector $e_w$ provides  a {\it mutated in the 
direction $e_w$} quiver ${\mathbf q'}$. 
The  quiver 
${\bf q}'$ is defined by changing the 
basis $\{e_v\}$ only. The lattice and the form stay intact. 
The new basis $\{e'_v\}$ is defined via halfreflection of the basis $\{e_v\}$ along the hyperplane 
$(e_{w}, \cdot)=0$:
\be \la{12.12.04.2aa}
e'_v := 
\left\{ \begin{array}{lll} e_v + (e_v, e_w)_+e_w
& \mbox{ if } &  v\not = w\\
-e_w& \mbox{ if } &  v = w.
\end{array}\right.
\ee
Here $\alpha_+:= \alpha$ if $\alpha\geq 0$ 
and $\alpha_+:=0$ otherwise.  
Quiver mutations in a coordinate form were  introduced by Seiberg \cite{Se95}, and independently by 
Fomin-Zelevinsky \cite{FZI}. 

\vskip 3mm
A quiver ${\bf q}$ gives rise to a dual pair $({\cal A}, {\cal X})$ of cluster varieties  \cite{FG2}. Their cluster coordinate systems are parametrised by the quivers obtained by  
mutations of the quiver ${\bf q}$. 
The algebra of regular functions on the cluster $K_2$-variety ${\cal A}$ is 
 Fomin-Zelevinsky's upper cluster algebra  \cite{FZI}.

The cluster Poisson variety ${\cal X}$ has a deformation, called quantum cluster variety, 
which depends on a parameter $q$. The quantum cluster coordinate systems  
of the quantum cluster variety are related by the {\it quantum cluster transformations}. 

The crucial part in their definition \cite{FG2} 
plays the quantum dilogarithm formal power series: 
 \be \la{psia}
{\bf \Psi}_q(x):= 
\frac{1}{(1+qx)(1+q^3x)(1+q^5x)(1+q^7x)\ldots}.
\ee
It is the unique formal power series starting from $1$ and satisfying a  
difference relation
\begin{equation} \label{11.19.06.20a}
{\bf \Psi}_q(q^2x) = (1+qx){\bf \Psi}_q(x). 
\ee
It has the  power series expansion, easily checked by using 
the difference relation:
\be \la{psib}
{\bf \Psi}_q(x)= \sum_{n=0}^{\infty}\frac{q^{n^2}x^n}{|{\rm GL}_n(F_{q^2})|}.
\ee
The logarithm of the power series 
${\bf \Psi}_q(x)$ is the $q$-dilogarithm power series:
\be \la{qdq}
 {\rm log}~{\bf \Psi}_q(x)=
\sum_{n\geq 1}\frac{(-1)^{n+1}}{n(q^{n}-q^{-n})} x^n. 
\ee
Indeed, it suffices to show that the difference relation \eqref{11.19.06.20a} holds for the right hand side:
\[
\displaystyle{\sum_{n\geq 1}}\frac{(-1)^{n+1}(q^2x)^n }{n(q^{n}-q^{-n})} -
\displaystyle{\sum_{n\geq 1}
\frac{(-1)^{n+1}x^n}{n(q^{n}-q^{-n})} } 
= \displaystyle{\sum_{n\geq 1}}\frac{(-1)^{n+1}(q^{2n}-1)x^n}{n(q^{n}-q^{-n})}= 
\displaystyle{\sum_{n\geq 1}}\frac{-(-qx)^n}{n} 
=\log(1+qx).
\]
In particular, in the quasiclassical limit we recover the classical dilogarithm power series:
\be \la{qdq1}
 \lim_{q \to 1}(q-q^{-1}){\rm log}~{\bf \Psi}_q(x) = -\sum_{n\geq 1}\frac{(-x)^{n}}{n^2}= - {\rm Li}_2(-x).
\ee

The quantum cluster transformations are automorphisms of the 
non-commutative fraction field 
 of the quantum torus algebra ${\bf T}_\Lambda$,  given by the 
conjugation by  {quantum dilogarithms}. Inspite of the fact that the quantum dilogarithms 
are power series, the conjugation is a rational transformation due to the 
difference relation (\ref{11.19.06.20a}). 
The quasiclassical limit when $q\to 1$ of the quantum cluster variety is 
the cluster Poisson variety ${\cal X}$. 
We carefully review the definition of quantum cluster varieties in Section 2. 

\vskip 3mm


There is a natural cluster Poisson structure on the space ${\cal X}_{{\rm PGL_m}, \bS}$, 
 introduced in \cite{FG1}. 
The group $\Gamma_\bS$ acts by 
cluster transformations of  ${\cal X}_{{\rm PGL_m}, \bS}$, provided that

\begin{itemize} \la{assumption}
\item[(i)] 
$g(\bS) + \mu\geq 3$, 
or that $\bS$ is an annulus with $2$ special points.
\end{itemize}
In particular,  the rotation $r_{\bS, h}$ is a cluster transformation. See Section \ref{cluster.mapping.class.group}.
\bt[{Theorems \ref{cluster.ast.6.16.16.07h},  \ref{2.17.22.42h}}] \la{2.27.15.1}
1) Assuming $(i)$, the  involution $\ast$ acts by a cluster transformation of  
${\cal X}_{{\rm PGL_m}, \bS}$. 

2) Let us assume $(i)$, exclude surfaces with $n=1$ puncture and no special points,  
and if $\G$ is of type ${\rm A}_1$, exclude a sphere with 3 punctures. 
Then the group $W^n$ acts by cluster transformations of the space ${\cal X}_{{\rm PGL_m}, \bS}$.    
\et


Let us recall the moduli space ${\cal A}_{{\G}, \bS}$. The group 
$\Gamma_\bS$ and the involution $\ast$ act naturally on the space ${\cal A}_{{\G}, \bS}$. 
In Section 4 we introduce an action of the Weyl group $W^n$ by birational automorphisms of the space 
${\cal A}_{{\G}, \bS}$. 
The space ${\cal A}_{{\rm SL_m}, \bS}$
 has a natural cluster structure of different kind, called a cluster $K_2$-structure, or 
cluster ${\cal A}$-variety structure; 
the  group $\Gamma_\bS$ acts by cluster transformations 
of the space ${\cal A}_{{\rm SL_m}, \bS}$ under the same assumptions as for the 
space ${\cal X}_{{\rm PGL_m}, \bS}$ \cite{FG1}. 

\bt \la{2.27.15.1aa} The involution $\ast$ and the Weyl group $W^n$ act on 
the space ${\cal A}_{{\rm SL_m}, \bS}$ 
by cluster transformations under the same assumptions  as in the parts 1) and 2) 
of  Theorem \ref{2.27.15.1}. 
\et

Our  main result determines the DT-transformation of the moduli space ${\cal X}_{{\rm PGL_m}, \bS}$ 
when $\bS$  is  an admissible  decorated surface. 
Let us formulate the question in the next subsection.

\subsection{Donaldson-Thomas transformations} \la{sec1.3dualII}

Kontsevich and Soibelman   \cite{KS1}, generalizing the original Donaldson-Thomas invariants \cite{DT}, 
defined Donaldson-Thomas invariants of
 a 3d Calabi-Yau  category ${\cal C}$ equipped with a stability condition. 
An important  class of 3d CY categories 
is provided by quivers with potentials. First we briefly recall the definitions and results following \cite{KS1, KS2}.

\vskip 3mm
A quiver ${\bf q}$ with a {\it generic} potential $W$  gives rise to a 3d CY category ${\cal C}({\bf q}, W)$, 
  defined as the derived category of certain representations of the Ginzburg DG algebra \cite{Gin} of the quiver with potential $({\bf q}, W)$. See \cite[Sect.7]{K12}, \cite{N10} for details.

The category {${\cal C}({{\bf q}, W})$} has a collection of spherical generators $\{S_v\}$, 
parametrized by the vertices  of the quiver ${\bf q}$, called a 
{\it cluster collection}. Their classes $[S_v]$  form a basis of the  Grothendieck group $K_0({\cal C}({\bf q},W))$. See \cite[Sect.8.1]{KS1}. 

For any 3d CY category ${\cal C}$, the lattice $K_0({\cal C})$ has 
a skew-symmetric integral bilinear form:
$$
([A], [B])_{\rm Euler}:= - \sum_{i=0}^3(-1)^{i} {\rm rk} ~{\rm Ext}^i(A,B), ~~~~[A],[B] \in K_0({\cal C}).
$$
The original quiver ${\bf q}$ is identified with the quiver assigned to the triple 
$$
\Bigl(K_0({\cal C}({\bf q}, W)), [S_v], (\ast, \ast)_{\rm Euler}\Bigr).
$$
\vskip 3mm

There is an open domain ${\cal H}_{\bf q}$ in the space of stability conditions on the category ${{\cal C}({{\bf q}, W})}$, described as follows. 
Let us consider a ``punctured upper halfplane'' 
$$
{\cal H} = \{z\in \C~|~ z = r e^{i\varphi}, ~0< \varphi \leq \pi, ~ r>0\}.
$$
The domain ${\cal H}_{\bf q}$ is identified with 
the product of ${\cal H}$'s over the set of vertices of the quiver ${\bf q}$:
$$
{\cal H}_{\bf q}:= \prod_v{\cal H} = \{z_v \in {\cal H}\}.
$$
The {\it central charge} of a stability condition ${\bf s} \in {\cal H}_{\bf q}$ is 
given by a group homomorphism
$$
Z: K_0({\cal C}({\bf q}, W)) \lra \C, ~~~~ [S_v] \lms z_v. 
$$
Define the positive cone  of the lattice $K_0({\cal C}({\bf q}, W))$ generated by the basis
\[
\Lambda_{\bf q}^+: =\oplus_v {\Z_{\geq 0}}[S_v].
\]
A central charge $Z$ is called {\it generic} if there are no two $\Q$-independent elements of $\Lambda^+_{\bf q}$ which are mapped by $Z$ to the same ray.

\paragraph{Quantum DT-series.} Consider a unital algebra   over the field $\Q((q))$ of Laurent 
power series in $q$, 
given by the $q$-commutative formal power series in $X_v$, 
where $v$ is in the positive cone: 
\[
\widehat{{\bf A}_{\bf q}}:= \Q((q)) [[X_v, ~v \in \Lambda_{\bf q}^+ ~|~ X_v X_w = q^{(v, w)}X_{v+w}]].
\]

Kontsevich-Soibelman \cite[Sect.8.3]{KS1} assigned to 
 the 3d CY category ${\cal C}({\bf q}, W)$ is a formal power series, 
called the {\it quantum {\rm DT}-series} of the category: 
\be
\la{refine.dt.invariant}
{\Bbb E}_{\bf q} =  1+ \mbox{higher order terms} \in \widehat{{\bf A}_{\bf q}}.
\ee 
For a generic potential $W$, the series ${\Bbb E}_{\bf q}$ depends on the quiver ${\bf q}$ only.
 
The following useful Lemma \ref{ULKS}, due to  \cite[Th.6]{KS}
see also \cite[Lemma 1.12]{MMNS}, 
expresses the series ${\Bbb E}_{\bf q}$ as a product, possibly infinite, 
of the ``$q$-powers'' of the quantum dilogarithm power series ${\bf \Psi}_q(X)$. 
Namely, given a formal Laurent series $\Omega(q) \in \Q((q))$, let us set 
\be \la{12.19.15.1}
\begin{split}
&{\bf \Psi}_q(X)^{\circ \Omega(q) } = {\rm exp}\Big(\sum_{n\geq 1}\frac{(-1)^{n+1}\Omega(q^n)}{n(q^{n}-q^{-n})} X^n\Big).
\end{split}
\ee
If $\Omega(q)$ is just an integer $\Omega$, then (\ref{12.19.15.1}) 
is the usual power ${\bf \Psi}_q(X)^{\Omega}$, as is clear 
 from the formula (\ref{qdq}) relating the logarithm of the power series 
${\bf \Psi}_q(X)$ to the $q$-dilogarithm power series.

\bl \la{ULKS} Given a  stability condition ${\bf s}$ with a generic central charge $Z$, 
 there exists a unique collection of rational functions 
 $\Omega_{\gamma}^{\bf s}(q) \in \Q(q)$, 
parametrized by the positive cone vectors $\gamma\in \Lambda^+_{\bf q}-\{0\}$ with $Z(\gamma)\in {\cal H}$, 
such that the quantum DT series ${\Bbb E}_{\bf q}$ are factorized as
\be
\la{refine.dt.invariant2} 
\begin{split}
&{\Bbb E}_{\bf q} = 
{\displaystyle \prod_{l\subset {\cal H}}^{\curvearrowright}} {\Psi}_l,  \hskip 6mm
{\Psi}_l:= \prod_{\gamma}{\bf \Psi}_q(X_\gamma)^{\circ \Omega_{\gamma}^{\bf s}(q) }.\\ 
\end{split}
\ee 

Here the first product is over all rays $l\subset {\cal H}$ in the clockwise order; 

The second product 
is over positive lattice vectors $\gamma\in \Lambda^+_{\bf q}-\{0\}$ such that $Z(\gamma)\in l$. 
\el
In particular, if $\Omega_{\gamma}^{\bf s}(q)$ are all integers, then ${\Bbb E}_{\bf q}$ is a product of quantum dilogarithm series.

The rational functions $\Omega_{\gamma}^{\bf s}(q)$ are called the {\it quantum {\rm (or refined) DT}-invariants} assigned to the stability condition ${\bf s}$, and the element $\gamma$ \cite[Def.6.4]{KS2}. 
So all the quantum DT-invariants are packaged into the single quantum DT-series ${\Bbb E}_{\bf q}$. 
They are uniquely determined by the  ${\Bbb E}_{\bf q}$. 

Evaluating at $q=-1$, we obtain the {\it numerical DT-invariant} $\Omega_{\gamma}^{\bf s}(-1)$.  \underline{Roughly speaking}, the numerical DT-invariant is the weighted Euler characteristic of the moduli space of all semistable objects 
 for the stability condition ${\bf s}$ with a given class $\gamma$ in $K_0$  ({\it cf}. \cite{KS1}).

The integrality conjecture \cite[Sect.7.6, Conj.6]{KS1} asserts that the numerical DT-invariants are integers for all generic ${\bf s}$.
Konstevich-Soibelman \cite[Sect.6.1]{KS2} proved that if ${\Bbb E}_{\bf q}$ is {\it quantum admissible} (in the sense of [{\it loc.cit.} Definition 6.3]), then $\Omega_{\gamma}^{\bf s}(q)$ is a Laurent polynomial with integral coefficients. 
The integrality conjecture follows directly then. 

Conversely, if for a given generic ${\bf s}$ we have $\Omega_{\gamma}^{\bf s}(q)\in \Z[q, q^{-1}]$ for all $\gamma\in \Lambda^+_{\bf q}$, then ${\Bbb E}_{\bf q}$ is  quantum admissible. Therefore  $\Omega_{\gamma}^{\bf s}(q)\in \Z[q, q^{-1}]$ for all generic ${\bf s}$.

\paragraph{DT-transformations.} 
Define  the algebra of formal power series along the negative cone
\[
\widecheck{{\bf A}_{\bf q}}:= \Q((q)) [[X_{-v}, ~v \in \Lambda_{\bf q}^+ ~|~ X_{-v} X_{-w} = q^{(v, w)}X_{-v-w}]].
\]
The conjugation ${\rm Ad}_{{\Bbb E}_{\bf q}}$ by the ${\Bbb E}_{\bf q}$ is  a formal power series transformation of $\widehat{{\bf A}_{\bf q}}$. 
Being composed with the reflection map $\Sigma$ acting by $$
{\Sigma}(X_{v})=X_{-v} ~~~~\forall v\in K_0({\cal C}({\bf q}, W)),
$$ 
we get a formal power series transformation, called   {\it {\rm DT}-transformation}:
$$
{\rm DT}_{\bf q} := {\rm Ad}_{{\Bbb E}_{\bf q}} \circ \Sigma : ~~ \widecheck{{\bf A}_{\bf q}}~\lra~ \widehat{{\bf A}_{\bf q}}.
$$
Note that ${\rm DT}_{\bf q}$  is 
an invariant of the quiver ${\bf q}$. It is an  ``infinite'' cluster transformation.

The conjugation ${\rm Ad}_{{\Bbb E}_{\bf q}}$ by the ${\Bbb E}_{\bf q}$ is not necessarily rational. 
If it is rational, then the {\rm DT}-transformation of the quantum torus ${\bf T}_{\bf q}$ is defined as
 \be \la{DTDEF}
 {\rm DT}_{\bf q} := {\rm Ad}_{{\Bbb E}_{\bf q}} \circ \Sigma : ~~ {\Bbb T}_{\bf q} ~\lra~  
{\Bbb T}_{\bf q}, ~~~~
 \ee
  where ${\Bbb T}_{\bf q}:= {\rm Frac}({\bf T}_{\bf q})$ is the non-commutative 
field of fractions of ${\bf T}_{\bf q}$.

 \paragraph{DT-invariants from DT-transformations.}
 Consider the symplectic double $(\Lambda_{\cal P}, \langle *,*\rangle_{\cal P})$ of the original lattice $\Lambda$ with the form $(*, *)$, given by 
 $$
 \Lambda_{\cal P}:= \Lambda \oplus {\rm Hom}(\Lambda, \Z), ~~~~\langle(v_1, f_1), (v_2, f_2)\rangle_{\cal P}:= (v_1, v_2) - (f_1, v_2) - (f_2, v_1). 
 $$
 Since $\Lambda \subset \Lambda_{\cal P}$, one can defined the DT-transformation  ${\rm DT}_{{\cal P}, \bf q}$ of the 
 quantum torus algebra related to the pair $(\Lambda_{\cal P}, \langle *,*\rangle_{\cal P})$ by the same formula (\ref{DTDEF}):
 \be \la{DTDEFa}
 {\rm DT}_{{\cal P}, \bf q} := {\rm Ad}_{{\Bbb E}_{\bf q}} \circ \Sigma_{\cal P}, ~~~~ \Sigma_{\cal P}: X_v \lms X_{-v} ~ ~\forall v\in \Lambda_{\cal P}. 
 \ee
 Remarkably,   
the DT-series ${\Bbb E}_{\bf q}$ and therefore all DT-invariants $\Omega_{\gamma}^{\bf s}$ are recovered from the DT-transformation (\ref{DTDEFa}). 
They are recovered from the DT-transformation (\ref{DTDEF}) if the form $(\ast, \ast)$ is non-degenerate. 

\begin{remark} 
 
Quantum cluster transformations were defined in \cite{FG2} by the conjugation by the 
quantum dilogarithm power series ${\bf \Psi}_q(X)$. The crucial fact that they are 
rational transformations follows from  difference relation (\ref{11.19.06.20a}), characterizing 
the  power series ${\bf \Psi}_q(X)$. 

On the other hand, the quantum dilogarithm ${\bf \Psi}_q(X)$ 
appeared in \cite{KS} story due to formula (\ref{qdq}),  
as well as  thanks to its power series expansion (\ref{psib}):
the coefficient in $x^n$ in (\ref{psib}) reflects counting the number of points of the stack 
$X^{\oplus n}/ {\rm Aut}(X^{\oplus n})$ over ${\Bbb F}_{q^2}$ for a simple object $X$.

The quantum dilogarithm power series (\ref{psia}) 
are convergent if $|q|<1$, but hopelessly divergent if $|q|=1$. 
The most remarkable feature of the quantum dilogarithm is that 
its ``modular double''
\be \la{12.6.15.1}
\Phi_{\hbar}(x) := \frac{{\bf \Psi}_q(e^x)}{{\bf \Psi}_{q^\vee}(e^{x/\hbar})}, ~~~~q= {\rm exp}(i\pi \hbar), 
~~q^\vee= {\rm exp}(i\pi /\hbar)
\ee
has wonderful analytic properties at all $q$, e.g. $|q|=1$. It has a beautiful  integral presentation
$$
\Phi_{\hbar}(x) = {\rm exp}\Bigl(-\frac{1}{4}\int_{\Omega}\frac{e^{ipx} dp}{sh(\pi  p)sh(\pi \hbar p)p}\Bigr).
$$
 The quantum dilogarithm function $\Phi_{\hbar}(x)$ is crucial in the 
quantization of cluster Poisson varieties, given by a $\ast$-representation in a Hilbert space of the 
$q$-deformed algebra of functions \cite[version 1]{FG2}, \cite{FG4}. 
Yet so far the function $\Phi_{\hbar}(x)$ has no role in the 
{\rm DT} theory. 
\end{remark}

\paragraph{Mutations of quivers with potentials.}
Mutations of quivers with potentials were  studied by 
{Derksen-Weyman-Zelevinsky \cite{DWZ}.}  
The mutation $\mu_k$ at the direction $k$ gives rise to a pair $({\bf q}' ,W')=\mu_k({\bf q}, W)$. 
The domains ${\cal H}_{\bf q}$ for the quivers with potentials obtained by mutations of the original quiver 
${\bf q}$ form a connected  open domain in the space of all stability conditions on the category 
${\cal C}({\bf q}, W)$. Moving in this domain, and thus mutating a quiver, 
 we get a different {\rm DT}-transformation of the same quantum torus.  
The Kontsevich-Soibelman wall crossing formula  
tells how quantum DT-series changes under quiver mutations, see \cite[Sect.8.4, Property 3]{KS1}.  

Indeed, let ${\bf q}=(\Lambda, \{e_i\}, (\ast, \ast))$. The mutated quiver $\mu_k({\bf q})$ is isomorphic to
\[
{\bf q}':=(\Lambda, \{e_i'\}, (\ast, \ast)), \hskip 7mm \mbox{where } e_i'= \left\{ \begin{array}{lll} 
-e_k& \mbox{ if }   i = k\\
e_i + [(e_i, e_k)]_+e_k
& \mbox{ otherwise. }
\end{array}\right.
\]
Note that it only changes the basis. The lattice and the form stay intact. Therefore one can identify the quantum tori
\be
\la{canonical.muk.2.4.11}
{\bf T}_{\bf q}\stackrel{\sim}{=} {\bf T}_{\Lambda} \stackrel{\sim}{=} {\bf T}_{{\bf q}'}.
\ee
Consider the intersection
\[
\widehat{{\bf A}_{\bf q}}\cap \widehat{{\bf A}_{\bf q'}}= \Q((q)) [[X_v, ~v \in \Lambda_{\bf q}^+\cap\Lambda_{\bf q'}^+ ~|~ X_v X_w = q^{(v, w)}X_{v+w}]].
\]
\bt[{\cite[p.138]{KS1}}]  
\la{KS.wall.crossing0h}
Under the identification \eqref{canonical.muk.2.4.11}, we have
\be
{\bf \Psi}_q(X_{e_k})^{-1} {\Bbb E}_{\bf q} = {\Bbb E}_{\bf q'} {\bf \Psi}_q(X_{e_k'})^{-1} ~~~ \in \widehat{{\bf A}_{\bf q}}\cap \widehat{{\bf A}_{\bf q}'}.
\ee
\et

\paragraph{Compatibility with cluster mutations.}
The {\it quantum cluster mutation}
\be
\Phi(\mu_k)= {\rm Ad}_{{\bf \Psi}_q(X_{e_k})} \circ i : ~~~ {\Bbb T}_{{\bf q}'} \lra {\Bbb T}_{{\bf q}}
\ee
is the composition of the isomorphism $i:{\Bbb T}_{{\bf q}'} \to {\Bbb T}_{{\bf q}}$ under \eqref{canonical.muk.2.4.11} and the conjugation $ {\rm Ad}_{{\bf \Psi}_q(X_{e_k})}$.

The following result is a direct consequence of Theorem \ref{KS.wall.crossing0h}. See also \cite[p.143]{KS1}.
\bt
\la{KS.wall.crossing}
If  ${\rm Ad}_{{\Bbb E}_{\bf q}}$ is rational,  
then the following diagram is commutative:
\begin{displaymath}
    \xymatrix{
        {\Bbb T}_{\bf q'} \ar[r]^{\Phi(\mu_k)} \ar[d]_{{\rm DT}_{\bf q'}} & {\Bbb T}_{{\bf q}} \ar[d]^{{\rm DT}_{{\bf q}}} \\
         {\Bbb T}_{\bf q'} \ar[r]^{\Phi(\mu_k)}       & {\Bbb T}_{{\bf q}} }
\end{displaymath}
\et
\begin{proof}
By Theorem \ref{KS.wall.crossing0h}, we have
$
{\rm Ad}_{{\bf \Psi}_q(X_{e_k})^{-1}}\circ  {\rm Ad}_{{\Bbb E}_{\bf q}} \circ i= i \circ {\rm Ad}_ {{\Bbb E}_{\bf q'}} \circ {\rm Ad}_{{\bf \Psi}_q(X_{e_k'})^{-1}}.
$ 
Therefore
\be
\la{proof.wall.cross.cor.2}
  {\rm Ad}_{{\Bbb E}_{\bf q}} \circ i \circ {\rm Ad}_{{\bf \Psi}_q(X_{e_k'})}= {\rm Ad}_{{\bf \Psi}_q(X_{e_k})}\circ i \circ {\rm Ad}_ {{\Bbb E}_{\bf q'}}. 
\ee
Note that $i(X_{e_k'})=X_{-e_k}$. Therefore 
\be
\la{proof.wall.cross.cor.1}
 i \circ {\rm Ad}_{{\bf \Psi}_q(X_{e_k'})} \circ \Sigma =  {\rm Ad}_{{\bf \Psi}_q(X_{-e_k})} \circ i \circ \Sigma = \Sigma \circ  {\rm Ad}_{{\bf \Psi}_q(X_{e_k})} \circ i. 
\ee
Therefore
\begin{align}
\Phi(\mu_k)\circ {\rm DT}_{{\bf q}'}&~= ~{\rm Ad}_{{\bf \Psi}_q(X_{e_k})}\circ i \circ {\rm Ad}_ {{\Bbb E}_{\bf q'}}\circ \Sigma 
\stackrel{\eqref{proof.wall.cross.cor.2}}{=} {\rm Ad}_{{\Bbb E}_{\bf q}} \circ i \circ {\rm Ad}_{{\bf \Psi}_q(X_{e_k'})} \circ \Sigma \nonumber\\
 &\stackrel{\eqref{proof.wall.cross.cor.1}}{=} {\rm Ad}_{{\Bbb E}_{\bf q}}\circ\Sigma \circ  {\rm Ad}_{{\bf \Psi}_q(X_{e_k})} \circ i 
 ~=~ {\rm DT}_{{\bf q}}\circ \Phi(\mu_k). 
\end{align}
\end{proof}

There is a similar interpretation of the DT-transformations ${\rm DT}_{{\cal P}, q}$, see (\ref{DTDEFa}),
 via the quantum cluster variety ${\cal A}_{\rm prin, q}$, discussed in the end of Section 2. 

\vskip 3mm
Theorem \ref{KS.wall.crossing} implies that 
the multitude of  {\rm DT}-transformations assigned to  quivers obtained by  mutations of an initial quiver ${\bf q}$  
are nothing but 
a single formal, i.e. given by formal power series, automorphism ${\rm DT}$ of the quantum cluster variety, written in different cluster coordinate 
systems assigned to these quivers.

\vskip 3mm

So a natural question arises: 
\be \la{question1}
\mbox{How to determine the {\rm DT}-transformation 
of a given (quantum) cluster variety?} 
\ee

\subsection{DT-transformations for  moduli spaces of local systems} \la{sec1.3dualIIa}

\paragraph{Cluster DT-transformation.}
Keller \cite{K11, K12, K13} using the 
work of Nagao \cite{N10}, proposed a simpler and more accessible, 
but much more restrictive combinatorial version of ${\rm DT}$-transformation. 
It  is a cluster transformation, which may not be defined, but when it does, it
 coincides with the Kontsevich-Soibelman {\rm DT}-transformation {\cite[Th 6.5]{K12}}. 
 We call it {\it cluster {\rm DT}-transformation}. 
It acts on any type of cluster variety, e.g. 
on the quantum cluster variety. We postpone a  definition of cluster DT-transformations till Section \ref{SecAA1}.

If ${\rm DT}_{\bf q}$ is a cluster DT-transformation, then Theorem \ref{KS.wall.crossing} follows directly from Theorem \ref{universal.dt} of the present paper. 
Furthermore, its quantum ${\rm DT}$-series ${\Bbb E}_{\bf q}$ can be presented as a finite product of quantum dilogarithm power series. As a Corollary of \cite[Prop 6.2]{KS2}, we have
\bp If ${\rm DT}_{\bf q}$ is a cluster DT-transformation, then ${\Bbb E}_{\bf q}$ is quantum admissible. Therefore for arbitary generic stability condition ${\bf s}$, the quantum DT-invariants $\Omega_{\gamma}^{\bf s}(q)\in \Z[q,q^{-1}]$. 
\ep
Conjecturally, $\Omega_{\gamma}^{\bf s}(q)$ has non-negative coefficients in this case.

\vskip 2mm
Even if a {\rm DT}-transformation of a  cluster variety is rational, 
it may not  be a cluster transformation. 
In Section \ref{SecAA1}, elaborated in Section \ref{SecAA}, we give  
a conjectural elementary characterization of rational 
 {\rm DT}-transformations of cluster varieties. It does not refer to  Kontsevich-Soibelman theory.  

\paragraph{Main result.}
Moduli spaces ${\cal X}_{\G, \bS}$ are important examples of 
cluster Poisson varieties. So there is a ${\rm DT}$-transformation acting as a single 
(formal, or if we are lucky, rational) transformation   of a moduli space ${\cal X}_{\G, \bS}$, encapsulating 
  the Donaldson-Thomas invariants 
of the corresponding 3d CY categories. 
This leads to the following questions:
\be \la{question1}
\mbox{What are the {\rm DT}-transformations 
of the moduli spaces ${\cal X}_{\G, \bS}$? Are they rational?} 
\ee

We use Theorem \ref{2.27.15.1} and Keller's characterization of cluster  {\rm DT}-transformations to determine the ${\rm DT}$-transformation 
${\rm DT}_{\G, \bS}$ of the 
space ${\cal X}_{\G, \bS}$ for $\G={\rm PGL}_m$.

\bt \la{2.27.15.2}
Let $\G={\rm PGL}_m$. If $\bS$ is admissible in the sense of Definition 
\ref{ADMDS}, then the 
${\rm DT}$-transformation ${\rm DT}_{\G, \bS}$ is a cluster transformation. It is  given by the following formula:
\be \la{FDT}
{\rm DT}_{\G, \bS} = {\rm C}_{\G, \bS}.
\ee
The cluster transformation (\ref{FDT}) is a cluster DT-transformation in the sense of Definition \ref{2.14.16.1}.
\et

\bcon \la{2.27.15.2*}
Formula  \eqref{FDT} is valid for any pair $(\G, \bS)$.
\econ

\paragraph{Example.} The $W$-action is not cluster if $\bS$ 
has a single puncture, no holes, and $\G={\rm PGL}_2$. 
The cluster DT transformation in this case is 
not defined. 
Yet when $G={\rm PGL}_2$ and $\bS$ is a punctured torus the formula ${\rm DT}_{\G, \bS} = {\rm C}_{\G, \bS}= w_0$ was proved by Kontsevich.  

\vskip 3mm

For surfaces with $n>2$ punctures
 a proof for $\G={\rm PGL}_2$ follows from the results of Bucher and Mills 
\cite{Bu}, \cite{BuM} who found green sequences of cluster transformations in these cases. 

For $\G={\rm PGL}_2$ we give two  
transparent  geometric proofs of formula (\ref{FDT}):

i) The first proof, presented in  Section \ref{sec3.2},  is based on 
the interpretation of  integral tropical points of the moduli space  ${\cal X}_{\G, \bS}$  
as integral laminations  \cite[Section 12]{FG1}. It  requires a calculation 
of the tropicalization of the $w_0$-action at the puncture given in \cite[Lemma 12.3]{FG1}.

ii) The second proof, presented in Section  \ref{sec3.3}, does not require any calculations at all. 
It uses an interpretation of integral tropical points 
as explicitly constructed divisors at infinity of the moduli space  ${\cal X}_{\G, \bS}$  
 \cite{FG3}. We worked out the details when $\bS$ has no punctures. 

None of them require a decomposition of 
the map ${\rm C}_{{\rm PGL}_2, \bS}$ 
into a composition of mutations. However we do not know how to generalise these proofs
to the higher rank groups. 
To find such generalizations is a very important problem. 

\vskip 3mm
For  $\G={\rm PGL}_m$ we give  another, high precision proof,  based on an explicit 
decomposition of the cluster transformation ${\rm C}_{\G, \bS}$ 
into a composition of mutations. This decomposition looks  pretty complicated, 
but it reveals a lot of valuable information about the 
{\rm DT}-invariants $\Omega_\gamma^{\bf s}$.

\subsection{{\rm DT}-transformations of cluster varieties and Duality Conjectures} \la{SecAA1}

The definition of   {\rm DT}-transformations is  complicated. It uses generic potentials, but  
in the end the {\rm DT}-transformation does not depend on it. So one wants an intrinsic 
definition of  {\rm DT}-transformations of cluster varieties, given just in terms of  cluster varieties.  
 
We state  a conjecture   relating  {\rm DT}-transformations of cluster varieties to Duality Conjectures. 
It implies an alternative conjectural definition of  
  the {\rm DT}-transformations of  cluster varieties,  
which characterizes them uniquely, and makes transparent their crucial properties.

\paragraph{Duality Conjectures \cite{FG2}.} Recall that a  quiver gives rise to a dual pair $({\cal A}, {\cal X})$ of cluster 
varieties of the same dimension, as well as a  Langlands dual pair of cluster varieties $({\cal A}^\vee, {\cal X}^\vee)$. The cluster modular group $\Gamma$ 
 acts  by their automorphisms.

The following objects, equipped with a $\Gamma$-action, are assigned to  any cluster variety 
${\cal Y}$:
\begin{itemize}

\item  The algebra  ${\cal O}({\cal Y})$  (respectively  $\widehat {\cal O}({\cal Y})$)  of regular (respectively  formal) 
 functions on  ${\cal Y}$.  
 
 \item  
  A set ${\cal Y}(\Z^t)$ of the integral tropical points of ${\cal Y}$. 

\end{itemize}

The set ${\cal Y}(\Z^t)$  is isomorphic, in many different ways, to $\Z^n$, where $n={\rm dim}{\cal Y}$. 

As  was shown in \cite{GHK}, the algebra 
${\cal O}({\cal X})$ could have smaller dimension then  ${\cal X}$.

Duality Conjectures \cite[Section 4]{FG2} predict  a  deep multifacet duality between   
cluster varieties ${\cal A}$ and ${\cal X}^\vee$. In particular, one should have 
canonical $\Gamma$-equivariant pairings 
\be \la{DCIa1}
\begin{split}
&{\bf I}_{\cal A}: {\cal A}(\Z^t) \times {\cal X}^\vee \lra {\Bbb A}^1, ~~~~{\bf I}_{\cal X}: {\cal A} \times {\cal X}^\vee(\Z^t) \lra {\Bbb A}^1. \\
\end{split}
\ee 
This means that each  $l \in {\cal A}(\Z^t)$, and each $m \in {\cal X}^\vee(\Z^t)$, give rise to 
functions  
$$
{\Bbb I}_{\cal A}(l):= {\bf I}_{\cal A}(l, \ast) ~~\mbox{on ${\cal X}^\vee$, ~~~~and }
 ~~~~{\Bbb I}_{\cal X}(m):= {\bf I}_{\cal X}(m, \ast)~~\mbox{on ${\cal A}^\vee$}.
$$  
In particular, in the formal setting we 
should have a pair of canonical $\Gamma$-equivariant maps 
\be \la{DCIII1}
\begin{split}
&{\Bbb I}_{\cal A}: {\cal A}(\Z^t) \lra  \widehat {\cal O}({\cal X}^\vee), ~~~~{\Bbb I}_{\cal X}: {\cal X}(\Z^t) \lra  \widehat {\cal O}({\cal A}^\vee).
\end{split}
\ee
Each map should parametrise 
a linear basis in its image on the space of  function on the target.

\paragraph{The involutions $i_{\cal A}$ and $i_{\cal X}$ \cite[Lemma 3.5]{FG4}.} Denote by $({\cal A^\circ}, {\cal X^\circ})$ the dual 
pair of cluster varieties assigned to the opposite quiver, obtained by changing the sign of the form.  
 Then there are isomorphisms of cluster varieties 
\be \la{12.4.15.1021}
i_{\cal A}: {\cal A}\lra {\cal A}^\circ, ~~~~
i_{\cal X}: {\cal X}\lra {\cal X}^\circ
\ee
which in any cluster coordinate systems $\{A_i\}$ on ${\cal A}$ and $\{X_i\}$ on ${\cal X}$ act as follows:
\be \la{12.4.15.1011}
i^*_{\cal A}: A^\circ_i \lms A_i, ~~~~i^*_{\cal X}: X^\circ_i \lms X_i^{-1}.
\ee
 
  \paragraph{DT-transformations and Duality Conjectures.} Our point is that 
the  duality ${\cal A} \longleftrightarrow {\cal X}^\vee$ is \underline{not compatible} with the 
isomorphisms $i_{\cal A}$ and $i_{{\cal X}^\vee}$! Furthermore, Conjecture \ref{MCDTTR1} suggests 
that the {\rm DT}-transformations ${\rm DT}_{\cal X}$ and 
${\rm DT}_{\cal A}$ of the cluster varieties ${\cal X}$ and ${\cal A}$ respectively measure the failure of the 
isomorphisms $i_{\cal A}$ and $i_{\cal X}$ to be compatible with the duality. Namely, set
\be \la{Tildeiso1}
{\rm D}_{\cal A}:= i_{\cal A} \circ{\rm DT}_{\cal A}, ~~~~
{\rm D}_{\cal X}:= i_{\cal X} \circ {\rm DT}_{\cal X}.
\ee
We show that these maps are involutions: ${\rm D}_{{\cal A}^\circ}\circ{\rm D}_{\cal A} = {\rm Id}_{\cal A}$, 
${\rm D}_{{\cal X}^\circ}\circ{\rm D}_{\cal X} = {\rm Id}_{\cal X}$.  
Then the duality should intertwine  
${\rm D}_{\cal A}$ with  $i_{{\cal X}^\vee}$, and  
$i_{\cal A}$ with  ${\rm D}_{{\cal X}^\vee}$. So 
we should have diagrams 
\begin{displaymath} \la{CDMS1}
    \xymatrix{
        {\cal A} \ar@{<->}[r] \ar[d]_{{\rm D}_{\cal A}} & {\cal X}^\vee   \ar[d]^{i_{{\cal X}^\vee}} \\
         {\cal A}^\circ  \ar@{<->}[r]      &  {{\cal X}^\vee}^\circ}
         ~~~~~~~~\xymatrix{
        {\cal A} \ar@{<->}[r] \ar[d]_{{i}_{\cal A}} & {\cal X}^\vee   \ar[d]^{{\rm D}_{{\cal X}^\vee}} \\
         {\cal A}^\circ \ar@{<->}[r]      &  {{\cal X}^\vee}^\circ}
         \end{displaymath}

They should give rise to commutative diagrams when one of the columns 
is tropicalised, and the other is replaced by the induced map 
of algebras of functions. 
The horizontal arrows become the canonical maps. Here is a precise statement.

\bcon\footnote{Conjecture \ref{MCDTTR1} is just one incarnation of the "commutative diagrams" above. Another incarnation is the one where 
 the horizontal arrows mean the mirror symmetry.} \la{MCDTTR1} Let $({\cal A}, {\cal X})$  be a  dual 
pair of cluster varieties.  
Then:

i) There are commutative diagrams
\begin{displaymath}
    \xymatrix{
        {\cal X}(\Z^t) \ar[r]^{{\Bbb I}_{\cal X}} \ar[d]_{{i}^t_{\cal X}} & \widehat {\cal O}({\cal A}^\vee) 
          \ar[d]^{{\rm D}^*_{{{\cal A}^\vee}^\circ}} \\
         {\cal X}^\circ(\Z^t) \ar[r]^{{\Bbb I}_{{\cal X}^\circ}} &  \widehat {\cal O}({\cal A}^{\vee\circ})}
         ~~~~~~~~\xymatrix{
        {\cal A}(\Z^t) \ar[r]^{{\Bbb I}_{{\cal A}}} \ar[d]_{{i}^t_{\cal A}} &\widehat {\cal O}( {\cal X}^\vee )  \ar[d]^{{\rm D}^*_{{{\cal X}^\vee}^\circ}} \\
         {\cal A}^\circ(\Z^t) \ar[r]^{{\Bbb I}_{{\cal A}^\circ}}      &  \widehat {\cal O}({\cal X}^{\vee \circ})}
         \end{displaymath}

ii) Assume that the {\rm DT}-transformations ${\rm DT}_{\cal A}$ and ${\rm DT}_{\cal X}$ are positive rational.  Then: 
\begin{itemize}

\item The canonical pairings are ${\rm DT}$-equivariant:
\be\la{CANP1a1}
\begin{split}
&{\bf I}_{\cal A}({\rm DT}_{\cal A}^t(a), {\rm DT}_{{\cal X}^\vee}(x)) = 
{\bf I}_{\cal A}(a, x),~~~~ {\bf I}_{{\cal X}^\vee}({\rm DT}_{\cal A}(x), {\rm DT}^t_{{\cal X}^\vee}(x)) = 
{\bf I}_{{\cal X}^\vee}(a, x).\\
\end{split}
\ee

\item  There are commutative diagrams
\begin{displaymath}
    \xymatrix{
        {\cal X}(\Z^t) \ar[r]^{{\Bbb I}_{\cal X}} \ar[d]_{{\rm D}^t_{\cal X}} & \widehat {\cal O}({\cal A}^\vee) 
          \ar[d]^{i^*_{{{\cal A}^\vee}^\circ}} \\
         {\cal X}^\circ(\Z^t) \ar[r]^{{\Bbb I}_{{\cal X}^\circ}} &  \widehat {\cal O}({\cal A}^{\vee\circ})}
         ~~~~~~~~\xymatrix{
        {\cal A}(\Z^t) \ar[r]^{{\Bbb I}_{{\cal A}}} \ar[d]_{{\rm D}^t_{\cal A}} &\widehat {\cal O}( {\cal X}^\vee )  \ar[d]^{{i}^*_{{{\cal X}^\vee}^\circ}} \\
         {\cal A}^\circ(\Z^t) \ar[r]^{{\Bbb I}_{{\cal A}^\circ}}      &  \widehat {\cal O}({\cal X}^{\vee \circ})}
         \end{displaymath}

\end{itemize}
\econ

Let us recall the most basic feature of Duality Conjectures. A quiver determines a cluster coordinate system $\{A_i\}$ 
 on ${\cal A}^\vee$, and  a cluster coordinate system  on ${\cal X}$.  Duality Conjectures predict that a tropical point $l^+ \in {\cal X}(\Z^t)$ 
with non-negative coordinates $(x_1, ..., x_n)$ in the tropicalised cluster coordinate system  on ${\cal X}$ gives rise to a cluster monomial  
$A_1^{x_1} \ldots A_n^{x_n}$ on ${\cal A}^\vee$:
$$
{\Bbb I}_{\cal X}(l^+) = A_1^{x_1} \ldots A_n^{x_n}, ~~~~ l^+ = (x_1, ..., x_n).
$$
 Let $l^- \in {\cal X}(\Z^t)$ be the tropical point with the coordinates $(-x_1, \ldots , -x_n)$ in the same  coordinate system. Then 
 (\ref{12.4.15.1011}) imlies that the tropicalised transformation ${\rm DT}^t_{{\cal X}}$ has the following property: 
\be \la{GSMPDTCL1}
{\rm DT}_{{\cal X}}^t(l^+)= l^-.
\ee
 In particular, let $l^+_i \in {\cal X}(\Z^t)$ be the tropical point with the coordinates $(0, ... , 1, ..., 0)$: all the coordinates but the i-th one are zero. 
  Specializing (\ref{GSMPDTCL1}) to these tropical points we get  
 \be \la{GSMPDTCL2}
{\rm DT}_{{\cal X}}^t(l_i^+)= l_i^-. 
\ee
It follows easily  from Duality Conjectures that if there is a 
 cluster transformation ${\bf K}$ such that 
 \be \la{KDKD}
 {\bf K}^t(l_i^+) = l_i^-
 \ee  then it is unique  (Proposition \ref{BLDC}). 
 
  \bd \la{2.14.16.1} A cluster transformation ${\bf K}$  such that (\ref{KDKD}) holds in a single cluster coordinate system is called a {\it cluster DT-transformation}. 
    \ed

 Let ${\bf K}_{\cal A}$ and ${\bf K}_{\cal X}$ be the cluster transformations of the 
 ${\cal A}$ and ${\cal X}$ spaces provided by such a cluster transformation ${\bf K}$. If  DT-transformations are rational, Conjecture \ref{MCDTTR1}  implies  that 
  \be \la{KTAX}
{\rm DT}_{\cal A} = {\bf K}_{\cal A}, ~~~~  {\rm DT}_{\cal X} = {\bf K}_{\cal X}.
 \ee
 Indeed, thanks to (\ref{GSMPDTCL2}) - (\ref{KDKD}), the ${\rm DT}_{\cal A}$ and ${\bf K}_{\cal A}$ must  act the same way on  the cluster variables $A_i$, 
 and hence on the cluster algebra.  Therefore they coincide. 
 Using Dulaity Conjectures again, this implies the second claim. See the proof of Proposition \ref{BLDC} for details.

    Keller proved  unconditionally \cite{K11}, 
\cite[Th 6.5, Sect 7.11]{K12}, although in a different 
formulation which used crucially the cluster nature of ${\bf K}$,  see Theorem \ref{basiclamDT},   even a stronger  claim:  
 $$
 \mbox{a cluster DT-transformation is a Kontsevich-Soibelman DT-transformation}.
 $$    
 
 Unlike Keller's definition, Condition (\ref{KDKD}) 
 makes sense for any positive rational transformation ${\bf K}$ of ${\cal X}$. Conjecture \ref{MCDTTR1}  implies that 
  ${\bf K}$ must coincide with the Kontsevich-Soibelman DT-transformation.    
 Condition (\ref{KDKD}) is an efficient way to find a cluster DT-transformation  of a cluster variety, which we use. 
 Yet, placed out of the context of Conjecture \ref{MCDTTR1}, it looks  enigmatic. 
  
 

\paragraph{DT-transformations and Duality Conjectures revisited.} The following conjecture links rationality of  
DT-transformations to the  existence of  regular canonical bases  on cluster varieties. 

 \bcon \la{2.9.16.2a}
 The map ${\rm DT}_{\cal X}$  is  rational if and only if the formal canonical basis in $\widehat {\cal O}({\cal X})$ 
  lies  in ${\cal O}({\cal X})$.  The same is true for the ${\cal A}$-space.  
  \econ

 \bt \la{Th1.17}
  Suppose that the map ${\rm DT}_{\cal X}$ is a cluster DT-transformation. Then
  
  i)  There is a canonical $\Gamma$-equivariant  basis in the space 
  ${\cal O}({\cal X})$, parametrized by the set of the integral tropical points of ${\cal A}^\vee(\Z^t)$ of the Langlands dual cluster ${\cal A}$-variety.   
  
  ii)   There is a canonical $\Gamma$-equivariant  basis in the upper cluster algebra with principal coefficients  
  ${\cal O}({\cal A}_{\rm prin})$.    \et
  
  \begin{proof} 
  This  follows immediately from    \cite[Theorem 0.10, Proposition 8.25]{GHKK} and the following observation. 
  Given a quiver ${\bf q}$, 
  consider  two cones:
  $$
  \Delta^+_{\bf q} \subset {\cal X}(\Z^t), ~~~~ \Delta^-_{\bf q} \subset {\cal X}(\Z^t).  
  $$ 
  The cone $\Delta^+_{\bf q}$ (respectively $\Delta^-_{\bf q}$) consists of all integral tropical points of ${\cal X}$ which have non-negative (respectively non-positive) 
  coordinates in the cluster coordinate system provided by the quiver ${\bf q}$. Following \cite{FG3}, take the union $\Delta^+$ of all cones $\Delta^+_{\bf q}$, 
  as well as  the union $\Delta^-$ of all cones $\Delta^-_{\bf q}$,
   when ${\bf q}$ runs through all quivers obtained from 
  a given one by  mutations: 
  $$
  \Delta^+ \subset {\cal X}(\Z^t), ~~~~ \Delta^- \subset {\cal X}(\Z^t).
   $$
  Equivalently, the  $\Delta^+$ (respectively $\Delta^-$) consists of all  points of ${\cal X}(\Z^t)$ which have non-negative (respectively non-positive) 
  coordinates in one of the  cluster coordinate systems.  Since 
  the map ${\rm DT}_{\cal X}$ is a cluster DT-transformation, we have
  $$
  \{l_1^+, ..., l_n^+, l_1^-, ..., l_n^-\} \in \Delta^+. 
  $$
  Indeed, $l_i^+ \in \Delta^+_{\bf q}$ by the  definition, and $l_i^- = {\rm DT}_{\cal X}^t(l_i^+)$ by (\ref{GSMPDTCL2}) and Definition \ref{2.14.16.1}. 
  Therefore, since ${\rm DT}_{\cal X}$ is cluster, $l_i^-\in \Delta^+$.   Evidently the convex hull of the points $\{l_i^+, l_j^-\}$ in the linear structure of ${\cal X}(\Z^t)$ given by the cluster coordinate system assigned to the quiver ${\bf q}$ 
  coincides with the ${\cal X}(\Z^t)$. This is exactly the condition in  \cite[Theorem 0.10, Proposition 8.25]{GHKK} needed to get a canonical basis in ${\cal O}({\cal X})$ as well as in 
  ${\cal O}({\cal A}_{\rm prin})$.  
    \end{proof}
  
  We notice that a  cluster transformation ${\bf K}$ is a cluster DT-transformation if and only if
  $$
  {\bf K}(\Delta^+_{\bf q} ) = \Delta^-_{\bf q}.
   $$
           
  Conjecture \ref{2.9.16.2a} tells that if DT-transformations are rational, we should have canonical maps
  \be \la{DCIII1x}
\begin{split}
&{\Bbb I}_{\cal A}: {\cal A}(\Z^t) \lra   {\cal O}({\cal X}^\vee), ~~~~{\Bbb I}_{\cal X}: {\cal X}(\Z^t) \lra  {\cal O}({\cal A}^\vee).
\end{split}
\ee

The functions  $ \{{\Bbb I}_{\cal A}(l)\}$ in ${\cal O}({\cal X}^\vee)$ should be linearly independent, and 
form a canonical 
linear basis in the linear span of the image. Similarly for the functions $ \{{\Bbb I}_{\cal X}(m)\}$ in ${\cal O}({\cal A}^\vee)$. 

So the next question is what are the images. The equivariance under the DT-transformations in Conjecture \ref{MCDTTR1} 
implies that any function $F$ in the linear span of the image of  each of the  two maps (\ref{DCIII1x})  
must  remain regular under   arbitrary powers of the corresponding 
DT-transformations. Conjecture \ref{2.14.16.10} claims that this is the only extra condition on the image.

    \bcon \la{2.14.16.10}
Assume that the DT-transformations of  cluster varieties $({\cal A}, {\cal X})$ are rational. 
Then the linear span of the images of  canonical maps (\ref{DCIII1x})  
 consist of all regular functions which remain regular under  the arbitrary powers of the corresponding 
DT-transformations. 
\econ

Let $\bS$ be a decorated surface with a single puncture and no boundary. Then the condition in Conjecture \ref{2.14.16.10} is  essential for the dual pair $({\cal A}_{\G, \bS}, {\cal X}_{\G^L, \bS})$, see the end of  Section \ref{SecAA}. 


\subsection{Ideal bipartite graphs on surfaces and 3d CY categories \cite{G}} 

To define the {\rm DT}-invariants Kontsevich-Soibelman 
start with a 3d CY category. However for a generic cluster variety 
there is no natural 3d CY category assigned to it. Indeed, one uses a 
quiver with generic potential $({\bf q}, W)$ as an input, 
and the 3d CY category does depend on $W$. 

\vskip 3mm
It turns out that for the moduli space 
${\cal X}_{\G, \bS}$ the situation is much better. 
Among the quivers describing its cluster structure there is a particularly nice subclass provided by {\it rank $m$ 
ideal bipartite graphs}, introduced in 
\cite{G}. A {\it bipartite graph} is a graph with vertices of two kinds,  so that  
each edge connects vertices of different kinds.  
Let us recall crucial examples. 

Let $T$ be an ideal triangulation of $\bS$, i.e. a triangulation of $\bS$ with the vertices at the marked points. Given a triangle of $T$, we subdivide it into $m^2$ small triangles 
by drawing three families of $m$ equidistant lines, parallel to 
 the sides of the triangle, 
as shown on the left  of Figure \ref{gra10i}. 

For every triangle  of $T$ there are two kinds of small triangles: 
the ``up'' and ``down'' triangles. We put a $\bullet$-vertex
 into the center of each of the 
``down'' triangles. 
Let us color  in red all ``up'' triangles.  
Consider the obtained red domains -- some of them are unions of red triangles, and 
put a $\circ$-vertex into the center of each of them. 
A $\circ$-vertex and a $\bullet$-vertex 
 are {\it neighbors} if the corresponding domains share an edge. 
Connecting the neighbors,  
 we get a bipartite surface graph $\Gamma_{{\rm A}_{m}}(T)$, see Figure \ref{gra10i}.

\begin{figure}[ht]
\centerline{\epsfbox{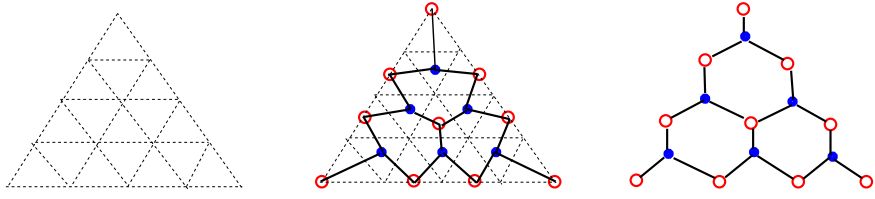}}
\caption{A bipartite graph associated with a 4-triangulation of a triangle.}
\label{gra10i}
\end{figure}

The dual graph to a bipartite surface graph $\Gamma$ is a quiver ${\bf q}_\Gamma$. 
Its vertices are the faces of $\Gamma$, and its edges are 
oriented so that the $\bullet$-vertex is on the left, see Figure \ref{bc0}. 
The quiver ${\bf q}_\Gamma$  was introduced in \cite{FG1}, where it was shown 
that it gives rise to a cluster coordinate system on  the moduli space ${\cal X}_{\G, \bS}$. 
We review these coordinate systems in Section 4.  

The quiver ${\bf q}_\Gamma$  is equipped with a \underline{canonical potential}. 
Namely, for each vertex of the graph $\Gamma$ there is a unique cycle on the quiver ${\bf q}$ 
going around the vertex. We sum all these cycles, 
with the $+$ sign for the $\circ$-vertices, and $-$ sign for the $\bullet$ vetrices: 
$$
W_{\bf q_\Gamma} := \sum_{\mbox{$\circ$-vertices of $\Gamma$}}\mbox{Cycles in ${\bf q}_\Gamma$ around $\circ$-vertices} - 
\sum_{\mbox{$\bullet$-vertices of $\Gamma$}}\mbox{Cycles in ${\bf q}_\Gamma$ around $\bullet$-vertices}. 
$$
\begin{figure}[ht]
\centerline{\epsfbox{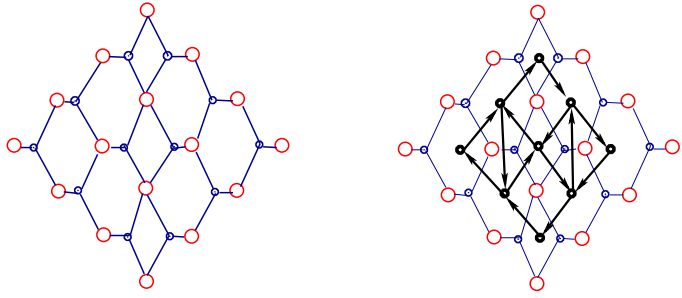}}
\caption{$\G={\rm PGL}_4$. A
 bipartite graph associated with a quadrilateral, and the related quiver.}
\label{bc0}
\end{figure}

It was proved in \cite{G} that any two bipartite graphs assigned to ideal triangulations of 
$\bS$  are related by special moves of bipartite graphs, called {\it two by two moves}, see Figure \ref{gc2a}. 
\begin{figure}[ht]
\centerline{\epsfbox{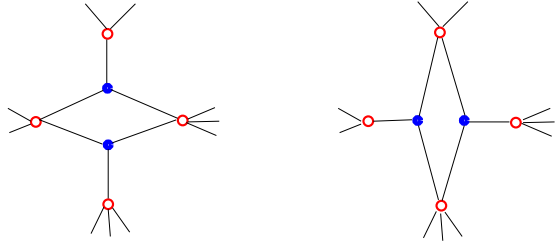}}
\caption{A two by two move. 
Flipping the colors of vertices delivers another two by two move. }
\label{gc2a}
\end{figure} 
The corresponding moves of the associated quivers are the {mutations}. The two by two moves 
keep us in the class of bipartite graphs on $\bS$, introduced in {\it loc. cit.} and called 
{\it ideal bipartite graphs}. In particular, 
this class  is $\Gamma_\bS$-invariant. 
 The crucial fact is that a two by two move $\Gamma \to \Gamma'$ 
 transforms the potential $W_{{\bf q}_\Gamma}$ 
to one $W_{\bf q_{\Gamma'}}$.

Therefore we arrive at  a  3d CY category ${\cal C}_{m, \bS}$
 with an array of cluster collections of generating objects. For $\G={\rm PGL}_2$ it was studied by Labardini-Fragoso 
in \cite{LF08}. 
So we conclude that 
$$
\mbox{There is a combinatorially defined 3d {\rm CY} category ${\cal C}_{m, \bS}$ 
``categorifying'' moduli spaces ${\cal X}_{\G, \bS}$.} 
$$

\paragraph{A conjectural  realization of the 3d {\rm CY} category ${\cal C}_{m, \bS}$ as a Fukaya category \cite{G}.}
Let $\Omega_P(\Sigma)$ be the sheaf of meromorphic differentials on a Riemann surface $\Sigma$ 
with the set of punctures $P$, 
with poles of order $\leq 1$ at $P$.  
A point ${\bf t}$ of the Hitchin's base of  $(\Sigma, P)$ is given by a data
$$
{\bf t} = (\Sigma, P; t_2, t_3, \ldots t_m), \quad t_k \in \Omega_P(\Sigma)^{\otimes k}. 
$$
The universal Hitchin base is the family of Hitchin bases over 
the moduli space ${\cal M}_{g,n}$. 

The spectral curve assigned to ${\bf t}$ is a curve in $T^*\Sigma$ given by  the  following equation:
\be \la{eqsc}
\Sigma_{\bf t}:= 
\{\lambda \in T^*\Sigma~|~ \lambda^m + t_2\lambda^{m-2} + \ldots + t_{m-1}\lambda +t_m=0\} \subset T^*\Sigma. 
\ee
The projection $T^*\Sigma\to \Sigma$ provides the spectral cover 
$\pi_{\bf t}:\Sigma_{\bf t} \to \Sigma$. It is an $m:1$ ramified cover. 

\vskip 3mm
 
The pair 
$(m, \bS)$ gives rise to a family 
of  open CY threefolds ${\cal Y}_{m, \bS}$ over the universal Hitchin base 
\cite{DDP}, \cite{KS3}.  For example, in the ${\rm A}_1$ case it is given by 
$$
\{({\bf t}, x\in \Sigma, \alpha_1, \alpha_2, \alpha_3)~~~|~ ~~ \alpha_1^2 + \alpha_2^2 + \alpha_3^2  = {\bf t}\}, ~~~~
{\bf t} \in \Omega_P(\Sigma)^{\otimes 2}; ~~\alpha_i(x) \in T^*_x(\Sigma). 
$$
The intermediate Jacobians of the fibers ${\cal Y}_{\bf t}$ provide Hitchin's integrable system. 

The category ${\cal C}_{m, \bS}$ should be equivalent to a Fukaya category of the open CY threefold 
given by the generic fiber ${\cal Y}_{\bf t}$ of the family ${\cal Y}_{m, \bS}$:

\bcon[{\cite{G}}] \la{FukC} 
For a generic point ${\bf t}$ of the Hitching base, there is a fully faithful functor 
$$
\varphi: {\cal C}_{m, \bS} \lra {\cal F}({\cal Y}_{\bf t}).
$$ 
It transforms  cluster collections in ${\cal C}_{m, \bS}$ to the ones provided by special Lagrangian spheres. 
\econ
 For $m=2$ this is known thanks to  the works of  
Bridgeland and Smith \cite{BrS}, \cite{S}. 

Therefore formula (\ref{FDT}),  conjecturally, describes the {\rm DT}-invariants of this Fukaya category.


\subsection{Physics perspective}

\paragraph{Unification diagram.}Assume that the group $\G$ is simply laced. 
In a series of works \cite{GMN1}-\cite{GMN5}, 
 Gaiotto, Moore and Neitzke  studied  4d ${\cal N}=2$ SUYM 
theories of class ${\cal S}$ related to a Riemann surface with punctures $\Sigma$. 
The ${\cal S}$ alludes to ``six dimensional'': the theories are ``defined'' as compactifications of the 
hypothetical $(2,0)$ 
theories ${\cal X}_\G$, related to ADE Dynkin diagrams,
 on the Riemann surface $\Sigma$, with defects at the punctures.

The origins of the 4d theory can be perceived as follows. 
Let $\Gamma_\G$ be a finite subgroup of ${\rm SU}(2)$ 
corresponding to $\G$ by the McKay correspondence. 
The theory ${\cal X}_\G$ itself is ``defined'' 
 as a compactification of the ten dimensional type {\rm IIB} superstring theory 
on the Klein singularity  $\C^2/\Gamma_\G$. 
One should have  a ``commutative diagram'':
\begin{displaymath}
    \xymatrix{
      \mbox{10d type {\rm IIB} superstring theory} \ar[dddd]_{\big{[Y_{\G, \Sigma, {\bf t}}}\big]}\ar[ddr]^{\big{[\C^2/\Gamma_{\G}}\big]} &      \\
       &\\
                                 & \mbox{6d theory}~{\cal X}_{\G} \ar[ddl]^{\big{[\Sigma}\big]} \\
                                 & \\
                     \mbox{$4d$ ${\cal N}=2$ {\rm SUYM} ~class ${\cal S}$ theory}~ {\cal T}_{\G, \Sigma} &            
                                 }
\end{displaymath}
The vertical arrow is the compactification of the $10d$ type {\rm IIB} superstring theory 
on the six-dimensional Riemannian  
manifold given by the complex open CY threefold $Y_{\G, \Sigma, {\bf t}}$. 
The point ${\bf t}$ belongs to the Hitchin base. The latter  
is the Coulomb branch of the space of vacua 
of the  4d theory ${\cal T}_{\G, \Sigma}$.

\paragraph{Gaiotto-Moore-Neitzke count of BPS states.} 
The Hilbert space ${\cal H}$ of the $4d$ ${\cal N}=2$ {\rm SUYM} class ${\cal S}$ theory 
is a huge representation of the ${\cal N}=2$ Poincare super Lie algebra 
${\cal P} = {\cal P}_0 \oplus {\cal P}_1$. Its even part ${\cal P}_0$ is the Poincare Lie algebra 
of the flat Minkowski space $\R^{3,1}$ plus a one dimensional center with the generator ${\cal Z}$. 
The odd part is 8-dimensional. As a representation of the Lorenz group, it is a sum of two copies of 
the spinor representation $S_+\oplus S_-$. 

The Hilbert space ${\cal H}$ is the symmetric algebra of a 1-particle Hilbert space ${\cal H}_{1}$. 
The $n$-th symmetric power of ${\cal H}_{1}$ is the ``n-particle part''. The 
${\cal H}_{1}$  should have a discrete spectrum, i.e. be a sum rather then integral of 
unitary representations of the Poincare super Lie algebra ${\cal P}$. 

The Hilbert space ${\cal H}$ has the following structures,  inherited on
 the subspace ${\cal H}_{1}$:

1. It depends on a point ${\bf t}$ of the Hitchin base. The Hitchin base  
 is the Coulomb branch of the moduli space of vacua in the theory. 

2. It is graded by a {\it charge lattice $\Gamma$}. In particular, there is a decomposition  
$$
{\cal H}_{1} = \oplus_{\gamma \in \Gamma}{\cal H}_{1, \gamma}.
$$
The lattice $\Gamma$ is equipped with an integral valued skew symmetric bilinear form $\langle \ast, \ast \rangle$. 

Irreducible unitary representations of the super Lie algebra ${\cal P}$ 
are parametrized by three parameters: the mass $M \in [0, \infty)$, the spin 
$j\in \{0, \frac{1}{2}, 1, \frac{3}{2}, ...\}$, and 
the central charge $Z\in \C$. 

The pairs $(M, j)$ parametrize ``positive'' 
unitary representations of the Lorenz group. 

The crucial fact is the inequality 
$M \geq |Z|$. We are interested in the 
BPS part ${\cal H}^{\rm BPS}_{1}$ of the space, defined by the $M = |Z|$ condition. 
Let $n_j$ be the multiplicity of the irreducible representations of spin $j$ in ${\cal H}^{\rm BPS}_{1}$. 
The integers $\Omega^{\rm GMN}_{\bf t}(\gamma)$, ``counting the BPS states'' of the central charge $\gamma$, 
are not the integers $n_j$ but rather the `` second helicity 
supertrace'': 
$$
\Omega^{\rm GMN}_{\bf t}(\gamma):= \sum_{j}(-1)^{2j}(2j+1)n_j. 
$$

Let us now discuss the Gaiotto-Moore-Neitzke  approach to calculate these numbers. 
A point ${\bf t}$ of the Hitchin base determines a spectral curve 
$\Sigma_{\bf t} \subset T^*\Sigma$, and a spectral cover
 $\pi_{\bf t}: \Sigma_{\bf t} \to \Sigma$. It determines a lattice $\Gamma_{\bf t}$. 
When $\Sigma$ is compact it is given by 
\be \la{8.5.15.1}
\Gamma_{\bf t}= {\rm Ker}\Bigl(H_1(\Sigma_{\bf t}, \Z) \stackrel{\pi_{\bf t}}{\lra} H_1(\Sigma, \Z)\Bigr).
\ee
Then   $\gamma\in \Gamma_{\bf t}$. 
 Integrating the canonical 1-form $\alpha$ on 
$T^*\Sigma$ over the homology classes from (\ref{8.5.15.1}) 
we get a linear map, called the central charge map:
$$
Z_{\bf t}: \Gamma_{\bf t} \lra \C, ~~~~ \gamma \lms \int_\gamma \alpha. 
$$

Gaiotto, Moore and Neitzke introduced a spectral network related to  a generic  ${\bf t}$. They use it to 
develop an  algorithm to calculate the numbers $\Omega_{\bf t}(\gamma)$. 
The algorithm  
has some mathematical issues for higher rank groups.\footnote{One of them is a possibility of having 
an infinite number of ``two side roads'' in a spectral network, making the algorithm problematic.}  
{Let us assume that  they are resolved. }

The Gaiotto-Moore-Neitzke {\it spectral generator} is a  transformtion of 
the Hitchin moduli space. It tells the cumulative result of the wall crossings 
which one encounters rotating a Higgs field $\Phi$ projecting to a point ${\bf t}$ by $e^{i\theta}\Phi$, 
with $0 \leq \theta \leq  \infty$.
It turned out\footnote{We thank Davide Gaiotto who 
pointed this to us at the 6d conference at Banff, and to Andy Neitzke 
for providing some details.} that our map ${\rm C}_{{\rm PGL_m}, \bS}$ acting on the moduli space 
${\cal X}_{{\rm PGL_m}, \bS}$ 
coincides with the result of calculation of  the spectral generator.

So formula (\ref{FDT}) implies that 
\be \la{5.7.15.1a}
\mbox{The {\rm DT}-transformation ${\rm DT}_{m, \bS}$  ~=~    The Gaiotto-Moore-Neitzke spectral generator}. 
\ee

Let us assume that a point ${\bf t}$ of the universal Hitchin base 
determines a quiver ${\bf q}$, and that the lattice $\Lambda_{\bf q}$ 
of this quiver is identified with the lattice $\Gamma_{\bf t}$. 
This is known for $\G={\rm SL}_2$ \cite{GMN2}. 
Examples 
were worked out in \cite{GMN5} for $\G=SL_m, m\leq 9$. They produce quivers of the type 
discussed above. Then  the central charge map 
$Z_{\bf t}$ translates into a central charge map  
$$
Z_{\bf q}: \Lambda_{\bf q} \lra \C. 
$$


Let $\{e_i\}$ be the basis of $\Lambda$ provided by the quiver ${\bf q}$. 
Assuming that $Z_{\bf q}(e_i) \in {\cal H}$, we arrive at a stability condition ${\bf s}$ determined by ${\bf t}$.

\vskip 3mm
So if all  mentioned above assumptions were satisfied, the formula \eqref{5.7.15.1a} would imply that 
\be \la{5.7.15.1b}
\mbox{KS numerical
{\rm DT}-invariants $\Omega^{\rm KS}_{\bf s}(\gamma)$ =  GMN invariants $\Omega^{\rm GMN}_{\bf t}(\gamma)$}.
\ee

Let us stress that the origins and definitions of the two sides of (\ref{5.7.15.1b}) are entirely different. 

The numbers $\Omega^{\rm KS}_{\bf s}(\gamma)$ came from  
\underline{3d CY categories} related to quivers ${\bf q}$.  

The  numbers 
$\Omega^{\rm GMN}_{\bf t}(\gamma)$ came from a \underline{quantum field theory}, and 
calculated using the geometry of a Riemann surface $\Sigma$. 

\vskip 3mm

Gaiotto-Moore-Neitzke algorithm for counting the numbers $\Omega_{{\bf t}}(\gamma)$ can be interpreted as 
a count of certain type of branes in 10d type IIB superstring theory on 
\be \la{threeused}
{\cal Y}_{{\bf t}}\times \R^{3,1}.
\ee
 These are the $D3$-branes supported on 
$L \times l$ where $L\subset {\cal Y}_{{\bf t}}$ is a special Lagrangian sphere,  and 
$l \subset \R^{3,1}$ is the world line of a BPS particle. 
The mass $M$ of the particle is the Riemannian volume of $L$. Its central charge $Z$ 
is the integral $Z(L) = \int_L\Omega$  over $L$ of the holomorphic 3-form $\Omega$ on ${\cal Y}_{{\bf t}}$. So the BPS condition $M=Z$ just means that we count {\it special} Lagrangian spheres. 

\vskip 3mm
Conjecture \ref{FukC} connects  the two approaches. Namely, among the ``combinatorial''  
3d CY categories assigned to a quiver ${\bf q}$ there is a distinguished one,  
${\cal C}_{m, \bS}$,  provided by the canonical potentials on ideal bipartite graphs on $\bS$. 
Conjecture \ref{FukC} predicts that the category ${\cal C}_{m, \bS}$
 has a geometric realisation as a Fukaya category of an open CY threefold ${\cal Y}_{{\bf t}}$. 
And this is threefold needed to get 
the $4d$ ${\cal N}=2$ {\rm SUYM} ~theory  from the {\rm IIB} superstring theory in (\ref{threeused}). 

So now the combinatorial 3d CY categories and the 4d theories are linked directly to each other. 
The common structure visible in both is the cluster structure.

\vskip 3mm
It is interesting to note that 
formula (\ref{FDT}) involves both the 3d CY category used to define 
the ${\rm DT}$-transformation and the moduli space ${\cal X}_{\G, \bS}$ on which the 
element ${\rm C}_{\G, \bS}$ acts naturally. 

Notice  that although the moduli space ${\cal X}_{\G, \bS}$ is closely related to the moduli space of 
$\G$-local systems on $\bS$, the element ${\rm C}_{\G, \bS}$ does not act 
on the  latter.

In general there is  no 
canonical moduli space linked to 
the {\rm DT}-transformation.  
We have only a cluster variety:  
it describes a lot of features of the  space,   yet it is different.


\subsection{Other ramifications and applications}

 \paragraph{Duality conjectures for moduli spaces of $\G$-local systems  and Weyl group actions.} 
We assume for simplicity that $\bS$ has no boundaries, only punctures.\footnote{When $\bS$ has boundaries / special points, the correct analog of the ${\cal X}$-moduli space 
is the moduli space ${\cal P}_{\G^L, \bS}$ from \cite{GS}, which we do not discuss here. In particular, 
${\rm dim}{\cal A}_{\G, \bS} = {\rm dim}{\cal P}_{\G^L, \bS}$. However 
${\rm dim}{\cal A}_{\G, \bS} > {\rm dim}{\cal X}_{\G^L, \bS}$ if $\bS$ has holes.
{In the other direction, each boundary interval ${\rm I}$ on a boundary component of $\bS$ gives rise a natural map $\alpha_{\rm I}$ from ${\cal A}_{\G, \bS}$ to the Cartan subgroup ${\rm H}$. Let ${\cal A}_{\G, \bS}'$ be the subspace of ${\cal A}_{\G, \bS}$ consisting of points $a$ such that $\alpha_{\rm I}(a)= {\rm Id}$ for all boundary intervals {\rm I}. Then we have ${\rm dim}{\cal A}_{\G, \bS}' = {\rm dim}{\cal X}_{\G^L, \bS}$. }}
 Duality Conjectures   suggest a  mirror duality between the Langlands  dual  moduli spaces 
${\cal A}_{\G, \bS}$ and ${\cal X}_{\G^L, \bS}$ \cite[Section 12]{FG1}, \cite{GS}. 
In particular, each integral tropical point $l$ of one space 
corresponds to a regular function ${\Bbb I}(l)$ on the other space. 
The functions ${\Bbb I}(l)$ should be linearly independent. 
We observe  that if $\bS$ has punctures, then $\{{\Bbb I}(l)\}$ can not span the space of  regular functions on the space ${\cal A}_{\G, \bS}$. 

Indeed,  each puncture $p$ on $\bS$ gives rise to  a regular 
$\Gamma_\bS$-invariant function 
${\cal W}_p$ on ${\cal A}_{\G, \bS}$, the potential \cite{GS}, see also Section \ref{sec1}. 
However, one can argue that the only $\Gamma_\bS$-invariant 
finite subset in the set ${\cal X}_{\G^L, \bS}(\Z^t)$ is the  zero point. 
This is obvious if $\G$ is of type ${\rm A}_1$ due to the interpretation 
of the integral tropical points as laminations on $\bS$: the $\Gamma_\bS$-orbit of any
 non-empty integral lamination is infinite. 
The zero point $0 \in {\cal X}_{\G^L, \bS}(\Z^t)$ 
maps under the duality to the constant function on ${\cal A}_{\G, \bS}$. 
So the ${\cal W}_p$ can not be  a finite linear compbination of the functions 
${\Bbb I}(l)$, where $l \in {\cal X}_{\G^L, \bS}(\Z^t)$. 

We conjecture that the space the functions ${\Bbb I}(l)$ 
span consists of all regular functions  remaining regular 
under the $W^n$-action. The latter condition is forced by the following conjecture. 

\bcon \la{WGAC}
There is a $\Gamma_\bS\times W^n$-equivariant duality between the spaces 
${\cal A}_{\G, \bS}$ and ${\cal X}_{\G^L, \bS}$. 
\econ 
\noindent

If the $W^n$-action  is not cluster, the $W^n$-transformations 
play a role of cluster transformations.

\paragraph{Periodicity of ${\rm DT}$-transformations.}
The periodicity of ${\rm DT}$-transformations is closely related to  the periodicity conjecture of Zamolodchikov \cite{Z}. Keller solved the cases of  square products of Dynkin quivers \cite{K}. As a consequence of Theorem \ref{2.27.15.2}, we immediately have
\bt  If ${\bS}$ has only punctures, then $({\rm DT}_{\G, \bS})^2={\rm Id}$.
 
 If ${\bS}$ is a disk/punctured disc with $k$ 
special points on its boundary, then $({\rm DT}_{\G, \bS})^{{\rm lcm}(2, k)}={\rm Id}$.
\et

\paragraph{Compatibility  of the ${\rm C}_{{\rm G}, \bS}$-transformations with covers.} 
The transformation  ${\rm C}_{{\rm G}, \bS}$  has many nice geometric properties. 

Let $\pi: \widetilde{\bS} \lra \bS$ be a finite cover of decorated surfaces. By pull back, it induces a natural positive embedding $\pi^*: {\cal X}_{\G, \bS}\lra {\cal X}_{\G, \widetilde{\bS}}$. The following  diagram is commutative:
\begin{displaymath}
    \xymatrix{
        {\cal X}_{\G, \widetilde\bS} \ar[r]^{{\rm C}_{\G, \widetilde\bS}}  &   {\cal X}_{\G, \widetilde\bS}  \\
        {\cal X}_{\G, \bS} \ar[r]^{{\rm C}_{\G, {\bS}}}   \ar[u]^{\pi^*}    & {\cal X}_{\G, \bS} \ar[u]_{\pi^*} }
\end{displaymath}
It would be interesting to establish a categorical interpretation of this diagram in the ${\rm DT}$-theory.

\paragraph{Compatibility  of the maps ${\rm C}_{{\rm G}, \bS}$ with the cluster ensemble structure.} The transformation ${\rm C}_{{\rm PGL_m},\bS}$ is birational. We define
 a similar map ${\rm C}_{{\rm SL_m},\bS}$ on the space ${\cal A}_{{\rm SL_m}, \bS}$ using quite
 different construction. Remarkably, the two maps can be presented by the same sequence of cluster (per)-mutations. 
So viewing the pair $({\cal A}_{{\rm SL_m}, \bS}, {\cal X}_{{\rm PGL_m}, \bS})$ as a cluster ensemble, we have the
 following commutative diagram:
 \begin{displaymath}
    \xymatrix{
        {\cal A}_{{\rm SL}_m, \bS} \ar[r]^{{\rm C}_{{\rm SL_m}, \bS}}  \ar[d]_{p} &   {\cal A}_{{\rm SL_m,} \bS}   \ar[d]_{p}\\
        {\cal X}_{{\rm PGL}_m, \bS} \ar[r]^{{\rm C}_{{\rm PGL}_m, {\bS}}}      & {\cal X}_{{\rm PGL}_m, \bS} }
\end{displaymath}
Therefore the  birational map ${\rm C}_{{\rm PGL}_m,\bS}$ is regular on the image of ${\cal A}_{{\rm SL}_m, \bS}$.

\paragraph{Configurations of points in ${\Bbb C}{\Bbb P}^m$: the DT-transformation $=$ the parity conjugation.} Denote by ${\rm Conf}_n({\Bbb C}{\Bbb P}^m)$ 
the moduli space of configurations, that is ${\rm PGL_{m+1}}$-orbits,  of points $(z_1, ..., z_m)$ in ${\Bbb C}{\Bbb P}^m$. It has a cluster Poisson variety 
structure invariant under the cyclic shift ${\rm C}: (z_1, ..., z_m) \lra (z_2, ..., z_m, z_1)$.  Let $h_k:= (z_{k-m}, ..., z_{k-1})$ be a hyperplane in ${\Bbb C}{\Bbb P}^m$  spanned by the points 
$z_{k-m}, ..., z_{k-1}$, where the indices are modulo $n$. Then there is a birational map
$$
{\rm P}: {\rm Conf}_n({\Bbb C}{\Bbb P}^m) \lra {\rm Conf}_n({\Bbb C}{\Bbb P}^m),  ~~~~(z_1, ..., z_n) \lms (h_1, ..., h_n).
$$ 
It was argued in \cite{GGSVV} that  ${\rm P}$ is a cluster transformation. 
Just recently  D. Weng \cite{We} proved
\bt \la{PCC} The map ${\rm P}$ is the cluster DT-transformation for the cluster  variety ${\rm Conf}_n({\Bbb C}{\Bbb P}^m)$. 
\et
When  $m=3$, the map ${\rm C}^2{\rm P}$ is  the  {\it parity conjugation},  which 
plays an essential role in the theory scattering amplitudes in the ${\cal N}=4$ super Yang-Mills. Namely, let $\overline {\rm P}$  be the composition of 
${\rm P}$ with the complex conjugation. Scattering amplitudes are functions / forms on  ${\rm Conf}_n({\Bbb C}{\Bbb P}^m)$ which are 
invariant under the subgroup generated by the cyclic shift and the  map  $\overline {\rm P}$. 


\paragraph{Double Bruhat cells: the   DT-transformation $\stackrel{?}{=}$ the twist map.}
Let $\G$ be a split semi simple group. 
Let us fix a pinning $({\rm B}, {\rm B}^-, x_i, y_i; I)$ of $\G$, where $\B$ is a Borel subgroup of $\G$, ${\rm B}^-$ is an opposite Borel subgroup, so that 
${\rm H}:= {\rm B}^- \cap {\rm B}$ is a Cartan subgroup, and the pair $x_i$, $y_i$ give rise to a homomorphism $\gamma_i : {\rm SL}_2 \to \G$ for each $i\in I$ such that
\[
\gamma_i   \begin{pmatrix} 
      1 & a \\
      0 & 1 \\
   \end{pmatrix}= x_i(a),  
 \hskip 7mm \gamma_i  \begin{pmatrix} 
      1 & 0 \\
      a & 1 \\
   \end{pmatrix}= y_i(a), \hskip 7mm 
   \gamma_i \begin{pmatrix} 
      a & 0 \\
      0 & a^{-1} \\
   \end{pmatrix}=\alpha_i^\vee(a).
\]

Each pair of Weyl group elements  $u, v \in W$  gives rise to a {\it double Bruhat cell}
\[
{\rm G}^{u, v} := \B u \B ~\cap ~\B^{-} v \B^{-}. 
\] 

There are several maps involving the double Bruhat cells.

\begin{itemize}  
\item 

The involution $i$ of the $\G$:
\[
i :  \G \lra \G, \hskip 7mm h\lms h^{-1}, \hskip 3mm x_i(a)\lms y_i(a), \hskip 3mm y_i(a)\lms x_i(a).
\]
The involution $i$ exchanged $\B$ and $\B^-$. Therefore it induces an involution
\be
i:  \G^{u, v}\lra \G^{v, u}.
\ee
\item 
 The transposition $\G\lra \G, ~ g \lms g^T$, defined as an anti-automorphism of $\G$ such that
\[
h^T=h, \hskip 5mm x_i(a)^T = y_i(a), \hskip 5mm y_i(a)^T =x_i(a). 
\]
It induces a similar involution
\be
 \G^{u,v} \lra \G^{v^{-1}, u^{-1}}, \hskip 5mm g \lms g^T.
\ee
\item

For each $w\in W$, we define two  representatives $\overline{w}, \overline{\overline{w}} \in \G$. If $s_i$ is a simple refection, then 
\[
\overline{s_i}=\gamma_i\begin{pmatrix} 
      0 & -1 \\
      1 & 0 \\
   \end{pmatrix} , \hskip 7mm \overline{\overline{s_i}}={\gamma_i \begin{pmatrix} 
      0 & 1 \\
      -1 & 0 \\
   \end{pmatrix}}. 
\]
The $\{ \overline{s_i} \}$ and $\{\overline{\overline{s_i}}\}$ satisfy braid relations. So given a reduced decomposition $w=s_{i_1}\ldots s_{i_k}$, 
\[
\overline{w}=\overline{s_{i_1}}\ldots \overline{s_{i_k}}, \hskip 7mm \overline{\overline{w}}=\overline{\overline{s_{i_1}}}\ldots\overline{ \overline{s_{i_k}}}.
\] 

Let ${\rm U}:=[\B, \B]$, and  ${\rm U}^-:=[\B^{-}, \B^{-}]$. Using the Gaussian decomposition ${\rm G}_0 = {\rm U}{\rm H}{\rm U}^-$, any $g\in \G_0$ can be written as 
$g = [g]_+ [g]_0 [g]_-$. 
Consider the twist map
\be
\eta: ~~ \G^{u,v}\lra \G^{u,v}, ~~~~~\hskip 7mm \eta(g) =  \Big([\overline{v^{-1}}g]_+^{-1} \overline{v^{-1}}g   \overline{\overline{u^{-1}}} [g \overline{\overline{u^{-1}}}]^{-1}_- \Big)^T.
\ee
\end{itemize}

The map $\eta$ is a slight modification of the Fomin-Zelevinsky twist map \cite[Section 1.5]{FZ98}, which 
plays crucial role in the factorization formulas \cite{FZ98}. 
It is well-defined for all $g\in \G^{u,v}$.

If $\G$ is simply connected, then  the algebra ${\cal O}(\G^{u,v})$ has a cluster algebra structure \cite{BFZ}.
 The generalized minors are cluster variables. So $\G^{u,v}$ is a cluster ${\cal A}$-variety.
 
If $\G$ has trivial center, then $\G^{u,v}$ has a cluster ${\cal X}$-variety structure \cite{FG5}. Taking quotients by the action of the Cartan group  ${\rm H}$ on both sides, we get another ${\cal X}$-variety
\be
{\cal X}_{u,v}:={\rm H}\backslash {\rm G}^{u, v} / {\rm H}.
\ee
Alternatively, it is obtained by   deleting all the frozen variables of the original ${\cal X}$-variety $\G^{u,v}$.

It is easy to check that
\[
\eta(h_1 \cdot g \cdot h_2) =u (h_2) \cdot  \eta(g)\cdot v^{-1}(h_1)\hskip 7mm \forall h_1, h_2\in {\rm H}, \hskip 3mm \forall g \in {\rm G}^{u,v}.
\]
Thus the twist map can be reduced to an isomorphism of ${\cal X}_{u,v}$. We still denote it by $\eta$.

It is easy to see that the map $i$ on the space ${\cal X}_{u,v}$ coincides with our involution $i_{\cal X}$. 

\bcon The twist map $\eta$ is the ${\rm DT}$-transformation of ${\cal X}_{u,v}$.
\econ 

\paragraph{Quantized {\rm DT}-transformations and $\Gamma$-invariant 
bilinear forms.} 
Quantizing a cluster Poisson variety  ${\cal X}$, 
we get a 
Hilbert space
${\cal H}_{\cal X}$ with a scalar product 
$\langle \ast, \ast \rangle_{\cal X}$. 
Each  cluster transformation ${\rm C}$ of  ${\cal X}$ is quantized into a  
unitary operator $\widehat {\rm C}$ in   ${\cal H}_{\cal X}$. 
In particular we get a unitary projective representation of the  cluster modular group $\Gamma$ 
in the Hilbert space ${\cal H}_{\cal X}$  \cite{FG4}.

Suppose that the {\rm DT}-transformation ${\rm DT}_{\cal X}$ of  
${\cal X}$ is  cluster.  
Recall   the map   
${\rm D}_{\cal X}= i_{\cal X} \circ {\rm DT}_{\cal X}$,   see (\ref{Tildeiso1}). It is involutive: ${\rm D}_{\cal X^\circ}\circ {\rm D}_{\cal X} = {\rm Id}_{\cal X}$. 
So we get  a  $\Gamma$-invariant 
``symmetric'' bilinear form 
\be \la{12.5.15.1ac}
\begin{split}
&{\rm B}_{\cal X}: {\cal H}_{\cal X} \times {\cal H}_{\cal X^\circ}  \lra \C, ~~~~
 {\rm B}_{\cal X}(v, w):= \langle \widehat {\rm D}_{\cal X} u, w\rangle_{\cal X^\circ}, \\
&{\rm B}_{\cal X}(v, w) = \mu\cdot \overline {{\rm B}_{\cal X}(w, v)}, ~~|\mu|=1. 
\end{split}
\ee

Applying this to the moduli space ${\cal X}_{{\G}, \bS}$, $\G = {{\rm PGL}_m}$, 
we get a 
Hilbert space
${\cal H}_{\G, \bS}$ \cite{FG1, FG4}. It is conjecturally the space of conformal blocks for 
the higher Liouville theory related to $\G$. 
Let $\bS^\circ$ be the surface $\bS$ with the opposite orientation. 
Then  pairing (\ref{12.5.15.1ac}) is a  $\Gamma_\bS$-invariant 
 form 
\be \la{12.5.15.1a}
{\rm B}_{\G, \bS}: {\cal H}_{\G, \bS} \times {\cal H}_{\G, \bS^\circ}  \lra \C. 
\ee

When $\bS$ has punctures {only}, the cluster transformation ${\rm C}_{\G, \bS}$ is already 
 involutive: ${\rm C}^2_{\G, \bS} = {\rm Id}$. So we get a new $\Gamma_\bS$-invariant Hermitian symmetric form 
in the Hilbert space $({\cal H}_{\G, \bS}, \langle \ast, \ast\rangle_{\G, \bS})$:
\be \la{12.5.15.1}
{\rm B}'_{\G, \bS}: {\cal H}_{\G, \bS} \times \overline {{\cal H}_{\G, \bS}}
  \lra \C, ~~~~{\rm B}'_{\G, \bS}(u, v):= \langle \widehat {\rm C}_{\G, \bS} ~u, v\rangle_{\G, \bS}.
\ee
{There might be 
 a potential similarity between  this form 
and the Drinfeld-Wang   invariant bilinear form  \cite{DW} in a space of automorphic forms 
 on $\G({\Bbb A}_F)/\G(F)$ where $\G = {\rm SL}_2$,  $F$ is a global field.} 

\paragraph{Acknowledgments.} 
This work was 
supported by the  NSF grant DMS-1301776. 

A.G. 
is grateful to IHES 
for the hospitality and support during the Summer of 2015.

We are grateful to  
 Davide Gaiotto,   Maxim Kontsevich, and Andy Neitzke 
for many illuminating conversations. 


\section{Quantum cluster varieties}\la{SecQCV}

In Section \ref{SecQCV}, borrowed mostly from \cite{FG2},  
we present a careful definition of quantum cluster transformations and 
quantum cluster varieties. 
Quantum cluster varieties have a quasiclassical limit,  called 
{\it cluster Poisson variety}, introduced in {\it loc. cit.}  under the name 
cluster ${\cal X}$-variety. 

\vskip 3mm
A cluster Poisson variety is obtained by gluing a collection of split algebraic tori, 
each equipped with a Poisson structure, by birational transformations 
called {\it cluster Poisson transformations}. It is not quite a variety: it is rather a 
prescheme,  possibly non-separated. Yet it has a lot of important geometric 
features, not available  for general 
varieties: a well defined set of points with values in any semifield, 
canonical cluster variety divisors at infinity, etc.  

The algebra of regular functions on each of the cluster Poisson tori has a canonical $q$-deformation, 
to a quantum torus algebra. Cluster Poisson transformations are quasiclassical limits 
of certain isomorphisms of the non-commutative fields of fractions of quantum torus algebras, called 
{\it quantum cluster transformations}. Quantum cluster transformations are the primary objects of study. 
They are given by isomorphisms of quantum torus algebras, 
followed by the birational transformations provided by the 
composition of conjugations by the quantum dilogarithm of certain cluster coordinates. 
Quantum cluster variety is just a collection of these 
non-commutative fields related by quantum cluster transformations.

For the convenience of the reader we present all definitions in the 
``simply laced'' case.


\paragraph{Quiver mutations.} 

\bd \la{QDEF} A quiver is  a data 
$$
{\bf q} = (\Lambda,  \{e_i\}_{i \in {\rm I}}, (\ast, \ast)),
$$ 
 where $\Lambda$ is a lattice; $\{e_i\}$ is a basis  of the lattice $\Lambda$ parametrized by a 
given set ${\rm I}$; and 
$(\ast, \ast)$ is a skewsymmetric $\Z$-valued bilinear form 
on the lattice $\Lambda$.

The $\varepsilon$-matrix associated to ${\bf q}$ is a matrix $\varepsilon_{\bf q}:=(\varepsilon_{ij})$, $\varepsilon_{ij}:=(e_i,e_j)$. Two quivers are called {\it isomorphic} if their $\varepsilon$-matrices are equal. \footnote{An isomorphism class of quivers from Definition \ref{QDEF} is the same thing as a geometric quiver without loops and length two cycles, whose vertices are parametrized by a set ${\rm I}$. Namely, every such a geometric quiver ${\bf q}$ determines a matrix $\varepsilon_{\bf q}:=(\varepsilon_{ij})$, 
$
\varepsilon_{ij}:= \#\{\mbox{arrows from $i$ to $j$}\}-  \#\{\mbox{arrows from $j$ to $i$}\},
$
and vice versa.}
\ed
Every basis vector $e_k$ provides  a {\it mutated in the 
direction $e_k$} quiver ${\mathbf q'}$. 
The  quiver 
${\bf q}'$ is defined by changing the 
basis $\{e_i\}$ only. The lattice and the form stay intact. 
The new basis $\{e'_i\}$ is defined via halfreflection of the basis $\{e_i\}$ along the hyperplane 
$(e_k, \cdot)=0$:
\begin{equation} \label{12.12.04.2a}
\mu_k(e_i) = e'_i := 
\left\{ \begin{array}{lll} 
-e_k& \mbox{ if }   i = k\\
e_i + [(e_i, e_k)]_+e_k
& \mbox{ otherwise. }
\end{array}\right.
\end{equation}
Here $[\alpha]_+:= \alpha$ if $\alpha\geq 0$ 
and $[\alpha]_+:=0$ otherwise.

{\paragraph{Relation with the Fomin-Zelevinsky quiver mutations.} 
Formula (\ref{12.12.04.2a}) 
implies the Fomin-Zelevinsky formula,  which 
also appeared in Seiberg's work in physics \cite{Se95},  
telling how the $\varepsilon$-matrix changes under mutations: 
\begin{equation} \label{epsilon.mutate}
 \varepsilon'_{ij} := \left\{ \begin{array}{lll} 
- \varepsilon_{ij} & \mbox{ if $k \in \{i,j\}$} \\ 
\varepsilon_{ij} - \varepsilon_{ik}{\rm min}\{0, -{\rm sgn}(\varepsilon_{ik}) \varepsilon_{ kj}\}& 
 \mbox{ if $k \not \in \{i,j\}.$}\end{array}\right.
\end{equation} }

Mutations of a given quiver can be  encoded by the elements of the set ${\rm I}$. 
Performing the mutation at an element $k \in {\rm I}$ twice, we get a basis  
\be \la{SR}
e_i'':= \mu_{k} \circ \mu_{k}(e_i)= e_i + (e_i, e_k)e_k. 
\ee
 So in general $\{e_i''\}$ is a different basis 
than $\{e_i\}$. 
However, 
it preserves the 
$\varepsilon$-matrix:
\be \la{SR1}
(e_i'', e_j'') =(e_i, e_j) + (e_i, e_k)(e_k, e_j) + (e_i,e_k)(e_j, e_k)  = (e_i, e_j).
\ee


Each quiver can be mutated in $n$ directions, where $n = {\rm rk}~\Lambda$, 
and mutations can be repeated indefinitely. 
Thanks to  (\ref{SR1}), the double mutation in the same direction preserves 
the isomorphism class of a quiver. So we can picture
  the quivers obtained by 
mutations of an original quiver ${\bf q}$, and \underline{considered up to  isomorphisms}, 
 at the vertices of an infinite $n$-valent tree ${\rm T}_n$.

Precisely, consider a tree ${\rm T}_n$ such that the edges incident 
to a given vertex are 
parametrized by the set 
${\rm I}$.  
We assign to each vertex ${a}$ of the tree ${\rm T}_n$ a quiver ${\bf q}_a$   
{considered up to an isomorphism}, so that the  quivers ${\bf q}_a$ and ${\bf q}_b$ assigned to 
the vertices of an edge $a \stackrel{k}{-} b$ of ${\rm T}_n$ labeled by an element $k \in {\rm I}$ 
 are related by the mutations assigned to this edge: 
\be \la{muasse}
{\bf q}_b = \mu_{k}({\bf q}_a), ~~~~ {\bf q}_a = \mu_{k}({\bf q}_b). 
\ee
Equivalently, the vertices of ${\rm T}_n$ are  assigned $\varepsilon$-matrices which are related by \eqref{epsilon.mutate}.

Summarizing, we arrive at the following definition.

\bd A decorated tree $T_n$ is a tree whose edges are labeled by the elements of a given 
set ${\rm I}$, and whose vertices are decorated by the isomorphism classes of quivers  so that 
\begin{itemize}

\item The set of the edges incident to a given vertex is identified with the set ${\rm I}$.

\item The  quivers ${\bf q}_a$ and ${\bf q}_b$ at  
the vertices of any edge $a \stackrel{k}{-} b$  
 are related by mutations   (\ref{muasse}).

\end{itemize}
\ed

\paragraph{Negative mutations.} The halfreflection (\ref{12.12.04.2a}) is not the only natural way 
to describe mutations of isomorphism classes of quivers. 
There is another transformation of quivers, acting on the basis vectors only, given by   
\begin{equation} \label{12.12.04.2ab}
\mu_k^-(e_i) := 
\left\{ \begin{array}{lll} -e_k& \mbox{ if }  i = k\\
e_i + [-(e_i, e_k)]_+e_k
& \mbox{ otherwise}.\\
\end{array}\right.
\end{equation}
The negative mutation $\mu_k^-$ is the \underline{inverse} of 
$\mu_k$ \underline{on the nose}, not only up to an isomorphism. Indeed, $\mu_k^-\circ \mu_k (e_k) = e_k$, and 
the composition $\mu_k^-\circ \mu_k (e_i)$ is computed as 
\begin{align}
e_i ~~&\lms ~~ e_i'= e_i + [(e_i, e_k)]_+e_k~~ \nonumber \\
&\lms ~~ e'_i +
[-(e'_i, e'_k)]_+(e'_k) = 
e_i + [(e_i, e_k)]_+e_k -  [(e_i, e_k)]_+e_k = e_i.  \nonumber
\end{align}
The negative  mutation $\mu^-_k$ acts on the isomorphism classes of quivers in the same way as 
 $\mu_k$. 
Note that changing the sign of the form $(\ast, \ast)$ amounts to changing 
the $\mu_k$ to the  $\mu^-_k$.

\begin{remark}
Let $t_k:=\mu_k\circ \mu_k$ be the transformation of bases $\{e_i\} \to \{e_i''\}$, see  \eqref{SR}-\eqref{SR1}. We have
\be
\la{tmucomtriv}
\mu_k=t_k \circ \mu_k^{-}=\mu_k^- \circ t_k.
\ee
It is easy to check that $\{t_k\}$ satisfy the braid relations
\begin{align}
t_j \circ t_k &=t_k \circ t_j ,  &\mbox{if } (e_j, e_k) &=0. \nonumber\\
t_j \circ t_k \circ t_j &= t_k \circ t_j \circ t_k,   &\mbox{if } (e_j, e_k) &=\pm 1. 
\end{align}
Therefore the braid group acts on the bases in $\Lambda$, preserving the
  isomorphism class of quivers.
  
Moreover, one has
\be
\la{tmucom}
t_i = 
\left\{ \begin{array}{lll}  \mu_j \circ t_i \circ \mu_j^- & \mbox{ if }  (e_i, e_j) \geq 0, \\
 \mu_j^- \circ t_i \circ \mu_j ~~
&  \mbox{ if }  (e_i, e_j)  < 0.\\
\end{array}\right.
\ee
It relates the braid group actions assigned to different vertices of  $T_n$.

In fact, after categorification, the $\{t_k\}$ correspond to the 
Seidal-Thomas twist functors \cite{ST}, which generate braid group 
actions on the category ${\cal C}({\bf q}, W)$. 
\end{remark}

From now on, all the quivers are considered up to isomorphisms, unless otherwise stated. Abusing notation, we write ${\bf q}={\bf q'}$ if ${\bf q}$ and ${\bf q'}$ are isomorphic.

\paragraph{Quantum torus algebra.} A lattice $\Lambda$ with a form 
$(\ast, \ast): \Lambda \wedge\Lambda \to \Z$ gives rise to a {\it quantum torus algebra}  
${{\bf T}}_{\Lambda}$, which is 
a free  $\Z[q, q^{-1}]$-module with a basis $ X_v$, $v \in \Lambda$, and the product
$$
q^{-(v_1, v_2)}X_{v_1} X_{v_2} = X_{v_1+v_2}.
$$
There is an involutive antiautomorphism making it into a $\ast$-algebra: 
$$
\ast: {{\bf T}}_{\Lambda}\lra {{\bf T}}_{\Lambda}, \qquad  
\ast(X_{v}) = X_{v}, ~
\ast(q) = q^{-1}.
$$

A quiver ${\bf q}=({\Lambda}, \{e_i\}, (\ast, \ast))$ gives rise to  a quantum torus algebra ${{\bf T}}_{\bf q} := {{\bf T}}_{\Lambda}$ equipped with a set $\{X_{e_i}\}$ of algebra generators  corresponding to the basis vectors $\{e_i\}$.
Thanks to  parametrization of the basis vectors, if two quivers ${\bf q}$ and ${\bf q}'$ are isomorphic, then there is a \underline{unique} isomorphism identifying the associated quantum torus algebras 
\be
\la{unique.iso.quantum.torus.hhh}
{\bf T}_{\bf q} \stackrel{\sim}{\lra} {\bf T}_{{\bf q}'}, \hskip 7mm X_{e_i} \lms X_{e_i'}.
\ee

Given a decorated tree $T_n$, each vertex $a$ of $T_n$ 
is decorated by a quiver $({\Lambda}, \{e_i\}, (\ast, \ast))$, 
considered up to isomorphisms.
Therefore, each vertex $a$ gives rise to a quantum torus algebra ${{\bf T}}(a) := {{\bf T}}_{\Lambda}$ with generators $\{X_{e_i}\}$, 
well-defined up to the isomorphism \eqref{unique.iso.quantum.torus.hhh}.

Denote by 
${\Bbb T}(a)$ the non-commutative fraction field of the quantum torus algebra ${{\bf T}}(a)$. 
Our goal is to assign to any pair of  vertices $(a, b)$ of the tree a unique 
{\it quantum cluster transformation} 
\[
\Phi(a, b) : {{\Bbb T}}(b) \lra {{\Bbb T}}(a). 
\]
First, we assign a quantum cluster transformation to each oriented edge of the tree. 
Choose  an oriented path ${\bf i}$ on $T_n$ connecting $a$ and $b$. Denote by $\Phi({\bf i}): {{\Bbb T}}(b) \to {{\Bbb T}}(a)$ the composition of  the cluster transformations assigned to ${\bf i}$ in the reversed order. 
We show that the composition of the two quantum cluster transformations assigned to the path 
$a \to a' \to a$ is the identity map.
Therefore $\Phi(a, b):=\Phi({\bf i})$ is independent of the choice of ${\bf i}$. 

\vskip 3mm
{\it Notation.} We denote by ${{\bf T}}(a)$ the quantum torus algebra assigned to 
a vertex $a$ of the tree $T_n$, and by ${{\Bbb T}}(a)$ its 
non-commutative fraction field. At the same time, we have a quiver ${\bf q}_{a}$ assigned to 
the vertex $a$, well defined up to an isomorphism. We denote by ${{\bf T}}_{\bf q}$ the quantum torus algebra assigned to a quiver ${\bf q}$, and by ${\Bbb T}_{\bf q}$ its 
fraction field. 
They come with canonical generators $\{X_{e_i}\}$ provided by the basis $\{e_i\}$ of the quiver. 
The quantum torus algebra itself depends on the lattice $\Lambda$ with the form only, and 
denoted also by ${{\bf T}}_{\Lambda}$.

\vskip 3mm
 Our crucial tool to define quantum cluster transformations is the quantum dilogarithm.

\paragraph{The quantum dilogarithm formal power series.} 
Let us recall its definition:  
 \be \la{psi}
{\bf \Psi}_q(x):= \prod_{a=1}^{\infty}(1+q^{2a-1}x)^{-1}.
\ee
It is the unique formal power series starting from $1$ and satisfying a  
difference relation
\begin{equation} \label{11.19.06.20}
{\bf \Psi}_q(q^2x) = (1+qx){\bf \Psi}_q(x). 
\ee
It is useful to note its equivalent form: 
\be \label{11.19.06.20eq}
{\bf \Psi}_q(q^{-2}x) = (1+q^{-1}x)^{-1}{\bf \Psi}_q(x).
\end{equation} 
It has the  power series expansion, easily checked by using 
the difference relation:
\be
{\bf \Psi}_q(x)= \sum_{n=0}^{\infty}\frac{q^{n}x^n}
{(q^2-1) (q^4-1) \ldots (q^{2n}-1)} = \sum_{n=0}^{\infty}\frac{q^{n^2}x^n}{|{\rm GL}_n(F_{q^2})|}.
\ee
By \eqref{qdq}, it has an exponential expression
\be
\la{quantum.dilog.exp}
{\bf \Psi}_q(x)=
{\rm exp}\Big(\sum_{n\geq 1}\frac{(-1)^{n+1}}{n(q^{n}-q^{-n})} x^n\Big).
\ee
As a direct consequence, we get
\begin{equation} \label{1ssw}
{\bf \Psi}_q(x)^{-1} = {\bf \Psi}_{q^{-1}}(x).
\end{equation} 
Below we skip $q$ in the notation, setting ${\bf \Psi}(x):= {\bf \Psi}_q(x)$.

\paragraph{Quantum tori mutations.} 
Take a  decorated tree $T_n$.  Each vertex $a$ of the tree $T_n$ 
gives rise to 
a quantum torus algebra ${\bf T}({a})$ with a set of generators $X_{e_i}$, 
considered up to an isomorphism.  
Given an oriented edge $a \stackrel{k}{\to} a'$, 
there is a unique isomorphism \footnote{Note that the isomorphism is in the reversed direction.}
\be \la{Imut1}
i_{a \stackrel{}{\to} a'}: {\bf T}({a'}) \lra {\bf T}(a), ~~~~ X_{e_i'}\lms X_{\mu_k(e_i)},
\ee
transforming the generator $X_{e_i'}$ of the algebra ${\bf T}({a'})$ 
to the one $X_{\mu_k(e_i)}$ of the algebra ${\bf T}({a})$. 
Abusing notation, we also denote by $i_{a \stackrel{}{\to} a'}$ the induced isomorphism of the 
fraction fields. 

\bd \la{12.5.15.23} The quantum mutation at an edge $a \stackrel{k}{\to} a'$ is an isomorphism 
of fraction fields 
$$
\Phi(a \to a'):~{\Bbb T}({a'}) \lra {\Bbb T}(a)
$$
 defined as  the composition  of the isomorphism 
$i_{a \stackrel{}{\to} a'}$  with the conjugation by the quantum dilogarithm 
${\bf \Psi}(X_{e_k})$:
\be
\la{quantum.mut.1.26hh}
\Phi(a \stackrel{}{\to} a'):= 
{\rm Ad}_{{\bf \Psi}(X_{e_k})}\circ i_{a \stackrel{}{\to} a'}, 
\hskip 1cm
Y \lms {\bf \Psi}(X_{e_k}) i_{a \stackrel{}{\to} a'}(Y) 
{\bf \Psi}^{-1}(X_{e_k}).
\ee
\ed

It is a remarkable fact, following from  difference equations (\ref{11.19.06.20}) - (\ref{11.19.06.20eq}), that 
the conjugation by the the quantum dilogarithm ${\bf \Psi}(X_{e_k})$ is a rational transformation. 
One can look at Definition \ref{12.5.15.23} as follows. The classical Scolem-Noether theorem tells that 
any automorphism of a simple central algebra is inner. If 
the form $(\ast, \ast)$ on $\Lambda$ is non-degenerate, 
the quantum torus algebra is an infinite dimensional simple central algebra. 
So the Scolem-Noether theorem can not be applied. However one can get a birational automorphism 
of the quantum torus algebra by the conjugation with the dilogarithm power series  ${\bf \Psi}_q(X)$. 
Although the ${\bf \Psi}_q(X)$ does not belong to the algebra, the induced automorphism deserves to be viewed as 
``inner''.

\paragraph{Quantum cluster transformations.} 
Consider a path ${\bf i}$ on the tree ${T_n}$, presented as a sequence of oriented edges  
labeled by the elements of the set ${\rm I}$:
\be \la{CPATH}
{\bf i}:  ~~a= a_0 \stackrel{k_1}{\lra} a_1 \stackrel{k_2}{\lra}  \ldots \stackrel{k_{m-1}}{\lra} a_{m-1} \stackrel{k_{m}}{\lra}  a_m=b.
\ee
The quantum cluster transformation $\Phi({\bf i})$ is 
 the composition of mutations in the reversed order: 
$$
\Phi({\bf i}):= \Phi(a_{0}\to a_{1})\circ \ldots \circ\Phi(a_{m-1}\to a_{m-2}) \circ \Phi(a_{m-1}\to a_{m}): \hskip 5mm {\Bbb T}(b){\lra} {\Bbb T}(a).
$$
Below we present $\Phi({\bf i})$ as a composition 
of an isomorphism of quantum tori algebras with a sequence of conjugations by 
quantum dilogarithms. 

 Let $\{e_i\}$ be the basis for the quiver ${\bf q}_{a}$ at the vertex $a$. 
Consider a composition of mutations
\be \la{fvect2}
\mu_{\bf i}:= \mu_{k_{m}} \circ \ldots \circ \mu_{k_1}.
\ee
It changes the basis $\{e_i\}$ 
to a basis $\{\mu_{\bf i}(e_i)\}$ of the same lattice $\Lambda$ for the quiver ${\bf q}_{a}$: 
\be \la{SEQV}
\{e_i\} = \{e^{(0)}_i\} \stackrel{\mu_{k_1}}{\lra} 
\{e^{(1)}_i\} \stackrel{\mu_{k_2}}{\lra} \ldots 
\stackrel{\mu_{k_{m}}}{\lra} \{e^{(m)}_i\} = \{\mu_{\bf i}(e_i)\}
\ee
 The basis $\{\mu_{\bf i}(e_i)\}$ 
defines a quiver isomorphic to ${\bf q}_{b}$. Denote by  
$\{e'_i\}$ the basis for the quiver ${\bf q}_{b}$. 
There is a unique isomorphism of quantum torus algebras identifying the generators:
$$
i({\bf i}): {\bf T}(b) \lra {\bf T}(a), ~~~~
X_{e'_j} \lms 
X_{\mu_{\bf i}(e_j) }.
$$
 
Let us define vectors $f_1, ..., f_{m}$  of the lattice $\Lambda$ for the quiver ${\bf q}_{a}$ 
by setting 
\be \la{fvect0}
f_{s}:=e_{k_s}^{(s-1)},
 ~~~~s = 1, ..., m.
\ee

\bp One has 
\be \la{sepf}
\Phi({\bf i}) = {\rm Ad}_{{\bf \Psi}(X_{f_1})}\circ \ldots \circ {\rm Ad}_{{\bf \Psi}(X_{f_{m}})}\circ i({\bf i}).
\ee
\ep

\begin{proof} Follows from the very definitions.
\end{proof}

\bl \la{clust.inv.hh}
The composition of cluster mutations 
$$
\Phi(a \stackrel{k}{\to} a' \stackrel{k}{\to} a):= 
\Phi(a \stackrel{k}{\to} a')\circ \Phi(a' \stackrel{k}{\to} a): ~~
{\Bbb T}(a) \lra {\Bbb T}({a'})\lra {\Bbb T}(a) ~~~~\mbox{\it is the identity map}. 
$$
\el

\begin{proof} Let $v\in \Lambda$ be the basis vector which we use to define the mutation 
$a \to a'$. Then $i_{a\to a'}\circ i_{a'\to a}$ is the reflection map $w \to w + (w, v)v$. 
The following lemma calculates  
the ``quantum dilogarithm part'' of the composition $\Phi(a\to a')\circ \Phi(a'\to a)$. 
\bl
One has 
\be \la{conjf}
{\rm Ad}_{{\bf \Psi}(X_{v})}{\rm Ad}_{{\bf \Psi}(X_{-v})} (X_w) =   X_{w - (w, v)v}.
\ee
\el

\begin{proof} The general case reduces to the case when $(v,w)=1$. 
Assuming $(v,w)=1$, we have 
$$
{\bf \Psi}(X_v){\bf \Psi}(X_{-v})X_w =  X_w{\bf \Psi}(q^{2}X_v){\bf \Psi}(q^{-2}X_{-v}) \stackrel{\eqref{11.19.06.20}
 \eqref{11.19.06.20eq}}{=\joinrel=\joinrel=} 
$$
$$
 X_w (1+qX_v){\bf \Psi}(X_{v})(1+q^{-1}X_{-v})^{-1} {\bf \Psi}(X_{-v}) =
qX_wX_v {\bf \Psi}(X_{v}) {\bf \Psi}(X_{-v}). 
$$
Since $(v,w)=1$ implies that $qX_wX_v = X_{v+w}$, and $w-(w,v)v = w+v$,  we get (\ref{conjf}).  
\end{proof}
Therefore 
\be \la{3.17.15.1}
\Phi(a\to a')\circ \Phi(a'\to a)={\rm Ad}_{{\bf \Psi}(X_{v})}\circ {\rm Ad}_{{\bf \Psi}(X_{-v})} \circ i_{a\to a'}\circ 
i_{a'{\to} a} =   {\rm Id}.
\ee
\end{proof}

\paragraph{Remark 1.} It is tempting to write 
\be \la{conjfa}
{\rm Ad}_{{\bf \Psi}(X_{v})}{\rm Ad}_{{\bf \Psi}(X_{-v})}  \stackrel{?}{=} {\rm Ad}_{{\bf \Psi}(X_{v}){\bf \Psi}(X_{-v})}. 
\ee
However the product ${\bf \Psi}(X^{-1}){\bf \Psi}(X)$ does not make sense 
as a power series. The formula starts  to make sense if we replace 
the quantum dilogarithm power series by their modular double, given by the quantum dilogarithm function 
$\Phi_{\hbar}(x)$, see (\ref{12.6.15.1}). 

\paragraph{Remark 2.} Formula \eqref{sepf} is a composition of two transformations. 
 The first one 
is an isomorphism $i({\bf i}): {\bf T}(b) \to {\bf T}(a)$. 
The second one is a birational automorphism 
\be \la{PRPFI}
{\rm Ad}_{{\bf \Psi}(X_{f_1})}\circ 
\ldots \circ {\rm Ad}_{{\bf \Psi}(X_{f_{m}})}:  ~{\Bbb T}(a) \lra {\Bbb T}(a).
\ee
By Lemma \ref{clust.inv.hh}, there is a unique 
rational map assigned to any pair $a,b$ of vertices of $T_n$, called the {\it quantum cluster transformation map}:  
\be \la{qcltra}
\Phi(a,b): {\Bbb T}(b) \lra {\Bbb T}(a). 
\ee

Another approach is to 
view mutations of quivers as transformations of bases in a \underline{given} 
lattice $\Lambda$. Then the lattices assigned to the vertices 
$a_0, ... , a_m$ of the path ${\bf i}$ are identified with $\Lambda$. So 
cluster transformations can be understood as birational 
automorphisms  (\ref{PRPFI}) of the quantum torus algebra ${\bf T}_\Lambda$.
 However then 
$\mu_k \circ \mu_k$ is no longer the identity map, 
and therefore the cluster transformation depends on the path ${\bf i}$ rather then 
on the vertices it starts and ends.  
Yet the advantage is that $\mu_k \circ \mu_k$ is identified with the symplectic reflection  $t_k$, 
discussed in the beginning of this Section, 
incorporating the braid group action into the cluster transformation story. 

\paragraph{Alternative formulas for quantum cluster transformations.} Recall the negative mutation \eqref{12.12.04.2ab}.
Given an oriented edge $a \stackrel{k}{\to} a'$, 
there is an isomorphism
\be \la{Imut}
i^-_{a {\to} a'}: {\bf T}({a'}) \lra {\bf T}(a), \hskip 7mm X_{e_i'}\lms X_{\mu^-_k(e_i)}. 
\ee
Since $i_{a'\to a}^{-1}= i^{-}_{a\to a'}$, and ${\bf \Psi}(X_{v})$ commutes with 
${\bf \Psi}(X_{-v})$,  formula (\ref{3.17.15.1}) is equivalent to
\be \la{3.18.15.1}
{\rm Ad}_{{\bf \Psi}(X_{v})} \circ i_{a\to a'} =  {\rm Ad}_{{\bf \Psi}(X_{-v})^{-1}}\circ i^{-}_{a\to a'}.
\ee
Therefore
the quantum cluster mutation $\Phi(a \stackrel{}{\to} a')$ can be defined by a different formula
\be
\Phi^-(a \stackrel{}{\to} a') := {\rm Ad}_{{\bf \Psi}(X_{-v})^{-1}}\circ i^{-}_{a\to a'}, \hskip 7mm \Phi(a \stackrel{}{\to} a') = 
\Phi^-(a \stackrel{}{\to} a'). 
\ee
So for any sequence of signs $\varepsilon_s \in \{\pm 1\}$ we can write the quantum cluster transformation 
as 
$$
\Phi({\bf i}) = \Phi^{\varepsilon_{1}}(a_0 \stackrel{}{\to} a_{1})\circ \ldots \circ \Phi^{\varepsilon_{m}}(a_{m-1} \stackrel{}{\to} a_{m}).
$$

Recall the bases $\{e_i\}$ and $\{e'_i\}$ for the quivers ${\bf q}_{a}$ and ${\bf q}_{b}$. We consider the sequence of mutations along the path ${\bf i}$:
\be \la{fvect2}
\{e_i\} = \{e^{(0)}_i\} \stackrel{\mu_{k_1}^{\varepsilon_{1}}}{\lra} 
\{e^{(1)}_i\} \stackrel{\mu_{k_2}^{\varepsilon_{2}}}{\lra} \ldots 
\stackrel{\mu_{k_{m}}^{\varepsilon_{m}}}{\lra} \{e^{(m)}_i\} = \{\mu_{\bf i}^\varepsilon(e_i)\}
\ee
There is 
an isomorphism  of quantum torus algebras: 
$$
i^\varepsilon({\bf i}): {\bf T}(b) \lra {\bf T}(a), ~~~~
X_{e'_i} \lms 
X_{\mu^{\varepsilon}_{\bf i}(e_i) }.
$$
 
Set
\be \la{fvect1}
f^{\varepsilon}_{s}:=\varepsilon_s \cdot e_{k_s}^{(s-1)}= \varepsilon_s\cdot\mu^{\varepsilon_{s-1}}_{k_{s-1}} \circ \ldots \circ 
\mu^{\varepsilon_1}_{k_1}(e_{k_{s}}), ~~~~s = 1, ..., m.
\ee 
The same quantum cluster transformation $\Phi({\bf i})$ can be written in a different form as 
\be \la{sepf1}
\Phi({\bf i}) = {\rm Ad}_{{\bf \Psi}(X_{f^{\varepsilon}_1})^{\varepsilon_1}}\circ 
\ldots \circ {\rm Ad}_{{\bf \Psi}(X_{f^{\varepsilon}_{m}})^{\varepsilon_{m}}}\circ 
i^\varepsilon({\bf i}).
\ee
 As we will see in Section 2.4, there is a unique sequence of signs $\varepsilon_s \in \{\pm 1\}$ 
for which all power series ${\bf \Psi}(X_{f^{\varepsilon}_s})^{\varepsilon_s}$ in (\ref{sepf1}) 
 lie in the same completion of the quantum torus algebra ${\bf T}_\Lambda$. 

\paragraph{Cluster modular groupoid.} 
Let $\pi$ be an arbitrary permutation of the set ${\rm I}$. It gives rise to a new quiver which does not necessarily preserve the isomorphism class of the original quiver:
\[
{\bf q}'=\pi({\bf q}):=\{\Lambda, \{e_i'\}, (\ast, \ast)\}, \hskip 7mm \mbox{where } e_i':=e_{\pi^{-1}(i)}, ~\forall i \in {\rm I}.
\]
There is an isomorphism between their associated non-commutative fraction fields
\[
\Phi(\pi): {\Bbb T}_{{\bf q}'}\lra {\Bbb T}_{\bf q}, \hskip 7mm X_{e_i'}\lms X_{e_{\pi^{-1}(i)}}.
\]

A {\em quiver cluster transformation} is a 
composition of quiver mutations and {permutations}. 
It induces a {\em quantum cluster transformation} of the associated non-commutative fraction fields.


\bd \la{Def2.7}
If  two quiver cluster transformations {$\sigma_1,\sigma_2: {\bf q}\to {\bf q'}$} 
induce the same quantum cluster transformation, i.e,
\[\Phi(\sigma_1)=\Phi(\sigma_2): {\Bbb T}_{\bf q'}\lra {\Bbb T}_{\bf q},\]
then we say $\sigma_1$ and $\sigma_2$ are equivalent, denoted by $\sigma_1=\sigma_2$.
\ed

Two quivers are
{\em equivalent} if they are related 
by a quiver cluster transformation. 

\bd
The cluster modular groupoid ${\cal G}_{{\bf q}}$ is 
a groupoid whose objects are quivers equivalent to ${\bf q}$, 
and morphisms are  quiver cluster transformations modulo equivalence. 
The fundamental group $\Gamma_{\bf q}$ of the 
groupoid  at ${\bf q}$ is  the cluster modular group.
\ed

Below we call both quiver cluster transformations and  quantum cluster transformations just 
cluster transformations, and use similar convention for mutations.

\paragraph{Cluster Poisson transformations.} Setting $q=1$, the quantum cluster transformation 
(\ref{qcltra}) becomes a birational transformation preserving the Poisson structure given by the quasiclassical limit 
of the commutator in the quantum torus algebra. It is called the {\it cluster Poisson map}.\footnote{Kontsevich and 
Soilbelman considered another specilization $q=-1$.} 
\footnote{Notice that 
it is important to present first the map (\ref{qcltra}) as a rational transformation, and only then set $q=1$. 
Indeed, setting $q=1$ first we get a commutative algebra, so the conjugation becomes the identity map. }

To write  it explicitly, let us consider the quiver mutation $\mu_k: {\bf q} \to {\bf q}'$.
We assign to ${\bf q}$ a set of {\it cluster Poisson coordinates} $\{X_i\}_{i\in {\rm I}}$. 
Denote  by $\{X'_i\}$ the cluster Poisson coordinates assigned  to ${\bf q'}$.  Setting $q=1$, the quantum cluster transformation \eqref{quantum.mut.1.26hh} becomes the cluster Poisson map
\begin{equation} \label{5.11.03.1x}
X'_{i}  \lms \left\{\begin{array}{ll} X_k^{-1}& \mbox{if }  i=k \\
 X_i(1+X_k^{-{\rm sgn} (\varepsilon_{ik})})^{-\varepsilon_{ik}} &   \mbox{\rm otherwise}.
\end{array} \right.
\end{equation}
Note that \eqref{5.11.03.1x} is subtraction free. Such a transformation is called {\it positive}.  
Its tropicalization  is
\begin{equation} \label{5.11.03.1xtr}
x'_{i} \lms \left\{\begin{array}{ll} -x_k& \mbox{if }  i=k \\
 x_i-\varepsilon_{ik}{\rm min}\{0, -{\rm sgn} (\varepsilon_{ik})x_k\} & \mbox{\rm otherwise}.
\end{array} \right.
\end{equation}

\paragraph{Tropical points of cluster Poisson varieties.}
A collection of quivers  $\{{\bf q}\}$ related by quiver cluster transformations   determines a cluster Poisson variety ${\cal X}$. 
It is given by a collection of cluster Poisson tori 
\be \la{trainit}
{\cal X}_{\bf q}:= {\rm Hom}(\Lambda, {\Bbb G}_m)
\ee
glued by cluster Poisson maps. 
A cluster Poisson variety ${\cal X}$ gives rise to a set ${\cal X}(\Z^t)$, 
called the set of integral tropical points  of ${\cal X}$, 
equipped with an action of the cluster modular group $\Gamma$. Namely, 
for each quiver ${\bf q}$ there is a set 
$$
{\cal X}_{\bf q}(\Z^t):= {\rm Hom}({\Bbb G}_m, {\cal X}_{\bf q}) = \Lambda^\vee = {\rm Hom}(\Lambda, \Z). 
$$
A mutation $\sigma: {\bf q} \to {\bf q'}$ gives rise to an isomorphism of sets 
$\varphi^t(\sigma): {\cal X}_{\bf q}(\Z^t) \lra {\cal X}_{\bf q'}(\Z^t)$ given in coordinates by the transformation 
(\ref{5.11.03.1xtr}). 

\bd An integral tropical point $l \in {\cal X}(\Z^t)$ as a collection 
of  $l_{\bf q}\in {\cal X}_{\bf q}(\Z^t)$ related by mutations: $\varphi^t(\sigma)(l_{\bf q}) = 
l_{\bf q'}$.
\ed


\paragraph{Frozen variables and integral tropical points of cluster Poisson variaties.} 
 Proposition \ref{9:14:04:1} below is borrowed from 
  \cite[ArXive version 2, Proposition 2.44]{FG2}. 

Given a quiver  ${\bf q} = ( \Lambda,  \{e_i\}, (\ast, \ast))$, take a lattice 
$$
\widehat \Lambda:= \Lambda \oplus \Z e_0 
$$
generated by $\Lambda$ and a new basis vector $e_0$.  
It has a basis $\{e_i\} \cup e_0$.

\begin{proposition} \label{9:14:04:1} 
There is a canonical bijection between the extensions of the skew-symmetric 
form $(\ast, \ast)$ from $\Lambda$ to  $\widehat \Lambda$, 
and the set of integral tropical points ${\cal X}(\Z^t)$. 
\end{proposition}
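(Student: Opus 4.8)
The plan is to write the bijection explicitly at the initial quiver and then check that it does not depend on that choice. Fix ${\bf q}=(\Lambda,\{e_i\},(\ast,\ast))$, so $\widehat\Lambda=\Lambda\oplus\Z e_0$ with basis $\{e_i\}\cup e_0$. Since a skew-symmetric form has $(v,v)=0$, an extension $\widehat{(\ast,\ast)}$ of $(\ast,\ast)$ to $\widehat\Lambda$ is determined by, and amounts to, the linear functional $\Lambda\to\Z$, $e_i\mapsto(e_i,e_0)$ — that is, a point of ${\cal X}_{\bf q}(\Z^t)={\rm Hom}(\Lambda,\Z)$. Composing with the tautological identification ${\cal X}(\Z^t)\stackrel{\sim}{\lra}{\cal X}_{\bf q}(\Z^t)$ (a tropical point is determined by its ${\bf q}$-coordinate) gives a bijection $\beta_{\bf q}$ from the set of extensions of $(\ast,\ast)$ onto ${\cal X}(\Z^t)$. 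The content of Proposition \ref{9:14:04:1} is that $\beta_{\bf q}=\beta_{{\bf q}'}$ whenever ${\bf q}'$ is obtained from ${\bf q}$ by a mutation or a permutation; the permutation case is immediate, so everything reduces to an edge ${\bf q}'=\mu_k({\bf q})$, $k\in{\rm I}$.

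Here the key point is that $\widehat\Lambda$ and the form $\widehat{(\ast,\ast)}$ never change; only the distinguished basis does, and it must be mutated \emph{including the frozen vector}. By \eqref{12.12.04.2a}, mutation at $k$ replaces $\{e_i\}\cup e_0$ by $\{e_i'\}\cup e_0'$, where $e_k'=-e_k$, $e_i'=e_i+[(e_i,e_k)]_+e_k$ for $i\in{\rm I}\setminus\{k\}$, and $e_0'=e_0+[(e_0,e_k)]_+e_k$. The sublattice $\langle e_i'\mid i\in{\rm I}\rangle$ is again $\Lambda$, with restricted form $(\ast,\ast)$ and basis $\{e_i'\}$: this is the quiver ${\bf q}'=\mu_k({\bf q})$, whose extended lattice is $\widehat\Lambda$ with frozen vector $e_0'$, and the functional read off from $\widehat{(\ast,\ast)}$ at ${\bf q}'$ sends $e_i'\mapsto(e_i',e_0')$. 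Thus $\beta_{\bf q}=\beta_{{\bf q}'}$ is equivalent to the assertion that the tuple $\bigl((e_i',e_0')\bigr)_{i\in{\rm I}}$ is obtained from $\bigl((e_i,e_0)\bigr)_{i\in{\rm I}}$ by the tropicalized cluster Poisson mutation \eqref{5.11.03.1xtr}.

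That assertion is a short direct computation. Put $l_i:=(e_i,e_0)$, $\varepsilon_{ij}:=(e_i,e_j)$, so that $(e_0,e_k)=-l_k$. Then $(e_k',e_0')=(-e_k,e_0')=-l_k$, and for $i\ne k$, expanding bilinearly and using $(e_k,e_k)=0$,
\[
(e_i',e_0')=\bigl(e_i+[\varepsilon_{ik}]_+e_k,\ e_0+[-l_k]_+e_k\bigr)=l_i+[\varepsilon_{ik}]_+l_k+[-l_k]_+\varepsilon_{ik},
\]
and an elementary case distinction on the sign of $\varepsilon_{ik}$ rewrites the right-hand side as $l_i-\varepsilon_{ik}\min\{0,-{\rm sgn}(\varepsilon_{ik})l_k\}$, which is exactly \eqref{5.11.03.1xtr}. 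Hence $\beta_{\bf q}$ is independent of ${\bf q}$, giving the canonical bijection; surjectivity and injectivity are immediate from the construction. The only step requiring care — and the one place where a sign error is easy — is the mutation of the frozen vector $e_0\mapsto e_0'$ (equivalently, that the extra entries $(e_i,e_0)$ of the $\varepsilon$-matrix mutate by the Fomin--Zelevinsky rule \eqref{epsilon.mutate}): one must check that the orientation of this transport reproduces \eqref{5.11.03.1xtr} rather than its inverse, which is the purpose of the computation above.
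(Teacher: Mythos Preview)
Your proof is correct and follows essentially the same approach as the paper: identify extensions with the tuple $(\varepsilon_{i0})_{i\in{\rm I}}$ and verify that these numbers transform under mutation at $k$ by exactly the tropicalized Poisson mutation rule \eqref{5.11.03.1xtr}. The only presentational difference is that the paper quotes the Fomin--Zelevinsky $\varepsilon$-matrix mutation formula \eqref{epsilon.mutate} for the extended matrix and observes that it coincides with \eqref{5.11.03.1xtr} under $x_i=\varepsilon_{i0}$, whereas you derive the same identity directly from the halfreflection \eqref{12.12.04.2a} applied to both $e_i$ and the frozen vector $e_0$; the two computations are equivalent.
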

\begin{proof}  We encode a quiver ${\bf q}$ by a skew-symmetric $\Z$-valued function $\varepsilon_{ij}$ 
on ${\rm I} \times {\rm I}$. The rank one extensions of the quiver are 
parametrised by similar functions on  $({\rm I} \cup \{0\})^2$, i.e.  
 by the integers $\{\varepsilon_{i0}\}_{i \in {\rm I}}$. The desired bijection is then given by 
$$
\{\varepsilon_{i0}\} \lms \{x_i\}  \in {\cal X}_{{\bf q}}(\Z^t), \quad x_i := \varepsilon_{i0}.
$$
We have to show that the numbers $\{\varepsilon_{i0}\}$ and coordinates $\{x_i\}$ 
of a $\Z$-tropical point 
$x\in {\cal X}(\Z^t)$ in the tropical coordinate system assigned to the  quiver ${\bf q}$ 
change under mutations the same way. Indeed, under the mutation in the direction $k\in {\rm I}$ one has 
$$
x_i'= \left\{ \begin{array}{ll}-x_i& \mbox{if } i = k, \\
x_i - \varepsilon_{ik} {\rm min}\{0, -{\rm sgn}(\varepsilon_{ik})x_k\}&  
\mbox{\rm otherwise}.
\end{array}\right. 
$$
On the other hand, we have
$$
\varepsilon_{i0}' 
= 
\left\{ \begin{array}{ll}
- \varepsilon_{i0} & \mbox{if } i = k,\\
\varepsilon_{i0} - \varepsilon_{ik} {\rm min}\{0, -{\rm sgn}(\varepsilon_{ik})\varepsilon_{k0}\}&  
\mbox{\rm otherwise}.
\end{array}\right.
$$
The two formulas  coincide under the assumption
 $x_i=\varepsilon_{i0}$. 
\end{proof}

\paragraph{Quantum cluster algebras with principle coefficients.} Given a quiver ${\bf q}$, the  basis $\{e_i\}$ of the lattice $\Lambda$ provides a dual basis $\{f_i\}$ of 
the dual lattice $\Lambda^{\circ}:= {\rm Hom}(\Lambda, \Z)$. The basis $\{f_i\}$  mutates as follows:
\be \la{qdbas}
f'_i := 
\left\{ \begin{array}{lll} -f_k + \sum_{j\in I}[-\varepsilon_{kj}]_+f_j
& \mbox{ if } &  i = k\\
f_i& \mbox{ if } &  i \not = k.\end{array}\right.
\ee
We need a lattice 
$$
\Lambda_{\cal P}:= \Lambda \oplus \Lambda^{\circ}.
$$
   
   Let $[ \ast, \ast ]: \Lambda \times \Lambda^\circ\to\Z$ be the canonical pairing. 
Togerther with the skew symmetric form $(\ast, \ast)$ on $\Lambda$, it provides 
the lattice $\Lambda_{\cal P}$ with a skew symmetric bilinear form
$\langle\ast, \ast\rangle_{\Lambda_{\cal P}}$: 
$$
\langle(e,f), (e',f')\rangle_{\Lambda_{\cal P}}:= (e,e') + [ e, f' \rangle ] - [ e', f ].
$$
It gives rise to a  quantum torus $\ast$-algebras ${\bf T}_{\Lambda_{\cal P}}$. 
 The basis $\{e_i, f_j\}$ of the lattice $\Lambda_{\cal P}$ provides   its 
 generators $\{B_i, X_j\}$, where $B_i:=B_{f_i}$ and $X_j:=X_{e_j}$. 
The relations are the following:
\begin{equation} \label{4.28.03.11x}
q  B_iX_i = 
q^{-1}X_iB_i, \quad  B_i X_j = 
X_jB_i, ~i \not = j, \quad  
q^{-\varepsilon_{ij}} X_i X_j = 
q^{-\varepsilon_{ji}} X_jX_i, \quad B_iB_j=B_jB_i.
\end{equation}

Denote by ${\bf T}_{\cal P}(a)$  the quantum torus algebra assigned to a vertex $a$ of the tree ${\Bbb T}_n$, and by ${\Bbb T}_{\cal P}({a})$ 
its   fraction field.  Given oriented edge $a \stackrel{k}{\to} a'$, 
there is a unique isomorphism of algebras
\be \la{Imut1a}
i_{a \stackrel{}{\to} a'}: {\bf T}_{\cal P}({a'}) \lra {\bf T}_{\cal P}(a), ~~~~ X_{e_i'}\lms X_{\mu_k(e_i)}, ~~B_{f_i'}\lms B_{\mu_k(f_i)}.
\ee
Abusing notation, we  also denote by $i_{a \stackrel{}{\to} a'}$ the induced isomorphism of the 
fraction fields. 

\bd \la{12.5.15.23a} The quantum mutation at an edge $a \stackrel{k}{\to} a'$ is an isomorphism 
of fraction fields 
$$
\Phi_{\cal P}(a \to a'):~{\Bbb T}_{\cal P}({a'}) \lra {\Bbb T}_{\cal P}(a)
$$
\be
\la{quantum.mut.1.26hh}
\Phi_{\cal P}(a \stackrel{}{\to} a'):= 
{\rm Ad}_{{\bf \Psi}(X_{e_k})}\circ i_{a \stackrel{}{\to} a'}, 
\hskip 1cm
Y \lms {\bf \Psi}(X_{e_k}) i_{a \stackrel{}{\to} a'}(Y) 
{\bf \Psi}^{-1}(X_{e_k}).
\ee
\ed

In coordinates,  
\begin{equation} \label{11.18.06.1}
{\rm Ad}_{{\bf \Psi}(X_{e_k})}: B_i\lms \left\{\begin{array}{lll} B_i& \mbox{ if } & i\not =k, \\
    B_k(1+q_kX_k) & \mbox{ if } &  i= k. \\
\end{array} \right.
\end{equation} 
 Indeed,   the relation 
$
X_kB_i = q^2B_iX_k
$ 
implies 
$
\Psi_q(X_k)B_i = 
B_i\Psi_q(q^{2}X_k).
$
It remains to use   (\ref{11.19.06.20}).  Since the $B$-variables commute, we can set 
\be \la{setB}
{\Bbb B}_k^+:= 
\prod_{j\in I}B_j^{[\varepsilon_{kj}]_+}, \qquad {\Bbb B}_k^-:= 
\prod_{j\in I}B_j^{[-\varepsilon_{kj}]_+}.
\ee
Then formula  (\ref{qdbas}) translates into monomial transformations
\begin{equation} \label{11.18.06.10hr}
i_{a \stackrel{}{\to} a'}^*: B'_{i} \lms \left\{\begin{array}{lll} B_i& \mbox{ if } & i\not =k, \\
    {\Bbb B}_k^-/B_k
 & \mbox{ if } &  i= k.   \\
\end{array} \right.
\end{equation}
Formulas for the 
action  on the generators $X_i$ are the same as for 
the ${\cal X}$-space.  Set
\be \la{wideY}
\widetilde X_i:= X_i\cdot\prod_{j\in I}B_j^{\varepsilon_{ij}} = X_i\frac{{\Bbb B}_k^+}{{\Bbb B}_k^-}.
\ee
Then  the $\widetilde X_i$ commutes with the $X_j$. 
So   mutations act on the $\widetilde X_i$ by  monomial transformations. 

Let  us work out the formulas  for the mutations of $B$-coordinates in the $q\to 1$ limit. The conjugation (\ref{11.18.06.1})  preserves ${\Bbb B}_k^+$ and ${\Bbb B}_k^-$. 
So we get 
\begin{equation} \label{1.7.10.3}
\Phi_{\cal P}^*: B'_{i} \lms \left\{\begin{array}{lll} B_i& \mbox{ if } & i\not =k, \\
    \frac{{\Bbb B}_k^-}{B_k(1+X_k)}
 & \mbox{ if } &  i= k. \\ 
\end{array} \right. 
\end{equation}

Let us set $A_i:= B_i^{-1}$. Then $
 X_i= \widetilde X_i\frac{{\Bbb A}_k^+}{{\Bbb A}_k^-}.$ So the mutation 
formula can be written as 
\begin{equation} \label{1.7.10.3a}
\Phi_{\cal P}(a \stackrel{}{\to} a')^*: A'_{k} \lms  
    \frac{({\Bbb A}_k^-+ \widetilde X_k{\Bbb A}_k^+)}{A_k}, ~~~~ \Phi_{\cal P}(a \stackrel{}{\to} a')^*: A'_{i} \lms A_i, ~~i \not = k.
    \end{equation}
So  mutation formulas of the coordinates $(A_i, \widetilde X_j)$ in the $q\to 1$ limit coincide
 with the mutation formulas \cite{FZIV} for the cluster algebra with  cluster variables $A_i$ and 
 principle coefficients $\widetilde X_k$.

We denote by ${\cal A}_{\rm prin, q}$ the quantum cluster variety with principal coefficients obtained by gluing the symplectic tori assigned to the lattices $\Lambda_{\cal P}$ by the 
mutations $\Phi_{\cal P}(a \stackrel{}{\to} a')$.  Denote by ${\rm T}$ the split torus with the group of characters $\Lambda$, and by 
${\rm T}_q$ the corresponding quantum torus. Then the quantum space ${\cal A}_{\rm prin, q}$ projects canonically to the product of the quantum torus ${\rm T}_q$ and  the quantum cluster  variety ${\cal X}_q$, and the fiber of the map $\pi_{\rm T}$ is  the cluster variety ${\cal A}$:
$$
{\cal A} \stackrel{j}{\hra} {\cal A}_{\rm prin, q} \stackrel{\pi_{\rm T}\times \pi_{\cal X}}{\lra} {\rm T}_q \times {\cal X}_q, ~~~~\pi_{\rm T}^*X_i:= \widetilde X_i, ~~\pi_{\cal X}^*X_i:=X_i, ~~~~
j({\cal A})=\pi_{\rm T}^{-1}(e).
$$

The subalgebra $\pi_{\rm T}^*({\cal O}( {\rm T}_q))$, that is the subalgebra generated by the $\widetilde X_i$'s,  see (\ref{wideY}), is the subalgebra of "coefficients", explaining the name. 
The   $A_i$'s are the cluster algebra generators. 

The quantum symplectic double ${\cal D}_q$ defined in \cite[Definition 3.1]{FG3}  is similar to the 
 double ${\cal A}_{\rm prin, q}$. The difference is that 
the mutation automorphisms  are defined differently:
$$
{\cal D}_q: \mbox{We use the conjugation by the ratio of two quantum dilogarithms: 
${\bf \Psi}_{q}(X_k)/{\bf \Psi}_{q}(\widetilde X_k)$}. 
$$
$$
{\cal A}_{\rm prin, q}: \mbox{We use the conjugation by single quantum dilogarithm: ${\bf \Psi}_{q}(X_k)$}.
$$

\section{DT-transformations of cluster varieties and Duality Conjecturs}

In Section \ref{sec2.2a} we discuss a basic question: 
when do two quantum cluster transformations coincide? 
A considerable part of Section \ref{sec2.2a} is due to 
Nagao \cite{N10} and Keller \cite{K11,K12,K13}, 
although we present the story from a different perspective, 
emphasizing the role of certain integral tropical points of cluster Poisson variaties, 
called  {\it basic positive laminations}, 
rather then using the  C-matrices. 
Proposition \ref{9:14:04:1} provides the dictionary relating the two points of view.

In Section \ref{SSec1.2} we recall  cluster 
${\rm DT}$-transformations following Keller \cite{K12}. 
We interpret them as quantum cluster transformations, and prove that  
a cluster ${\rm DT}$-transformation is a central element of the cluster modular group. 
In Section \ref{iandF} we recall the isomorphism $i$  from \cite[Sect.3.2]{FG4}. 
In Section \ref{SecAA} we relate the cluster {\rm DT}-transformations to Duality Conjectures.

\subsection{When do two  cluster transformations coincide?} \la{sec2.2a}

A quiver cluster transformation $\sigma: {\bf q} \to {\bf q}'$ induces
 a quantum cluster transformation 
\[
\Phi(\sigma): {\Bbb T}_{{\bf q'}} \lra {\Bbb T}_{\bf q}.  
\] 
 Setting $q=1$, we get a positive birational isomorphism of the Poisson tori 
\[
\varphi(\sigma): {\cal X}_{\bf q} \lra {\cal X}_{\bf q'}.
\] 
Its tropicalization is 
a piecewise-linear map of the set of tropical points 
\[
\varphi^t(\sigma): {\cal X}_{{\bf q}}(\Z^t) \lra {\cal X}_{\bf q'}(\Z^t).  
\] 

If two quiver cluster transformations $\sigma_1, \sigma_2: {\bf q} \to {\bf q}'$ 
induce the same quantum cluster transformations $\Phi(\sigma_1) = \Phi(\sigma_2)$, then their tropicalisations evidently 
coincide:  $\varphi^t(\sigma_1) = \varphi^t(\sigma_2)$.

Remarkably, the converse is true:
$$
\mbox{$\varphi^t(\sigma_1) = \varphi^t(\sigma_2)$ implies that $\Phi(\sigma_1) = \Phi(\sigma_2)$.}
$$ 
It follows from a stronger Theorem \ref{basiclam}, proved by Keller 
\cite{K11}, 
\cite[Sect.7]{K12} and Nagao \cite{N10} in a different formulation. 
It also follows from Duality Conjectures, as we show in  Section \ref{SecAA}. 


\paragraph{Basic ${\cal X}$-laminations.} Recall that an equivalent class of quivers gives rise to a cluster Possion variety ${\cal X}$.
Fix a quiver ${\bf q}$. Each vertex $i\in \{1,\ldots, N\}$ of ${\bf q}$ corresponds to a rational function $X_i$ on ${\cal X}$. 
The set ${\bf c}_{\bf q}:=\{X_1,\ldots, X_N\}$ is a rational cluster Poisson coordinate system on ${\cal X}$. 
Its tropicalization identifies the set ${\cal X}(\Z^t)$ of $\Z$-tropical points of ${\cal X}$ with 
$\Z^N$:
\[
{\bf c}_{\bf q}^t: ~ {\cal X}(\Z^t) \stackrel{\sim}{\lra} \Z^N, \hskip 7mm l\lms (X_1^t(l),\ldots, X_{N}^t(l)).
\]
Let $e_i=(0,...,1,...,0)$ be the $i$-th unit element of $\Z^N$.
\bd 
\la{def.basic.laminations.t} 
The $\Z$-tropical points $l_{{\bf q}, i}^+$ (respectively $l_{{\bf q}, i}^-$) of ${\cal X}$ such that
\be
{\bf c}_{\bf q}^t(l_{{\bf q},i}^+) = e_i,
\hskip 9mm
{\bf c}_{\bf q}^t(l_{{\bf q}, i}^-) = -e_i
\ee
are called basic positive (respectively negative) ${\cal X}$-laminations  associated to the quiver ${\bf q}$.
\ed

We usually call basic positive ${\cal X}$ laminations just basic laminations, or basic ${\cal X}$-laminations. We also frequently write $l_{i}^\pm$ instead of $l_{{\bf q}, i}^\pm$ when there is no confusion.

\paragraph{Basic laminations and cluster transformations.} 
Using basic laminations, we can state now the strongest version of the 
criteria determining when two cluster transformations coincide. 

\bt \la{basiclam}
Let $\sigma_1, \sigma_2: {\bf q} \to {\bf q}'$ be two cluster transformations between the same quivers. 
The following are equivalent
\begin{itemize}
\item[1.] $\Phi(\sigma_1) = \Phi(\sigma_2)$. 
\item[2.] $\varphi^t(\sigma_1)(l^+_{{\bf q},i}) = \varphi^t(\sigma_2)(l^+_{{\bf q}, i})$ for all $i \in {\rm I}$.
 \end{itemize}
\et

It looks surprising  that such a strong statement is true, and even more surprising  
that  basic laminations play  key role in the formulation. 
The proof of Proposition \ref{BLDC} below explains both. 

\bp \la{BLDC}
Theorem \ref{basiclam} follows from  Duality Conjectures \cite{FG2}. 
\ep
We prove Proposition \ref{BLDC} 
in Section \ref{SecAA}, after a discussion of Duality Conjectires.

\subsection{Cluster Donaldson-Thomas transformations} \la{SSec1.2}


\bt \la{basiclamDT} 
Let $\sigma: {\bf q} \to {\bf q'}$ be a cluster transformation such that 
\[
\varphi^t({\sigma})(l^+_{{\bf q},i}) = l^-_{{\bf q'},i}, \quad \forall i \in {\rm I}.
\]
Then the quivers ${\bf q}$ and ${\bf q'}$ are isomorphic.
The quantum cluster transformation $\Phi(\sigma)$,  unique by Theorem \ref{basiclam},
 coincides with the Kontsevich-Soibelmam  ${\rm DT}$-transformation.  
\et
Theorem \ref{basiclamDT} suggests the following definition. 
\bd \la{Donaldson.Thomas.transformation}
A cluster transformation  ${\bf K}: {\bf q}\to {\bf q}$ is called a cluster Donaldson-Thomas transformation if 
\be
\varphi^t({\bf K})(l_{{\bf q},i}^+)=l_{{\bf q},i}^-, \quad \forall i\in {\rm I}.
\ee
\ed
A cluster {\rm DT}-transformation ${\bf K}$ 
may not exist. If it does, it is unique by 
 Theorem \ref{basiclam}.   

Theorems \ref{basiclam} and \ref{basiclamDT}  were proved by Keller  \cite{K11}, 
\cite[Th 6.5, Sect 7.11]{K12} in a different 
formulation, using 
 {\it $c$-vectors} and {\it $C$-matrices} \cite{FZIV}, which we review in Section \ref{sssec2.4}, 
 rather than the tropical points of 
cluster Poisson varieties. 
See an exposition in a  nice short 
 paper \cite{K13}. 
The equivalence of two points of view follows from Proposition \ref{9:14:04:1}. 
One of the benefits of using the tropical points is that then  Definition \ref{Donaldson.Thomas.transformation} 
make sense for any positive rational transformation, not necessarily a cluster one.


\bt \la{universal.dt}
Let $\sigma: {\bf q} \to {\bf q}'$ be a cluster transformation. If ${\bf K}:{\bf q}\to {\bf q}$ is a cluster {\rm DT}-transformation, then so is $\sigma\circ {\bf K}\circ \sigma^{-1}$ .\et
Theorem \ref{universal.dt} is proved in Section \ref{sec10}. It asserts that the cluster {\rm DT}-transformation is independent of  the choice of ${\bf q}$. 
Therefore  it  associates a \underline{canonical cluster transformation} to  the cluster variety ${\cal X}$, which is independent of the choice of coordinate system ${\bf c}_{\bf q}$.

\bc The cluster {\rm DT}-transformation ${\bf K}:{\bf q}\ra {\bf q}$ is in the  center of the cluster modular group $\Gamma_{{\bf q}}$.
\ec

\begin{proof} Let $\sigma \in \Gamma_{\bf q}$. By Theorem \ref{universal.dt},  $\sigma\circ {\bf K}\circ \sigma^{-1}$ is a DT-transformation. By the uniqueness of a DT-transformation,  $\sigma\circ {\bf K}\circ \sigma^{-1}={\bf K}$. 
\end{proof}

\subsection{The isomorphism $i$, the contravariant functor $F$, and  DT-transformations.}
\la{iandF}
We recall the isomorphism $i$ following \cite[Sect.3.2]{FG4}. It gives rise to a contravariant functor $F$.
\vskip 2mm

Let $-{\bf q}$ be the quiver obtained by 
reversing the sign of the form in 
the quiver ${\bf q}$. 
Equivalently, it is obtained by reversing the arrows in the geometric quiver ${\bf q}$. We use the notation 
$(u,v)_\circ:= -(u,v)$ for the form. 
The quantum torus algebra ${\bf T}_{-{\bf q}}$ has generators $X_u^\circ$, $u\in \Lambda$ satisfying
\be
\la{relation.opposite.torus.ii}
X_u^\circ X_v^\circ = q^{(u,v)_\circ} X_{u+v}^\circ.
\ee 
There is a natural ``antilinear" isomorphism 
\be
\la{anti.iso,quantum.i}
i^*: {\bf T}_{-{\bf q}} \lra {\bf T}_{\bf q}, \hskip 5mm X_v^\circ\lms X_{-v},~~ q\lms q^{-1}. 
\ee
Indeed, the isomorphism $i^*$ sends the relation \eqref{relation.opposite.torus.ii} to 
$X_{-u}X_{-v}= q^{(-u, -v)}X_{-u-v}$:
$$ 
i^*(X^{\circ}_uX^{\circ}_v) = X_{-u}X_{-v}, ~~~~
i^*(q^{(u,v)_\circ}X^\circ_{u+v}) =
q^{(u,v)}X_{-u-v} = q^{(-u,-v)}X_{-u-v}.
$$

\bl \la{Lemma3.8}
\la{compatiblity.of.i.hhh}
The isomorphism $i$ commutes with the quantum cluster transformations.
\el

\begin{proof}
Clearly $i$ commutes with the permutations of basis. It suffices to show that $i$ commutes with the mutations.

Recall the isomorphisms \eqref{Imut1}, \eqref{Imut}.
The mutation $\mu_k: -{\bf q} \to -{\bf q}'$  acts on the basis vectors in the same way as the negative mutation $\mu_k^-: {\bf q}\to {\bf q}'$. So  the following diagram commutes
\begin{displaymath}
    \xymatrix{
        {\bf T}_{-{\bf q}'} \ar[r]^{i^*}  \ar[d]_{i_{-{\bf q}\to -{\bf q}'}} &   {\bf T}_{\bf q'} \ar[d]^{{{i_{{\bf q}\to {\bf q}'}^-}}}  \\
        {\bf T}_{-{\bf q} } \ar[r]^{i^*}       & {  \bf T}_{\bf q} }
\end{displaymath}
By (\ref{1ssw}), we get
\be
i^* ({\bf \Psi}_{q}(X^\circ_v)) ={\bf \Psi}_{q^{-1}} (X_{-v}) ={\bf \Psi}_{q} (X_{-v})^{-1}
\ee
Therefore the following diagram commutes
\begin{displaymath}
    \xymatrix{
        {\Bbb T}_{-{\bf q}} \ar[r]^{i^*}  \ar[d]_{{\rm Ad}_{{\bf \Psi}(X^\circ_{e_k})}} &   {\Bbb T}_{\bf q} \ar[d]^{{\rm Ad}_{{\bf \Psi}(X_{-e_k})^{-1}}}  
        \\
        {\Bbb T}_{-{\bf q} } \ar[r]^{i^*}       & {  \Bbb T}_{\bf q} }
\end{displaymath}
By \eqref{3.18.15.1}, one can combine the above two diagrams, getting the following commutative diagram:
\begin{displaymath}
    \xymatrix{
        {\Bbb T}_{-{\bf q'}} \ar[r]^{i^*}  \ar[d]_{\Phi(\mu_k)} &   {\Bbb T}_{\bf q'} \ar[d]^{\Phi(\mu_k)}  
        \\
        {\Bbb T}_{-{\bf q} } \ar[r]^{i^*}       & {  \Bbb T}_{\bf q} }
\end{displaymath}
\end{proof}

Thanks to Lemma \ref{compatiblity.of.i.hhh}, we get a functor $I: {\cal G}_{\bf q}\to {\cal G}_{-\bf q}$, which assigns to a quiver ${\bf q}$ the quiver $-{\bf q}$, and to a (per)-mutation $\sigma: {\bf q}\ra {\bf q'}$ the one $\sigma: -{\bf q}\ra -{\bf q'}$. 

\vskip 2mm

It is useful to introduce a ``contravariant" version of the functor $I$.

\bd The contravariant functor $F: {\cal G}_{\bf q}\lra {\cal G}_{-\bf q}$ assigns to quiver ${\bf q}$ the quiver $-{\bf q}$, and to a cluster transformation $\sigma: {\bf q}\to {\bf q'}$ the one $I(\sigma^{-1}): -{\bf q'}\to -{\bf q}$. 
\ed

This allows to us to state the following result, which we prove in Section \ref{sec10}.
\bt \la{DT.contrav.2.1.hh}
A cluster transformation ${\bf K}$ is a cluster {\rm DT}-transformation if and only if $F({\bf K})$ is a cluster {\rm DT}-transformation.
\et

\subsection{{\rm DT}-transformations of cluster varieties and Duality Conjectures} \la{SecAA}

Definition \ref{Donaldson.Thomas.transformation} 
of  cluster {\rm DT}-transformations looks mysterious: 
it refers to a particular cluster Poisson coordinate system, and uses 
 positive and negative basic laminations 
in this coordinate system, which seem  out of the blue. 
Independence  
of a cluster coordinate system looks surprising.

A {\rm DT}-transformation of a cluster variety is always defined as a formal 
automorphism. 
However even if it is rational, it may not be a cluster transformation, 
even in the most basic cases, for example when $\G = PGL_2$ 
and $\bS$ is a surface of positive genus  with a single puncture. 

We suggest, using formal Duality Conjectures, 
 a conjectural property of  {\rm DT}-transformations of  cluster varieties 
which characterizes them uniquely, makes their crucial properties obvious, 
and in the  case when it is a cluster transformation implies immediately that it is the 
cluster {\rm DT}-transformation. 

We believe that this is the ``right'' definition of {\rm DT}-transformations of  cluster varieties, 
while Definition \ref{Donaldson.Thomas.transformation} is a convenient technical
characterization of those {\rm DT}-transformations of  cluster varieties which are 
cluster transformations. 

We formulate a conjecture  relating rationality of DT-transformations to existence of canonical bases in the space of regular functions on cluster varieties. 

Let us recall first  some features of  Duality Conjectures.

\paragraph{Duality Conjectures \cite[Section 4]{FG2}.} 
For any cluster variety 
${\cal Y}$, a {\it regular  function on ${\cal Y}$ }
is a function which, 
in any cluster coordinate system,  is a Laurent polynomial in  the cluster coordinates 
with positive integral coefficients. We denote by ${\cal O}({\cal Y})$ the algebra of regular
 functions on  ${\cal Y}$.

A {\it formal function on ${\cal Y}$} assigns to 
each cluster coordinate system  a Laurent  series 
with  integral coefficients 
in the cluster coordinates, related by  the cluster transformations. 
We denote by $\widehat {\cal O}({\cal Y})$ the algebra of formal functions on  ${\cal Y}$.
There is a canonical map 
\be \la{phimap}
\varphi: {\cal O}({\cal Y}) \lra \widehat {\cal O}({\cal Y}).
\ee

A  quiver gives rise to a dual pair of cluster 
varieties of the same dimension: a $K_2$-cluster variety ${\cal A}$, and a cluster Poisson variety 
${\cal X}$, as well as the Langlands dual cluster varieties ${\cal A}^\vee$ and 
 ${\cal X}^\vee$ \cite{FG2}. 
In the ``simpli-laced'' case 
${\cal A}^\vee = {\cal A}$ and ${\cal X}^\vee = {\cal X}$. The cluster modular group $\Gamma$ 
 acts  by their automorphisms.

The algebra ${\cal O}({\cal A})$ is closely related to the 
cluster algebra. By the Laurent phenomenon theorem \cite{FZ} the algebra 
${\cal O}({\cal A})$ is ``big'': every cluster coordinate $A_i$ lies in 
the ${\cal O}({\cal A})$. So the dimension of the spectrum of the algebra ${\cal O}({\cal A})$
 equals to the  dimension of  ${\cal A}$. 
As was shown in \cite{GHK}, the algebra 
${\cal O}({\cal X})$ could have smaller dimension then  ${\cal X}$. 
Yet for generic quiver, e.g. with a non-degenerate form $(\ast, \ast)$ on the lattice,  
the algebra ${\cal O}({\cal X})$ is ``big''.

Duality Conjecture \cite[Section 4]{FG2} predict  a  duality between   
cluster varieties ${\cal A}$ and ${\cal X}^\vee$. In particular, one should have 
canonical $\Gamma$-equivariant pairings 
\be \la{DCIa}
\begin{split}
&{\bf I}_{\cal A}: {\cal A}(\Z^t) \times {\cal X}^\vee \lra {\Bbb A}^1,\\ 
&{\bf I}_{\cal X}: {\cal A} \times {\cal X}^\vee(\Z^t) \lra {\Bbb A}^1. \\
\end{split}
\ee 
This means that each  $l \in {\cal A}(\Z^t)$ and each $m \in {\cal X}^\vee(\Z^t)$ give rise to 
functions  
$$
{\Bbb I}_{\cal A}(l):= {\bf I}_{\cal A}(l, \ast) ~~\mbox{on ${\cal X}^\vee$, and }
 ~~~~{\Bbb I}_{\cal X}(m):= {\bf I}_{\cal X}(m, \ast)~~\mbox{on ${\cal A}^\vee$}.
$$  
Since we can consider either formal or regular functions, 
there are two kinds of canonical pairings. 
In the formal setting we 
should have canonical $\Gamma$-equivariant maps 
\be \la{DCIII}
\begin{split}
&{\Bbb I}_{\cal A}: {\cal A}(\Z^t) \lra  \widehat {\cal O}({\cal X}^\vee), \\
&{\Bbb I}_{\cal X}: {\cal X}(\Z^t) \lra  \widehat {\cal O}({\cal A}^\vee).
\end{split}
\ee
In a quite general setting, pairings (\ref{DCIa}) should produce 
 $\Gamma$-equivariant maps to regular functions: 
\be \la{DCIIIa}
\begin{split}
&{\Bbb I}_{\cal A}: {\cal A}(\Z^t) \lra   {\cal O}({\cal X}^\vee), \\
&{\Bbb I}_{\cal X}: {\cal X}(\Z^t) \lra   {\cal O}({\cal A}^\vee).
\end{split}
\ee
Being composed with the embedding (\ref{phimap}), they produce the maps (\ref{DCIII}). 

The main feature of the maps (\ref{DCIII}) / (\ref{DCIIIa}) is that they should parametrise canonical 
linear bases in the corresponding space of  functions on the target space. 
Below we discuss two 
properties of canonical maps (\ref{DCIII}) / (\ref{DCIIIa}) relevant to our story. 

\paragraph{1. Positive tropical points and cluster algebras 
\cite[Conjecture 4.1, part 2)]{FG2}.}
The first basic property is this. If a tropical point $l\in {\cal X}(\Z^t)$ has non-negative coordinates $(l_1, ..., l_N) \in \Z^N_{\geq 0}$  in a cluster coordinate system 
assigned to a quiver ${\bf q}$, then the function ${\Bbb I}_{\cal X}(l)$ on ${\cal A}^\vee$  is  a monomial in the 
cluster ${\cal A}$-coordinates assigned to the same quiver:
\be \la{PTPCAM}
{\Bbb I}_{\cal X}(l) = \prod_{i\in{\rm I}}A_i^{l_i}.
\ee
By the Laurent Phenomenon theorem  \cite{FZ}, one has 
${\Bbb I}_{\cal X}(l)\in {\cal O}({\cal A}^\vee)$. By the very definition, the functions 
${\Bbb I}_{\cal X}(l)$ generate the cluster algebra related to  ${\cal A}^\vee$.

This immediately implies Proposition \ref{BLDC}.

\paragraph{Proof of Proposition \ref{BLDC}.} 
The cluster transformation of quivers $\sigma^{-1}_1\sigma_2: {\bf q} \to {\bf q}$ 
acts identically on the basic positive laminations. So by (\ref{PTPCAM}), it acts 
as the identity  
on the cluster algebra. It preserves the canonical 2-form on the 
spectrum of cluster algebra:  
$$
\Omega = \sum_{i,j \in {\rm I}} (e_i, e_j)d\log (A_i) \wedge d\log (A_j). 
$$ 
Therefore 
$\sigma^{-1}_1\sigma_2$ preserves the form $(\ast, \ast)$. 
This means that the map $\sigma^{-1}_1\sigma_2: {\bf q} \to {\bf q}$ is an isomorphism of quivers.  
Since it acts as the identity on the cluster coordinates,  it acts as the identity on the set ${\cal A}^\vee(\Z^t)$.  
Therefore, thanks to the formal Dulaity Conjecture,  it acts as the identity on the canonical formal basis on ${\cal X}$. Therefore it is the identity map of ${\cal X}$. 
The claim that the corresponding quantum cluster transformation is also 
the identity follows then by using arguments from \cite{FG4}. Alternatively, one can just use the quantum formal Dulaity Conjecture.

\paragraph{2. The parametrization of canonical bases \cite[Conjecture 4.1, part 1)]{FG2}.} This is the second basic property. 
It  tells how to 
recover the integral tropical point 
$l\in {\cal A}(\Z^t)$ parametrizing a canonical basis vector $F$ on ${\cal X}^\vee$. In the cluster 
coordinate system assigned to a quiver ${\bf q}$, the $F$ is given by 
 a Laurent polynomial / series $F_{\bf q}(X_1, ..., X_n)$. Let us right 
it as 
$$
F_{\bf q}(X_1, ..., X_n) = \prod_{i\in {\rm I}}X_i^{a_i} + \mbox{lower order terms}. 
$$
Then the exponents  $(a_1, ..., a_n)$ are the coordinates of a tropical 
point $l\in {\cal A}(\Z^t)$ in the cluster coordinate system assigned to the quiver ${\bf q}$. 
In other words, the exponents of the upper term of $F_{\bf q}(X_1, ..., X_n)$ change under the 
cluster transformations as the coordinates of an integral tropical point of ${\cal A}$. 
 This way one should get a bijection between the canonical basis 
elements and the set $l\in {\cal A}(\Z^t)$. This is the ``upper'' parametrisation 
of the canonical basis 
on ${\cal X}^\vee$.

\paragraph{The involutions $i_{\cal A}$ and $i_{\cal X}$ \cite[Lemma 3.5]{FG4}.} 
There are isomorphisms of cluster varieties 
\be \la{12.4.15.102}
i_{\cal A}: {\cal A}\lra {\cal A}^\circ, ~~~~
i_{\cal X}: {\cal X}\lra {\cal X}^\circ
\ee
which in any cluster coordinate system act as follows:
\be \la{12.4.15.101}
i^*_{\cal A}: A^\circ_i \lms A_i, ~~~~i^*_{\cal X}: X^\circ_i \lms X_i^{-1}.
\ee
The maps  (\ref{12.4.15.101}) are compatible with mutations, which means that they define
isomorphisms (\ref{12.4.15.102}). They are also compatible 
with the canonical projection 
$p: {\cal A}\lra {\cal X}$. 
By Lemma \ref{Lemma3.8}, the map $i_{\cal X}$ 
is the classical limit of an isomorphism of quantum cluster varieties
\be \la{12.4.15.21}
i_{\cal X}: {\cal X}_q \lra {\cal X}^{\circ}_{q}
\ee
which in any cluster coordinate system is given by an ``antilinear'' isomorphism 
of $\ast$-algebras
\be \la{12.4.15.20}
i^*_{\cal X}: {\cal O}_{q}({\cal X}^{\circ}) \lra  {\cal O}_{q}({\cal X}), 
~~~~i^*_{\cal X}(X^\circ_i) =  X^{-1}_i, ~~i^*_{\cal X}(q) = q^{-1}. 
\ee


\paragraph{The lower parametrization of canonical bases.} 
One can also parametrize canonical basis elements $F$  on ${\cal X}^\vee$ 
by the exponents of the lowest term  by writing 
$$
F_{\bf q}(X_1, ..., X_n) = \prod_{i\in {\rm I}}X_i^{b_i} + \mbox{higher order terms}. 
$$
Namely, assigning to $F$ the exponents $(b_1, ..., b_n)$  one should get a   
well defined integral tropical point of ${\cal A}$. 
This is the ``lower'' parametrization of the canonical basis 
on ${\cal X}^\vee$. 

\bl
The  existence of lower parametrization follows from  
 the existence of the upper. 

\el

\begin{proof} 
The isomorphism $i_{\cal X}$ transforms a canonical basis on ${\cal X}^\vee$ 
to a canonical basis on ${{\cal X}^\vee}^\circ$. 
Evidently, in any cluster coordinate system one has 
$$
\mbox{The lower term of $i^*_{{\cal X}^\vee}(F)$}  = \mbox{The upper term of $F$}.
$$
\end{proof}

The  
duality between  cluster varieties ${\cal A}$ and ${\cal X}^\vee$ is \underline{not compatible} with the 
isomorphisms $i_{\cal A}$ and $i_{{\cal X}^\vee}$. Conjecture \ref{MCDTTR} suggests 
that the {\rm DT}-transformations ${\rm DT}_{\cal X}$ and 
${\rm DT}_{\cal A}$  of  the cluster varieties ${\cal A}$ and  ${\cal X}$ tell the failure of the 
isomorphisms $i_{\cal A}$ and $i_{\cal X}$ to be compatible with the duality. Precisely, set
\be \la{Tildeiso}
{\rm D}_{\cal A}:=  i_{\cal A}\circ {\rm DT}_{{\cal A}}, ~~~
{\rm D}_{\cal X}:= i_{\cal X} \circ {\rm DT}_{\cal X}.
\ee
The duality should intertwine  
${\rm D}_{\cal A}$ with  $i_{{\cal X}^\vee}$, and  
$i_{\cal A}$ with  ${\rm D}_{{\cal X}^\vee}$. So, very schematically, 
we should have diagrams
\be
\begin{array}{cccccccccccc}
{\cal A}&\stackrel{}{\longleftrightarrow}& {\cal X}^\vee 
&&&&&&{\cal A}&\stackrel{}{\longleftrightarrow}& {\cal X}^\vee\\
&&&&&&&&\\
{\rm D}_{\cal A}\downarrow  &&\downarrow i_{{\cal X}^\vee}&&&&&&
i_{\cal A}\downarrow  &&\downarrow {\rm D}_{{\cal X}^\vee}\\
&&&&&&&&\\
{\cal A}^\circ& \stackrel{}{\longleftrightarrow}&{{\cal X}^\vee}^\circ&&&&&&
{\cal A}^\circ& \stackrel{}{\longleftrightarrow}&{{\cal X}^\vee}^\circ
\end{array}
\ee
They become commutative diagrams when one of the columns 
is tropicalised, and the other is replaced by the induced map 
of algebras of functions. 
So there are four commutative diagrams. The horizontal arrows are the canonical maps, 
going in the direction ``from the tropical column''. 
Let us state this precisely. 


\bcon \la{MCDTTR} Let $({\cal A}, {\cal X})$  be a  dual 
pair of cluster varieties satisfying  formal Duality Conjectures. Then

i) There are commutative diagrams
\be \la{COMMD1a}
\begin{array}{cccccccccccc}
{\cal X}(\Z^t)&\stackrel{{\Bbb I}_{\cal X}}{\lra}& \widehat {\cal O}({\cal A}^\vee) 
&&&&&&
{\cal A}(\Z^t)&\stackrel{{\Bbb I}_{\cal A}}{\lra}& \widehat {\cal O}({\cal X}^\vee)\\
&&&&&&&&&&\\
i^t_{\cal X}\downarrow  &&\downarrow {\rm D}^*_{{{\cal A}^\vee}^\circ}&&&&&&
i^t_{\cal A}\downarrow  &&\downarrow {\rm D}^*_{{{\cal X}^\vee}^\circ}\\
&&&&&&&&&&\\
{\cal X}^\circ(\Z^t)& \stackrel{{\Bbb I}_{{\cal X}^\circ}}{\lra}&\widehat {\cal O}({{\cal A}^\vee}^\circ)
&&&&&&
{\cal A}^\circ(\Z^t)& \stackrel{{\Bbb I}_{{\cal A}^\circ}}{\lra}&\widehat {\cal O}({{\cal X}^\vee}^\circ)
\end{array}
\ee

ii) Assume that the 
transformations ${\rm DT}_{\cal A}$ and 
${\rm DT}_{\cal X}$ are  positive rational maps, e.g. cluster transformations, so
 their tropicalisations ${\rm DT}^t_{\cal A}$ and ${\rm DT}^t_{\cal X}$ are defined. Then they have the following properties: 
\begin{itemize}

\item The canonical pairings are ${\rm DT}$-equivariant, that is
\be\la{CANP1a}
\begin{split}
&{\bf I}_{\cal A}: {\cal A}(\Z^t) \times {\cal X}^\vee \lra {\Bbb A}^1, ~~~~
{\bf I}_{\cal A}({\rm DT}_{\cal A}^t(a), {\rm DT}_{{\cal X}^\vee}(x)) = 
{\rm I}_{\cal A}(a, x),\\
&{\bf I}_{{\cal X}^\vee}: {\cal A} \times {\cal X}^\vee(\Z^t)\lra {\Bbb A}^1, ~~~~
{\bf I}_{{\cal X}^\vee}({\rm DT}_{\cal A}(x), {\rm DT}^t_{{\cal X}^\vee}(x)) = 
{\rm I}_{{\cal X}^\vee}(a, x).\\
\end{split}
\ee

\item Recall the maps   (\ref{Tildeiso}).  
Then there are commutative diagrams
\be \la{COMMD1}
\begin{array}{cccccccccccc}
{\cal X}(\Z^t)&\stackrel{{\Bbb I}_{\cal X}}{\lra}& \widehat {\cal O}({\cal A}^\vee) 
&&&&&&
{\cal A}(\Z^t)&\stackrel{{\Bbb I}_{\cal A}}{\lra}& \widehat {\cal O}({\cal X}^\vee)\\
&&&&&&&&&&\\
{\rm D}^t_{\cal X}\downarrow  &&\downarrow i^*_{{{\cal A}^\vee}^\circ}&&&&&&
{\rm D}^t_{\cal A}\downarrow  &&\downarrow i^*_{{{\cal X}^\vee}^\circ}\\
&&&&&&&&&&\\
{\cal X}^\circ(\Z^t)& \stackrel{{\Bbb I}_{{\cal X}^\circ}}{\lra}&\widehat {\cal O}({{\cal A}^\vee}^\circ)
&&&&&&
{\cal A}^\circ(\Z^t)& \stackrel{{\Bbb I}_{{\cal A}^\circ}}{\lra}&\widehat {\cal O}({{\cal X}^\vee}^\circ)
\end{array}
\ee

\end{itemize}
\econ

Few comments are in order.

\begin{enumerate}
\item
The right commutative diagram in (\ref{COMMD1a}) just means that, for any element $F$ of the 
canonical basis,  
the upper parametrization of ${\rm DT}^*_{\cal X}(F)$ = the 
lower parametrization of $F$.

It tells that the canonical basis on ${\cal X}^\vee$ is essentially\footnote{``Essentially'' 
reflect the fact that they are canonical bases on different spaces.}  invariant under the involution 
${\rm D}_{\cal X}$. 
\item The transformation ${\rm DT}_{\cal X}$ satisfies the property  
characterizing  \underline {cluster} DT-transformations: 
\be \la{GSMPDTCL}
{\rm DT}_{{\cal X}}^t(l_i^+)= l_i^-. 
\ee
Indeed, since  $i_{\cal A}^*(A_i) = A_i$, 
we have, using the left  diagram (\ref{COMMD1}):
 $$
{\rm D}_{\cal X}^t(l_i^+) = i_{\cal X}^t \circ {\rm DT}_{{\cal X}}^t(l_i^+)= l_i^+. 
$$
Applying to this the map $i_{\cal X^\circ}^t$, and using $i_{\cal X^\circ}\circ i_{\cal X} = {\rm Id}$ and 
$i_{\cal X}^t(l_i^+) = l_i^-$, we get (\ref{GSMPDTCL}). 

So if the transformation ${\rm DT}_{{\cal X}}$ is cluster, it is  the 
cluster DT-transformation.

\item The {\rm DT}-equivariance  (\ref{CANP1a})  can be stated as follows:
\be \la{EQDTX}
\begin{split}
&{\Bbb I}_{\cal A}({\rm DT}_{\cal A}^t(a)) = {\rm DT}_{{\cal X}^\vee}^*({\Bbb I}_{\cal A}(a)), ~~~
\mbox{\rm i.e.}~~~{\Bbb I}_{\cal A}\circ {\rm DT}_{\cal A}^t = 
{\rm DT}_{{\cal X}^\vee}^*\circ {\Bbb I}_{\cal A}.\\
&{\Bbb I}_{{\cal X}^\vee}({\rm DT}_{{\cal X}^\vee}^t(x)) = {\rm DT}_{\cal A}^*({\Bbb I}_{{\cal X}^\vee}(x)), ~~~
\mbox{\rm i.e.}~~~{\Bbb I}_{{\cal X}^\vee}\circ {\rm DT}_{{\cal X}^\vee}^t = {\rm DT}_{\cal A}^*\circ 
{\Bbb I}_{{\cal X}^\vee}.
\end{split}
\ee

\item The transformation    ${\rm DT}_{\cal A}$ 
is uniquely determined by (\ref{GSMPDTCL}) and (\ref{EQDTX}), since its  action 
on the cluster coordinates is determined by these conditions. 

\item Commutative diagrams (\ref{COMMD1}) plus (\ref{EQDTX}) imply  the commutative diagrams 
(\ref{COMMD1a}). 
Indeed, the  maps $i_{\cal X}$ and $i_{\cal A}$ are involutive, in the sense that 
$$
i_{\cal X^\circ}\circ i_{\cal X} = {\rm Id}_{\cal X}, ~~~~i_{\cal A^\circ}\circ i_{\cal A} = {\rm Id}_{\cal A}.
$$
Diagrams (\ref{COMMD1}) commute, so 
the  maps ${\rm D}^t_{\cal X}$ and ${\rm D}^t_{\cal A}$ are involutive in the same sense. 
Thus  
\be\la{2.8.16.1}
i_{{\cal X}}^t = (i^t_{\cal X^\circ})^{-1} = {\rm DT}^t_{{\cal X}^\circ}\circ i_{\cal X}^t\circ {\rm DT}^t_{\cal X}.
\ee 
Using this, we have:
\be
\begin{split}
&{\Bbb I}_{{{\cal X}}^\circ}\circ i_{{\cal X}}^t \stackrel{(\ref{2.8.16.1})}{=} 
{\Bbb I}_{{\cal X}^\circ}\circ {\rm DT}^t_{{\cal X}^\circ}\circ i_{\cal X}^t\circ {\rm DT}^t_{\cal X} \stackrel{(\ref{EQDTX})}{=} 
{\rm DT}_{{{\cal A}^\vee}^\circ}^*\circ {\Bbb I}_{{\cal X}^\circ} \circ  i_{\cal X}^t\circ {\rm DT}^t_{\cal X} \stackrel{(\ref{Tildeiso})}{=}\\
&{\rm DT}_{{{\cal A}^\vee}^\circ}^*\circ {\Bbb I}_{{\cal X}^\circ} \circ  
{\rm D}_{\cal X}^t \stackrel{(\ref{COMMD1})}{=}
{\rm DT}_{{{\cal A}^\vee}^\circ}^*\circ i^*_{{{\cal A}^\vee}^\circ}\circ {\Bbb I}_{{\cal X}} \stackrel{(\ref{Tildeiso})}{=}
{\rm D}_{{{\cal A}^\vee}^\circ}^*\circ  {\Bbb I}_{{{\cal X}}}. 
\end{split}
\ee
The argument for the second diagram is similar. 

\item Since the  maps $i_{\cal X}$ and $i_{\cal A}$ are involutive 
and diagrams (\ref{COMMD1a}) commute, the maps ${\rm D}_{\cal X}$ and ${\rm D}_{\cal A}$ must be involutive: 
\be \la{Dinv}
{\rm D}_{{\cal X}^\circ} \circ {\rm D}_{\cal X} = {\rm Id}_{\cal X}, ~~~~
{\rm D}_{{\cal A}^\circ}\circ {\rm D}_{\cal A} = {\rm Id}_{\cal A}.
\ee

\item Using ${\rm DT}_{{\cal X}, {\rm cl}}\circ i_{\cal X}$ 
instead of ${\rm D}_{\cal X}$ in the left diagram (\ref{COMMD1}) 
would not make the diagram  commute for basic positive laminations. 
So we have no choice but to use the tropicalised 
$i_{\cal X}\circ {\rm DT}_{\cal X}$ in the left diagram (\ref{COMMD1}) rather than the one 
${\rm DT}_{\cal X}\circ i_{\cal X}$. 

 \end{enumerate}

\paragraph{DT-transformations and Duality Conjectures for the double.} Recall the  cluster variety ${\cal A}_{\rm prin}$, see the end of Section 2.   
The algebra of regular functions ${\cal O}({\cal A}_{\rm prin})$ is the upper cluster algebra with principal coefficients 
\cite{FZIV}. The cluster variety  ${\cal A}_{\rm prin}$ contains the cluster variety ${\cal A}$ and projects onto the cluster Poisson variety ${\cal X}$:
$$
{\cal A} \stackrel{j}{\hra} {\cal A}_{\rm prin} \stackrel{\pi}{\lra} {\cal X}.
$$
There is a canonical involution, compatible with the involutions $i_{\cal A}$ and $i_{\cal X}$ in the obvious way:\footnote{We use the subscript ${\cal P}$ - "principal" - 
for maps related to the cluster variety ${\cal A}_{\rm prin}$, the subscript ${\cal P}^\vee$ for the ${\cal A}^\vee_{\rm prin}$, etc.}
$$
i_{\cal P}: {\cal A}_{\rm prin}   \lra  {\cal A}_{\rm prin}. 
$$

Duality Conjectures  can be casted as  a  duality between   
cluster varieties $ {\cal A}_{\rm prin}$ and $ {\cal A}_{\rm prin}^\vee$. In the "simply laced" case, which we mostly focus on in this paper, 
${\cal A}_{\rm prin}^\vee = {\cal A}_{\rm prin}$. 

In particular, one should have 
canonical $\Gamma$-equivariant pairings 
\be \la{DCIaa}
\begin{split}
&{\bf I}_{\cal P}: {\cal A}_{\rm prin}(\Z^t) \times {\cal A}_{\rm prin}^\vee \lra {\Bbb A}^1.
\end{split}
\ee 
This means that each  $l \in {\cal A}_{\rm prin}(\Z^t)$ give rise to 
functions  
$$
{\Bbb I}_{\cal P}(l):= {\bf I}_{\cal P}(l, \ast) ~~\mbox{on $ {\cal A}_{\rm prin}^\vee$}.
$$  

In the formal setting we 
should have canonical $\Gamma$-equivariant maps 
\be \la{DCIIIaa}
\begin{split}
&{\Bbb I}_{\cal P}:  {\cal A}_{\rm prin}(\Z^t) \lra  \widehat {\cal O}( {\cal A}_{\rm prin}^\vee).
\end{split}
\ee
Under certain assumptions, pairing (\ref{DCIaa}) should produce a 
 $\Gamma$-equivariant map to regular functions, which  should parametrise a canonical 
linear basis in the  space of  functions on the target:
\be \la{DCIIIb}
\begin{split}
&{\Bbb I}_{\cal P}:  {\cal A}_{\rm prin}(\Z^t) \lra   {\cal O}( {\cal A}_{\rm prin}^\vee). 
\end{split}
\ee

The duality   ${\cal A}_{\rm prin} \leftrightarrow{\cal A}_{\rm prin}^\vee$ is {not compatible} with the 
isomorphism $i_{\cal P}$. The {\rm DT}-transformation ${\rm DT}_{\cal P}$  tells the failure of the 
isomorphism $i_{{\cal P}^\vee}$   to be compatible with the duality. Precisely, set
\be \la{Tildeisoa}
{\rm D}_{\cal P}:=  i_{\cal P}\circ {\rm DT}_{\cal P}. 
\ee
Then the duality should intertwine  
${\rm D}_{\cal P}$ with  $i_{{\cal P}^\vee}$. So,  schematically, 
we should have a diagram
\begin{displaymath} \la{CDMS1}
    \xymatrix{
        {\cal A}_{\rm prin} \ar@{<->}[r] \ar[d]_{{\rm D}_{\cal P}} & {\cal A}_{\rm prin}^\vee   \ar[d]^{i_{{\cal P}^\vee}} \\
         {\cal A}_{\rm prin}^\circ  \ar@{<->}[r]      &  {\cal A}_{\rm prin}^{\vee\circ}}
         \end{displaymath}
It becomes a commutative diagram when one of the columns 
is tropicalised, and the other is replaced by the induced map 
of algebras of functions. The horizontal arrows are the canonical maps,  going ``from the tropical column''. 
Let us state this precisely. 

\bcon \la{MCDTTRa} Let ${\cal A}_{\rm prin}$  be a   cluster ${\cal A}$-variety with principal coefficients. Then 

i) The  formal transformation ${\rm D}_{{\cal P}^\vee}$, see (\ref{Tildeisoa}),  
 makes the following diagram commutative:

    \begin{equation}\label{CDMS1}
\begin{gathered}
    \xymatrix{
        {\cal A}_{\rm prin}(\Z^t) \ar[r]^{{\Bbb I}_{{\cal P}}}   \ar[d]_{{i}^t_{\cal P}} & \widehat {\cal O}({\cal A}_{\rm prin}^\vee)   \ar[d]^{{\rm D}_{{\cal P}^\vee}} \\
         {\cal A}_{\rm prin}^\circ(\Z^t)  \ar[r]^{{\Bbb I}_{{\cal P}^\circ}}     & \widehat {\cal O}({{\cal A}_{\rm prin}^{\vee^\circ}}) }
         \end{gathered}
         \end{equation}
         
    ii) Assume  
that  the DT-transformation ${\rm DT}_{\cal P}$ is a positive rational map, so that the tropicalised transformation ${\rm D}^t_{\cal P}$ is defined. Then:
\begin{itemize}

\item The canonical pairing is ${\rm DT}$-equivariant:
\be\la{CANP1aa}
\begin{split}
&{\bf I}_{\cal P}: {\cal A}_{\rm prin}(\Z^t) \times {\cal A}_{\rm prin}^\vee \lra {\Bbb A}^1, ~~~~
{\bf I}_{\cal P}({\rm DT}_{\cal P}^t(x), {\rm DT}_{{\cal P}^\vee}(y)) = 
{\rm I}_{\cal P}(x, y).\\ 
\end{split}
\ee

\item  
There is the second commutative diagram
  \begin{equation}\label{COMMD1aa}
\begin{gathered}
    \xymatrix{
        {\cal A}_{\rm prin}(\Z^t) \ar[r]^{{\Bbb I}_{{\cal P}}}   \ar[d]_{{\rm D}^t_{\cal P}} & \widehat {\cal O}({\cal A}_{\rm prin}^\vee)   \ar[d]^{i^*_{{{\cal P}^\vee}^\circ}} \\
        {\cal A}_{\rm prin}^\circ(\Z^t)  \ar[r]^{{\Bbb I}_{{\cal P}^\circ}}    & \widehat {\cal O}({\cal A}_{\rm prin}^{\vee\circ}) }
         \end{gathered}
         \end{equation}

\end{itemize}
\econ

Few comments are in order.

\begin{enumerate}

\item The {\rm DT}-equivariance  (\ref{CANP1aa})  can be stated as follows:
\be \la{EQDTXa}
\begin{split}
&{\Bbb I}_{\cal P}({\rm DT}_{\cal P}^t(x)) = {\rm DT}_{{\cal P}^\vee}^*({\Bbb I}_{\cal P}(x)), ~~~
\mbox{\rm i.e.}~~~{\Bbb I}_{\cal P}\circ {\rm DT}_{\cal P}^t = 
{\rm DT}_{{\cal P}^\vee}^*\circ {\Bbb I}_{\cal P}.\\ 
\end{split}
\ee

\item The transformation ${\rm DT}_{\cal P}$  
is uniquely determined by (\ref{EQDTXa}), since its  action 
on the canonical  basis is determined by this conditions.

\item Commutative diagram (\ref{COMMD1aa}) plus (\ref{EQDTXa}) imply that the   diagram  (\ref{CDMS1}) is commutative. 
Indeed, the  maps $i_{\cal P}$  are involutive, in the sense that 
$$
i_{\cal D^\circ}\circ i_{\cal P} = {\rm Id}_{\cal P}.
$$
Since diagram (\ref{COMMD1a}) is commutative, 
the  map ${\rm D}^t_{\cal P}$ is involutive in the same sense. 
Thus  
\be\la{2.8.16.1aa}
i_{{\cal P}}^t = (i^t_{\cal P^\circ})^{-1} = {\rm DT}^t_{{\cal P}^\circ}\circ i_{\cal P}^t\circ {\rm DT}^t_{\cal P}.
\ee 
Using this, we have:
\be
\begin{split}
&{\Bbb I}_{{{\cal P}}^\circ}\circ i_{{\cal P}}^t \stackrel{(\ref{2.8.16.1aa})}{=} 
{\Bbb I}_{{\cal P}^\circ}\circ {\rm DT}^t_{{\cal P}^\circ}\circ i_{\cal P}^t\circ {\rm DT}^t_{\cal P} \stackrel{(\ref{EQDTXa})}{=} 
{\rm DT}_{{{\cal P}^\vee}^\circ}^*\circ {\Bbb I}_{{\cal P}^\circ} \circ  i_{\cal X}^t\circ {\rm DT}^t_{\cal P} \stackrel{(\ref{Tildeisoa})}{=}\\
&{\rm DT}_{{{\cal P}^\vee}^\circ}^*\circ {\Bbb I}_{{\cal P}^\circ} \circ  
{\rm D}_{\cal P}^t \stackrel{(\ref{COMMD1a})}{=}
{\rm DT}_{{{\cal P}^\vee}^\circ}^*\circ i^*_{{{\cal P}^\vee}^\circ}\circ {\Bbb I}_{{\cal X}} \stackrel{(\ref{Tildeisoa})}{=}
{\rm D}_{{{\cal P}^\vee}^\circ}^*\circ  {\Bbb I}_{{{\cal P}}}. 
\end{split}
\ee

\item Since the  map $i_{\cal P}$ is involutive 
and diagram (\ref{COMMD1aa}) commute, the map ${\rm D}_{\cal P}$   must be involutive: 
\be \la{Dinv}
{\rm D}_{{\cal P}^\circ} \circ {\rm D}_{\cal P} = {\rm Id}_{\cal P}. 
\ee






\end{enumerate}

\paragraph{Rationality of  ${\rm DT}$-transformation  and regular canonical bases.}
\bl \la{2.9.16.1} Let us assume Conjecture \ref{MCDTTR}i). Then:

i) If the map ${\rm DT}_{\cal X}$ is  cluster, then the formal canonical basis consists of  Laurent polynomials.
 
 ii) The same  is true if ${\rm DT}_{\cal X}$ is a positive rational map, and the canonical pairing ${\bf I}_{\cal A}$ is {\rm DT}-equivariant, see (\ref{CANP1a}). 
   \el
 
 \begin{proof}  i) The duality map is compatible with cluster transformations. 
So if the ${\rm DT}$'s are cluster, then we can compose the vertical maps in the right diagram in (\ref{COMMD1a}) with the inverse of ${\rm DT}_{{\cal X}^{\vee}}$ on the right, and with 
${\rm DT}^t_{\cal A}$ on the left.  
The tropical side becomes  $i^t_{\cal A} \circ {\rm DT}^t_{\cal A}$. 
The function side becomes $i_{{\cal X}^\vee}$. Note that $i^t_{{\cal X}^\vee}$ takes lower order terms to upper order terms. 
So all the canonical basis element are bounded by top terms and bottom terms, i.e., they are all polynomials. 

ii) Same argument using commutativity of diagram (\ref{COMMD1a}) plus {\rm DT}-equivariance (\ref{CANP1a}). 
\end{proof}
 
 \bcon \la{2.9.16.2}
 The map ${\rm DT}_{\cal X}$ (respectively ${\rm DT}_{\cal A}$) is  rational if and only if the formal canonical basis in $\widehat {\cal O}({\cal X})$ 
 (respectively $\widehat {\cal O}({\cal A})$) is regular, i.e. lies  in ${\cal O}({\cal X})$ (respectively ${\cal O}({\cal A})$).  
  \econ
 
 Here are the arguments supporting Conjecture \ref{2.9.16.2}. 
 
 a) If the map ${\rm DT}_{\cal X}$ is a cluster DT-transformation, 
 then by Theorem \ref{Th1.17}, which uses \cite[Theorem 0.10]{GHKK}, we get a canonical basis in ${\cal O}({\cal X})$. 
  
  b) Lemma \ref{2.9.16.1} tells that   if ${\rm DT}_{\cal X}$ is positive rational then, assuming Conjecture \ref{MCDTTR}, 
  there is a canonical basis in ${\cal O}({\cal X})$. 
  
  c) A map  ${\cal A} \to {\cal A}$ is rational if and only if 
  it takes the cluster coordinates $A_i$ to rational functions. So Conjecture \ref{MCDTTR} implies that 
  if  a canonical basis in ${\cal O}({\cal A})$ exists, then the map ${\rm DT}_{\cal A}$ must be rational.

\paragraph{{\rm DT}-transformations 
 and the target vector spaces in Duality Conjectures.} 
Consider  the largest  subalgebras  on which the  powers of the transformation ${\rm DT}$ act:
$$
{\cal O}_{\rm DT}({\cal A}):= \{f \in {\cal O}({\cal A}) ~|~ 
{\rm DT}_{\cal A}^n(f) \in {\cal O}({\cal A}), \forall n \in \Z\}.
$$
$$
{\cal O}_{\rm DT}({\cal X}):= \{f \in {\cal O}({\cal X}) ~|~ 
{\rm DT}_{\cal X}^n(f) \in {\cal O}({\cal X}), \forall n \in \Z\}.
$$

We  can  
state now the enhanced version of Duality Conjectures for  cluster varieties. 

\bcon \la{X-dualityconj} Let $({\cal A}, {\cal X})$ 
be a dual pair of cluster varieties. 
Assume that the DT transformation ${\rm DT}_{\cal A}$ and ${\rm DT}_{\cal X}$ are rational. Then   
there is a  $\Gamma\times {\rm DT}$-equivariant mirror duality between the spaces 
${\cal A}$ and ${\cal X}^\vee$. 
In particular there are canonical $\Gamma\times {\rm DT}$-equivariant isomorphisms
\be \la{canpair1002a}
{\Bbb I}_{\cal A}: \Z[{\cal A}(\Z^t)] \stackrel{\sim}{\lra} {\cal O}_{\rm DT}({\cal X}^\vee), ~~~~
{\Bbb I}_{{\cal X}}: \Z[{\cal X}(\Z^t)] \stackrel{\sim}{\lra} {\cal O}_{\rm DT}({\cal A}^\vee).
\ee
\econ

The very existence of the   ${\rm DT}$-equivariant pairing ${\bf I}_{{\cal X}}$
 implies that the image of the map ${\Bbb I}_{{\cal X}}$
 lies in the subspace ${\cal O}_{\rm DT}({\cal A}^\vee)$.
 Indeed, if ${\Bbb I}_{\cal X}(l) \in {\cal O}({\cal A}^\vee)$, then 
by the ${\rm DT}$-equivariance,  
$$
{\Bbb I}_{\cal X}\Bigl(({\rm DT}^t_{\cal X})^n(l)\Bigr) = {\rm DT}_{{\cal A}^\vee}^n{\Bbb I}_{\cal X}(l),~~ ~~\forall n \in \Z. 
$$
It remains to notice that the left hand side always lies in ${\cal O}({\cal A})$. 

Conjecture \ref{X-dualityconj} is a cluster generalisation of Conjecture \ref{WGAC}. 
We want to stress that the  ${\cal O}_{\rm DT}({\cal A})$ could be smaller then 
 ${\cal O}({\cal A})$. 
Let us elaborate on this. 

One can associate to the moduli space 
${\cal A}_{\G, \bS}$ several {\it a priori} different algebras. One is the algebra ${\cal O}({\cal A}_{\G, \bS})$ of 
regular functions on the moduli space 
${\cal A}_{\G, \bS}$. The other  is the algebra ${\cal O}_{\rm cl}({\cal A}_{\G, \bS})$ 
of regular functions on the corresponding cluster ${\cal A}$-variety, which in this case is  just  
the corresponding upper cluster algebra. The third one is the algebra ${\cal O}_{\rm DT}({\cal A}_{\G, \bS})$.

\bp
i) Assume that a decorated surface $\bS$ has $>1$ punctures. 
Then the algebras ${\cal O}_{\rm DT}({\cal A}_{\G, \bS})$ and  ${\cal O}_{\rm cl}({\cal A}_{\G, \bS})$ 
are smaller then  ${\cal O}({\cal A}_{\G, \bS})$. 

ii) If  $\bS$ has $1$ puncture and no special points, then  ${\cal O}_{\rm DT}({\cal A}_{SL_2, \bS})$ 
is smaller then ${\cal O}_{\rm cl}({\cal A}_{SL_2, \bS})$. 
\ep

\begin{proof} 
i) The Weyl group $W$ acts in this case by cluster transformations. 
Therefore, by the very definition,  its action preserves the algebra ${\cal O}_{\rm cl}({\cal A})$.  
Recall the potential ${\cal W}_p$ at a puncture $p$ introduced in \cite{GS}, see also (\ref{potential}). 
it is a regular function 
on ${\cal A}_{\G, \bS}$. However for any nontrivial $w \in W$, the function $w^*{\cal W}_p$ is not regular 
on ${\cal A}_{\G, \bS}$. So we conclude that
$$
{\cal W}_p \in {\cal O}({\cal A}_{\G, \bS}), ~~~~ {\cal W}_p \not \in {\cal O}_{\rm cl}({\cal A}_{\G, \bS}), 
~~~~ {\cal W}_p \not \in {\cal O}_{\rm DT}({\cal A}_{\G, \bS}).
$$
The same is true for any partial potential ${\cal W}_{p, \alpha}$.

ii) 
Indeed, ${\cal W}_p \not \in {\cal O}_{\rm DT}({\cal A}_{SL_2, \bS})$, but 
${\cal W}_p \in {\cal O}_{\rm cl}({\cal A}_{SL_2, \bS})$. This also tells that 
the Weyl group action on ${\cal A}_{SL_2, \bS}$ is not cluster. 
\end{proof} 


 \section{Properties of cluster DT-transformations}
 
 In Section \ref{sssec2.4} we discuss a  special presentation of
  quantum cluster transformations provided by the sign-coherence of the C-matix.  
In Section \ref{S2Example} we elaborate the cluster DT-transformation and 
the quantum canonical basis for the cluster ${\cal X}$-variety of type ${\rm A}_2$, demonstrating their compatibility. 

In Section \ref{sQDT} we show that the unitary operator quantizing
the  cluster {\rm DT}-transformations leads to a $\Gamma$-equivariant bilinear form 
on the Hilbert space assigned to a cluster Poisson variety. 

In Section \ref{sec10} we prove some results on  cluster 
${\rm DT}$-transformations stated in Section \ref{SSec1.2}. 

\subsection{Sign-coherence and cluster transformations} \la{sssec2.4}

\bd Let $\sigma: {\bf q } \to {\bf q}'$ be a cluster transformation. 
The matrix  $C_\sigma$ is a matrix whose $j$-th column $(c_{1j}, c_{2j}, ..., c_{nj})^T$ is given by the 
coordinates of $l_{{\bf q}, j}^+$ in 
the coordinate system ${\bf c}_{{\bf q}'}$.
\ed
By Proposition \ref{9:14:04:1}, the matrix $C_\sigma$ coincides with 
the Fomin-Zelevinsky $C$-matrix \cite{FZIV}. 

It is easy to show that ${\rm det}(C_{\sigma})=\pm 1$. Therefore $C_{\sigma}$ is invertible. 
The matrix $C_\sigma$  has many other nice properties. 
The most important  one is the sign-coherence.

\paragraph{Sign-coherence.} 
Let ${c}_i=(c_{i1},\ldots, c_{in})$ be the $i$-th row vector of $C_{\sigma}$. 
\bt[\rm \cite{DWZ2}] \la{DWZ2} 
The entries of ${c}_i$ are either all non-negative, or all non-positive. 
\et
This allows to introduce the sign ${\rm sgn}(c_i)\in \{\pm 1\}$ as the sign of the entries of the row $c_i$. 

Let $\sigma: {\bf q} \to {\bf q}'$ be a cluster transformation. Let $\varepsilon_{{\bf q}'}=(\varepsilon_{ik}')$. Let $\mu_k: {\bf q}'\ra {\bf q}''$ be a mutation.
\bl  \la{lem.c.matrix1} 
 The $i$-th row vector ${c'_i}$ of $C_{\mu_k\circ \sigma}$ is
\be \la{lem.c.matrix00} 
{c_{i}'}:=\left\{ \begin{array}{ll}
-{c_{k}} &\mbox{ if $i=k$},\\
{c_{i}}+[{\rm sgn}({c}_k)\varepsilon_{ik}']_+ {c}_k& \mbox{ if $i\neq k$}.\\
   \end{array}
   \right.
\ee 
\el
\begin{proof}   Let $C_{\mu_k\circ \sigma}=(c_{ij}')$. By definition, the  $c_{ij}'$ are given by the tropicalization formula \eqref{5.11.03.1xtr}:
\be \la{lem.c.matrix00a}
c_{ij}':=\left\{ \begin{array}{ll}
-c_{kj} &\mbox{ if $i=k$},\\
c_{ij}-\varepsilon_{ik}'\min\{0, -{\rm sgn}(\varepsilon_{ik}')c_{kj}\}={c_{ij}}+[{\rm sgn}({c}_{kj})\varepsilon_{ik}']_+ {c}_{kj} & \mbox{ if $i\neq k$}.\\
   \end{array}
   \right.
\ee
The Lemma follows immediately.
\end{proof}

Let $\sigma: {\bf q} \to {\bf q}'$ 
be a  cluster transformation,
presented as a 
sequence of mutations followed by a  permutation:
\be \la{CLTR}
\sigma: {\bf q}= {\bf q}_0 \stackrel{k_1}{\lra} {\bf q}_{1} 
\stackrel{k_2}{\lra} \ldots \stackrel{k_{m}}{\lra} {\bf q}_{m} \stackrel{\pi}{\lra} {\bf q'}.
\ee
Following  \eqref{lem.c.matrix00}, we get a sequence of $C$-matrices
\be
{\rm Id}= C_0\stackrel{{k_1}}{\lra} C_1 \stackrel{{k_2}}{\lra} \ldots \stackrel{{k_{m}}}{\lra} {C}_{m} \stackrel{\pi}{\lra} {C_\sigma}.
\ee
Denote by $c_{i}^{(s)}$ the $i$-th row vector of $C_s$.
\bd
The canonical sequence $\varepsilon=(\varepsilon_1, \ldots, \varepsilon_{m})$ of signs 
for the cluster transformation (\ref{CLTR}) is given by 
$\varepsilon_{s}:= {\rm sgn}(c_{k_s}^{(s-1)})$.
\ed

Recall the half reflections $\mu_k$ and  $\mu_k^-$ of basis defined by \eqref{12.12.04.2a} and  
\eqref{12.12.04.2ab} respectively. 

The canonical sequence $\varepsilon=(\varepsilon_1, \ldots, \varepsilon_{m})$ of signs gives rise to 
the following composition of bases mutations and permutation assigned to the
 cluster transformation (\ref{CLTR}):
\[
\sigma^{(\varepsilon)}:= \pi\circ \mu_{k_{m}}^{\varepsilon_{m}} \circ \ldots \circ \mu_{k_1}^{ \varepsilon_1}.
\]
Let $\{e_i\}$ be a basis of $\Lambda$  
 defining the quiver ${\bf q}$. Then  $\sigma^{(\varepsilon)}$ transforms it 
to a new basis $\{e_i'\}$ of $\Lambda$, defining a quiver isomorphic to ${\bf q}'$:
\be \la{SEQV}
\{e_i\} = \{e^{(0)}_i\} \stackrel{\mu_{k_1}^{ \varepsilon_1}}{\lra} 
\{e^{(1)}_i\} \stackrel{\mu_{k_2}^{ \varepsilon_2}}{\lra} \ldots 
\stackrel{\mu_{k_{m}}^{ \varepsilon_{m}}}{\lra} \{e^{(m)}_i\} \stackrel{\pi}{\lra} \{e'_i\}
\ee
\bl \la{CMATL}
The matrix $C_\sigma= (c_{ij})$ expresses the vectors $\{e'_i\}$ via   $\{e_j\}$:
\be \la{cmatrixbasis1}
e'_i = \sum_{j \in { I}}c_{ij}e_j, \hskip 7mm \forall i\in {\rm I}.           
\ee
\el
\begin{proof} Since $\varepsilon_s = {\rm sgn}(c_{k_s}^{(s-1)})$, 
the half reflection $\mu_{k_s}^{\varepsilon_s}$ coincides with the transformation \eqref{lem.c.matrix00a}. 
\end{proof}

\bc \la{cmatrixbasis22}
The skew-symmetric matrix $\varepsilon_{{\bf q}'}$ of ${\bf q}'$ is 
given by $\varepsilon_{{\bf q}'} = C_\sigma \varepsilon_{{\bf q}} C_{\sigma}^T$.
\ec
\begin{proof} It follows directly from \eqref{cmatrixbasis1} and the definition of $\varepsilon_{\bf q}$.
\end{proof}
Corollary \ref{cmatrixbasis22} asserts that $C_{\sigma}$ determines the isomorphism class of ${\bf q}'$. In particular, if $\sigma$ is a cluster DT-transformation ($i.e., C_{\sigma}=-{\rm Id}$), then ${\bf q'}={\bf q}$.

\paragraph{A sign-coherent presentation of cluster transformations.} 
 Recall the vectors 
$f_s^\varepsilon$ and the quantum cluster transformation ${\Phi}({\bf i})$, 
see  (\ref{fvect1}) and \eqref{sepf1}:
\be \la{sepf2}
\Phi({\bf i}) = {\rm Ad}_{{\bf \Psi}(X_{f^{\varepsilon}_1})^{\varepsilon_1}}\circ 
\ldots \circ {\rm Ad}_{{\bf \Psi}(X_{f^{\varepsilon}_{m}})^{\varepsilon_{m}}}\circ 
i^\varepsilon({\bf i}).
\ee
We consider the positive cone generated by the basis $\{e_i\}$
\[
\Lambda^+:=\oplus_{i\in{\rm I}}~ {\Bbb Z}_{\geq 0} e_{i}.
\]
The sign-coherence  property of $C$-matrices from Theorem \ref{DWZ2} is equivalent to 
the following.
\bt
\la{sign.seq.hh} Given a quiver cluster transformation $\sigma: {\bf q} \to {\bf q}'$, 
for the canonical sequence  $\varepsilon$  of signs we have 
\be \la{SCPCT}
f_s^\varepsilon\in \Lambda^+, \hskip 7mm \forall s \in \{1,\ldots, m\}.
\ee 
This is the unique sequence of signs for which (\ref{SCPCT}) holds. 
\et

\begin{proof} By the definition (\ref{fvect1}) of the vectors $f_s^\varepsilon$, 
they  are exactly the vectors $\varepsilon_s e_{k_s}^{(s-1)}$ in 
(\ref{SEQV}). 
\end{proof}

Recall that $\widehat{\bf T}_{\bf q}$ is the algebra of $q$-commutating power series 
in the basis $\{X_{e_i}\}$. 
Since  for the canonical sequence of signs $\varepsilon$ 
each of the $X_{f^{\varepsilon}_{s}}$ is a monomial with non-negative exponents 
in this basis, it make sense to consider 
a product of formal power series:
\be
\la{formal.power.series}
{\bf \Psi}({\bf i}):={\bf \Psi}(X_{f^{\varepsilon}_1})^{\varepsilon_1}
\ldots {\bf \Psi}(X_{f^{\varepsilon}_{m}})^{\varepsilon_{m}}  \in \widehat{\bf T}_{\bf q}.
\ee

Then the quantum cluster transformation \eqref{sepf1} can be written as 
\be
\la{sepf2}
\Phi({\bf i}) ={\rm Ad}_{{\bf \Psi}({\bf i})} \circ i^\varepsilon({\bf i}).
\ee

\paragraph{Decompositions of cluster transformations.}
One can exchange permutations $\pi$ and quiver mutations:
$\pi \circ \mu_{k}=\mu_{\pi(k)} \circ \pi$. 
Therefore every quiver transformation $\sigma$ can be decomposed as 
\[
\sigma=  \pi_\sigma\circ {\bf i}_\sigma,
\]
where ${\bf i}_\sigma$ is a sequence of cluster mutations and $\pi_\sigma$ is a permutation. By \eqref{sepf2}, we can 
decompose the quantum cluster transformation $\Phi(\sigma)$ into two parts
\be
{\Phi}(\sigma):={\rm Ad}_{{\bf \Psi}({\bf i}_{\sigma})} \circ \Sigma_{\sigma}, \hskip 7mm \Sigma_{\sigma}:=i^\varepsilon({\bf i}_\sigma) \circ \Phi(\pi_\sigma)
\ee 


By Lemma \ref{CMATL}, the  $\Sigma_{\sigma}$ corresponds to the change of basis,  also 
encoded by the ${C}$-matrix.

\bt[\cite{K13}]
\la{sepe.thm.hh}
Let $\sigma_1, \sigma_2$ be two cluster transformations starting from the same quiver. 
\begin{enumerate}
\item The following  are equivalent
\be
\Phi(\sigma_1)=\Phi(\sigma_2) ~~~\Longleftrightarrow~~~ \Sigma_{\sigma_1} = \Sigma_{\sigma_2} ~~~\Longleftrightarrow~~~C_{\sigma_1}=C_{\sigma_2}.
\ee
\item If $\sigma_1\circ \sigma_2^{-1}$ is a  permutation, then the corresponding formal power series are the same
\be
\la{dilogper}
{\bf \Psi}({\bf i}_{\sigma_1})={\bf \Psi}({\bf i}_{\sigma_2})  
\ee
\end{enumerate}
\et

\begin{remark}
The first part of Theorem \ref{sepe.thm.hh} is a reformulation of 
Theorem \ref{basiclam}.  It asserts  that the ${C}$-matrices determine cluster transformations. 
The second part asserts that every cluster transformation $\sigma:{\bf q}\to {\bf q'}$ canonically determines a formal power series
\be {\bf \Psi}_{\sigma}=1+\mbox{higher order terms} ~~~~~ \in \widehat{\bf T}_{\bf q}
\ee
which does not depend on the decomposition of $\sigma$.

As an application, the $q\to 1$ limit of ${\rm Ad}_{{\bf \Psi}_{\sigma}}$ gives rise to  the ${\rm F}$-polynomials from \cite{FZIV}. Precisely, recall the
 double of the quantum torus algebra ${\bf T}_{\bf q}$ obtained by adding new generators $\{A_{e_i}\}_{i\in {\rm I}}$ satisfying 
\[
A_{e_i} A_{e_j} = A_{e_j} A_{e_i}; \hskip 7mm  X_{e_i} A_{e_j}  = \left\{ \begin{array}{ll}
q^{2} A_{e_j} X_{e_i} &\mbox{ if $i=j$},\\
A_{e_j} X_{e_i} & \mbox{ otherwise}.\\
   \end{array}
   \right.
\]
Let $
[{\bf \Psi}_{\sigma} , A_i]:={\bf \Psi}_{\sigma} A_i {\bf \Psi}_{\sigma} ^{-1} A_i^{-1}\in  \widehat{\bf T}_{\bf q}$.  
Then the  ${\rm F}$-polynomials associated to $\sigma$ are
\[
F_i = \lim_{q\to 1} [{\bf \Psi}_{\sigma} , A_i], \hskip 7mm i\in {\rm I}.
\]
\end{remark}

Now we can state the following crucial result.
 \bt[{\cite[Th.6.5]{K12}}] \la{dilogper22} If ${\bf K}$ is a cluster DT-transformation, then the formal power series ${\bf \Psi}_{\bf K}$ is the quantum Donaldson-Thomas series \eqref{refine.dt.invariant}.  
\et

\subsection{An example: quantum cluster variety for the quiver of type ${\rm A}_2$} \la{S2Example}
Let ${\bf q}= \big(\Lambda, \{e_1, e_2\}, (\ast, \ast)\big)$ be a rank 2 quiver with $(e_1, e_2)=1$. Then 
\[
X_{e_1}X_{e_2}= q X_{e_1+e_2}= q^{2} X_{e_2} X_{e_1}.
\]
\paragraph{DT-transformation and the quantum pentagon relation.} 
Let $\sigma_1 =\mu_2\circ \mu_1$. The canonical sequence of signs for $\sigma_1$ is  $\varepsilon=\{1, 1\}$. 
The $\Sigma_{\sigma_1}$ is determined by a change of basis:
\be
\{e_1, e_2\}\stackrel{\mu_1} {\lra}  \{-e_1, e_2\} \stackrel{\mu_2}{\lra} \{-e_1, -e_2\}. 
\ee
We have
\[
f_1^\varepsilon = e_1,   \quad f_2^\varepsilon=e_2.
\]
Therefore
\[
{\bf \Psi}({\bf i}_{\sigma_1})= {\bf \Psi}(X_{e_1}){\bf \Psi}(X_{e_2}).
\]

Let $\sigma_2 =\pi_{12}\circ \mu_2\circ \mu_1\circ \mu_2$, where $\pi_{12}$ is the permutation exchanging $1$ and $2$. 
The canonical sequence of signs for $\sigma_2$ is  $\varepsilon=\{1, 1,1\}$. The $\Sigma_{\sigma_2}$ is determined by a change of basis:
\be
\{e_1, e_2\}\stackrel{\mu_2} {\lra}  \{e_1+e_2, -e_2\} \stackrel{\mu_1}{\lra} \{-e_1-e_2, e_1\}\stackrel{\mu_2}{\lra}\{-e_2, -e_1\}\stackrel{\pi_{12}}{\lra}\{-e_1, -e_2\}. 
\ee
We have
\[
f_1^\varepsilon = e_2,   \quad f_2^\varepsilon=e_1+e_2, \quad f_3^\varepsilon=e_1.
\]
Therefore
\[
{\bf \Psi}({\bf i}_{\sigma_2})= {\bf \Psi}(X_{e_2}){\bf \Psi}(X_{e_1+e_2}){\bf \Psi}(X_{e_1}).
\]

Note that $\Sigma_{\sigma_1}=\Sigma_{\sigma_2}$. Therefore $\sigma_1$ is equivalent to $\sigma_2$. In this case, the identity \eqref{dilogper} gives rise to the Faddev-Kashaev \emph{pentagon relation} of quantum dilogarithms
\be\la{DTquantumseries.a2}
{\bf \Psi}(X_{e_1}){\bf \Psi}(X_{e_2})={\bf \Psi}(X_{e_2}){\bf \Psi}(X_{e_1+e_2}){\bf \Psi}(X_{e_1}).
\ee

By Definition \ref{Donaldson.Thomas.transformation},  ${\sigma}_1=\sigma_2$ is  the cluster DT-transformation for ${\bf q}$. Formula \eqref{DTquantumseries.a2} factorizes the quantum DT-series ${\Bbb E}_{\bf q}$ in two different ways.

\paragraph{The canonical basis and {\rm DT}-transformation.}
There are 5 basic polynomials
\[
P_1=X_{e_1}, \hskip5mm P_2=X_{-e_2}, \hskip5mm P_3=X_{-e_1-e_2}+ X_{-e_1},\hskip 5mm
P_4=X_{-e_1}+X_{e_2-e_1}+X_{e_2}, \hskip 5mm P_5=X_{e_2}+X_{e_1+e_2}
\]
satisfying 
\[P_{i+2} P_{i}= 1+q P_{i+1}, \hskip 7mm \mbox{$i\in \Z/5\Z$.}
\]
The polynomials
\[
q^{-cd}P_{i+1}^cP_{i}^d, \hskip 7mm c, d\in \Z_{\geq 0}, ~~i\in \Z/5\Z
\]
give rise to a linear basis of ${\cal O}_q({\cal X}_{\bf q})$ parametrized by ${\cal A}_{\bf q}(\Z^t)$
\[ {\Bbb I}_{\cal A}(a,b) = \left\{ \begin{array}{ll}
         q^{ab} P_{2}^{-b}P_1^{a}& \mbox{if $a \geq 0$, $b\leq 0$};\\
         q^{-ab} P_1^a P_5^b & \mbox{if $a\geq0$, $b\geq 0$};\\
        q^{ab} P_5^b P_4^{-a}  & \mbox{if $a\leq0$, $b\geq 0$};\\
        q^{(b-a)b}P_4^{b-a}P_3^{-b} & \mbox{if $a\leq b\leq0$};\\
         q^{a(a-b)}P_3^{-a}P_2^{a-b} & \mbox{if $b\leq a\leq0$}.\\
                \end{array} \right. \] 
 The basis $\{{\Bbb I}_{\cal A}(a,b)\}$ admit the following properties:
 \begin{enumerate}
 \item${\Bbb I}_{\cal A}(a,b)= X_{ae_1+be_2}+ \mbox{higher order terms.}$
 \item ${\Bbb I}_{\cal A}(a,b)$ is selfdual for the involutive anti-automorphism $\ast$ such that 
$\ast X_v = X_v, ~\ast q = q^{-1}$. 
 
 \item $\{{\Bbb I}_{\cal A}(a,b)\}$ is compatible with cluster mutations.
 \item The {\rm DT}-transformation is a $\Z[q, q^{-1}]$-linear isomorphism preserving the basis
 \be
 {\rm DT}:  {\cal O}_q({\cal X}_{\bf q})\stackrel{\sim}{\lra} {\cal O}_q({\cal X}_{\bf q}), \hskip 7mm P_{i}\lms P_{i+3}.
 \ee
 \end{enumerate}
 
 \paragraph{The map ${\rm D}_{\cal X}$.} Consider the opposite quiver $-{\bf q}= \big(\Lambda, \{e_1, e_2\}, (\ast, \ast)_\circ\big)$ such that $(e_1, e_2)_{\circ}=-1$. Its associated
quantum torus ${\bf T}_{-{\bf q}}$ has generators $X_{v}^\circ, ~v\in \Lambda$ and relations
 \[
 X_v^\circ X_w^\circ = q ^{(v, w)_\circ} X_{v+w}^\circ.
 \]
There are 5 basic polynomials
 \[
 Q_1= X_{e_1}^\circ+ X_{e_1+e_2}^\circ, \hskip5mm Q_2=X_{-e_2}^\circ+X_{e_1-e_2}^\circ+X_{e_1}^\circ, \hskip5mm Q_3=X_{-e_1-e_2}^\circ+X_{-e_2}^\circ, \hskip5mm Q_4= X_{-e_1}^\circ, \hskip5mm Q_5=X_{e_2}^\circ.
 \]
 satisfying 
\[Q_{i+2} Q_{i}= 1+q^{-1} Q_{i+1}, \hskip 7mm \mbox{$i\in \Z/5\Z$.}
\]
Similarly the polynomials
\[
q^{cd} Q_{i+1}^cQ_{i}^d, \hskip 7mm c,d \in \Z_{\geq 0}, ~~i\in \Z/5
\]
provide a linear basis of ${\cal O}_q({\cal X}_{{\bf q}^\circ})$ 
\[ {\Bbb I}_{{\cal A}^\circ}(a,b) = \left\{ \begin{array}{ll}
          q^{-ab}Q_2^{-b}Q_{1}^a   & \mbox{if $a \geq 0$, $b\leq 0$};\\
        q^{ab}Q_1^a Q_5^b & \mbox{if $a\geq0$, $b\geq 0$};\\
        q^{-ab}Q_5^b Q_4^{-a}  & \mbox{if $a\leq0$, $b\geq 0$};\\
        q^{(a-b)b}Q_4^{b-a}Q_3^{-b} & \mbox{if $a\leq b\leq0$};\\
         q^{a(b-a)}Q_3^{-a}Q_2^{a-b} & \mbox{if $b\leq a\leq0$}.
        \end{array} \right. \] 
Here $\{{\Bbb I}_{{\cal A}^\circ}(a,b) \}$ satisfy the same properties as $\{{\Bbb I}_{{\cal A}}(a,b) \}$.       

 There is an natural isomorphism
 \[
 i: ~ {\bf T}_{-{\bf q}} \stackrel{\sim}{\lra}  {\bf T}_{{\bf q}}, \hskip 12mm X_v^\circ \lms X_{-v},~~q \lms q^{-1}.
 \]
The map ${\rm D}_{\cal X}$ is an isomorphism
\be
{\rm D}_{\cal X}:={\rm DT}_{\bf q}\circ i:~ {\cal O}_{q} ({\cal X}_{-{\bf q}}) \stackrel{\sim}{\lra} {\cal O}_{q}({\cal X}_{{\bf q}}), \hskip 12mm
{\Bbb I}_{{\cal A}^\circ}(a,b) \lms {\Bbb I}_{{\cal A}}(a,b), \hskip 7mm  q\lms q^{-1}.
\ee
    
\subsection{Canonical bilinear form on $\ast$-representations of quantum cluster  varieties}\la{sQDT}

Let ${\rm H}_{\cal X}$ be a split torus 
with the group of characters given by the kernel of the form 
$(\ast, \ast)$ on the lattice $\Lambda$. Then, see \cite[Section 2.2]{FG2},  
the cluster Poisson variety ${\cal X}$ is fibered over the ${\rm H}_{\cal X}$:
$$
\theta: {\cal X} \lra {\rm H}_{\cal X}.
$$
The subalgebra of functions $\theta^*{\cal O}({\rm H}_{\cal X})$ is the center of the Poisson algebra 
${\cal O}({\cal X})$. 

There is a $q$-deformation of the Poisson algebra  ${\cal O}({\cal X})$ is  
given by the $\ast$-algebra ${\cal O}_q({\cal X})$. 

The center of ${\cal O}_q({\cal X})$ is canonically identified 
with the algebra ${\cal O}({\rm H}_{\cal X})$ \cite[Section 3.4.1]{FG2}. 

A cluster Poisson variety ${\cal X}$ gives rise to a 
Hilbert space ${\cal H}_{\cal X}$ with the scalar product $\langle \ast, \ast\rangle$ together with 
the following quantisation data \cite{FG4}: 

\begin{itemize}

\item A unitary projective 
action of the cluster modular group $\Gamma$ in the Hilbert space 
${\cal H}_{\cal X}$. 

\item A $\Gamma$-equivariant $\ast$-representation of the $\ast$-algebra ${\cal O}_q({\cal X})$ in 
the Hilbert space ${\cal H}_{\cal X}$. 

\item A decomposition of the unitary representation ${\cal H}_{\cal X}$ 
of the cluster modular group $\Gamma$ into an integral of Hilbert spaces 
${\cal H}_{{\cal X}, \lambda}$, parametrised by the real positive points 
$\lambda \in {\rm H}_{\cal X}(\R_{+}^*)$: 
\be
{\cal H}_{\cal X} = \int_{\lambda \in {\rm H}_{\cal X}(\R_{+}^*)} {\cal H}_{{\cal X}, \lambda}d\lambda.
\ee
A point $\lambda \in {\rm H}_{\cal X}(\R_{+}^*)$ gives rise to a  
character $C_\lambda$ of the center of 
${\cal O}_q({\cal X}) $, 
given by evaluation of polynomials $P \in {\cal O}({\rm H}_{\cal X})$ on 
$\lambda$. 
The center acts on the 
${\cal H}_{{\cal X}, \lambda}$ by the character $C_\lambda$. 

\item Any cluster transformation ${\rm C}$ of ${\cal X}$ gives rise to a unitary 
operator
\be\la{12.4.15.10}
\widehat {\rm C}: {\cal H}_{{\cal X}, \lambda} \lra {\cal H}_{{\cal X}, \lambda}. 
\ee
One has $\widehat {{\rm C}_1\circ {\rm C}_2} = \mu \cdot \widehat {{\rm C}_1} \circ \widehat {{\rm C}_2}$, 
where $\mu \in \C^*, |\mu |=1$. 
\end{itemize}

In this section we establish one more feature of this picture: 

\bt Assume that the Donaldson-Thomas transformation for 
a cluster Poisson variety ${\cal X}$ is a cluster transformation, denoted by ${\rm DT}_{\cal X}$. Then there is  
\begin{itemize}

\item A $\Gamma$-equivariant non-degenerate 
bilinear form, symmetric up to a unitary scalar $\mu_{\cal X}\in {\rm U}(1)$:
\be
{\rm B}_{\cal X}: {\cal H}_{\cal X} \otimes {\cal H}_{\cal X^\circ} \lra \C, 
~~~~{\rm B}_{\cal X}(v, w) = \mu_{\cal X}\cdot \overline {{\rm B}_{\cal X^\circ}(w, v)}, ~~
|\mu_{\cal X}|=1.
\ee
It provides a non-degenerate pairings 
\be
{\rm B}_{{\cal X}, \lambda}: {\cal H}_{{\cal X}, \lambda} \otimes {\cal H}_{{\cal X^\circ}, -\lambda} \lra \C. 
\ee
\end{itemize}
\et

\begin{proof} According to (\ref{12.4.15.10}), the cluster ${\rm DT}$-transformation ${\rm DT}_{\cal X}$ 
gives rise to a unitary  operator 
\be \la{12.4.15.123}
\widehat {\rm DT}_{\cal X}: {\cal H}_{\cal X} \lra {\cal H}_{\cal X}.
\ee

Since the cluster transformation ${\rm DT}_{\cal X}$ is in the center of the cluster 
modular group, the operator $\widehat {\rm DT}_{\cal X}$ commutes with the action of the cluster modular 
group $\Gamma$. 

Recall the  isomorphism of quantum spaces from Lemma \ref{Lemma3.8}:
\be \la{12.4.15.21q}
\begin{split}
&i_{\cal X}: {\cal X}_q \lra {\cal X}^{\circ}_{q}, ~~~~
i^*_{\cal X}: {\cal O}_{q}({\cal X}^{\circ}) \lra  {\cal O}_{q}({\cal X}), \\
&i^*_{\cal X}(X^\circ_i) =  X^{-1}_i, ~~i^*_{\cal X}(q) = q^{-1}. 
\end{split}
\ee

Let $\overline H$ be  the complex conjugate of a complex vector space $H$. 
It is the same real vector space
 with a new complex structure given by $c\circ v:= \overline c v$, $c\in \C$, $v \in H$.

Let $\{X'_i\}$ be cluster coordinates in ${\cal O}_{q^{-1}}({\cal X})$, and 
$\{X_i\}$ the ones in ${\cal O}_q({\cal X})$.  
Given a representation $\rho$ of the $\ast$-algebra 
${\cal O}_q({\cal X})$ in a Hilbert space ${\cal H}_{\cal X}$ 
we get a new representation $\overline \rho$ of the $\ast$-algebra ${\cal O}_{\overline q}({\cal X})$ in 
$\overline {\cal H}_{\cal X}$ by setting
$$
\overline \rho(X_i'):= \rho(X_i), ~~~~ X'_i \in {\cal O}_{\overline q}({\cal X}), ~~X_i \in 
{\cal O}_{\overline q}({\cal X}). 
$$
Indeed, we have 
$$
\overline \rho(X'_aX'_b - \overline q^{(a,b)}X'_{a+b}) = \rho(X_a)\rho(X_b) - q^{(a,b)}\rho(X_{a+b})=0.
$$

This construction is compatible with  interwiners between the 
Hilbert spaces assigned to quivers, lifting  cluster transformations. 
In particular the assignment ${\cal H}_{\cal X} \to \overline {\cal H}_{\cal X}$ 
is $\Gamma$-equivariant. 

Assume now that $|q|=1$. Then the $\Gamma$-equivariant representation $\rho$ of the $\ast$-algebra 
${\cal O}_q({\cal X^{\circ}})$ in the Hilbert space ${\cal H}_{\cal X^{\circ}}$ 
gives rise to  a $\Gamma$-equivariant 
representation $\overline \rho$ of the $\ast$-algebra ${\cal O}_{q^{-1}}({\cal X^{\circ}})$ in 
$\overline {\cal H}_{\cal X^{\circ}}$. 
Applying the isomorphism  
$i_{\cal X}^*$ we get a $\Gamma$-equivariant unitary isomorphism
\be \la{12.4.15.12}
\widehat i_{\cal X}: 
{\cal H}_{\cal X} \lra \overline {\cal H}_{\cal X^\circ}. 
\ee


The composition of the operators (\ref{12.4.15.123}) and (\ref{12.4.15.12}) is a $\Gamma$-equivariant 
unitary operator
$$
\widehat {\rm D}_{\cal X}:= \widehat i_{\cal X}\circ \widehat {\rm DT}_{\cal X}: ~~
{\cal H}_{\cal X} {\lra} \overline {\cal H}_{\cal X^\circ}. 
$$
Therefore we get a complex bilinear $\Gamma$-equivariant non-degenerate form
\be
{\rm B}_{\cal X}: {\cal H}_{\cal X} \otimes {\cal H}_{\cal X^\circ} \lra \C,~~~~
{\rm B}_{\cal X}(v, w):= 
\langle v, \widehat  {\rm D}_{\cal X}(w)\rangle.
\ee

Since ${\rm D}_{\cal X^\circ} \circ {\rm D}_{\cal X} = {\rm Id}_{\cal X}$ is the identity map, 
the composition of the unitary operators 
$\widehat {\rm D}_{\cal X}$ and $\widehat {\rm D}_{\cal X^\circ}$ 
is the identity map up to a unitary constant $\mu_{\cal X}\in {\rm U}(1)$:
\be \la{12.4.15.1}
\widehat {\rm D}_{\cal X^\circ} \circ \widehat {\rm D}_{\cal X}: 
{\cal H}_{\cal X} \lra  {\cal H}_{\cal X}, ~~~~
\widehat {\rm D}_{\cal X^\circ} \circ \widehat {\rm D}_{\cal X} = \mu_{\cal X}\cdot {\rm Id}, ~~
|\mu_{\cal X}|=1.
\ee
Therefore
\be
\begin{split}
&{\rm B}_{\cal X}(v, w) = \langle v,  \widehat  {\rm D}_{\cal X}(w)\rangle = 
\langle \widehat {\rm D}_{\cal X^\circ} (v),  \widehat {\rm D}_{\cal X^\circ} 
\circ \widehat {\rm D}_{\cal X}(w)\rangle \stackrel{(\ref{12.4.15.1})}{=} \\
&\mu_{\cal X}\cdot \langle \widehat {\rm D}_{\cal X^\circ} (v),  w\rangle = 
\mu_{\cal X}\cdot \overline {\langle w,  \widehat {\rm D}_{\cal X^\circ} (v)\rangle} = \mu_{\cal X}\cdot \overline {{\rm B}_{\cal X^\circ}(w, v)}.\\
\end{split}
\ee
\end{proof}

\paragraph{Applications.} Given an oriented decorated surface $\bS$, 
we denote by $\bS^\circ$ 
the decorated surface $\bS$ with the opposite orientation. Then we have a canonical isomorphism:  
$$
{\cal X}^{\circ}_{G, \bS} = {\cal X}_{G, \bS^\circ}. 
$$
Therefore the map ${\rm D}_{\cal X}^*= (i_{\cal X}\circ {\rm DT})^*$ provides us an involution
$$
{\rm D}_{\cal X}^*: 
{\cal O}_q({\cal X}_{G, \bS}) \lra {\cal O}_{q^{-1}}({\cal X}^{\circ}_{G, \bS}) = 
{\cal O}_{q^{-1}}({\cal X}_{G, \bS^\circ}). 
$$





\subsection{Proof of Theorems  \ref{DT.contrav.2.1.hh}, \ref{universal.dt}} \la{sec10} 

\paragraph{Proof of Theorem \ref{DT.contrav.2.1.hh}.}
By definition,  ${\bf K}$ is a cluster DT-transformation if and only of $C_{\bf K}={-\rm Id}$. 
The rest follows directly from the next Lemma.
\bl[{\cite[Th.1.2, (1.12)]{NZ}}] 
\la{lemma.NZ} For any cluster transformation $\sigma$, we have $
C_{F(\sigma)}C_{\sigma}={\rm Id}.$
\el

\paragraph{Proof of Theorem \ref{universal.dt}.}
It suffices to show that
\[
C_{\bf K}=-{\rm Id}  ~~~\Longrightarrow ~~~~ C_{\sigma\circ{\bf K}\circ \sigma^{-1}}=-{\rm Id}.
\]
It is trivial when $\sigma$ is a permutation. We prove the case when $\sigma=\mu_k$ is a cluster mutation.

Set $\varepsilon_{\bf q}=(\varepsilon_{ij})$.
By Lemma \ref{lem.c.matrix1} we get 
\be
C_{\mu_k\circ {\bf K}}=-{\rm Id}+D, \hskip 5mm \mbox{where~}D=(D_{ij}), \hskip 4mm D_{ij}:=\left\{ \begin{array}{ll} 0 &\mbox{ if $j\neq k$},\\ {[-\varepsilon_{ik}]}_+ & \mbox{ if $j= k, i\neq k$},\\2 & \mbox{ if $j=k, i=k$}. \\ \end{array}   \right.
\ee
Note that $D^2=2D$. Therefore
$
\big({C_{\mu_k\circ {\bf K}}}\big)^2=\big(-{\rm Id}+D\big)^2={\rm Id}-2D+D^2={\rm Id}.
$ 
By Lemma \ref{lemma.NZ}, 
$$
C_{F({\bf K})\circ \mu_k}=C_{F(\mu_k\circ {\bf K})}=\big({C_{\mu_k\circ{\bf K}}}\big)^{-1}=-{\rm Id}+D.
$$
Here $F({\bf K}) \circ \mu_k$ is a cluster transformation from $-\mu_k({\bf q})$ to $-{\bf q}$.
Note that $\varepsilon_{-{\bf q}}=(-\varepsilon_{ij})$. Using Lemma \ref{lem.c.matrix1} again, we get $C_{\mu_k\circ F({\bf K})\circ \mu_k}=-{\rm Id}$.
By Lemma \ref{lemma.NZ}, we have 
 $$C_{\mu_k\circ {\bf K}\circ \mu_k}=(C_{F(\mu_k\circ {\bf K}\circ \mu_k)})^{-1}
=(C_{\mu_k\circ F({\bf K})\circ \mu_k})^{-1}=-{\rm Id}.
 $$

\paragraph{A combinatorial characterization of cluster DT-transformations \cite{K12}.}
A transformation $\sigma: {\bf q}\to {\bf q}'$ is {\it reddening}  if all the entries of  $C_{\sigma}$ are non-positive. 

\bl \la{reddening.f.pro}
If $\sigma$ is reddening, then $F(\sigma)$ is  reddening.
\el
\begin{proof} 
If $F(\sigma)$ is not reddening, then at least one of the entries of $C_{F(\sigma)}$ (say ${d_{ij}}$) is positive. By the sign-coherence of $C$-matrix, the entries on the $i$-th row of $C_{F(\sigma)}$  are all non-negative.   
Since $\sigma$ is reddening, the entries on  $i$-th row of the product $C_{F(\sigma)}C_{\sigma}$  are all non-negative, which contradicts the fact that $C_{F(\sigma)}C_{\sigma}={\rm Id}$.
\end{proof}

\bp 
\la{isom.quiver.1}
If $\sigma: {\bf q}\ra {\bf q}'$ is  reddening, then there exists a unique permutation $\pi: {\bf q} \ra {\bf q'}$ such that ${\pi\circ \sigma}$ is the cluster DT-transformation for ${\bf q}$.  
\ep

\begin{proof} Let ${c}_{i}$ be the $i$-th row vector of $C_{\sigma}$. Let ${d}_{i}=(d_{i1}, \ldots d_{in})$ be the $i$-th row vector of $C_{F(\sigma)}$. Let ${\bf e}_i=(0,..., 1, ..., 0)$ be the $i$-th unit vector. By Lemma \ref{lemma.NZ}
\[
d_{i1} c_{1}+d_{i2}c_2+\ldots +d_{in}c_n ={\bf e}_i, \hskip 7mm \forall i\in \{1,..., n\}.
\]
By Lemma \ref{reddening.f.pro}, every $d_{ik}{c}_{k}\in ({\Bbb Z}_{\geq 0})^n$. Therefore for each $i$ there is a unique $j:=\pi(i)$ such that $d_{ij}c_{j}={\bf e}_i$. Since $d_{ij}\in {\Bbb Z}_{\leq 0}$ and $c_{j}\in ({\Bbb Z}_{\leq 0})^n$, we get 
$d_{ij}=-1$, $c_{j}=-{\bf e}_i$. 
Thus $C_{\pi\circ \sigma}=-{\rm Id}$. 
\end{proof}

\bc If $\sigma: {\bf q}\ra {\bf q}'$ is  reddening, then the  formal power series ${\bf \Psi}_\sigma$ in \eqref{dilogper} is the quantum DT-series for ${\bf q}$.
\ec
\begin{proof} Follows directly from Theorem \ref{dilogper22} and  Proposition \ref{isom.quiver.1}
\end{proof}

The following Proposition provides a criterion for recognizing permutations. Its proof goes the same line as that of Proposition \ref{isom.quiver.1}.
\bp
\la{positive.analogue.12}
A cluster transformation $\sigma: {\bf q}\to {\bf q}'$ is a permutation if and only if all the entries of $C_{\sigma}$ are non-negative.
\ep

\begin{proof} We prove the ``if" part. The other direction is clear. Using the same argument as in the proof of Lemma \ref{isom.quiver.1}, it follows that there is a cluster permutation $\tau$ such that $C_{\tau \circ \sigma}={\rm Id}$. By Corollary \ref{cmatrixbasis22},  $\tau\circ \sigma$ maps the quiver ${\bf q}$ to itself. By Theorem \ref{basiclam}, $\tau\circ \sigma$ is an identical map.
\end{proof}




\section{Two geometric ways to determine  cluster {\rm DT}-transformations for ${\cal X}_{{\rm PGL}_2, \bS}$}

\subsection{${\cal X}$-laminations and  cluster {\rm DT}-transformations 
for ${\cal X}_{{\rm PGL}_2, \bS}$} \la{sec3.2}

We start with a geometric interpretation of  integral tropical points 
of the space ${\cal X}_{{\rm PGL}_2, \bS}$. 

Laminations on closed surfaces were defined  by Thurston. 
An important subclass of laminations is given by integral laminations.  
There are two kinds of laminations on decorated surfaces, 
discussed in \cite[Section 12]{FG1}, \cite{FG1a}. Let us recall the  important for us integral 
${\cal X}$-laminations (also called unbounded measured laminations in {\it loc.cit.}). 

We alter a decorated surface $\bS$ by cutting little discs 
around the punctures. Abusing notation, we denote it by $\bS$. 
We define the {\it punctured boundary}  of $\bS$ as the boundary of $\bS$ minus the special points. 
It is a union of {\it punctured boundary components}, 
which are either {\it boundary circles} or {\it boundary intervals}. The boundary circles are parametrized 
by the punctures on the original decorated surface. Each boundary interval is 
bounded by special points.

\bd
An integral ${\cal X}$-lamination on a decorated surface $\bS$ is a union of 
finitely many non-intersecting and non-self-intersecting simple closed loops and arcs connecting punctured boundary components, 
each considered with a positive integral weight, plus
\begin{itemize}
\item A choice of an orientation of each boundary circle of $\bS$ which 
bears an arc of the lamination.
\end{itemize}
In addition, we require that 
\begin{itemize}
\item There are no trivial arcs between the neighboring boundary intervals.
\item There are no loops homotopy equivalent to boundary circles.
\end{itemize} 
Denote by ${\cal L}_{\cal X}(\bS; \Z)$ the set of integral ${\cal X}$-lamination on $\bS$. 
\ed

The group $(\Z/2\Z)^n$ acts on  ${\cal L}_{\cal X}(\bS; \Z)$ by altering 
orientations of the boundary circles bearing arcs of the laminations. 
The action of $(\Z/2\Z)^n$ on ${\cal X}_{{\rm PGL}_2, \bS}$ by altering framings assigned to punctures is positive. Therefore it can be tropicalized and acts on the set 
 ${\cal X}_{{\rm PGL}_2, \bS}(\Z^t)$. 

The mapping class group $\Gamma_{\bS}$ acts on both  ${\cal L}_{\cal X}(\bS; \Z)$ and ${\cal X}_{{\rm PGL}_2, \bS}$.

The following result is  \cite[Theorem 12.1]{FG1} in the case of rational laminations on 
a surface with punctures. See the general case in  \cite{FG1a}.

\bt
There is a canonical $\Gamma_\bS\times (\Z/2\Z)^n$-equivariant isomorphism of sets 
$$
{\cal L}_{\cal X}(\bS; \Z) \stackrel{\sim}{\lra} {\cal X}_{{\rm PGL}_2, \bS}(\Z^t).
$$
\et

Each ideal triangulation ${\cal T}$ of $\bS$ gives rise to a cluster Poisson coordinate system 
$\{X_E\}$ of ${\cal X}_{{\rm PGL}_2, \bS}$,  parametrized by the edges $E$ of ${\cal T}$.  
The tropicalization of these coordinates is a cluster tropical coordinate system $\{x_E\}$ on the set 
${\cal L}_{\cal X}(\bS; \Z)$, defined as follows.

Let $l\in {\cal L}_{\cal X}(\bS; \Z)$.
We twist each of its arcs ending at a boundary circle infinitely many times along the orientation of this boundary circle entering the definition of  $l$,  see Figure \ref{spiral}.

\begin{figure}[ht]
\epsfxsize130pt
\centerline{\epsfbox{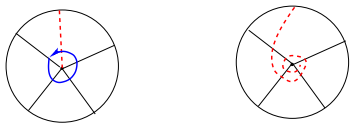}}
\caption{Rotating arcs of a lamination ending on a boundary circle. 
}
\label{spiral}
\end{figure}

We count the minimal intersection number of each connected component of the 
obtained finite collection of curves as explained on Figure \ref{xcoord3}, and multiply it by the weight of the component. 
Note that the part of a curve rotating around a vertex does not contribute to the coordinates. 
In particular, the infinite number of intersections of an arc circling around a vertex do not count.  
\begin{figure}[ht]
\epsfxsize70pt
\centerline{\epsfbox{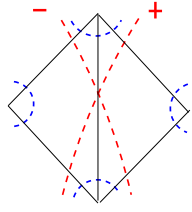}}
\caption{Counting contribution of an arc to the 
coordinate $x_E$ assigned to a diagonal $E$ of a quadrilateral. 
An arc going around a single vertex contributes $0$. An arc crossing the diagonal 
left-to-right contributes $+1$, 
and the arc crossing the diagonal  right-to-left contributes $-1$. These rules do not require an orientation of the arc. }
\label{xcoord3}
\end{figure}


Each ideal triangulation ${\cal T}$ of $\bS$ gives rise to a collection of {\it basic laminations} 
$\{l_E^+\}$ assigned to the edges $E$ of ${\cal T}$. 
The lamination $l_E^+$ is a single arc on $\bS$ with multiplicity $1$, defined as follows. 
Take an end of $E$. If it goes to a puncture, we just add the 
canonical orientation of the corresponding boundary circle, 
determined by the orientation of $\bS$. 
If it ends at a special point, we rotate the end  slightly 
following the orientation of the boundary.  

Let $\{x_F\}$ be the coordinate system assigned to 
the triangulation ${\cal T}$.
By Figure \ref{xcoord3},  we have 
\[
x_E(l_E^+) =1,\hskip 7mm  \mbox{otherwise~~} x_F(l_E^+) =0. 
\]
It means that  $\{l_E^+\}$ are the basic positive laminations for the 
ideal triangulation ${\cal T}$. The terminology ``basic positive laminations'' 
was suggested by this example.

\vskip 2mm

The longest element ${\bf w}_0=(1,1,...,1)\in  (\Z/2\Z)^n$ acts on ${\cal L}_{\cal X}(\bS; \Z)$ by altering the orientations assigned to all boundary circles.
Recall the cyclic shift by one action ${r}_\bS\in \Gamma_{\bS}$. We consider
\be
{\rm C}_{\bS}= r_\bS\circ {\bf w}_0 \in \Gamma_\bS \times (\Z/2)^n.
\ee

\bp \la{PBLNBL2}
The map ${\rm C}_{\bS}$ sends a positive basic 
 lamination $l_E^+$ to a negative one:
\[
x_E({\rm C}_{\bS}(l_E^+))=-1, ~~~~\mbox{otherwise }~~x_F({\rm C}_{ \bS}(l_E^+))=0.
\]
\ep

\begin{proof} Given an edge $E$ of the ideal triangulation ${\cal T}$, there are three cases: 

1. The edge $E$ connects two boundary circles, which could coincide. 
Altering orientations of the boundary circles we change the counting sign for 
the arc intersecting  $E$, see Figure \ref{xcoord3}.

2. The edge $E$ connects two boundary intervals. The rotation ${r}_{\bS}$ by one  changes  the multiplicity 
$+1$ intersection to the multiplicity $-1$ intersection, see Figure \ref{xcoord2}. 
\begin{figure}[ht]
\epsfxsize130pt
\centerline{\epsfbox{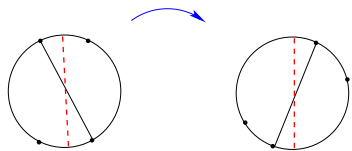}}
\caption{An edge $E$ connecting two boundary intervals. 
}
\label{xcoord2}
\end{figure}

3. The edge $E$ connects a boundary circle with a boundary interval. 
The resulting lamination ${\rm C}_{\bS}(l_E^+)$ is shown on the right of Figure \ref{xcoord1}. 
\begin{figure}[ht]
\epsfxsize130pt
\centerline{\epsfbox{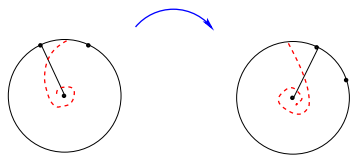}}
\caption{An edge $E$ connecting a boundary circle with a boundary interval.}
\label{xcoord1}
\end{figure}

\end{proof}



 The following result is the $\G={\rm PGL}_2$ case of Theorem \ref{THEmain.result.}.
 
\bt  \la{PBLNBL3} If $\bS$ is admissible, then 
the action of ${\rm C}_{\bS}$ on ${\cal X}_{{\rm PGL}_2, \bS}$ 
is the cluster ${\rm DT}$-transformation. 
\et

\begin{proof} If $\bS$ is admissible, then 
the action of ${\rm C}_{\bS}$ is a cluster transformation (\cite{FG1}).
The rest follows from  Theorem \ref{basiclamDT}  and Proposition \ref{PBLNBL2}.
\end{proof}

When $\bS$ has a 
single puncture and no special points, i.e., $\bS$ is not admissible, 
the action of ${\rm C}_{\bS}=w_0\in \Z/2\Z$ is not a cluster transformation. For example, 
when $\bS$ is a punctured torus, 
all cluster transformations preserve each of two tropical hemispheres,
but the action of $w_0$ 
interchanges them ({\it cf.} \cite{FG3}).

\subsection{Cluster divisors at infinity and  cluster {\rm DT}-transformations for ${\cal X}_{{\rm PGL}_2, \bS}$}\la{sec3.3}
We recall the correspondence between basic laminations and cluster divisors of ${\cal X}$-variety at infinity borrowed from \cite{FG3}. Using this correspondence, we give an alternative (rather simple) proof of Proposition  \ref{PBLNBL2} for the case when $\bS$ is a disk without punctures. We wish to apply the same approach to more general cases in the future.

\paragraph{Basic laminations and cluster divisors at infinity.}
Let $X_{k}$  be a cluster coordinate on a cluster Poisson variety ${\cal X}$, assigned 
to a basis vector $e_k$ of a quiver ${\bf q} = (\Lambda, \{e_i\}, (\ast, \ast))$. 
Deleting $e_k$, we obtain a subquiver ${\bf q}_{e_k}$, whose lattice is spanned by the 
basis vectors 
different from $e_k$, with the induced form. Mutating the quiver ${\bf q}_{e_k}$ we get a cluster 
Poisson variety ${\cal X}_{e_k}$ of dimension one less than that of  ${\cal X}$. 

The variety ${\cal X}_{e_k}$ sits naturally on the boundary of  ${\cal X}$ in two different ways:
\be \la{compX}
{\cal X}_{e_k} ^+ ~\subset~ \overline {\cal X} ~\supset~{\cal X}_{e_k}^-.
\ee
Namely,  adding the coordinate $X_k$ to 
any cluster coordinate system $\{X_j'\}$ on ${\cal X}_{e_k}$,  
we get a rational cluster coordinate system on ${\cal X}$. The cluster divisor ${\cal X}_{e_k} ^+$ is given in this coordinate 
system by the equation $X_k=0$. Mutations at the other directions do not change the equation 
$X_k=0$, as is clear from (\ref{5.11.03.1x}). Similarly, the cluster divisor ${\cal X}_{e_k}^-$ is obtained  by setting $X_k=\infty$.


Recall the basic laminations $l^+_{X_k},~ l^-_{X_k}\in {\cal X}(\Z^t)$ associated to the coordinate $X_k$.
For a generic  $p\in {\cal X}_{e_k}^+$ one has $X_k(p)=0$, while the values $\{X_j(p)\}$ of the rest coordinates $\{X_j\}$ are well defined and non-zero. Thus the irreducible divisor ${\cal X}_{e_k}^+$ can be naturally identified with  $l^+_{X_k}$:
\[
{\rm ord}_{{\cal X}_{e_k}^+} (X_k)= X_k^t(l_{X_k}^+)=1,
\hskip 7mm
{\rm ord}_{{\cal X}_{e_k}^+} (X_j) = X_j^t (l_{X_k}^+)=0. 
\]
Equivalently, $l^+_{X_k}$ is represented by a generic path $p(t)$ in ${\cal X}$ which approaches $p$ as $t\to 0$. 
Similarly, the divisor ${\cal X}_{e_k}^-$ is identified with $l_{X_k}^-$.

If the DT transformation ${\rm DT}_{\cal X}$ of ${\cal X}$ is a cluster transformation, then it maps $l_{X_k}^+$ to $l_{X_k}^-$. Therefore it gives rise to a birational map from ${\cal X}_{e_k}^+$ to ${\cal X}_{e_k}^-$. Note that ${\cal X}_{e_k}^+$ and  ${\cal X}_{e_k}^-$ are both isomorphic to the cluster Poisson variety ${\cal X}_{e_k}$. Therefore ${\rm DT}_{\cal X}$ induces a birational map 
\be
\la{boundary.dt.conjecture.11}
\widetilde{{\rm DT}_{\cal X}}: ~ {\cal X}_{e_k} \lra {\cal X}_{e_k}.
\ee
 \bcon The induced map \eqref{boundary.dt.conjecture.11} is the DT-transformation of ${\cal X}_{e_k}$.
 \econ
 \begin{remark} If $e_k$ is a sink of the quiver ${\bf q}$, $i.e., (e_k, e_j)\geq 0$ for all $j$, then the Conjecture follows directly from \cite[p.138, Proposition 16]{KS1}. 
\end{remark}

\paragraph{The cluster divisors at infinity for the space ${\cal X}_{{\rm PGL}_2, \bS}$ \cite{FG3}.} 
Let us adopt a dual  
point of view on the definition of the space ${\cal X}_{{\rm PGL}_2, \bS}$. 
We assume that given a framed ${\rm PGL}_2$-local system on $\bS$, the framing is defined as a covariantly 
constant section of the associate ${\Bbb P}^1$-bundle over the punctured boundary of $\bS$, 
that is over $\partial \bS - \{\mbox{special points}\}$.\footnote{In the original definition,  the framing is a reduction to a Borel subgroup near every marked point.}
A  dual ideal triangulation ${\cal T}$ of $\bS$ is a triangulation of $\bS$  
whose vertices are either 
in the boundary circles, or inside of the boundary intervals, so that each boundary interval 
carries just one vertex of ${\cal T}$.

\begin{figure}[ht]
\epsfxsize200pt
\centerline{\epsfbox{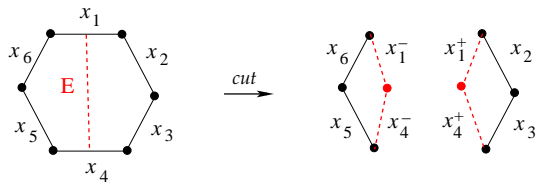}}
\caption{Cutting a polygon along the ideal edge $E$, getting two new special points.}
\label{cluste3}
\end{figure}
\begin{figure}[ht]
\epsfxsize200pt
\centerline{\epsfbox{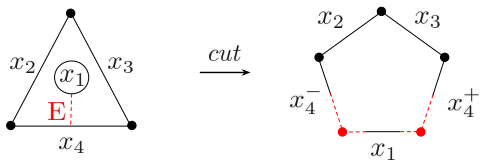}}
\caption{Cutting a punctured disk along the ideal edge $ E$, getting two new special points.}
\label{clustercut31}
\end{figure}

Given an ideal edge $E$ of $\bS$, the cluster variety assigned to $E$, sitting on the boundary of the space ${\cal X}_{{\rm PGL}_2, \bS}$, is described as follows, see Figures \ref{cluste3}-\ref{clustercut31}. 
Cut the surface $\bS$ along the edge $E$, getting a new decorated surface $\bS_E$, 
which may be disconnected.  
Its special points are the ones inherited from $\bS$ plus two new ones: 
the centers of the two new edges obtained by cutting the edge $E$. 
If $E$ ends at a boundary circle, then cutting along $E$, the boundary circle becomes a boundary interval ending at the two new special points. The pair of divisors at infinity assigned to $E$ are both identified with the moduli space 
${\cal X}_{{\rm PGL}_2, \bS_E}$:
$$
{\cal X}_{{\rm PGL}_2, \bS_E}^+ \subset \overline{ {\cal X}_{{\rm PGL}_2, \bS}} \supset {\cal X}_{{\rm PGL}_2, \bS_E}^-
$$

Proposition  \ref{PBLNBL2} is a direct consequence of  Proposition \ref{prop}. 

\bp  \la{prop} For any decorated surface $\bS$, the rational functions 
${\rm C}_\bS^*(X_i)$ on the  space ${\cal X}_{{\rm PGL}_2, \bS}$ have the following property:
\begin{enumerate}

\item 
If  $i$ is different from $k$, the function ${\rm C}_\bS^*(X_i)$, evaluated 
on a generic path $p(t)$ representing the basic lamination $l_{X_k}^+$, has a finite nonzero limit as $t\to 0$.  

\item The function ${\rm C}_\bS^*(X_k)$, evaluated 
on such a path $p(t)$, has a simple pole as $t\to 0$. 

\end{enumerate}
\ep

\bp
Proposition \ref{prop} is true when  $\bS$ is a disk without punctures.
\ep

\begin{proof}
Let $C$ be a finite subset  of the circle $S^1$. Let $R$ 
be an edge with the vertices $\{r_\bullet, r_\circ\} \subset S^1-C$. It seperates $S^1$ into two arcs. 
The points of $C$ on  the arc going clockwise from $r_\bullet$ form a subset $C_{\bullet}$. The rest points of $C$ form another subset $C_\circ$. Therefore $C = C_\bullet\cup C_\circ$.

Denote by ${\cal X}_C$ the space of configurations of points on ${\Bbb P}^1$, parametrized by the set $C$.
Its partial compactification $\overline {\cal X}_C$ has  a divisor ${\cal X}_C^+(R)$ consisting of the configurations, of which the points parametrized by $C_\bullet$ are ``very close" to a given point $x_\bullet\in {\Bbb P}^1$. 
Equivalently, the points parametrized by $C_\circ$ are ``very close" to a given point on $x_\circ\in {\Bbb P}^1$, see Figure \ref{xcoord5}.

\begin{figure}[ht]
\epsfxsize250pt
\centerline{\epsfbox{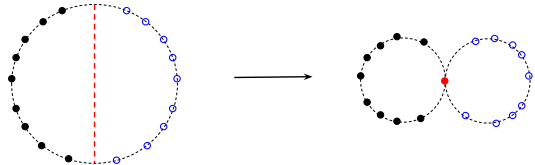}}
\caption{The red dashed edge $R$ describes a divisor at infinity.
}
\label{xcoord5}
\end{figure}


Consider the two connected intervals of $S^1 -C$ containing the points $r_\bullet, r_\circ$. Their ends are two $\bullet$-vertices and two $\circ$-vertices.  They form a quadrilateral. Let $E$ and $F$ be its diagonals. 
We assume that $E$ crosses $R$ ``from left to right'', see Figure \ref{xcoord4}.

\begin{figure}[ht]
\epsfxsize190pt
\centerline{\epsfbox{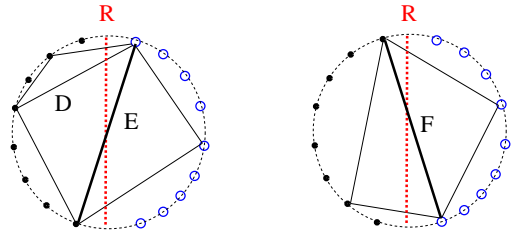}}
\caption{Rotating the set $C$ clockwise 
and keeping the 
triangulation intact,  amounts to rotating the triangulation counterclockwise. So the edge $E$ moves to the edge $F$.}
\label{xcoord4}
\end{figure}

Let ${\cal T}$ be a triangulation of the disc with vertices at  $C$, containing the edge $E$. 
Let $D$ be an edge of ${\cal T}$. Denote by $Q_D$ the quadrilateral of ${\cal T}$ containing  $D$ as a diagonal. 
Every configuration in ${\cal X}_C$ assigns to the vertices of $Q_D$  a quadruple
$x_1, ..., x_4 \in {\Bbb P}^1$ so that  $D= (x_1, x_3)$.  Its corresponding cluster ${\cal X}$-coordinate is  the cross-ratio of $x_1, ..., x_4$:
\be \la{CRN}
X_D= r^+(x_1, x_2, x_3, x_4):= \frac{(x_1-x_2)(x_3-x_4)}{(x_2-x_3)(x_1-x_4)}.
\ee

Let $p(t)$ be a path approaching a generic point $p$ of ${\cal X}_C^+(R)$. We consider the limit of $X_D(p(t))$ as $p(t)$ approaches $p$.
If $D$ is different from $E$, then $Q_D$ cannot contain two $\bullet$-vertices and two $\circ$-vertex simultaneously. Therefore $X_D(p(t))$ has a finite nonzero limit. For the edge $E$, the quadrilateral $Q_E$ contains two  $\bullet$-vertices and two $\circ$-vertex.
By  \eqref{CRN}, $X_E(p(t))\to 0$ as $p(t)\to p$. Thus the path $p(t)$ represents the basic positive lamination $l_{X_E}^+$.

\vskip 2mm

Let ${\cal T}'$  be the  ``counterclockwise rotation by one'' of  ${\cal T}$. It contains the edge $F$.  
By \eqref{CRN}, 
\[
X_F(p(t))= {\rm C}_{\bS}^* (X_E)(p(t)) \to \infty, \hskip 7mm \mbox{as }~~p(t) \to p.
\] 
For any edge $D'$ of ${\cal T}'$ different from $F$,  the limit of $X_{D'}(p(t))$ is finite and nonzero. 
These are precisely the properties we needed in Proposition \ref{prop}. 
\end{proof}


\section{Birational Weyl group action  on the space ${\cal A}_{\G, \bS}$} \la{sec1}
Let $\G$ be a split semi-simple group. Let us assume that  $\bS$ has $n$ many punctures.

The canonical central element $s_\G \in \G$ is the image of $\small{\begin{pmatrix}
-1 & 0 \\
0 & -1\end{pmatrix}}$ 
under a principal embedding ${\rm SL}_2 \hra \G$. 
A {\it twisted} $\G$-local system on $\bS$ is a $\G$-local system on the 
tangent bundle of $\bS$ minus zero section with monodromy $s_\G$ around a 
loop given by rotating 
a tangent vector by $360^\circ$ at a point of $\bS$. Since $s_\G^2=1$, the loop orientation is irrelevant. 

Recall the principal affine space ${\cal A} = \G/\U$. Elements of ${\cal A}$ are called {\it decorated flags}. 

\bd
The moduli space ${\cal A}_{\G, \bS}$ parametrizes twisted 
$\G$-local systems  on $\bS$ with an additional data, a {\it decoration}, given by 
 a reduction to a  decorated flag near each marked point.
\ed
This implies that the monodromy around each puncture is unipotent. 
However, thanks to the freedom of choices of decorations, the dimension of ${\rm dim}{\cal A}_{\G, \bS}$ will not decrease. For example, if $\bS$ has only punctures, then one has
\[
{\rm dim}{\cal A}_{\G, \bS} =  {\rm dim}{\cal X}_{\G, \bS}.
\]



If the group $\G$ has trivial center,  then the principal affine space ${\cal A}$ is the moduli space of 
pairs $(\U, \chi)$, where $\U$ is a maximal unipotent subgroup of $\G$, and 
$\chi:\U\to {\Bbb A}^1$ is a non-degenerate character ({\it cf.} \cite[Section 1]{GS}). 
For general $\G$, there is a 
canonical non-degenerate character $\chi_\A$ assigned to a decorated $\A \in {\cal A}$.

The group $\G$ acts on ${\cal A}$ on the left. The stabilizer ${\rm U}_{\A}$ 
of $\A\in {\cal A}$ is a maximal unipotent subgroup of $\G$. 
Recall the set ${I}$ indexing simple positive roots of $\G$. 
The character $\chi_{\A}$ provides an isomorphism
\be \la{isoia}
i_{\A}: {\rm U_{\A}/[U_{\A},U_{\A}]} \stackrel{\sim}{\lra} {\Bbb A}^I. 
\ee
Let $\Sigma: {\Bbb A}^I\to {\Bbb A}^1$ be the sum map. Then $\chi_\A = \Sigma\circ i_\A$. 
 This characterizes the map $i_{\A}$.

Let $p$ be a puncture. 
A decoration at $p$ is 
a decorated flag $\A_p$ in the fiber of  ${\cal L}_{\cal A}$  
near $p$, invariant under the monodromy around $p$. It defines a conjugacy class in the 
 unipotent subgroup ${\rm U}_{\A_p}$ preserving $\A_p$. So we get a regular map 
\be \la{mapmup}
\mu_p: {\cal A}_{{\rm G},   \bS} \lra {\rm U_{A_p}/[U_{\A_p},U_{\A_p}]}\stackrel{i_{{\rm A}_p}}{=}
{\Bbb A}^I.  
\ee
The composition $\mu_p\circ \sigma$ is called 
the {\it total potential} ${\cal W}_p$ at the puncture $p$. It is a sum of the components, called 
{\it partial potentials},  
parametrized by the simple positive roots $\alpha$:
\be \la{potential}
{\cal W}_p = \sum_{\alpha\in I} {\cal W}_{p, \alpha}. 
\ee


Let $\rm R$ be the lattice spanned by the simple positive roots of $\G$. 
For any abelian group ${\bf A}$, we have ${\rm Hom}({\rm R}, {\bf A}) = {\bf A}^I$. 
Using the embedding ${\Bbb G}_m\hra {\Bbb A}$ we get an open embedding 
\[
i: {\rm T}:= {\rm Hom}({\rm R}, {\Bbb G}_m) =({\Bbb G}_m)^I \hra {\Bbb A}^I. 
\]
The monodromy map, followed by the birational isomorphism $i^{-1}$, provides a rational map
\be \la{ACTMAP1}
\widetilde \mu_p:= i^{-1}\circ \mu_p: {\cal A}_{\G, \bS} \lra {\rm T}. 
\ee
Summarizing, we arrive at a commutative diagram, related to a puncture $p$ on $\bS$:
\[
\xymatrix{
&{\rm T} = {\rm Hom}({\rm R}, {\Bbb G}_m)\ar[d]_i& \\
{\cal A}_{\G, \bS}\ar[ru]^{{\widetilde\mu_p}} \ar[r]^-{\mu_p} & {\rm U}/[{\rm U}, {\rm U}] = {\Bbb A}^I\ar[r]^-{\sum}& 
{\Bbb A}
  }
\]

The Cartan group ${\rm H}$  of $\G$ acts from the  right on ${\cal A}$. 
Therefore the group  
${\rm H}^n$ acts on  ${\cal A}_{\G, \bS}$ by rescaling  decorations at punctures
\be \la{ACTMAP}
{\rm H}^n \times {\cal A}_{\G, \bS} \lra {\cal A}_{\G, \bS}, \hskip 7mm (h, a) \lms h\cdot a
\ee
Forgetting the decorations near punctures, we get
 a  principal ${\rm H}^n$-fibration over the moduli space ${\rm Loc}^{\rm un}_{\G, \bS}$ of twisted $\G$-local systems on $\bS$ with unipotent 
monodromies around the punctures: 
\be \la{CAPR}
p_{\cal A}: {\cal A}_{\G, \bS} \lra {\rm Loc}^{\rm un}_{\G, \bS}. 
\ee
The projection $p_{\cal A}$ and the rational map ${\widetilde \mu}=:\prod {\widetilde \mu}_{p}$ provide a diagram
\be \la{Adiag}
\begin{gathered}
\xymatrix{
{\cal A}_{\G, \bS}  \ar[d]_{p_{\cal A}} \ar[r]^-{{\widetilde \mu}} & {\rm T}^n \\
{\rm Loc}^{\rm un}_{\G, \bS} &   }
\end{gathered}
\ee

The Weyl group acts on the lattice $\rm R$, and hence on the torus ${\rm T}$. 
\bt \la{Weylgac}
For each puncture  $p$ of  $\bS$, there is a canonical birational action of the Weyl group $W$ 
on the space ${\cal A}_{{\rm G}, \bS}$ 
such that 
\begin{enumerate} 
\item The group $W$ acts along  the fibers of the projection $p_{\cal A}$. 
It alters only the decoration at $p$. 

\item The projection $\widetilde \mu_p$ is $W$-equivariant. 

\item The actions at different punctures  commute. So the group $W^n$ acts birationally on  ${\cal A}_{\G, \bS}$. 

\item The action $\circ$ of the  group $W^n$ intertwines the action $\cdot$ of ${\rm H}^n$ on 
${\cal A}_{{\rm G}, \bS}$:
\[
{\rm H}^n \times {\cal A}_{{\rm G}, \bS} \lra {\cal A}_{{\rm G}, \bS}, ~~~~
w\circ (h\cdot a) = w(h) \cdot (w \circ a), \hskip 4mm  w\in W^n, ~h\in {\rm H}^n, ~a \in {\cal A}_{\G, \bS}.
\]
\end{enumerate}

\et

\begin{proof}

The map $p_{\cal A}$ has a  multivalued ``section'' 
$\widetilde\mu^{-1}(1)$. 
For any generic $u\in {\rm Loc}^{\rm un}_{\G, \bS}$, we
choose an element 
\[
s \in \widetilde\mu_p^{-1}(1) \cap p_{\cal A}^{-1}(u).
\]  
Since the fiber $p_{\cal A}^{-1}(u)$ is an ${\rm H}^n$-torsor, the $s$ chosen induces an isomorphism
\be \la{4.1.15.1}
{\rm H}^n \lra p_{\cal A}^{-1}(u), ~~~~ h \lms h \cdot s. 
\ee
We define an action $\circ$ of the 
group $W^n$ on the fiber $p_{\cal A}^{-1}(u)$, making it $W^n$-equivariant,   setting 
\be \la{4.1.15.2}
w \circ (h \cdot s):= w(h) \cdot s. 
\ee

It remains to show that the action $\circ$  does not depend on the choice of $s$. 
Let $X^*({\rm Y})$ be the character group of a split torus ${\rm Y}$, and  $X_*({\rm Y})$ 
the cocharacter group. 
There is a natural embedding $X^*({\rm T}^n) \subset X^*({\rm H}^n)$. 
Its dual $X_*({\rm H}^n) \ra X_*({\rm T}^n)$ provides an isogeny: 
\be \la{ACTMAP2}
{\rm H} ^n= X_*({\rm H}^n) \otimes {\Bbb G}_m \lra {\rm T}^n = X_*({\rm T}^n) \otimes {\Bbb G}_m.  
\ee
Following the definition of $\tilde{\mu}$, it is easy to show that \eqref{ACTMAP2} coincides with the map
\be
\la{com.iso.4.1.15.2.mu}
{\rm H}^n \stackrel{\eqref{4.1.15.1}}{\xrightarrow{\hspace*{8mm}}} p_{\cal A}^{-1}(u)\stackrel{\tilde{\mu}}{\lra} {\rm T}^n.
\ee
The choices of $s$ are differed by kernel elements of \eqref{ACTMAP2}. Since (\ref{ACTMAP2}) is $W^n$-equivariant, its kernel is $W^n$-invariant. 
Thus the $W^n$-action $\circ$ in (\ref{4.1.15.2}) does not depend on the choices of $s$. 
\end{proof} 

\paragraph{Comparing with the rational Weyl group action  on the space ${\cal X}_{\G, \bS}$.} 
For any $\G$, the group $W^n$ acts by birational 
automorphisms of the  space  ${\cal X}_{\G, \bS}$, see Section \ref{sec1.1}.  
Although  it looks like the $W^n$-action  on the space ${\cal A}_{\G, \bS}$ has nothing to 
do with it, 
 they are closely related. 

Here is an analog of  diagram (\ref{Adiag}) for  the space ${\cal X}_{\G, \bS}$. 
There is a $W^n$-equivariant projection 
\be \la{ACTMAPP}
\pi:  {\cal X}_{\G, \bS} \lra {\rm H}^n,  
\ee
given by 
the semisimple part of the monodromies at the punctures,   
enhanced by  framings. 
For example, when $\G={\rm GL}_m$, a generic monodromy a each puncture has $m$ many different eigenvalues. A framing near the puncture is equivalent to an ordering of these eigenvalues.  It gives rise to the projection \eqref{ACTMAPP}. The Weyl group acts on ${\cal X}_{\G, \bS}$ by changing the ordering.

Forgetting the framing, we get
 a  projection onto the moduli space of $\G$-local systems 
\be \la{CAPR}
p_{\cal X}: {\cal X}_{\G, \bS} \lra {\rm Loc}_{\G, \bS}. 
\ee
The projection $p_{\cal X}$ and the map $\pi$  provide a diagram
\be \la{Pdiag}
\begin{gathered}
\xymatrix{
{\cal X}_{\G, \bS} \ar[r]^\pi  \ar[d]_{p_{\cal X}} & {\rm H}^n &\\
{\rm Loc}_{\G, \bS} &   &}
\end{gathered}
\ee
The Weyl group acts along  the fibers of the map $p_{\cal X}$. 
The projection $\pi$ is $W^n$-equivariant.  
The projection $p_{\cal X}$ is a Galois cover  over the generic point with the Galois group $W^n$.

\paragraph{Weyl group action for $\G={\rm SL}_2$ and tagged triangulations.} \la{WAsec}
The action of the group $(\Z/2\Z)^n$ on the space ${\cal A}_{{\rm SL}_2, \bS}$ was introduced in \cite[p.186]{FG1}. 
It  is very closely related to 
the ideal {\it tagged} triangulations of Fomin-Shapiro-Thurston \cite{FST}, \cite{FT}. 
Namely, any ideal triangulation ${\cal T}$ of $\bS$  provides a cluster coordinate system
 ${\cal C}_{\cal T}$ 
on ${\cal A}_{{\rm SL}_2, \bS}$. However if $\bS$ has more than one puncture, 
not all cluster coordinate systems on the space ${\cal A}_{{\rm SL}_2, \bS}$ 
can be interpreted  this way. 
In this case the group $(\Z/2\Z)^n$ acts by cluster transformations, 
and so for any element $w \in (\Z/2\Z)^n$ there is a new cluster coordinate system 
${\cal C}_{w{\cal T}}:= w^*{\cal C}_{{\cal T}}$. 

Any element $w\in  (\Z/2\Z)^n$ is determined uniquely by 
a subset ${\cal P}_w$ of the punctures, so that
$$
w = \prod_{p\in {\cal P}_w}w_p. 
$$
Here $w_p $ is the generator of $\Z/2\Z$ assigned to a puncture $p$. The cluster coordinate 
system ${\cal C}_{w{\cal T}}$ coincides with Fomin-Shapiro-Thurston's cluster coordinate system 
assigned to the tagged triangulation obtained by putting tags 
at the ends of all arcs of $T$ ending at the punctures $p \in {\cal P}_w$. 
This way we get almost all cluster coordinate systems, but not all of them. 
The exceptional ones are assigned to tagged triangulations which can have a puncture with just two 
arcs entering, which must be isotopic arcs, 
one is tagged, one is not, so that their other ends are either both tagged, or not. 
We discussed tagged triangulation in detail in Section 7.2.

The very existence of the $W^n$-action suggests a generalization of 
majority of tagged triangulations for the group ${\rm SL}_m$: 
they are obtained by the action of elements of the group $W^n$ 
on the cluster coordinate systems assigned to ideal webs on $\bS$ studied in  \cite{G}. 



\section{Example of cluster {\rm DT}-transformations}
We consider a basic example of the ${\rm DT}$-transformation for the punctured disk, which  serves as a basic model for studying the cluster nature of Weyl group actions in the next Section.

\subsection{Cluster {\rm DT}-transformation for the punctured disc}
\paragraph{Cluster set-up.} 
A quiver ${\bf q}$  can be described by 
 a skew-symmetric matrix $\varepsilon_{\bf q}=(\varepsilon_{ij})$, where
 \[
i,j\in  {\rm I}=\{1,\ldots,N\}, \hskip 7mm
\varepsilon_{ij}= \#\{\mbox{arrows from $i$ to $j$}\}  - \#\{\mbox{arrows from $j$ to $i$}\}.
\]
Let ${\cal F}_{\bf q}:=\Q(X_1,\ldots, X_N, A_1, \ldots, A_N)$ be the field of rational functions associated to  ${\bf q}$. 


Each $k\in  {\rm I}$ gives rise to a mutated quiver ${\bf q}'=\mu_{k}({\bf q})$ such that 
$\varepsilon_{{\bf q}'}$ is given by Formula \eqref{epsilon.mutate}. 
Let ${\cal F}_{{\bf q}'}:=\Q(X_1',\ldots, X_N', A_1', \ldots, A_N')$. Consider  
an isomorphism  $\mu_{k}^*: {\cal F}_{{\bf q'}} \to {\cal F}_{\bf q}$:
\begin{align}
\mu_{k}^\ast X_i':&=\left\{ \begin{array}{ll}
      X_k^{-1}  & \mbox{if $i=k$} \\
      X_i(1+X_k^{-{\rm sgn}(\varepsilon_{ik})})^{-\varepsilon_{ik}} & \mbox{if $i\neq k$}, \\
   \end{array}
   \right.  \\
\mu_k^*A_i' :&=\left\{ 
 \begin{array}{ll}  A_k^{-1}{({ \prod_{j|\varepsilon_{ij}<0}A_j^{-\varepsilon_{ij}} +\prod_{j|\varepsilon_{ij}>0}A_j^{\varepsilon_{ij}}})} & \mbox{if $i=k$} \\
      A_i & \mbox{if $i\neq k$}. \\
   \end{array}
   \right.
\end{align}
The map $\mu^*_k$ is the {\it cluster mutation} at the direction $k$.
It is involute.  

Let $\pi$ be a bijection from ${\rm I}$ to itself. Let ${\bf q}'=\pi({\bf q})$ be the quiver obtained via relabeling the vertices $i$ of ${\bf q}$ by $\pi(i)$. It induces an isomorphism $\pi^*:
{\cal F}_{{\bf q}'} \to {\cal F}_{{\bf q}}$,  called a {\it cluster permutation}:
$$
\pi^* X_{i}'=X_{\pi^{-1}(i)}, \quad \pi^* A_{i}'=A_{\pi^{-1}(i)}.
$$


\paragraph{Basic example.}
Let $N\geq 2$. Let ${\bf q}_N$ be a quiver of  $N$ vertices. When $N=2$, it has no arrows. When $N>2$, it is a cycle with vertices labelled clockwise from 1 to $N$.
 See Figure \ref{quiverN}. \footnote{
Starting from a Dynkin diagram of type $D_N$, we assign orientations to each edge, obtaining  quivers called the Dynkin quivers of type $D_N$. It is known that all the Dynkin quivers of type $D_N$ are equivalent to ${\bf q}_N$.}

\begin{figure}[h]
\epsfxsize300pt
\centerline{\epsfbox{quiverN.eps}}
\caption{The quivers ${\bf q}_2$, ${\bf q}_3$, and ${\bf q}_4$. }
\label{quiverN}
\end{figure}

Let $(i_1,i_2,\ldots, i_N)$ be a permutation of $\{1, 2, \ldots, N\}$. Define a cluster 
transformation of ${\bf q}_N$
\be
\la{cluster.trans.tau}
\tau_N:=\mu_{i_1}\circ \ldots \circ \mu_{{i_{N-1}}}\circ \pi_{i_{N-1}, i_{N}} \circ \mu_{i_{N-1}}\circ \ldots \circ \mu_{i_1},
\ee
where $\mu_{k}$ is the cluster mutation at the directions $k$, and $\pi_{i_{N-1}, i_{N}}$ is the cluster permutation switching the labels $i_{N-1}$, $i_N$. We frequently write $\tau$ instead of $\tau_N$.

\bt
\la{1.5.8.35}
The cluster transformation $\tau_N$ does not depend on the choices of permutation. It maps the quiver ${\bf q}_N$ to itself. Thus $\tau_N$ is an order 2 element of the cluster modular group $\Gamma_{{\bf q}_N}$. The induced isomorphism $\tau_N^*$ of the field ${\cal F}_{{\bf q}_N}$ is determined by
\be
\tau_N^* A_i = A_i W, \hskip 7mm \tau_N^*X_i= \frac{F_i}{X_{i-1}F_{i-2}},
\ee
where
\[
W=\sum_{j=1}^N\frac{1}{A_{j}A_{j+1}}, \hskip 7mm  F_i=1+X_i+X_iX_{i-1}+\ldots+X_{i}X_{i-1}\ldots X_{i-N+2}.
\]
\et

Let $r$ be the cluster permutation that relabels the vertex $i$ of ${\bf q}_N$ by $i-1$. Since 
 ${\bf q}_{N}$ is rotation invariant, $r\in \Gamma_{{\bf q}_N}$.
\bc The composition ${\bf K}:=r\circ \tau_N$ is the {\rm DT}-transformation of ${{\bf q}_N}$.
\ec

\begin{proof}
By Theorem \ref{1.5.8.35}, we have
\be
\la{basic.example.dt.cyc}
{\bf K}^*X_{i}=\tau_N^* X_{i+1}=\frac{F_{i+1}}{X_{i}F_{i-1}}.
\ee
By the explicit formulas of $F_{i}$, we have
$
F_i^t(l_j^+)=0$ for all $i, j\in I$. 
Therefore
\[({\bf K}^*X_{i})^t (l_j^+)=
F_{i+1}^t(l_j^+)-F_{i-1}^t(l_j^+)-X_i^t(l_j^+)=-X_i^t(l_j^+)=X_i^t(l_j^-).\]
\end{proof}

\subsection{Tagged ideal triangulations of a once-punctured disk.}
We present a combinatorial model for the cluster transformation $\tau_N$.

Let $D_N$ be a once-punctured disk with $N$ special points on its boundary. 
The  special points of $D_N$ divide its boundary into $N$  {\it boundary intervals}.
Let ${\bf m}$ be the set of  special points and the puncture of $D_N$.  
An ideal arc $\gamma$ is a curve up to isotopy in $D_N$ such that:
\begin{itemize}
\item the endpoints of $\gamma$ are two different points\footnote{We require that the endpoints of $\gamma$ are different in the case of once punctured disk. For a general decorated surface $\bS$, the the endpoints of $\gamma$ may coincide. } in ${\bf m}$;
\item $\gamma$ does not intersect itself;
\item except for the endpoints, $\gamma$ is disjoint from ${\bf m}$ and  the  boundary of $D_N$;
\item $\gamma$ is not isotopic to a boundary interval of $D_N$.
\end{itemize}
Following \cite[Definition 7.1]{FST}, a tagged arc $\gamma$ is an ideal arc $\gamma$ tagged  with some extra combinatorial data such that:
\begin{itemize}
\item if $\gamma$ connects the puncture and a special point,  we tag $\gamma$ either {\it plain} or {\it notched};
\item if $\gamma$ connects two special points, then we do not assign any data. 
\end{itemize}
In the figures, the plain tags are omitted and the notched tags are presented by the $\bowtie$ symbol.

\vskip 2mm

Denote by ${\bf A}^{\bowtie}(D_N)$ the set of tagged arcs of $D_N$. Two different tagged arcs $\alpha, \beta\in{\bf A}^{\bowtie}(D_N)$ are called {\it compatible} when one of the following cases holds:
\begin{itemize}
\item if $\alpha$ and $\beta$ both contain the puncture, then $\alpha$ and $\beta$ are tagged in the same way unless they correspond to the same ideal arc;
\item if either $\alpha$ or $\beta$ is disjoint from the puncture, then we require that $\alpha$ and $\beta$ are disjoint except for their endpoints.
\end{itemize}

\bd[{\cite{FST}}]
A {\it tagged ideal triangulation} of ${D}_N$ is a maximal collection of piecewise compatible tagged arcs. Let $\gamma$ be a tagged arc contained in a tagged ideal triangulation ${\cal T}$ of $D_N$. A \emph{flip} of ${\cal T}$ at $\gamma$ is a transformation of ${\cal T}$ that removes $\gamma$ and replace it with a $(unique)$ different tagged arc ${\gamma'}$ that, together with the remaining arcs, forms a new tagged ideal triangulation ${\cal T}'$.
\ed

\begin{figure}[ht]
\epsfxsize100pt
\centerline{\epsfbox{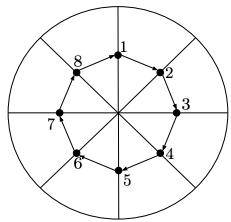}}
\caption{A tagged ideal triangulation corresponding to the quiver ${\bf q}_N$}
\label{taggedtriN}
\end{figure}

 The cardinality of every tagged ideal triangulation of $D_N$ is $N$ (\cite[Theorem 7.9]{FST}). Let us fix a tagged ideal triangulation ${\cal T}$. Let us label the tagged arcs in ${\cal T}$ by 1 through $N$.  It gives rise to a quiver ${\bf q}$ by placing a vertex in the midpoint of each tagged arc  and assigning an arrow from the vertex $i$ to the vertex $j$ if the corresponding tagged arc $i$ is to the right of the tagged arc $j$. For example, the tagged ideal triangulation in Figure \ref{taggedtriN} gives rise to the quiver ${\bf q}_N$.

\vskip 2mm
  
 It is easy to show that a flip at  a tagged arc is equivalent to the cluster mutation at the corresponding vertices of the corresponding quiver. 
Therefore the transformation $\tau_N$ in Theorem \ref{1.5.8.35} can be presented by a sequence of flips of the tagged ideal triangulations of $D_N$.

\paragraph{Example.}
If $N=2$, then $\tau_{2}=\mu_1\circ \pi_{1,2}\circ \mu_{1}$. As shown on Figure \ref{tagged2}, we start from a tagged ideal triangulation of $D_2$ with arcs labelled by $1$ and $2$. The first cluster mutation $\mu_1$ removes the plain arc 1 and replaces it with the notched arc 1 on the second graph. The permutation $\pi_{1,2}$ exchanges the labels of these two arcs. The last cluster mutation $\mu_1$ removes the plain arc 1 and replaces it with the notched arc 1 on the last graph. To summarize, the transformation $\tau_2$ preserves the underlying triangulation but replaces each plain arc by a notched one. 
\begin{figure}[ht]
\epsfxsize500pt
\centerline{\epsfbox{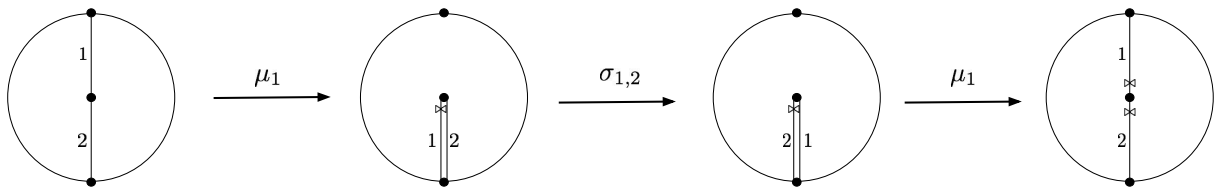}}
\caption{The transformation $\tau_2= \mu_1\circ \pi_{1,2}\circ \mu_{1}$.}
\label{tagged2}
\end{figure}

Similarly, for $N>2$,  $\tau_N$ notches all plain arcs, and therefore preserves ${\bf q}_N$. See  Figure \ref{taggedN}.

\begin{figure}[ht]
\epsfxsize220pt
\centerline{\epsfbox{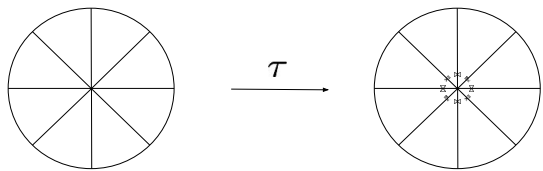}}
\caption{The transformation $\tau_N$.}
\label{taggedN}
\end{figure}

\subsection{Proof of Theorem \ref{1.5.8.35}} 
\la{proofofthm15835}
We prove Theorem \ref{1.5.8.35} in a more general setting for future use.

\bd
\la{quiver.gen.h}
Let ${\bf q}$ be a quiver containing ${\bf q}_N$.
The vertices of ${\bf q}$ are labelled by ${\rm J}$. 
The vertices of ${\bf q}_N$ are labelled by ${\rm I}=\{1,\ldots, N\}\subset {\rm J}$. We further assume that  for each vertex $k$ not in ${\bf q}_N$, the number of arrows from $k$ to ${\bf q}_N$  equals the number of arrows from ${\bf q}_N$ to $k$, i.e.,
\be
\la{zerocd}
\forall k \in {\rm J}-{\rm I}, \hskip 7mm  \sum_{i\in {\rm I}}\varepsilon_{ki}=0.
\ee
\ed

\bl Let ${\bf q}$ be as above. For $k\in {\rm J}-{\rm I}$, there is a unique
$
{\bf c}_k=(c_{k1}, \ldots, c_{kN}) \in \Z^N
$
such that
\begin{align}
&~\varepsilon_{ki}=c_{ki}-c_{k,i-1},\hskip 7mm  \forall i\in {\rm I}; \la{beta.eps.rel1}\\
&\min\{c_{k1},\ldots, c_{kN}\}=0. \la{mincd}
\end{align}
\el
\begin{proof}  The existence of ${\bf c}_k$ follows from \eqref{zerocd}. The uniqueness of ${\bf c}_k$ follows from \eqref{mincd}.
\end{proof}

\begin{example} The quiver ${\bf q}$ on the left of Figure \ref{quivergen} contains ${\bf q}_5$ and satisfies \eqref{zerocd}. We have 
\[ 
{\bf c}_6=(0,1,0,0,0),\hskip 1cm {\bf c}_7=(1,1,1,0,2).
\]
Mutating ${\bf q}$ at the direction $5$, we obtain a new quiver $\tilde{\bf q}$   on the right. Note that  $\tilde{\bf q}$ contains ${\bf q}_4$ and satisfies condition  \eqref{zerocd}. We have
\[ 
{\tilde{\bf c}}_5=(0,0,0,1), \hskip 7mm {\tilde{\bf c}}_6=(0,1,0,0), \hskip 7mm {\tilde{\bf c}}_7=(1,1,1,\min\{0,2\})=(1,1,1,0). 
\]
\begin{figure}[h]
\epsfxsize350pt
\centerline{\epsfbox{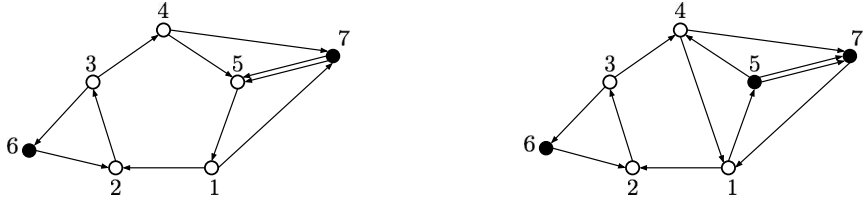}}
\caption{The right quiver is mutation of the left one at vertex $5$. Both satisfy condition \eqref{zerocd}.}
\label{quivergen}
\end{figure}
\end{example}

\bt 
\la{2.8.21.24.h}
Let ${\bf q}, {\bf c}_k$ be as above. The cluster transformation $\tau_N$ applying on the sub quiver ${\bf q}_N$  maps ${\bf q}$ to itself. The induced isomorphism of ${\cal F}_{\bf q}$ is given by
$$
\tau_N^*A_j=\left\{ \begin{array}{ll}
      A_jW & \mbox{if ${j}\in {\rm I}$} \\
     A_j & \mbox{if $j\notin {\rm I}$}, \\
   \end{array}
   \right.     
   \hskip 10mm
\tau_N^*X_j= \left\{ \begin{array}{ll}
      \frac{X_j}{Y_{j}Y_{j-1}} & \mbox{if $j\in {\rm I}$} \\
     X_j \prod_{i\in I} Y_{i}^{c_{ji}} & \mbox{if $j\notin {\rm I}$}, \\
   \end{array}
   \right.       
$$
where
\begin{align}
W:&=\sum_{i\in {\rm I}}\frac{Q_i}{A_iA_{i+1}},\hskip 7mm &Q_i:&=\prod_{k\in {\rm J}-{\rm I}}A_k^{c_{ki}};
\\
\la{induction.Yifunction}
Y_i:&=\frac{X_iF_{i-1}}{F_i},\hskip 7mm &F_i:&=1+X_i+X_iX_{i-1}+\ldots+X_iX_{i-1}\ldots X_{i-N+2}.
\end{align}
\et

\begin{proof} Theorem  \ref{2.8.21.24.h} is a generalization of Theorem \ref{1.5.8.35}.
We prove it by induction on $N$. 

\paragraph{1. Theorem \ref{2.8.21.24.h} holds for $N=2$.}
The cluster transformation $\tau_2$ mutates the quiver at the vertices labelled by 1 and 2, then switch them.  Condition \eqref{zerocd} asserts that
\[
\varepsilon_{k1}+\varepsilon_{k2}=0, \quad \forall k\in {\rm J}.
\]
It follows directly $\tau_2({\bf q})={\bf q}$. By definition, we have
\[
{\bf c}_k=(c_{k1},c_{k2})=(\max\{0, \varepsilon_{k1}\}, ~\max\{0, \varepsilon_{k2}\}).
\]
Therefore
\[
\tau_2^*A_1=\mu_2^*A_2=\frac{\prod_{k|\varepsilon_{k2}>0}A_k^{\varepsilon_{k2}}+\prod_{k|\varepsilon_{k2}<0}A_k^{-\varepsilon_{k2}}}{A_2}=A_1\big(\frac{\prod A_k^{c_{k2}}}{A_2A_1}+\frac{\prod A_k^{c_{k1}}}{A_1A_2}\big)=A_1W.
\]
Similarly, $\tau_2^*A_2=A_2W$. The rest $A_j$ remain intact.

For the ${\cal X}$-part,
note that $F_i=1+X_i$ for $i\in\{1,2\}$. So
\[
Y_1=\frac{1+X_2}{1+X_1^{-1}}, \hskip 7mm Y_2=\frac{1+X_1}{1+X_2^{-1}}, \hskip 7mm Y_1Y_2=X_1X_2.
\]
Therefore
\[
\tau_2^*X_1=\mu_2^*X_2=X_2^{-1}=\frac{X_1}{Y_1Y_2}, \quad \quad \tau_2^*X_2=X_1^{-1}=\frac{X_2}{Y_2Y_1}.
\]
For $k\in {\rm J}-\{1,2\}$, we have
\[
\tau_2^*X_k=X_k(1+X_1^{-{\rm sgn}(\varepsilon_{k1})})^{-\varepsilon_{k1}}(1+X_2^{-{\rm sgn}(\varepsilon_{k2})})^{-\varepsilon_{k2}}.
\]
If $\varepsilon_{k1}\geq 0$, then ${\bf c}_k=(c_{k1}, c_{k2})=(\varepsilon_{k1}, 0)$. Therefore
$$
\tau_2^*X_k=X_k(1+X_1^{-1})^{-c_{k1}}(1+X_2)^{c_{k1}} =X_kY_1^{c_{k1}}Y_2^{c_{k2}}. 
$$
The same formula holds for $\varepsilon_{k2}\geq 0$. 

\paragraph{2. The transformation $\tau_N$ maps ${\bf q}$ to itself.}
When $N>2$, without loss of generality, let us first mutate the quiver ${\bf q}$ at the direction $N$, obtaining a new quiver $\mu_{N}({\bf q})=\tilde{\bf q}$. 
Note that $\tilde{\bf q}$ contains ${\bf q}_{N-1}$ and satisfies condition \eqref{zerocd}. Let $\tau_{N-1}$ be the cluster transformation applying on ${\bf q}_{N-1}$. Using induction, we have $\tau_{N-1}(\tilde{\bf q})=\tilde{\bf q}$.
Therefore
$$
\tau_{N}({\bf q})=\mu_{N}\circ \tau_{N-1}\circ \mu_{N}({\bf q})=\mu_{N}^2({\bf q})={\bf q}.
$$

\paragraph{3. Proof of the ${\cal A}$-part.} By \eqref{epsilon.mutate}, the vectors  ${\tilde{\bf c}}_k$ of $\tilde{\bf q}=\mu_N({\bf q})$ are
\be
\la{tilda.beta.ind}
{\tilde{\bf c}}_N=(0,\ldots, 0, 1),\hskip 1cm {\tilde{\bf c}}_{k}=(c_{k1},\ldots, c_{k,N-2}, \min\{c_{k,N-1},c_{kN}\}),\quad \forall k\in {\rm J}-{\rm I}.
\ee
See Figure \ref{quivergen} for example.

Let $\{\widetilde{{X}_i}, \widetilde{{A}_i}\}$ be pairs of variables assigned to vertices $i$ of $\tilde{\bf q}$. By induction, we have
\be
\la{induction.a.part}
\tau_{N-1}^*\widetilde{{A}_j}=\left\{ \begin{array}{ll}
      \widetilde{{A}_j}\widetilde{W} & \mbox{if $j\in \{1,\ldots, N-1\}$} \\
     \widetilde{{A}_j} & \mbox{otherwise.}\\
   \end{array}
   \right.     
\ee
where
$$
 \widetilde{W}=\frac{\widetilde{{Q}_{N-1}}}{\widetilde{{A}_{N-1}}\widetilde{{A}_1}}+\sum_{i=1}^{N-2}\frac{\widetilde{Q_i}}{\widetilde{{A}_i}\widetilde{{A}_{i+1}}},\hskip 7mm \widetilde{{Q}_i} = \prod_{k\notin \{1,\ldots, N-1\}}\widetilde{{A}_k}^{\tilde{{c}}_{ki}}.
$$

Now we compute $\mu_N^* \widetilde{W}$. 
Note that
\be
\la{tras.a.mu.n}
\mu_N^*\widetilde{{A}_k} =\left\{ \begin{array}{ll}
A_k  &\mbox{ if } k\neq N\\
 \frac{A_1}{A_N}{\displaystyle \prod_{k\in {\rm J}-{\rm I}}}A_k^{-\min\{0, \varepsilon_{kN}\}}+\frac{A_{N-1}}{A_N}{\displaystyle\prod_{k\in {\rm J}-{\rm I}}}A_k^{-\min\{0, -\varepsilon_{kN}\}}  &\mbox{ if } k=N.\\
   \end{array}
   \right.
   \ee  
 It follows directly from \eqref{tilda.beta.ind}, \eqref{tras.a.mu.n}  that
\be
\forall i\in\{1,\ldots, N-2\}, \hskip 7mm
\mu_N^*\widetilde{{Q}_i} =Q_i.
\ee
Meanwhile
\begin{align}
\mu_N^*\widetilde{{Q}_{N-1}}&=\mu_N^*\Big(\prod_{k\notin \{1,\ldots, N-1\}}\widetilde{{A}_k}^{\tilde{{c}}_{k,N-1}}\Big)\stackrel{\eqref{tilda.beta.ind}\eqref{tras.a.mu.n}}{=\joinrel=\joinrel=}\Big(\mu_N^*\widetilde{{A}_{N}}\Big)\prod_{k\in {\rm J}-{\rm I}}A_k^{\min\{{c_{k,N-1},c_{kN}}\}}\nonumber\\
&=\frac{A_1}{A_N}\prod_{k\in {\rm J}-{\rm I}}A_k^{\min\{{c}_{k, N-1}, {c}_{kN}\}-\min\{0, \varepsilon_{kN}\}}+\frac{A_{N-1}}{A_N}\prod_{k\in {\rm J}-{\rm I}} A_k^{\min\{{c}_{k, N-1}, {c}_{kN}\}-\min\{0, -\varepsilon_{kN}\}}\nonumber\\
&\stackrel{\eqref{beta.eps.rel1}}{=\joinrel=}\frac{A_1\prod_{k\in {\rm J}-{\rm I}}A_k^{{c}_{k,N-1}}}{A_N}+\frac{A_{N-1}\prod_{k\in {\rm J}-{\rm I}}A_k^{{c}_{kN}}}{A_N} \nonumber\\
&=\frac{A_1Q_{N-1}}{A_N}+\frac{A_{N-1}Q_{N}}{A_N}
\end{align}
Therefore
\[
\mu_N^*\widetilde{W}=\sum_{i\in {\rm I}}\frac{Q_i}{A_iA_{i+1}}=W.
\]

We consider the following cases.
\begin{enumerate}
\item[(a)] If $j\notin {\rm I}$, then clearly $\tau_N^* A_j=A_j$.
\item[(b)] If $j\in  \{1,\ldots, N-1\},$ then 
\be \tau_{N}^* A_j =\mu_{N}^*\circ \tau_{N-1}^*\circ \mu_{N}^*(A_j)=\mu_N^* \big( \tau_{N-1}^* \widetilde{{A}_j}\big)\stackrel{\eqref{induction.a.part}}{=\joinrel=}\mu_N^*(\widetilde{{A}_j}\widetilde{W})= A_j W.
\ee
 \item[(c)] If $j=N$, then 
 \be \mu_{N}^*A_N=\frac{\widetilde{{A}_1}\prod_{k\in {\rm J}-{\rm I}}\widetilde{{A}_k}^{-\min\{0, \varepsilon_{kN}\}}+\widetilde{{A}_{N-1}}\prod_{k\in {\rm J}-{\rm I}}\widetilde{{A}_k}^{-\min\{0, -\varepsilon_{kN}\}}}{\widetilde{{A}_N}}.
 \ee
 Note that $\tau_{N-1}^*\widetilde{{A}_{k}}=\widetilde{{A}_k}$ for $k\notin \{1,\ldots, N-1\}$. Therefore
\be
\tau_{N}^*A_N=\mu_N^*\Big(\frac{\widetilde{{A}_1}\prod_{k\in {\rm J}-{\rm I}}\widetilde{{A}_k}^{-\min\{0, \varepsilon_{kN}\}}+\widetilde{{A}_{N-1}}\prod_{k\in {\rm J}-{\rm I}}\widetilde{{A}_k}^{-\min\{0, -\varepsilon_{kN}\}}}{\widetilde{{A}_N}}\cdot \widetilde{W}\Big)
=A_NW. 
\ee
\end{enumerate}
\paragraph{4. Proof of the ${\cal X}$-part.} 
By induction, we  have
\be
\la{induction.x.part.trans}
\tau_{N-1}^*\widetilde{{X}_{k}}
=\left\{ \begin{array}{ll}
\frac{\widetilde{{X}_1}}{\widetilde{{Y}_1}\widetilde{{Y}_{N-1}}} &\mbox{if } k=1,\\
\frac{\widetilde{{X}_k}}{\widetilde{{Y}_k}\widetilde{{Y}_{k-1}}} & \mbox{if } k \in\{2,\ldots, N-1\},\\
\widetilde{{X}_N} \widetilde{{Y}_{N-1}}& \mbox{if } k=N,\\
\widetilde{{X}_k}\prod_{j=1}^{N-1}\widetilde{{{Y}_{j}}}^{\tilde{{c}}_{kj}} &\mbox{otherwise}. \\
   \end{array}
   \right.
\ee
Here $\widetilde{{Y}_i}$ is defined similarly via \eqref{induction.Yifunction}. Now we compute $\mu_N^* \widetilde{{Y}_i}$. 
Note that
\be
\mu_N^*\widetilde{{X}_k}=\left\{ \begin{array}{ll}
X_k(1+X_N^{-{\rm sgn}(\varepsilon_{kN})})^{-\varepsilon_{kN}}  &\mbox{ if } k\in {\rm J}-{\rm I},\\
X_N^{-1}  &\mbox{ if } k=N,\\
X_1(1+X_N) &\mbox{ if } k=1,\\
X_{N-1}(1+X_N^{-1})^{{-1}} & \mbox{ if } k=N-1,\\
X_k & \mbox{ if } k \in\{2,\ldots, N-2\}.\\
   \end{array}
   \right.
\ee
By explicit calculations one obtains
$$
\mu_N^*\widetilde{{F}_i}=\left\{ \begin{array}{ll}
F_i  &\mbox{ if } i\in\{1,\ldots, N-2\},\\
F_N(1+X_N)^{-1}&\mbox{ if } i=N-1.\\
   \end{array}
   \right.
$$
Therefore
$$
\mu_N^*\widetilde{{Y}_{i}}=\left\{ \begin{array}{ll}
Y_i  &\mbox{ if } i\in\{1,\ldots, N-2\},\\
Y_NY_{N-1}&\mbox{ if } i=N-1.\\
   \end{array}
   \right.
$$
We consider the following cases.
\begin{enumerate}
\item[(a)]
If $k\in \{2,\ldots, N-2\}$, then  $\mu_N^* X_k =\widetilde{{X}_k}$. Therefore
\be
\tau_{N}^* X_i=\mu_N^*\circ \tau_{N-1}^*\widetilde{{X}_k}=\mu_N^*\Big(\frac{\widetilde{{X}_k}}{\widetilde{{Y}_k}\widetilde{{Y}_{k-1}}}\Big)=\frac{X_k}{Y_kY_{k-1}}.
\ee

\item[(b)] If $k=N$, then
\be
\la{case.k=N}
\tau_{N}^*X_N=\mu_N^*\circ \tau_{N-1}^*\circ \mu_N^*({X}_N)=\mu_N^*\circ \tau_{N-1}^*(\widetilde{{X}_N}^{-1})=\frac{1}{\mu_N^*(\widetilde{{X}_N} \widetilde{{Y}_{N-1}})}=\frac{X_N}{Y_N Y_{N-1}}.
\ee

\item[(c)]
If $k\in {\rm J}-{\rm I}$, then 
\be
\la{taunkinji}
\tau_{N}^*X_k=\mu_N^*\circ\tau_{N-1}^*\Big((1+\widetilde{{X}_N}^{{\rm sgn}(\varepsilon_{kN})})^{\varepsilon_{kN}}\widetilde{{X}_k}\Big)
\ee
Note that
\begin{align}
\mu_N^*\circ\tau_{N-1}^*(1+\widetilde{{X}_N})&=1+{\frac{Y_{N}Y_{N-1}}{X_N}}=1+\frac{X_{N-1}F_{N-2}}{F_{N}}=\frac{F_{N}+X_{N-1}F_{N-2}}{F_{N}} \nonumber\\
&=\frac{(1+X_N)F_{N-1}}{F_N}=(1+X_N^{-1})Y_N. \la{tau.action.xn}
\end{align}
If $\varepsilon_{kN}={c}_{kN}-{c}_{k,N-1}>0$, by \eqref{induction.x.part.trans}\eqref{taunkinji}\eqref{tau.action.xn}, we get
\begin{align}
\tau_N^*X_k&=\Big((1+X_N^{-1})Y_N\Big)^{\varepsilon_{kN}}\mu_N^*\Big(\widetilde{{X}_k}\prod_{j=1}^{N-1}\widetilde{{Y}_{j}}^{\tilde{{c}}_{kj}}\Big) \nonumber\\
&=\Big((1+X_N^{-1})^{\varepsilon_{kN}}\mu_N^*\widetilde{{X}_k}\Big)\Big(Y_N^{{c}_{kN}-{c}_{k,N-1}}\mu_N^*(\prod_{j=1}^{N-1}\widetilde{{Y}_{j}}^{\tilde{{c}}_{kj}})\Big)\nonumber\\
&=X_k \prod_{j=1}^N Y_j^{{c}_{kj}}.
\end{align}
By the same argument, the same formula holds for $\varepsilon_{kN}\leq0$.

\item[(d)]
If $k=N-1$, by \eqref{induction.x.part.trans}, we have
\be
\la{20.32.11.8.h}
\mu_N^*\circ \tau_{N-1}^*(\widetilde{{X}_{N-1}})=\mu_N^*\Big(\frac{\widetilde{{X}_{N-1}}}{\widetilde{{Y}_{N-2}}\widetilde{{Y}_{N-1}}}\Big)=\frac{X_{N-1}(1+X_N^{-1})^{-1}}{Y_NY_{N-1}Y_{N-2}}.
\ee
Note that $\mu_N^* X_{N-1}=\widetilde{{X}_{N-1}}(1+\widetilde{{X}_N})$.
Therefore
\begin{align}
\tau_N^* X_{N-1}&=\mu_N^*\circ \tau_{N-1}^*\Big(\widetilde{{X}_{N-1}}(1+\widetilde{{X}_N})\Big){\stackrel{\eqref{20.32.11.8.h}\eqref{tau.action.xn}}{=\joinrel =\joinrel=}}\frac{X_{N-1}(1+X_N^{-1})^{-1}}{Y_NY_{N-1}Y_{N-2}}(1+X_N^{-1})Y_N \nonumber\\
&=\frac{X_{N-1}}{Y_{N-1}Y_{N-2}}. 
\end{align}
If $k=1$, then by similar calculations we get
$\tau_N^* X_{1}=\frac{X_1}{Y_1Y_N}.$  \end{enumerate}
\end{proof}

\section{The Weyl group acts on  ${\cal X}_{{\rm PGL}_m, \bS}$ and ${\cal A}_{{\rm SL}_m, \bS}$ by cluster transformations}
\la{weyl.group.action.hhs}

Let $\bS$ be an admissible decorated surface. 
We recall the construction of cluster coordinates of the pair $({\cal X}_{{\rm PGL}_m, \bS}, {\cal A}_{{\rm SL}_m, \bS})$ introduced in \cite[Section 9, 10]{FG2}. 
If $\bS$ is a sphere with 3 punctures, then we assume $m >2$.
We show that the Weyl group actions on both ${\cal X}_{{\rm PGL}_m, \bS}$ and ${\cal A}_{{\rm SL}_m, \bS}$ are cluster transformations.

\begin{figure}[ht]
\epsfxsize 120pt
\centerline{\epsfbox{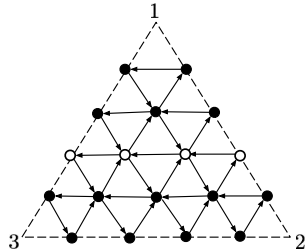}}
\caption{A 5-triangulation. The $\circ$- vertices are of distance $3$ to the vertex  1.}
\label{5tri}
\end{figure}

An $m$-triangulation of a triangle gives rise to a quiver whose vertices are parametrized  by 
\be
\la{GAMMA.m.lh}
\Gamma_m=\{(a,b,c)~|~ a+b+c=m, ~~a,b, c\in \Z_{\geq 0}\} -\{(m,0,0), (0,m,0), (0,0,m)\},
\ee
and arrows compatible with the orientation of the triangle. The vertices  $(a, b,c)$ with $a, b, c\in \Z_{>0}$ are called inner vertices. The other vertices are on the edges of the triangle.
See Figure \ref{5tri}.  

From now on, let us fix a puncture $p$ of $\bS$.
An ideal triangulation of $\bS$ is a triangulation of $\bS$ whose vertices are marked points  (i.e., punctures or special points) of $\bS$. 
 Since  $\bS$ is admissible, it admits an ideal triangulation ${\cal T}$ such that 
 \begin{itemize}
 \item ${\cal T}$ contains no {\it self-folded} triangles. See Figure \ref{selffold}.
 \begin{figure}[ht]
\epsfxsize40pt
\centerline{\epsfbox{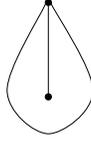}}
\caption{Self-folded triangle}
\label{selffold}
\end{figure}
 \item
 ${\cal T}$ contains no edge whose both vertices are $p$. 
 \end{itemize}
The ideal triangles in ${\cal T}$ surrounding  $p$ gives rise to a punctured disk.

We assign an $m$-triangulation to each triangle ${\rm t}\in {\cal T}$, {getting} a quiver ${\bf q}$. 
Let $i\in \{1,\ldots, m-1\}$. Denote by ${\bf q}_{p, i}$ the subquiver consists of vertices of distance $m-i$ to the puncture $p$. Note that ${\bf q}_{p, i}$ is a cycle. 
The pair $({\bf q}, {\bf q}_{p, i})$ satisfies conditions in Definition \ref{quiver.gen.h}. 

Denote by $\tau_{p,i}$ the cluster transformation \eqref{cluster.trans.tau} on the subquiver  ${\bf q}_{p, i}$.

\begin{example} Let $m=4$. If there are 4 ideal triangles surrounding $p$, then the quiver ${\bf q}$ locally looks like Figure \ref{waction}. The $\circ$- quiver ${\bf q}_{p, 1}$ consists of vertices of distance $3$ to the puncture $p$.
\end{example}

\begin{figure}[ht]
\epsfxsize150pt
\centerline{\epsfbox{waction.eps}}
\caption{}
\label{waction}
\end{figure}

In this section, we assign to each vertex $v$ of  ${\bf q}$ a function $A_v$ (respectively $X_v$) of the space ${\cal A}_{{\rm SL}_m, \bS}$ (respectively ${\cal X}_{{\rm PGL}_m, \bS}$). The set $\{A_v\}$ (respectively $\{X_v\}$) provides a cluster coordinate system for ${\cal A}_{{\rm SL}_m, \bS}$ (respectively ${\cal X}_{{\rm PGL}_m, \bS}$).

Recall that the puncture $p$  corresponds  to a Weyl group (of type $A_{m-1}$) action on both ${\cal A}_{{\rm SL}_m, \bS}$ and ${\cal X}_{{\rm PGL}_m, \bS}$.
The Weyl group is generated by simple reflections $s_{p,i}$, $i\in\{1,\ldots, m-1\}$. 

\bt 
\la{2.17.22.42h}
The map $s_{p,i}$ is  exactly the cluster transformation $\tau_{p,i}$, i.e.,
\be
\la{2.17.12.58h}
s_{p,i}^* A_v=\tau_{p,i}^* A_v, \quad 
s_{p,i}^* X_v=\tau_{p,i}^* X_v.\quad \quad \forall v\in \{\mbox{vertices of ${\bf q}$.}\}
\ee
\et

We prove Theorem \ref{2.17.22.42h} in the rest of this section.

\subsection{The moduli space ${\cal A}_{{\rm SL}_m, \bS}$}
\la{aspace.def.weyl.action}

\paragraph{The decorated flag variety ${\cal A}_{{\rm SL}_m}$.}
Let $V_m$ be an $m$-dimensional vector space  with a volume form $\omega\in {\rm det} V_m^*$. A flag $F_{\bullet}$ is a collection of subspaces in $V_m$:
\be \la{FLAG}
F_1\subset F_2\subset \ldots \subset F_{m-1},\quad \dim F_i=i.
\ee
A decorated flag  is a flag ${F}_{\bullet}$ with a choice of non-zero vectors $f_{(i)}\in \wedge^i F_i$ for each $i=1,\ldots, m-1$ called {\it decorations}. The decorated flag variety ${\cal A}_{{\rm SL}_m}$ parametrizes decorated flags for ${\rm SL}_m{:={\rm SL}(V_m)}$. The group ${\rm SL}_m$ acts on ${\cal A}_{{\rm SL}_m}$ on the left. The Cartan subgroup of ${\rm SL}_m$ acts on ${\cal A}_{{\rm SL}_m}$ on the right by rescaling the decorations. Note that ${\cal A}_{{\rm SL}_m}\stackrel{\sim}{=}\G/ {\rm U}$.

\paragraph{Additive characters associated to decorated flags.}
Let ${\rm F}\in {\cal A}_{{\rm SL}_m}$ be a decorated flag. Its  stabilizer ${\rm U}_{\rm F}$ is a unipotent subgroup of ${\rm SL}_m$.  A representative of ${\rm F}$ is a linear basis $(f_1,\ldots, f_m)$ of $V_m$ which gives rise to decorations of ${\rm F}$
$$ f_{(i)}:=f_1\wedge \ldots \wedge f_i\in \wedge^i F_i,\quad  \forall i\in\{1,\ldots, m-1\}; \hskip 7mm \langle f_1\wedge\ldots\wedge f_m, \omega\rangle =1.
$$

Let $u\in {\rm U}_{\rm F}$. Note that $e_{i}:= u(f_{i+1})-f_{i+1} \in F_i$. The vector $f_{(i-1)}\wedge e_i\in \wedge ^i F_i$ is independent of the representative $(f_1,\ldots, f_m)$ chosen. 
It determines a unique $\chi_{{\rm F},i}(u)\in {\Bbb A}^1$ such that
\be
f_{(i-1)}\wedge e_i =\chi_{{\rm F},i}(u) f_{(i)}, \hskip 7mm \forall i\in \{1,\ldots, m-1\}.
\ee
Therefore we associate to ${\rm U}_{\rm F}$ a set of {\it additive characters}
\be
(\chi_{{\rm F},1},\ldots, \chi_{{\rm F},{m-1}}):\quad {\rm U}_{{\rm F}}\lra {\Bbb A}^{m-1}.
\ee

\paragraph{The moduli space ${\cal A}_{{\rm SL}_m, \bS}$.} 
The moduli space ${\cal A}_{{\rm SL}_m, \bS}$ parametrizes pairs $({\cal L}, \gamma=\{{\rm F}_s\})$ where 
${\cal L}$ is a \emph{twisted} ${\rm SL}_m$-local system on $\bS$, 
and $\gamma$ assigns to every marked point $s$ a section ${\rm F}_s$ of ${\cal L}\otimes_{{\rm SL}_m}{\cal A}_{{\rm SL}_m}$. For a puncture $p$, the assigned section ${\rm F}_p$ is invariant under the monodromy $u_p$ around $p$. 
Thus $u_p$ is unipotent and belongs to the stabilizer of ${\rm F}_p$. 
The functions 
\be
\la{potential.2.19.hh}
{\cal W}_{p,i}:=\chi_{{\rm F}_p,i}(u_p), \quad i\in \{1,\ldots, m-1\}
\ee
are called {\it partial potentials} of ${\cal A}_{{\rm SL}_m, \bS}$ associated to the puncture $p$.

For each puncture $p$ of $\bS$, there is a Weyl group action on ${\cal A}_{{\rm SL}_m, \bS}$ by rescaling the decorations of the flat section ${\rm F}_p$. If the decorations of ${\rm F}_p$ are presented by the nonzero vectors $f_{(k)}, k=\{1,\ldots, m-1\}$,  then the action of the simple reflection $s_{p, i}$ changes the decorations to
\be
\la{actionsip.2.17}
f_{(1)}, \ldots, {\cal W}_{p,i} f_{(i)}, \ldots, f_{(m-1)},
\ee
and keeps the rest intact.

\paragraph{Local picture: configurations of three decorated flags.}

Let $({\rm F}, {\rm G}, {\rm H})$ be a configuration of three decorated flags, described by sets of nonzero vectors:
$$
{\rm F}=(f_{(1)},\ldots, f_{(m-1)}), \quad {\rm G}=(g_{(1)},\ldots, g_{(m-1)}), \quad {\rm H}=(h_{(1)},\ldots, h_{(m-1)}).
$$
Recall the $m$-triangulation of a triangle.
Each vertex $(a, b, c)\in \eqref{GAMMA.m.lh}$ gives rise to a function 
\be
\la{delatafgcord}
\Delta_{a, b,c}({\rm F, G,H}):=\langle f_{(a)}\wedge g_{(b)}\wedge h_{(c)}, \omega \rangle.
\ee
Forgetting the decorations, we get a natural projection $\pi: {\cal A}_{{\rm SL}_m} \rightarrow {\cal B}_{{\rm SL}_m}$. If $({\rm F, G, H})$ is generic, then there is a unique $u\in {\rm U}_{\rm F}$ such that 
$ 
u\cdot \pi({\rm H})=\pi({\rm G}). 
$ 
We define the potential
\be
\la{potential.i.12.59h}
{\cal W}_{{\rm F}, i}({\rm F, G, H}):=\chi_{{\rm F},i}(u).
\ee

\begin{figure}[ht]
\epsfxsize200pt
\centerline{\epsfbox{rhombi.eps}}
\caption{ }
\label{rhombi}
\end{figure}

Let $\alpha$ be the arrow $(a,b,c) \leftarrow(a, b+1,c-1 )$ in the $m$-triangulation. 
 As shown on Figure \ref{rhombi}, there is a unique rhombus with the  diagonal $\alpha$.  Its 
vertices correspond to functions in \eqref{delatafgcord}. 

Set $\Delta_{m,0,0}=\Delta_{0,m,0}=\Delta_{0,0,m}=1$. We consider the ratio

\be
\la{rhombi.function.r}
R_{\alpha}:=\frac{\Delta_{a+1,b,c-1}\Delta_{a-1,b+1,c}}{\Delta_{a,b,c}\Delta_{a,b+1,c-1}}.
\ee

\bl[{\cite[Section 3]{GS}}] 
\la{potential.i.12.59hll}
The potential \eqref{potential.i.12.59h} is 
\be
{\cal W}_{{\rm F},i}=\sum_{\alpha\in\{\mbox{arrows of row $i$\}} } R_{\alpha}.
\ee
\el

\begin{example}
Let ${\rm G}={\rm SL}_5$. There are three rhombi in row $2$ as shown on Figure \ref{potential}. Therefore
\be
{\cal W}_{{\rm F},2}=\frac{\Delta_{3,0,2}\Delta_{1,1,3}}{\Delta_{2,0,3}\Delta_{2,1,2}}+\frac{\Delta_{3,1,1}\Delta_{1,2,2}}{\Delta_{2,1,2}\Delta_{2,2,1}}+
\frac{\Delta_{3,2,0}\Delta_{1,3,1}}{\Delta_{2,2,1}\Delta_{2,3,0}}.
\ee
\end{example}

\begin{figure}[ht]
\epsfxsize100pt
\centerline{\epsfbox{potential.eps}}
\caption{ }
\label{potential}
\end{figure}

\paragraph{Global picture: cluster coordinates of ${\cal A}_{{\rm SL}_m, \bS}$.} Recall the quiver ${\bf q}$ associated to an ideal triangulation ${\cal T}$ of $\bS$. 
Let $v$ be a vertex of ${\bf q}$.
Assume that $v$ is contained in a triangle ${\rm t}\in {\cal T}$ and labelled by $(a,b,c)\in \Gamma_m$.   
Restricting the data $({\cal L}, \gamma)\in {\cal A}_{{\rm SL}_m, \bS}$ to the triangle ${\rm t}$, we get  a configuration $({\rm F,G,H})$ of three decorated flags. We set
\be
\la{cluster.cor.Aspace.h}
A_v:=\Delta_{a,b,c}({\rm F,G,H}).
\ee
The set $\{A_v\}$ is a coordinate system of ${\cal A}_{{\rm SL}_m, \bS}$.

The subquiver ${\bf q}_{p,i}$ is a cycle. Every arrow $\alpha$ of ${\bf q}_{p,i}$ corresponds to a rhombi term $R_{\alpha}$. The following Lemma is a direct sequence of Lemma \ref{potential.i.12.59hll}.
\bl
The potential \eqref{potential.2.19.hh} is
$$
{\cal W}_{p,i}=\sum_{\alpha \in \{\mbox{arrows of ${\bf q}_{i,p}$}\}} R_{\alpha}.
$$
\el
\begin{example}
Let $\G={\rm SL}_4$. If there are 4 ideal triangles surrounding $p$, then the function ${\cal W}_{p,1}$ is the sum of functions $R_\alpha$ assigned to the shadowed rhombi in Figure \ref{potentialaroundp}.
\end{example}
\begin{figure}[ht]
\epsfxsize100pt
\centerline{\epsfbox{potentialaroundp.eps}}
\caption{ }
\label{potentialaroundp}
\end{figure}

\paragraph{Proof of Theorem \ref{2.17.22.42h}: ${\cal A}$-Part.} According the definition of $A_v$ and \eqref{actionsip.2.17}, we have
$$
s_{p,i}^* A_v = \left\{ \begin{array}{ll}
      A_v{\cal W}_{p,i} & \mbox{if $j$ is vertex of ${\bf q}_{p,i}$}, \\
     A_v & \mbox{otherwise}. \\
   \end{array}
   \right.     
$$
Note that ${\cal W}_{p,i}$ is exactly the function $W$ in Theorem \ref{2.8.21.24.h}. Therefore we have
$
\tau_{p,i}^* A_v= s_{p,i}^* A_v.
$ 
 
\subsection{The moduli space ${\cal X}_{{\rm PGL}_m, \bS}$}
A flag $F_{\bullet}$ for ${\rm PGL}_m$ is a nested 
collection (\ref{FLAG}) of subspaces in a vector space $V_m$. 
 The flag variety ${\cal B}_{{\rm PGL}_m}$ parametrizes flags for ${\rm PGL}_m$.
First we consider the cases when $m=2,3$.

\paragraph{Local picture: the moduli space ${\cal X}_{{\rm PGL}_2, D_n}$.} 
The flag variety ${\cal B}_{{\rm PGL}_2}$ parametrizes lines in $V_2$. Let $(L_1,\ldots, L_4)$ be a quadruple of lines. Let $\omega \in {\rm det} V_2^*$ be a volume form. We choose  nonzero vectors $l_i\in L_i$. Let $\Delta(l_i\wedge l_j):=\langle l_i\wedge l_j, \omega\rangle$.
 We set the cross ratio
\be
\la{cross.ratio}
r^+(L_1, L_2, L_3, L_4):=\frac{\Delta(l_1\wedge l_2)\Delta(l_3\wedge l_4)}{\Delta(l_1\wedge l_4)\Delta(l_2\wedge l_3)}. 
\ee

Let $D_n$ be a punctured disk with $n$ special points on its boundary.  We label the 
special points clockwise from 1 to $n$.
The space ${\cal X}_{{\rm PGL}_2, D_n}$ parametrizes data $({\cal L}, \gamma=\{L_p, L_1,\ldots, L_n\})$, where ${\cal L}$ is a ${\rm PGL}_2$-local system on $D_n$, and $\gamma$ assigns to the puncture $p$ a flat section ${L_p}$ of  ${\cal L}\otimes_{{\rm PGL}_2} {\cal B}_{{\rm PGL}_2}$ invariant under the 
monodromy around $p$, and to each special point $i$ a flat section ${L}_i$. 
We connect each special point and the puncture, obtaining a triangulation of $D_n$. We restrict the pair $({\cal L}, \gamma)$ to the ideal quadrilateral with vertices $p$, $i-1$, $i$, $i+1$. We consider 
\be
\la{2.17.12.55h}
X_i:=r^+(L_p, L_{i-1},L_{i}, L_{i+1}).
\ee
The set $\{X_1,\ldots, X_n\}$ gives rise to a coordinate system of ${\cal X}_{{\rm PGL}_2, D_n}$. 

If the monodromy around the puncture is generic, then there is another flat section ${L}_p'$ invariant under the monodromy. We get a $\Z/2$-action on ${\cal X}_{{\rm PGL}_2, D_n}$ via replacing $L_p$ by ${L}_p'$.
Let
$$
{X}_i':=r^+( {L}_p', L_{i-1}, L_{i}, L_{i+1}), \quad {Y}_i:=r^+(L_{i}, L_{p}, L_{i+1}, {L}_p').
$$

\bl[{\cite[Lemma 12.3]{FG1}}] 
\la{Lemma12.3.FG1}
We have
\[
Y_i=\frac{X_iF_{i-1}}{F_i},\quad 
{X}_i'=\frac{X_i}{Y_iY_{i-1}}=\frac{F_i}{X_{i-1}F_{i-2}},\quad \mbox{ where } F_i=1+X_i+X_{i}X_{i-1}+\ldots+X_{i}\ldots X_{i-n+2}.
\]
\el

\paragraph{Cross-ratio versus triple ratio.} 
We consider a triple of flags for ${\rm PGL}_3$
$$
F_{\bullet}=(F_1\subset F_2),\quad G_{\bullet}=(G_1\subset G_2), \quad H_{\bullet}=(H_1\subset H_2).
$$
Let $\omega\in {\rm det} V_3^*$ be a volume form. We choose  nonzero vectors $f_1\in F_1$, $f_2\in \wedge^2 F_2$ and the same for $G_{\bullet}$ and $H_{\bullet}$. The following triple ratio  is independent of the choices of $\omega$ and $f_i, g_i, h_i$,
\be
\la{triple.ratio}
r_3^+(F_{\bullet}, G_{\bullet}, H_{\bullet}):=\frac{\langle f_1\wedge g_2,\omega\rangle~\langle g_1\wedge h_2,\omega\rangle~\langle h_1\wedge f_2, \omega\rangle}{\langle f_1\wedge h_2,\omega\rangle~\langle g_1\wedge f_2,\omega\rangle~\langle h_1\wedge g_2,\omega\rangle}.
\ee
If the triple $(F_{\bullet}, G_{\bullet}, H_{\bullet})$ is of generic position, then it gives rise to a quadruple of lines in $F_2$
$$
L_1:=F_1, \quad L_2:=G_2\cap F_2, \quad L_3:=(G_1\oplus H_1) \cap F_2, \quad L_4:=H_2\cap F_2.
$$
The following Lemma was proved in \cite[Lemma 3.8]{G94}. We provide a proof for completeness. 
\bl
\la{triple.cross.ratio}
The triple ratio \eqref{triple.ratio} is equal to the cross ratio $r^+(L_1, L_2, L_3,L_4)$.
\el
\begin{proof}
We choose  $g_1\in G_1$, $h_1\in H_1$ such that
$$
\Delta(\ast)=\langle \ast\wedge g_1,\omega\rangle= \langle \ast \wedge h_1,\omega\rangle,\quad \quad \forall\ast \in\wedge^2F_2.
$$
Therefore we get
$
\langle g_1\wedge f_2, \omega\rangle=\langle h_1\wedge f_2,\omega\rangle.
$ 
Let $l_3:=g_1-h_1\in G_1\oplus H_1$. Note that $f_2 \wedge l_3=0$. So $l_3\in F_2$. Therefore $l_3\in L_3$. 
Let $l_2\in L_2$ such that $g_2:=l_2\wedge g_1$. Therefore we get
\be
\Delta(l_2\wedge l_3)=\Delta(-l_3\wedge l_2)=\langle-l_3\wedge l_2\wedge g_1,\omega\rangle=\langle-l_3\wedge g_2,\omega\rangle=\langle h_1\wedge g_2,\omega\rangle.
\ee
Let $l_4\in L_4$ such that $h_2:=l_4\wedge h_1$. Then 
$
\Delta(l_3\wedge l_4)=\langle l_3\wedge l_4\wedge h_1,\omega\rangle=\langle l_3\wedge h_2,\omega\rangle=\langle g_1\wedge h_2,\omega\rangle.
$ 
Let  $l_1=f_1\in F_1$. Then we get
$$
\Delta(l_1\wedge l_2)=\langle l_1\wedge l_2\wedge g_1,\omega\rangle=
\langle f_1\wedge g_2,\omega\rangle,\hskip 7mm
\Delta(l_1\wedge l_4)=\langle l_1\wedge l_4\wedge h_1,\omega\rangle=\langle f_1\wedge h_2,\omega\rangle. 
$$
Combining the above equations, the Lemma is proved.
\end{proof}

\paragraph{Global picture: cluster coordinates of ${\cal X}_{{\rm PGL}_m, \bS}$.} Recall that
the moduli space ${\cal X}_{{\rm PGL}_m, \bS}$ parametrizes pairs $({\cal L}, \gamma=\{F_p\})$, where ${\cal L}$ is a ${\rm PGL}_m$-local system on $\bS$, and $\gamma$ assigns to each puncture $p$ a flat section $F_p$ of ${\cal L}\otimes_{{\rm PGL}_m}{\cal B}_{{\rm PGL}_m}$ invariant under the monodromy around $p$.

Recall the quiver ${\bf q}$ associated to an ideal triangulation ${\cal T}$ of $\bS$. We assign a function $X_v$ of ${\cal X}_{{\rm PGL}_m, \bS}$ to each vertex $v$ of ${\bf q}$. There are two cases.

\begin{itemize}
\item[1.] The vertex $v$ is an inner vertex of a triangle ${\rm t}\in {\cal T}$.  By restricting a generic pair $({\cal L}, \gamma)\in {\cal X}_{{\rm PGL}_m, \bS}$ to the triangle ${\rm t}$, we obtain a configuration  $(F_{\bullet}, G_{\bullet}, H_{\bullet})$ of flags for ${\rm PGL}_m$.
Let us choose decorations for each flag. Recall the function \eqref{delatafgcord}.
If $v$ is labelled by $(a, b,c)\in \Gamma_m$, then there are 6 vertices in the $m$-triangulation adjacent to the vertex $v$. See Figure \ref{verexfunc}. 
We consider the triple ratio 
\be
\la{triple.ratio.delta}
X_v:=\frac{\Delta_{a,b-1,c+1}\Delta_{a-1,b+1,c}\Delta_{a+1,b,c-1}}{\Delta_{a-1,b,c+1}\Delta_{a,b+1,c-1}\Delta_{a+1,b-1,c}}.
\ee
Note that $X_v$ is independent of the choices of decorations. So it is a function of ${\cal X}_{{\rm PGL}_m, \bS}$.

\begin{figure}[ht]
\epsfxsize330pt
\centerline{\epsfbox{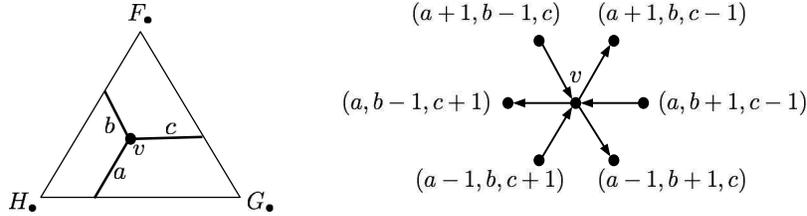}}
\caption{Triple ratio corresponding to an inner vertex.}
\label{verexfunc}
\end{figure}

We consider the following lines in the quotient $F_{a+1}/F_{a-1}$
$$
L_1=F_{a}/F_{a-1},\quad L_2=\Big((G_{b-1}\oplus H_{c+1})\cap F_{a+1}\Big)/F_{a-1},
$$
$$
 L_3=\Big((G_{b}\oplus H_{c})\cap F_{a+1}\Big)/F_{a-1},\hskip 7mm L_4=\Big((G_{b+1}\oplus H_{c-1})\cap F_{a+1}\Big)/F_{a-1}.
$$

\bl 
\la{2.17.12.50h}
We have $
X_v=r^+(L_1,L_2,L_3,L_4).$ 
\el

\begin{proof}
We project $(F_{\bullet}, G_{\bullet}, H_{\bullet})$ onto the quotient 
$$
\frac{V_m}{F_{a-1}\oplus G_{b-1}\oplus H_{c-1}},
$$
obtaining a configuration of flags for ${\rm PGL}_3$
$$
\overline{F}_{\bullet}=(F_{a}/F_{a-1}\subset F_{a+1}/F_{a-1}), ~~\overline{G}_{\bullet}=(G_{b}/G_{b-1}\subset G_{b+1}/G_{b-1}),~~ \overline{H}_{\bullet}=(H_{c}/H_{c-1}\subset H_{c+1}/H_{c-1}).
$$
Clearly
$
X_v=r_3^+(\overline{F}_{\bullet}, \overline{G}_{\bullet}, \overline{H}_{\bullet}). 
$ 
By Lemma \ref{triple.cross.ratio}, 
$
r_3^+(\overline{F}_{\bullet}, \overline{G}_{\bullet}, \overline{H}_{\bullet})=r^+(L_1,L_2,L_3,L_4).
$
\end{proof}

\item[2.] The vertex $v$ belongs to an edge $e$ in ${\cal T}$. 
Restricting a generic  $({\cal L}, \gamma)\in {\cal X}_{{\rm PGL}_m, \bS}$ to the unique 
 ideal quadrilateral containing $e$ as a diagonal, we get a configuration  $(F_{\bullet}, G_{\bullet}, H_{\bullet}, E_{\bullet})$. 
Let us choose decorations $f_{(k)}, k=1,\ldots, m-1$ for $F_{\bullet}$, and 
similarly for $G_{\bullet}, H_{\bullet}, E_{\bullet}$.
There are 4 vertices adjacent to $v$ in the quiver, see Figure \ref{verexfunc2}. 
We consider the cross ratio
\be
\la{crosss.ratio.delta} 
X_v:=\frac{\langle f_{(a)}\wedge h_{(b-1)}\wedge e_{(1)}, \omega\rangle ~\langle f_{(a-1)}\wedge g_{(1)}\wedge h_{(b)}, \omega\rangle}{\langle f_{(a-1)}\wedge h_{(b)}\wedge e_{(1)}, \omega\rangle ~\langle f_{(a)}\wedge g_{(1)}\wedge h_{(b-1)}, \omega\rangle}.
\ee
Note that $X_v$ is independent of the decorations chosen. So it is a function of ${\cal X}_{{\rm PGL}_m, \bS}$.
\begin{figure}[ht]
\epsfxsize230pt
\centerline{\epsfbox{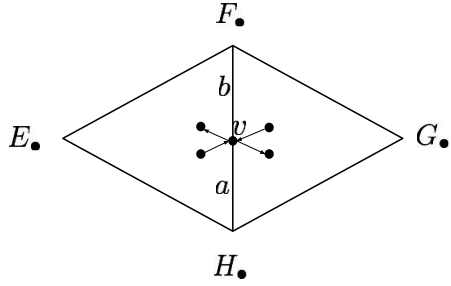}}
\caption{Cross ratio corresponding to an edge point.}
\label{verexfunc2}
\end{figure}

We consider the following lines in the quotient $F_{a+1}/F_{a-1}$:
$$
L_1=F_a/F_{a-1}, \quad L_2= \Big((G_1\oplus H_{b-1})\cap F_{a+1}\Big)/F_{a-1},
$$
$$
L_3=(H_b\cap F_{a+1})/ F_{a-1}, \quad L_4=\Big((H_{b-1}\oplus E_1)\cap F_{a+1}\Big)/F_{a-1}.
$$

\bl 
\la{tech.lemma.2.20.12.51h}
We have $X_v=r^+(L_1, L_2, L_3, L_4).$
\el

\begin{proof}
We project $(F_{\bullet}, G_{\bullet}, H_{\bullet}, E_{\bullet})$ on to the quotient
\be
\la{quoten2.hh}
\frac{V_m}{F_{a-1}\oplus H_{b-1}},
\ee  
obtaining 4 lines
$
\overline{F}=F_a/F_{a-1}, \overline{G}=G_1,  \overline{H}=H_b/H_{b-1}, \overline{E}=E_1.
$ 
Clearly we have
$
X_v=r^+(\overline{F}, \overline{G}, \overline{H},\overline{E}). 
$ 
We project $F_{a+1}$ onto \eqref{quoten2.hh}, identifying
$
F_{a+1}/F_{a-1}={V_m}/{(F_{a-1}\oplus H_{b-1})}.
$ 
It follows directly that
$
r^+(\overline{E}, \overline{F},  \overline{G}, \overline{H})=r^+(L_1, L_2, L_3, L_4).
$
\end{proof}
\end{itemize} 
The functions $\{X_v\}$ provide a coordinate system for the space ${\cal X}_{{\rm PGL}_m, \bS}$.

\paragraph{The  map $\pi_{p,i}$.} 
Let $v_1,\ldots, v_n$ be the vertices of the subquiver ${\bf q}_{p,i}$.  
Let us define a  map 
$$
\pi_{p, i}:~~
{\cal X}_{{\rm PGL}_m, \bS}\lra {\cal X}_{{\rm PGL}_2, D_n}, \quad ({\cal L}, \gamma)\lra ({\cal L}_{p,i}, \{L_p, L_{v_1}, \ldots, L_{v_n}\}).
$$
Take a generic  $({\cal L}, \gamma)\in {\cal X}_{{\rm PGL}_m, \bS}$.
The framing of ${\cal L}$ near $p$ is given by  
a  flag of local subsystems of ${\cal L}$ near $p$, or, what is the same, by a flat section of 
the local system of flags associated to ${\cal L}$:
\be \la{FP}
F_p=(F_1\subset F_2\subset \ldots \subset F_{m-1}).
\ee
The two dimensional subquotient $F_{i+1}/F_{i-1}$ of the local system 
${\cal L}$ near $p$ provides us with a ${\rm PGL}_2$-local system ${\cal L}_{p,i}$ 
of a punctured disk and an invariant line $L_p:=F_{i}/F_{i-1}$. 

Let $v_k$ belong to an $m$-triangulation of a triangle ${\rm t}\in {\cal T}$, locally labelled by $(i,b,c)\in \Gamma_m$.
The data $({\cal L},\gamma)$ restricts to a configuration $(F_{\bullet}, G_{\bullet}, H_{\bullet})$. 
We assign to $v_k$ a line
$$
L_{v_k}:=\Big((G_{b}\oplus H_c)\cap F_{i+1}\Big)/F_{i-1}.
$$
The data $({\cal L}_{p,i}, \{L_p,L_1,\ldots, L_n\})$ defines the map $\pi_{p,i}$.

\bl
\la{Lemma.proj.2.21.h}
In the   coordinate systems  of ${\cal X}_{{\rm PGL}_m, \bS}$ and ${\cal X}_{{\rm PGL}_2, D_n}$, 
the map $\pi_{p,i}$ is a projection 
$$
\pi_{p,i}: ~{\cal X}_{{\rm PGL}_m, \bS}\lra {\cal X}_{{\rm PGL}_2},\quad (X_{v_1},\ldots, X_{v_n},\ldots)\lms (X_{v_1},\ldots, X_{v_n})
$$
\el
\begin{proof}
Follows from Lemmas \ref{2.17.12.50h}, \ref{tech.lemma.2.20.12.51h}.
\end{proof}

\paragraph{Weyl group action on ${\cal X}_{{\rm PGL}_m, \bS}$.} 
The Weyl group acts on ${\cal X}_{{\rm PGL}_m, \bS}$ via changing the flat section $F_p$ around $p$, 
see (\ref{FP}),  and keeping the rest intact. The simple reflection $s_{p,i}$ maps $F_{p}$ to
\be
\la{fpprime}
F_p'=(F_1\subset \ldots F_{i-1}\subset  {F}_i'\subset F_{i+1}\subset \ldots F_{m-1})
\ee
such that ${F}_p'$ is invariant under the monodromy around $p$. 

Recall the $\Z/2$-action $s$ on ${\cal X}_{{\rm PGL}_2, D_n}$. By definition, the following map commutes
\begin{equation}\label{eq78}
\begin{gathered}
\xymatrix{
{\cal X}_{{\rm PGL}_m,\bS} \ar[r]^{s_{p,i}} \ar[d]_{\pi_{p,i}}& {\cal X}_{{\rm PGL}_m,\bS} \ar[d]^{\pi_{p,i}} &\\
{\cal X}_{{\rm PGL}_2, D_n} \ar[r]_{s} & {\cal X}_{{\rm PGL}_2, D_n}  &}
\end{gathered}
\end{equation}

\paragraph{Proof of Theorem \ref{2.17.22.42h}: ${\cal X}$-Part.}
We  consider the following cases.
\begin{enumerate}

\item
The vertex $v$  belongs to ${\bf q}_{p,i}$. By Lemma \ref{Lemma.proj.2.21.h} and \eqref{eq78}, we reduce the case to ${\cal X}_{{\rm PGL}_2, D_n}$. Comparing transition maps in Lemma \ref{Lemma12.3.FG1} and Theorem \ref{2.8.21.24.h}, 
$
s_{p,i}^* X_v =s^* X_v = \tau_{p,i}^* X_v.
$ 

\item The vertex $v$ is of distance $m-i-1$ to the puncture $p$. 
If $v$ is an inner point of an ideal triangle ${\rm t}\in {\cal T}$ labelled by $(i+1, b,c)$, then 
the function $X_v$ is defined by \eqref{triple.ratio.delta}. Let $(F_p, G_{\bullet}, H_{\bullet})$ be the configuration obtained by restricting $({\cal L},\gamma)$ on ${\rm t}$. 
Let us choose decorations for each flag.  The action $s_{p,i}$ maps $F_p$ to $F_p'$ as in \eqref{fpprime}. Let us pick an nonzero vector $f_{(i)}'\in \wedge^iF_i'$. Together with $f_{(k)}\in \wedge^k F_{k}, k\neq i$, it gives rise to decorations of $F_p'$. Set
$$\Delta_{i,j,k}':=\langle f_{(i)}'\wedge g_{(j)}\wedge h_{(k)}, \omega\rangle. $$
Using \eqref{triple.ratio.delta}, we get
$$
\frac{s_{p,i}^*X_v}{X_v}=\frac{\Delta_{i, b+1, c}'\Delta_{i, b, c+1}}{\Delta_{i, b+1, c}\Delta_{i, b, c+1}'}=r^+(L_1, L_p, L_2, L_p'),
$$
where $L_p=F_{i}/F_{i-1}$, $L_p'=F_i'/F_{i-1}$, and 
$$
L_1=\Big((G_{b+1}\oplus H_{c})\cap F_{i+1}\Big)/F_{i-1}, ~~~
L_2=\Big((G_{b}\oplus H_{c+1})\cap F_{i+1}\Big)/F_{i-1}$$
are lines in the quotient $F_{i+1}/F_{i-1}$.
Comparing Theorem \ref{2.8.21.24.h} and Lemma \ref{Lemma12.3.FG1}, we get
$
s_{p,i}^*X_v=\tau_{p,i}^* X_v.
$ 
Similarly, the same formula holds when $v$ belongs to an edge in ${\cal T}$.

\item  The vertex $v$ is of distance $m-i+1$ to the  $p$. By a similar argument, 
$s_{p,i}^*X_v=\tau_{p,i}^* X_v.$

\item For the rest $v$, we have $s_{p,i}^*X_v=\tau_{p,i}^* X_v=X_v$.
\end{enumerate}

\section{The $\ast$-involution and its cluster nature}
In this section, $\bS$ is a decorated surface which admits an ideal triangulation without self-folded triangles.

Let $\alpha_i$ $(i\in I)$ be simple positive roots. There is a Dynkin diagram automorphism such that $\alpha_{i^*}=-w_0(\alpha_i)$. Let us fix a pinning of $\G$. 
We get an involution $\ast: \G \to \G$ defined in 
(\ref{inv}). 

The involution of $\G$ preserves the subgroups $\B$ and $\U$. Therefore it acts on the moduli spaces 
${\cal X}_{{\rm PGL}_m, \bS}$ and ${\cal A}_{{\rm SL}_m, \bS}$. Indeed, they are defined as the local systems 
on $\bS$ with a chosen reduction the subgroups $\B$ or $\U$ near the marked points. 
Since all pinnings in $\G$ are $\G$-conjugated, this does not depend on the choice of pinning which we use to define $\B$ or $\U$.  
Abusing notation, we denoted all of these actions by $\ast$.

Recall the cluster structure of ${\cal X}_{{\rm PGL}_m, \bS}$ and ${\cal A}_{{\rm SL}_m, \bS}$ in the previous section.

\bt
\la{cluster.ast.6.16.16.07h}
The involution $\ast$ on $({\cal A}_{{\rm SL}_m, \bS}, {\cal X}_{{\rm PGL}_m, \bS})$ is a cluster transformation. 
\et
We prove Theorem \ref{cluster.ast.6.16.16.07h} in  Sections \ref{proof.inv.cluster1}-\ref{proof.inv.cluster2}. 
We give a ${\rm GL}_m$-specific proof since we feel that it may contain 
more information that just the claim. We present an explicit sequence of cluster transformations equivalent to the involution $\ast$.

\subsection{Involution on ${\rm Conf}_n({\cal A}_{{\rm SL}_m})$}
\la{proof.inv.cluster1}
We give an equivalent definition of the involution $\ast$ on ${\rm Conf}_n({\cal A}_{{\rm SL}_m})$.
\paragraph{The moduli space ${\rm Conf}_n({\cal A}_{{\rm SL}_m})$.} 
Let $V$ be an $m$-dimensional vector space with a volume form $\omega$. 
A decorated flag ${\rm F}=(F_\bullet, \{f_{(k)}\})$ on $(V, \omega)$ is a decorated flag in $V$ with 
 $\langle f_{(m)}, \omega\rangle=1$.  See Section \ref{aspace.def.weyl.action}.
Denote by ${\cal A}_{V,\omega}$ the space of decorated flags on $(V, \omega)$. 
The group ${\rm Aut}(V, \omega)={\rm SL}(V)$ 
 acts on it  on the left. Set
\be
\la{confvspace}
{\rm Conf}_n({\cal A}_{V, \omega}):= {\rm Aut}(V, \omega) \backslash \big({\cal A}_{V, \omega}\big)^n.
\ee
An isomorphism $g: (V, \omega) \to (V', \omega')$ 
induces an isomorphism ${\cal A}_{V, \omega} \to {\cal A}_{V', \omega'}$ and therefore an isomorphism 
\be
\la{confvpacemap}
{\rm Conf}_n({\cal A}_{V, \omega})\stackrel{\sim}{\lra}{\rm Conf}_n({\cal A}_{V', \omega'}).
\ee 
Different isomorphisms $g$ differ by an automorphism of the  $(V, \omega)$. 
Since ${\rm Conf}_n({\cal A}_{V, \omega})$ is the space of ${\rm Aut}(V, \omega)$-coinvariants, 
 isomorphism \eqref{confvpacemap} does not depend on  $g$. We set 
\be
\la{confnspace.5.31.2}
{\rm Conf}_n({\cal A}_{{\rm SL}_m}):=
{\rm Conf}_n({\cal A}_{V, \omega}).
\ee

\paragraph{The dual decorate flags.}
Let $V^\ast$ be the dual vector space of $V$.  
For each $k\in \{1,\ldots, m\}$ there is a non-degenerate bilinear map
\be
\la{dualmap.5.31.3h}
\langle - , -\rangle: {\bigwedge}^k V \times {\bigwedge}^k V^\ast \lra {\Q}, \quad \quad \langle v_1\wedge\ldots \wedge v_k, \phi_1\wedge \ldots \wedge \phi_k\rangle=\det(\langle v_i, \phi_j\rangle).
\ee
There is a  canonical isomorphism
\be
\la{dualmap.5.31.1}
\ast: {\bigwedge}^{k}V \stackrel{\sim}{\lra} {\bigwedge}^{m-k} V^\ast, ~~~~\mbox{such that} ~~
\langle v, \ast u\rangle=
\langle u\wedge v, \omega \rangle.  
\ee

Let $W$ be a $k$-dimensional subspace of $V$. Set
$
W^\perp:=\{ \phi \in V^\ast ~|~ \langle w, \phi\rangle =0~\mbox{for all } w\in W\}.
$ 
\bl 
\la{lem.6.16.14.25.15hh}
 If $u\in {\bigwedge}^k W$, then $\ast u\in {\bigwedge}^{m-k}W^\perp$.
\el

\begin{proof} 
Let us choose a linear basis $(e_1,\ldots, e_m)$ of $V$ such that
$u=e_1\wedge e_2\wedge \ldots\wedge e_k$, and $
\langle e_1\wedge e_2 \wedge \ldots\wedge e_m, \omega\rangle=1$. 
Thus $(e_1,\ldots, e_k)$ is a linear basis of  $W$. Let 
$(e^1, e^2, \ldots, e^m)$ be the basis of $V^\ast$ 
dual to $(e_1,\ldots, e_m)$. Then 
$
\omega= e^1\wedge \ldots \wedge e^m. 
$ 
Therefore
\be
\la{ast.fk.6.16.21.04h}
\langle  u\wedge v,\omega\rangle
 =\langle u\wedge v, e^1\wedge \ldots e^{k}\wedge e^{k+1}\wedge\ldots \wedge e^m\rangle
 =\langle v, e^{k+1}\wedge \ldots \wedge e^m 
  \rangle. 
\ee
Since $W^\perp$ is the linear span of $(e^{k+1},\ldots, e^{m})$, by \eqref{dualmap.5.31.1} we get
$
\ast u= e^{k+1}\wedge \ldots \wedge e^{m}\in {\bigwedge}^{m-k}W^\perp.
$ 
\end{proof}

\bl 
\la{reverse.ast.property} 
Let $\omega^\ast$ be the  volume form of $V^\ast$ dual to $\omega$, i.e. $\langle \omega^*, \omega\rangle=1.$
Then one has
\be
\langle u \wedge v , \omega \rangle =\langle \omega^\ast, \ast u \wedge \ast v \rangle,
\hskip 1cm \forall u \in{\bigwedge}^{m-k}V, \quad \forall v\in {\bigwedge}^{k}V.
\ee
\el
\begin{proof} It suffices to the prove  for $v=e_1\wedge e_2\wedge \ldots \wedge e_k$. By the proof of Lemma \ref{lem.6.16.14.25.15hh}, we set
\[
\omega^\ast = e_1\wedge e_2 \wedge \ldots\wedge e_m \hskip 1cm \ast v= e^{k+1}\wedge \ldots \wedge e^{m}.
\]
Therefore
\[
\langle \omega^\ast, \ast u \wedge \ast v \rangle=\langle e_1 \wedge \ldots\wedge e_m , \ast u \wedge e^{k+1}\wedge \ldots \wedge e^{m} \rangle= \langle e_1 \wedge \ldots\wedge e_k , \ast u\rangle= \langle v, \ast u\rangle= \langle u \wedge v , \omega \rangle .
\]
\end{proof}

The dual flag $F_\bullet^\perp$  is a flag on $V^\ast$
\be
\la{dual.flag}
F_{m-1}^\perp\subset \ldots \subset F_2^\perp\subset F_1^\perp.
\ee
The isomorphism
$
\ast: {\cal A}_{V, \omega}\stackrel{\sim}{\to}{\cal A}_{V^\ast, \omega^\ast}$, 
$(F_{\bullet},\{f_{(k)}\})\lms (F_{\bullet}^\perp, \{\ast f_{(m-k)}\})
$ from Lemma \ref{lem.6.16.14.25.15hh} provides a canonical isomorphism
$
\ast: {\rm Conf}_n({\cal A}_{V, \omega})\stackrel{\sim}{\lra}{\rm Conf}_n({\cal A}_{V^\ast, \omega^\ast}).
$ 
So we get a canonical involution
\be
\la{ast.6.16.21.14h}
\ast:  {\rm Conf}_n({\cal A}_{{\rm SL}_m})\stackrel{\sim}{\lra}{\rm Conf}_n({\cal A}_{{\rm SL}_m}).
\ee
Using \eqref{ast.fk.6.16.21.04h}, 
it is easy to show that \eqref{ast.6.16.21.14h} is the involution defined via the involution $\ast$ 
in (\ref{inv}).

\paragraph{Example.} When $\dim V=3$, decorated flags in ${\cal A}_{V, \omega}$ are canonically identified with pairs 
\be
(v,\phi)\in V\times V^\ast, \hskip 10mm
v\neq 0, \quad \phi \neq 0, \quad \langle v, \phi \rangle =0.
\ee

Switching $v$ and $\phi$, we get the map
$
\ast: {\cal A}_{V,\omega}\lra {\cal A}_{V^\ast, \omega^\ast}, \quad (v,\phi)\lms (\phi, v).
$ 

In particular, it acts on the triples of decorated flags as follows:  
$$
\ast: \big((v_1, \phi_1), (v_2,\phi_2), (v_3,\phi_3)\big)~
\lms ~
\big((\phi_1, v_1), (\phi_2,v_2), (\phi_3,v_3)\big). 
$$

\begin{figure}[h]
\epsfxsize450pt
\centerline{\epsfbox{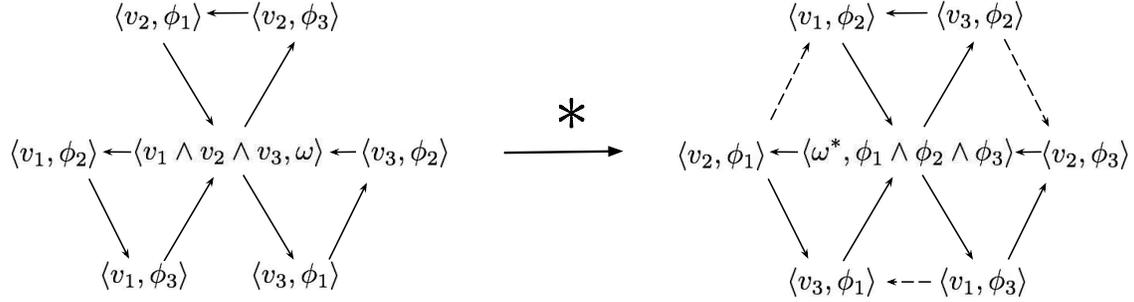}}
\caption{The involution $\ast$ on  ${\rm Conf_3({\cal A}_{{\rm SL}_3})}$ is a cluster transformation. The dashed arrows connect frozen vertices.}
\label{astm3}
\end{figure}

\bl 
\la{sl3ast.h}
We have
\be
\langle v_1\wedge v_2\wedge v_3, \omega\rangle\langle\omega^\ast, \phi_1\wedge \phi_2\wedge \phi_3\rangle =\langle v_1, \phi_2\rangle \langle v_2,\phi_3\rangle \langle v_3,\phi_1\rangle + \langle v_2, \phi_1\rangle \langle v_3,\phi_2\rangle \langle v_1,\phi_3\rangle.
\ee
Therefore the involution $\ast$ is a cluster transformation 
of ${\rm Conf_3({\cal A}_{{\rm SL}_3})}$, which mutates 
the inner vertex of the left quiver on Figure \ref{astm3}, and then switches the pair of 
vertices on each edge. 
\el
\begin{proof} By \eqref{dualmap.5.31.3h} we have
\begin{align}
\langle v_1\wedge v_2\wedge v_3, \omega\rangle\langle\omega^\ast, \phi_1\wedge \phi_2\wedge \phi_3\rangle &=\langle \omega^*, \omega\rangle \langle v_1\wedge v_2\wedge v_3,  \phi_1\wedge \phi_2\wedge \phi_3\rangle  \nonumber\\
&= \det   \begin{pmatrix} 
      0 &  \langle v_1,\phi_2 \rangle & \langle v_1,\phi_3\rangle \\
      \langle v_2, \phi_1\rangle & 0  & \langle v_2, \phi_3\rangle\\
      \langle v_3, \phi_1\rangle & \langle v_3,\phi_2\rangle & 0\\ 
   \end{pmatrix} \nonumber\\
   &=\langle v_1, \phi_2\rangle \langle v_2,\phi_3\rangle \langle v_3,\phi_1\rangle + \langle v_2, \phi_1\rangle \langle v_3,\phi_2\rangle \langle v_1,\phi_3\rangle. \nonumber
\end{align}
\end{proof}

\paragraph{Intersection of decorated flags.}
Let us fix a generic triple of decorated flags in ${\cal A}_{V, \omega}$
\[
{\rm F}=(F_\bullet, \{f_{(k)}\}),\quad {\rm G}=(G_{\bullet}, \{g_{(k)}\}), \quad {\rm H}=(H_{\bullet}, \{h_{(k)}\}).
\]
The pair $({\rm F, G})$ determines a basis $(f_1,\ldots, f_m)$ of $V$ such that
\be
f_{(k)}=f_1\wedge f_2\wedge \ldots \wedge f_k,\quad \quad f_{k}\in { F}_k\cap { G}_{m+1-k}.
\ee
The pair $({\rm G, H})$ determines a basis $(h_m,\ldots, h_1)$ of $V$ such that
\be
h_{(k)}=h_k\wedge h_{k-1}\wedge \ldots \wedge h_1,\quad \quad h_{k}\in  { G}_{n+1-k} \cap {H}_k.
\ee
For convenience, the subscripts of the wedge product decomposition of $h_{(k)}$ is reversed.
We set
\be
f_{s, (k)}:=f_{s+1}\wedge f_{s+2}\wedge\ldots \wedge  f_{s+k}. \hskip 10mm
h_{(k), s}:=h_{s+k}\wedge \ldots \wedge h_{s+1}.
\ee
Let $(a, b, c, s)$ be a quadruple of nonnegative integers such that 
\be
\la{6.23.12.27h}
a+b+c=m-s.
\ee
By definition, 
$
f_{s,(a)}\wedge g_{(b)}\wedge h_{(c),s}\in {\bigwedge}^{m-s}G_{m-s}.
$ 
We set
\be
\la{deltaabcs.h}
\Delta_{a,b,c}^s:=\langle f_{s, (a)}\wedge g_{(b)}\wedge h_{(c), s}, \omega_s\rangle,
\ee
where $\omega_s$ is a volume form of $G_{m-s}$ such that
\be
\la{omegas6.20.10.h}
\langle g_{(m-s)}, \omega_s\rangle :=\langle f_{(s)}\wedge g_{(m-s)}, \omega\rangle \langle g_{(m-s)}\wedge h_{(s)}, \omega\rangle.
\ee
\bl 
Let us assume that $a,b,c>0$. One has
\be
\la{oct.formula.6.17.11.44h}
\Delta_{a,b,c}^s\Delta_{a,b-1,c}^{s+1}=\Delta_{a+1, b-1, c}^s\Delta_{a-1, b, c}^{s+1}+ \Delta_{a, b-1, c+1}^s\Delta_{a, b, c-1}^{s+1}.
\ee
\el
\begin{proof} Note that the vector $f_{s+1, (a-1)}\wedge g_{(b)} \wedge h_{(c-1), s+1}$ belongs to the linear span of the vectors $ f_{s+1, (a)}\wedge g_{(b-1)}\wedge h_{(c-1), s+1}$ and $f_{s+1, (a-1)}\wedge g_{(b-1)} \wedge h_{(c), s+1}$. Let us set
\[
f_{s+1, (a-1)}\wedge g_{(b)} \wedge h_{(c-1), s+1}:=\alpha f_{s+1, (a)}\wedge g_{(b-1)}\wedge h_{(c-1), s+1} +\beta f_{s+1, (a-1)}\wedge g_{(b-1)} \wedge h_{(c), s+1}.
\]
Then
\begin{align}
f_{s, (a)}\wedge g_{(b)}\wedge h_{(c), s}&=\alpha f_{s, (a+1)}\wedge g_{(b-1)} \wedge h_{(c), s}+\beta f_{s, (a)}\wedge g_{(b-1)} \wedge h_{(c+1), s}, \nonumber\\
f_{s+1, (a-1)}\wedge g_{(b)}\wedge h_{(c), s+1}&=\alpha f_{s+1, (a)}\wedge g_{(b-1)} \wedge h_{(c), s+1}, \nonumber\\
f_{s+1, (a)}\wedge g_{(b)}\wedge h_{(c-1), s+1}&=\beta f_{s+1, (a)}\wedge g_{(b-1)} \wedge h_{(c), s+1}. \nonumber
\end{align}
Therefore
$$
\Delta_{a, b, c}^s=\alpha \Delta_{a+1, b-1, c}^s +\beta \Delta_{a, b-1, c+1}^s, \hskip 7mm\Delta_{a-1, b, c}^{s+1}=\alpha \Delta_{a, b-1,c}^{s+1},  \hskip 7mm
\Delta_{a, b, c-1}^{s+1} = \beta \Delta_{a,b-1, c}^{s+1}. 
$$
Plugging them to \eqref{oct.formula.6.17.11.44h}, we get the Lemma.
\end{proof}
\begin{remark}
Consider the tetrahedron
\be
{\bf T}_m:=\{(x_1,x_2, x_3, x_4)\in \R^4 ~|~ \sum_{i=1}^4 x_i =m, \quad x_i\geq 0\}. 
\ee
The quadruples $(a,b,c,s)$ satisfying \eqref{6.23.12.27h} are the integral points inside of ${\bf T}_m$. Therefore the functions $\Delta_{a,b,c}^s$ can be attached to the integral points of ${\bf T}_m.$ The functions appearing in \eqref{oct.formula.6.17.11.44h} correspond to the vertices of an octahedron as illustrated by Figure \ref{tetra.astm1}. Therefore we call Formula \eqref{oct.formula.6.17.11.44h} the octahedral relation.\footnote{A similar but different octahedral relation was studied in \cite[Sect.10]{FG1}.}
\begin{figure}[h]
\epsfxsize200pt
\centerline{\epsfbox{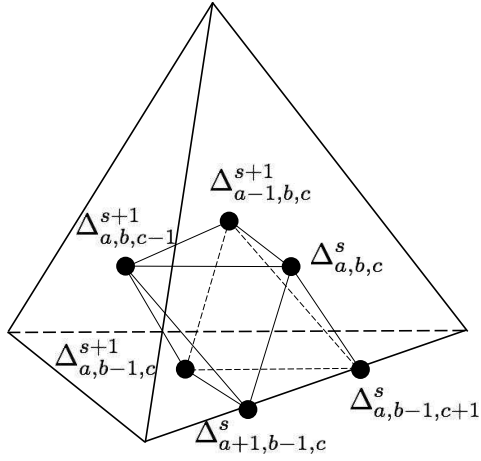}}
\caption{The octahedral relation.}
\label{tetra.astm1}
\end{figure}
\end{remark}

We show that all the functions $\Delta_{a,b,c}^s$ can be expressed in terms of  \eqref{delatafgcord}.
\bl 
\la{lem.boundary.6.20.10.46h}
One has
\be
\la{boundary.6.20.10.46h}
\Delta_{0,t, m-s-t}^s=\Delta_{s,m-s,0}\Delta_{0,t,m-t}, \hskip 12mm \Delta_{m-s-t,t,0}^s=\Delta_{m-t,t,0}\Delta_{0, m-s,s}
\ee
\el
\begin{proof}
Set
$
g_{(t)}\wedge h_{(m-s-t),s}:=\alpha g_{(m-s)}
$ 
Then
$$
g_{(t)}\wedge h_{(m-t)}=\big(g_{(t)}\wedge h_{(m-s-t),s}\big)\wedge h_{(s)}=\alpha g_{(m-s)}\wedge h_{(s)}.
$$
Therefore
\be
\la{6.22.11.59h}
\alpha=\frac{\Delta_{0,t,m-s-t}^{s}}{\Delta_{0,m-s,0}^{s}}=\frac{\Delta_{0,t,m-t}}{\Delta_{0,m-s,s}}.
\ee
By \eqref{omegas6.20.10.h} we have 
$
\Delta_{0,m-s,0}^{s}=\Delta_{s,m-s,0}\Delta_{0,s,m-s}.
$ 
Plugging it to \eqref{6.22.11.59h}, we get 
the first identity. The second  follows by a similar argument.
\end{proof}
When $s=0$, the functions \eqref{deltaabcs.h} equal $\Delta_{a,b,c}$ in \eqref{delatafgcord}. They correspond to the integral points on one face of the tetrahedron ${\bf T}_m$. The functions \eqref{boundary.6.20.10.46h} correspond to the integral points on two other faces of ${\bf T}_m$. See Figure \ref{Tfaces}. All of them can be expressed in terms of \eqref{delatafgcord}.
\begin{figure}[h]
\epsfxsize500pt
\centerline{\epsfbox{Tfaces.eps}}
\caption{}
\label{Tfaces}
\end{figure}

Using the octahedral relations \eqref{oct.formula.6.17.11.44h} repeatedly, we express $\Delta_{a,b,c}^s$ in terms of \eqref{delatafgcord} layer by layer as illustrated by Figure \ref{TLayers}.
\begin{figure}[h]
\epsfxsize400pt
\centerline{\epsfbox{TLayers.eps}}
\caption{}
\label{TLayers}
\end{figure}

\paragraph{Coordinates of the dual configurations.}
Recall the set $\Gamma_m$ in \eqref{GAMMA.m.lh}.  We set
\be
\Delta_{a,b,c}^\ast :=\Delta_{a,b,c}(\ast{\rm F}, \ast{\rm G}, \ast{\rm H})=\langle \omega^\ast, \ast f_{(m-a)}\wedge \ast g_{(m-b)} \wedge \ast h_{(m-c)}\rangle, \hskip 10mm \forall (a,b,c)\in {\Gamma}_m.
\ee
\bl 
\la{6.25.15.16h}
One has
\be \la{6.25.15.17h}
\Delta_{a,b,c}^\ast =\Delta_{c, 0, a}^b, \hskip 10mm \forall (a,b,c)\in {\Gamma}_m.
\ee
\el 
\begin{remark} Note that the functions $\Delta_{c, 0, a}^b$ correspond to the integral points on the base of ${\bf T}_m$. Using the process illustrated by Figure \ref{TLayers}, we express $\Delta_{a,b,c}^\ast$ in terms of \eqref{delatafgcord}.
\end{remark}
\begin{proof} 
By  Lemma \ref{lem.boundary.6.20.10.46h} and Lemma \ref{reverse.ast.property}, we have
$$
\Delta_{0,0,m-b}^b=\Delta_{b,m-b,0}\Delta_{0,0,m}=\Delta_{b,m-b,0}=\Delta_{m-b,b,0}^\ast.
$$
By moving $\ast h_{(m-c)}$ to the left, we get
$$
\Delta_{a,b,c}^\ast =(-1)^{c(m-c)}\langle \omega^\ast,  \ast h_{(m-c)}\wedge\ast f_{(m-a)}\wedge \ast g_{(m-b)}\rangle=(-1)^{c(m-c)}\langle g_{(m-b)},   \ast h_{(m-c)}\wedge  \ast f_{(m-a)} \rangle
$$
By definition
$
\langle h_{(c), m-c}, \ast h_{(m-c)} \rangle= 
\langle h_{(m-c)}\wedge h_{(c), m-c}, \omega \rangle=(-1)^{c(m-c)}.
$ 
Therefore
\begin{align}
\frac{\Delta_{a,b,c}^\ast }{\Delta_{c,0,a}^b}&=
\frac{\Delta_{a,b,c}^\ast}{\Delta_{m-b, b, 0}^\ast}\cdot\frac{ \Delta_{0,0,m-b}^b}{\Delta_{c,0,a}^b}
=(-1)^{c(m-c)}\frac{\langle g_{(m-b)}, \ast h_{(m-c)}\wedge \ast f_{(m-a)}\rangle}{\langle g_{(m-b)}, \ast f_{(b)}\rangle}\cdot\frac{ \langle h_{(m-b), b}, \omega_b\rangle}{ \langle f_{b, (c)}\wedge h_{(a), b}, \omega_b \rangle}
\nonumber \\
&=(-1)^{c(m-c)}\frac{\langle h_{(m-b), b},  \ast h_{(m-c)}\wedge \ast f_{(m-a)}\rangle }{\langle f_{b, (c)}\wedge h_{(a), b}, \ast f_{(b)}\rangle } \nonumber\\
&=(-1)^{c(m-c)}\frac{\langle h_{(c), m-c}\wedge h_{(a),b},  \ast h_{(m-c)}\wedge \ast f_{(m-a)}\rangle }{ \langle f_{(b)}\wedge f_{b, (c)}\wedge h_{(a), b}, \omega \rangle } \nonumber\\
&=(-1)^{c(m-c)}\langle h_{(c), m-c}, \ast h_{(m-c)} \rangle\cdot \frac{\langle h_{(a), b},  \ast f_{(m-a)}\rangle}{\langle f_{(m-a)} \wedge h_{(a), b}, \omega \rangle }=1 .\nonumber
\end{align}
\end{proof}

\paragraph{The involution $\ast$ of ${\rm Conf}_3({\cal A}_{{\rm SL}_m})$ is a cluster transformation.}
Recall the quiver associated to the $m$-triangulation of a triangle. See the left graph of Figure \ref{astclumut3}. Denote by $\mu_{a,b,c}$ the cluster mutation at the vertex $(a,b,c)\in \Gamma_m$. 
The $\circ$-vertices on edges are frozen vertices. 
We mutate the $\bullet$-vertices only. 
\begin{figure}[h]
\epsfxsize350pt
\centerline{\epsfbox{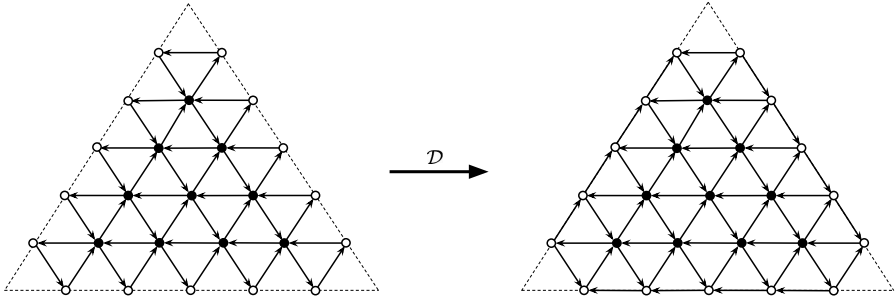}}
\caption{The cluster transformation ${\cal D}$.}
\label{astclumut3}
\end{figure}
 Let $i\in \{1,\ldots, m-2\}$. We introduce several cluster transformations: 
  \begin{enumerate}
 \item The sequence of cluster mutations at the $\bullet$-vertices in row $i$ from the left to the right: 
\be \la{cluster.row.i}
{\cal E}_{i}:=\mu_{1,b,i}\circ \ldots \circ \mu_{i-1,b,2}\circ \mu_{i,b,1}, \hskip 1cm \mbox{where~}b=m-i-1. 
\ee
\item The cluster transformation presented by a sequence of ${\cal E}_i$:
\be
{\cal S}_{i}:={\cal E}_1\circ {\cal E}_2\circ \ldots \circ{\cal E}_{i}.
\ee
It corresponds to a sequence of cluster mutations at the vertices included in the top triangle of size $i$ starting from the left bottom. See Figure \ref{astclumut4}.
\item The cluster transformation presented  by a sequence of ${\cal S}_{i}$:
\be \la{cluster.sch}
{\cal C}:={\cal S}_{m-2}\circ{\cal S}_{m-1}\circ \ldots \circ {\cal S}_1.
\ee
\item The cluster permutation $\sigma$ induced by an involution $\sigma$ of $\Gamma_m$ such that
\be
\la{perm.sigma.9.5}
\sigma: {\Gamma}_m\stackrel{\sim}{\lra}{\Gamma_m}, \hskip 1cm
\sigma(a,b,c)= \left\{ \begin{array}{ll}
    (0, c, b) & \mbox{if $a=0$}, \\
     (b, a, 0) & \mbox{if $c=0$}, \\  
     (c,b,a)  & \mbox{else}. \\
   \end{array}
   \right.
\ee
\end{enumerate}

\bp 
\la{basic.ast.prop.510}
The cluster transformation ${\cal D}:=\sigma\circ {\cal C}$ maps the left quiver of Figure \ref{astclumut3} to the right. It creates arrows between frozen vertices on the edges, and keeps the rest intact. Recall the cluster ${\cal A}$-coordinates $\{\Delta_{a,b,c}\}$ associated to the left quiver. We have
\be
\Delta^\ast_{a,b,c}:=
{\cal D}^\ast \Delta_{a,b,c}, \hskip 1cm \forall (a,b,c)\in \Gamma_m
\ee
\ep

\begin{proof} We start with proving the first part of the proposition. The proof is combinatorial and based on several pictures below. 

Note that the transformation ${\cal E}_k$ is a sequence of cluster mutations at $\bullet$-vertices in a row. Locally, the corresponding quiver mutation of ${\cal E}_k$ is illustrated by Figure \ref{astclumut}. In particular, we switch  the vertex $i$ (respectively $i'$) and the vertex $j$ (respectively $j'$) on the right quiver. 

\begin{figure}[H]
\epsfxsize350pt
\centerline{\epsfbox{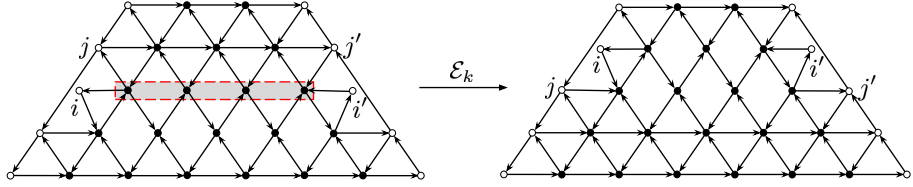}}
\caption{The cluster transformation ${\cal E}_k$.}
\label{astclumut}
\end{figure}

The cluster transformation ${\cal S}_i$ is a sequence of ${\cal E}_k$. Using the above process repeatedly, the corresponding quiver mutation of ${\cal S}_i$ is illustrated by Figure  \ref{astclumut5}. Note that at the last step we switch the vertex $1$ and $1'$.  Eventually ${\cal S}_i$ take the bottom vertex on the side to the top of the other side.
The resulted quiver looks similar to the original one, but its size is enlarged by $1$. 

\begin{figure}[H]
\epsfxsize450pt
\centerline{\epsfbox{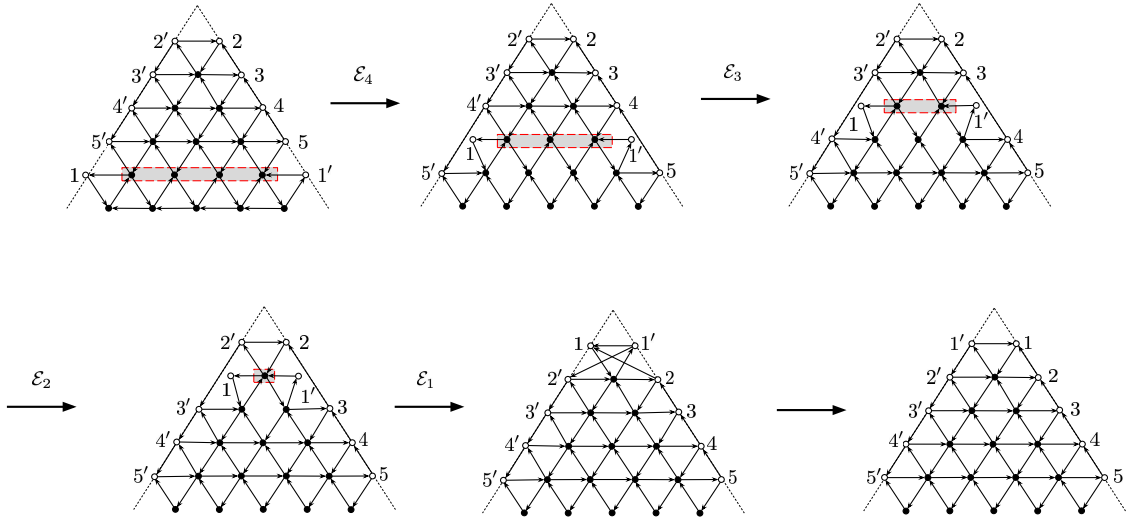}}
\caption{The cluster transformation ${\cal S}_4={\cal E}_1\circ {\cal E}_2 \circ {\cal E}_3 \circ {\cal E}_4$.}
\label{astclumut5}
\end{figure}

The cluster transformation ${\cal C}$ is a sequence of ${\cal S}_i$. Using the above process inductively, the corresponding quiver mutation of ${\cal C}$ is illustrated by 
Figure \ref{astclumut4}. After the action of ${\cal C}$, the orientation of all the arrows are reversed. In the last step we flip the whole quiver horizontally. It is equivalent to the permutation $\sigma$. Eventually we obtain the quiver after the action of ${\cal D}$.  

\begin{figure}[H]
\epsfxsize450pt
\centerline{\epsfbox{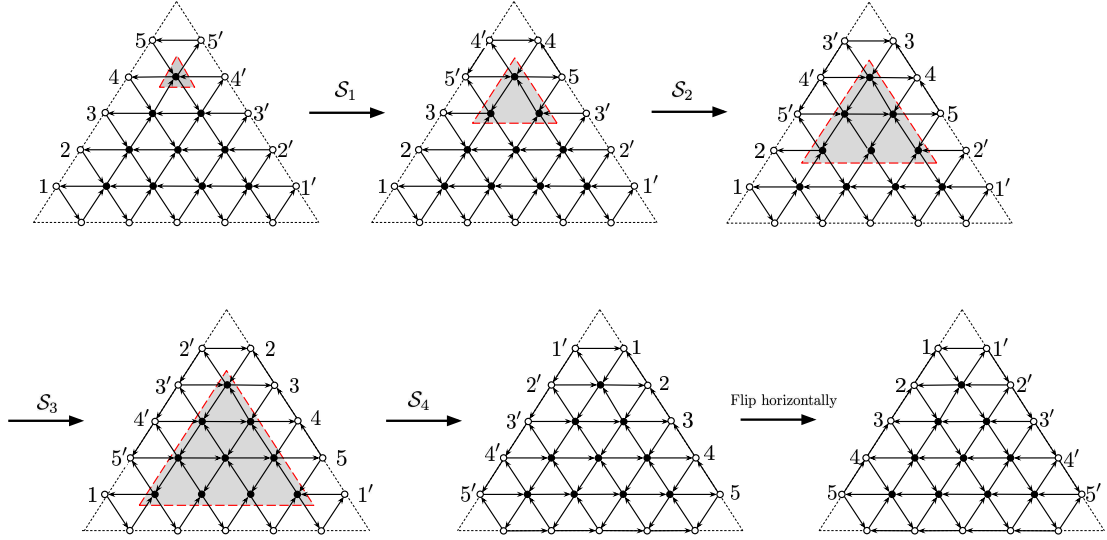}}
\caption{The cluster transformation ${\cal D}=\sigma \circ {\cal C}=\sigma\circ{\cal S}_{m-2}\circ \ldots \circ {\cal S}_2\circ {\cal S}_1$.}
\label{astclumut4}
\end{figure}

\vskip 3mm

To prove the second part of the proposition, we compare the above process with Figure \ref{TLayers}. Indeed, the action of ${\cal S}_i$ is equivalent to the transition from the $i$-th layer to the $(i+1)$-th layer in Figure \ref{TLayers}. In particular, the octahedral relation 
\eqref{oct.formula.6.17.11.44h} is compatible with the rule of cluster mutation. For the cluster mutations at the leftmost and the rightmost $\bullet$-vertices, besides the octahedral relation, we also need the identities \eqref{boundary.6.20.10.46h}. 
Note that we exchange the subscripts $a$ and $c$ in \eqref{6.25.15.17h}. Therefore, by Lemma \ref{6.25.15.16h}, after flipping the quiver horizontally, we get the function $\Delta_{a,b,c}^\ast$ eventually. 
\end{proof}


\begin{remark} The permutation \eqref{perm.sigma.9.5} can be decomposed as $\sigma:=\sigma_e\circ \sigma_i$, where $\sigma_e$ switches vertices on the edges only:
$$
\sigma_e(a,b,c):=\left\{ \begin{array}{ll}
    (0, c, b) & \mbox{if $a=0$}, \\
    (c,0,a) & \mbox{if $b=0$},\\
     (b, a, 0) & \mbox{if $c=0$}, \\  
     (a,b,c)  & \mbox{else}, \\
   \end{array}
   \right.
$$
and $\sigma_i$ exchanges the inner vertices only:
$$
\sigma_i(a,b,c):=\left\{ \begin{array}{ll}
    (c, b, a) & \mbox{if $a,b,c>0$}, \\
    (a,b,c) & \mbox{else}. \\
   \end{array}
   \right.
$$
We consider the following cluster transformation applying only on the inner vertices
\be
\la{9.6.19.48.15h}
{\cal C}_t:=\sigma_i\circ {\cal C}.
\ee
Note that the definition of ${\cal C}_t$ is not symmetric: one has to pick an angle of the triangle first.
\bc  Up to equivalence (in the sense of Definition \ref{Def2.7}),
 the cluster transformation ${\cal C}_t$ is independent of the angle chosen.\ec
\begin{proof}
The cluster transformation ${\cal D}$ in Proposition \ref{basic.ast.prop.510} can be rewritten as
${\cal D}:={\sigma}_e\circ {\cal C}_t$. The corollary is clear since ${\cal D}$ and ${\sigma}_e$ are independent of the angle chosen.
 \end{proof}
\end{remark}

\subsection{Proof of Theorem \ref{cluster.ast.6.16.16.07h}} \la{proof.inv.cluster2}
\paragraph{The ${\cal A}$--part.} Let us fix an ideal triangulation ${\cal T}:=(E, T)$ for the decorated surface $\bS$. Here $E$ is the set of all edges in ${\cal T}$, and $T$ is the set of all triangles in ${\cal T}$. We assign an $m$-triangulation to each triangle $t\in T$, obtaining a quiver ${\bf q}$. We define the cluster transformation
\be
{\cal D}: ={\sigma}_E\circ{\cal C}_T
\ee
where $\sigma_E$ is a permutation switching vertices of ${\bf q}$ on each edge  $e\in E$, and ${\cal C}_T$ is a cluster transformation applying \eqref{9.6.19.48.15h} on each triangle $t\in T$.

If $e\in E$ is a boundary edge, then the vertices of ${\bf q}$ on $e$ are frozen. As illustrated by Figure \ref{ast4clumut4}, the cluster tranformation ${\cal D}$ will create new arrows among the frozen vertices.

If $e\in E$ is an internal edge, then it belongs to two different triangles $t_1, t_2\in T$. The extra arrows on $e$ created by ${\cal C}_{t_1}$ and ${\cal C}_{t_2}$ will cancel. 

Summarizing, the cluster transformation ${\cal D}$ only add new arrows among frozen vertices.
\begin{figure}[h]
\epsfxsize350pt
\centerline{\epsfbox{ast4clumut44.eps}}
\caption{The involution ${\cal D}$ on ${\rm Conf}_4({\cal A}_{{\rm SL}_m})$.}
\label{ast4clumut4}
\end{figure}

\bt 
\la{ast.clu.tran.conf.n.93}
The action $\ast$ on ${\cal A}_{{\rm SL}_m,\bS}$ is exactly the cluster transformation ${\cal D}$.
\et
\begin{proof}
Note that the action $\ast$ is local, i.e., the following diagram commutes
\begin{equation}\label{mcdia}
\begin{gathered}
\xymatrix{
{\cal A}_{{\rm SL}_m,\bS} \ar[r]^{\ast} \ar[d]& {\cal A}_{{\rm SL}_m,\bS}\ar[d]  &\\
{\displaystyle \prod_{t\in T}}{\rm Conf}_3({\cal A}_{{\rm SL}_m}) \ar[r]^{\ast} &  {\displaystyle \prod_{t\in T}}{\rm Conf}_3({\cal A}_{{\rm SL}_m}) &}
\end{gathered}
\end{equation}
Meanwhile, the cluster coordinates \eqref{cluster.cor.Aspace.h} of ${\cal A}_{{\rm SL}_m,\bS}$ are also local. 
The Theorem is a direct consequence of Proposition \ref{basic.ast.prop.510}.
\end{proof}

\paragraph{The ${\cal X}$--part.}  Let ${\bf q}$ be a quiver with vertices parametrized by ${\rm I}$. Set $\varepsilon_{\bf q}=(\varepsilon_{ij})$.  Deleting frozen vertices of ${\bf q}$, we get a quiver $\overline{\bf q}$ with vertices parametrized by  ${\rm J}\subset {\rm I}$. Consider the map
\be
\la{natural.cluster.map.atox}
p: ~{\cal A}_{\bf q}\lra {\cal X}_{\overline{\bf q}}, \hskip 7mm ~~p^\ast X_j =\prod_{i\in {\rm I}}A_i^{\varepsilon_{ij}},~~\forall j\in {\rm J}.
\ee
It is known that  $p$ commutes with cluster permutations and cluster mutations
\begin{equation}\label{mcdia2h}
\begin{gathered}
\xymatrix{
{\cal A}_{\bf q} \ar[r]^{\mu_k} \ar[d]_p &{\cal A}_{\mu_k({\bf q})} \ar[d]^p  &\\
{\cal X}_{\bf \overline{q}} \ar[r]^{\mu_k} & {\cal X}_{\mu_k({\bf \overline{\bf q})}}  &}
\end{gathered}
\end{equation}
Therefore $p$ commutes with all cluster transformations.
\bt 
\la{ast.clu.tran.conf.n.93xpart}
The action $\ast$ on ${\cal X}_{{\rm PGL}_m,\bS}$ is exactly the cluster transformation ${\cal D}$.
\et
\begin{proof}
Since the action $\ast$ on ${\cal X}_{{\rm PGL}_m, \bS}$ is local, it suffices to prove the case when  ${\cal X}_{{\rm PGL}_m, \bS}={\rm Conf}_4({\cal B}_{{\rm PGL}_m})$. 
 Recall the projection from ${\cal A}_{{\rm SL}_m}$ to ${\cal B}_{{\rm PGL}_m}$ by forgetting decorations. It induces a projection 
\be
\la{projection.11.12.11.10.15}
p:~{\rm Conf}_4({\cal A}_{{\rm SL}_m})\lra {\rm Conf}_4({\cal B}_{{\rm PGL}_m}).
\ee
By \eqref{triple.ratio.delta}\eqref{crosss.ratio.delta}, the projection \eqref{projection.11.12.11.10.15} coincides with the map \eqref{natural.cluster.map.atox}.
Thus $p$ commutes with  ${\cal D}$:
$$
p^\ast ({\cal D}^\ast X_v)={\cal D}^\ast (p^\ast X_v).
$$
By the definition of $\ast$-involution, the following diagram commutes
\begin{equation}\label{mcdia3h}
\begin{gathered}
\xymatrix{
{\rm Conf}_4({\cal A}_{{\rm SL}_m})  \ar[r]^{\ast} \ar[d]_p & {\rm Conf}_4({\cal A}_{{\rm SL}_m})   \ar[d]^p  &\\
{\rm Conf}_4({\cal B}_{{\rm PGL}_m})  \ar[r]^{\ast} & {\rm Conf}_4({\cal B}_{{\rm PGL}_m})   &}
\end{gathered}
\end{equation}
Thus
$
p^\ast(\ast X_v)=\ast (p^\ast X_v).
$ 
By Theorem  \ref{ast.clu.tran.conf.n.93}, 
$
\ast (p^\ast X_v)={\cal D}^\ast (p^\ast X_v).
$ 
Hence
$p^\ast(\ast X_v)=p^\ast ({\cal D}^\ast X_v).$
Since $p$ is onto in this case,  $p^\ast$ is an injection. We get
$
\ast X_v={\cal D}^\ast X_v.
$
\end{proof}

\subsection{The Sch\"utzenberger involution}
\paragraph{The involution ${\bf S}$.}
Let $V$ be an $m$-dimensional vector space with a volume form $\omega$. Set
$
\omega':=(-1)^{m(m+1)/2}\omega.
$ 
We consider the  isomorphism
$$
t: ~{\cal A}_{V, \omega}\stackrel{\sim}{\lra}{\cal A}_{V, \omega'}, \hskip 7mm (F_{\bullet}, \{f_{(k)}\})\lms (F_{\bullet}, \{(-1)^{k(k+1)/2}f_{(k)}\}).
$$
Let $({\rm F}, {\rm G},{\rm H})$ be a triple of decorated flags. It is clear that\footnote{Note that we change the order of the decorated flags here.}
$$
\Delta_{a,b,c}({\rm F}, {\rm G},{\rm H})=\Delta_{c,b,a}(t({\rm H}), t({\rm G}),t({\rm F})), \hskip 7mm \forall (a,b,c)\in \Gamma_m.
$$
Let us compose the involution $\ast$ with the map $t$:
\be
{\bf S}: ~{\rm Conf}_3({\cal A}_{{\rm SL}_m})\stackrel{\sim}{\lra}{\rm Conf}_3({\cal A}_{{\rm SL}_m}),\hskip 7mm \big({\rm F}, {\rm G},{\rm H}\big)\lms \big(t(\ast{\rm H}), t(\ast{\rm G}),t(\ast{\rm F})\big)
\ee
\bl 
\la{15.9.10.9.24hh}
One has
${\bf S}^*\Delta_{a,b,c}=\Delta_{a,0,c}^b$ for all $(a,b,c)\in \Gamma_m$. 
\el
\begin{proof} It follows directly from Lemma \ref{6.25.15.16h}.
\end{proof}

\paragraph{The space ${\rm Conf}({\cal A}_{{\rm SL}_m}, {\cal B}_{{\rm SL}_m}, {\cal A}_{{\rm SL}_m})$.}
Recall the configuration space
\be
\la{confaabslm}
{\rm Conf}({\cal A}_{{\rm SL}_m}, {\cal B}_{{\rm SL}_m}, {\cal A}_{{\rm SL}_m}):={\rm SL}_m \backslash \big({\cal A}_{{\rm SL}_m}\times {\cal B}_{{\rm SL}_m}\times {\cal A}_{{\rm SL}_m}\big).
\ee
Let $(a,b,c)$ be a triple of nonnegative integers such that $a+b+c=m-1$. The functions 
\be
{\rm R}_{a,b,c}:=\frac{\Delta_{a,b,c+1}}{
\Delta_{a+1,b,c}}
\ee
form a coordinate system on \eqref{confaabslm}, referred to as the {\it special coordinate system}.

\bt[{\cite[Theorem 3.2]{GS}}] The special coordinate system on ${\rm Conf}({\cal A}_{{\rm SL}_m}, {\cal B}_{{\rm SL}_m}, {\cal A}_{{\rm SL}_m})$ together with the potential ${\cal W}=\chi_{\A_1}+\chi_{\A_3}$ provide a canonical isomorphism 
$$
\{\mbox{Gelfand-Tsetlin's patterns for ${\rm PGL}_m$}\}~~=~~{\rm Conf}^+({\cal A}_{{\rm SL}_m}, {\cal B}_{{\rm SL}_m}, {\cal A}_{{\rm SL}_m})(\Z^t).
$$
\et
\paragraph{The Sch\"utzenberger involution.}
Using the special coordinate system, we study the involution
\be
\la{sch.9.10.9.40hh}
\begin{split}
{\bf S}:~{\rm Conf}({\cal A}_{{\rm SL}_m}, {\cal B}_{{\rm SL}_m}, {\cal A}_{{\rm SL}_m})&\stackrel{\sim}{\lra}{\rm Conf}({\cal A}_{{\rm SL}_m}, {\cal B}_{{\rm SL}_m}, {\cal A}_{{\rm SL}_m}),\\
\big(\A_1, \B_2, \A_3\big)&\lms\big(t(\ast \A_3), \ast\B_2, t(\ast \A_1)\big)
\end{split}
\ee
Let $(a,b,c,s)$ be a quadruple of nonnegative integers such that 
$
a+b+c+s=m-1.
$ 
Let us set
\be
{\rm R}_{a,b,c}^s:=\frac{\Delta_{a,b,c+1}^s}{
\Delta_{a+1,b,c}^s}.
\ee
By definition, when $s=0$, we have
$
{\rm R}_{a,b,c}^0={\rm R}_{a,b,c}.
$ 
\bl One has 
${\bf S}^\ast {\rm R}_{a,b,c}={\rm R}_{a,0,c}^b$ for all $(a,b, c)\in \Gamma_{m-1}$.
\el
\begin{proof}
It follows directly from Lemma \ref{15.9.10.9.24hh}.
\end{proof}
\bl
Let us assume that $a,b,c>0$. One has
\be
{\rm R}_{a,b-1,c}^{s+1}{\rm R}_{a,b,c}^s=\frac{{\rm R}_{a-1,b,c}^{s+1}+{\rm R}_{a,b-1,c+1}^s}{({\rm R}_{a,b,c-1}^{s+1})^{-1}+({\rm R}_{a+1,b-1,c}^{s})^{-1}}.
\ee
\el
\begin{figure}[h]
\epsfxsize180pt
\centerline{\epsfbox{CopyoftetraAST.eps}}
\caption{}
\label{CopyoftetraAST}
\end{figure}
\begin{proof}
Using Figure \ref{CopyoftetraAST},
let us assign variables $(A,B,\ldots, J)$ to the 10 vertices satisfying the octahedral relations
$$
JH=BF+EC, \hskip 7mm IG=BD+EA.
$$ 
Let us assign ratios to the 6 red edges
$$
R_1=\frac{J}{I}, ~~R_2=\frac{H}{G}, ~~R_3=\frac{B}{A}, ~~R_4=\frac{C}{B}, ~~R_5=\frac{E}{D},~~ R_6=\frac{F}{E}.
$$
Then
\be
\la{schutzenberger.eq}
R_1R_2=\frac{JH}{IG}=\frac{BF+EC}{BD+EA}=\frac{F/E+C/B}{D/E+A/B}=\frac{R_6+R_4}{R_5^{-1}+R_3^{-1}}
\ee
Recall the octahedral relations \eqref{oct.formula.6.17.11.44h} illustrated by Figure \ref{tetra.astm1}. The Lemma follows by working with the subscripts of ${\rm R}_{a,b,c}^s$ carefully.
\end{proof}
We consider the special cases when $a=0$ or $c=0$.
\bl We have
\begin{align}
\la{schutzenberger.eq2.1} {\rm R}_{0,b-1,c}^{s+1}{\rm R}_{0,b,c}^s&=\frac{{\rm R}_{0,b-1,c+1}^s}{({\rm R}_{0,b,c-1}^{s+1})^{-1}+({\rm R}_{1,b-1,c}^{s})^{-1}}, \hskip 7mm &s=m-1-b-c,~c>0, \\
\la{schutzenberger.eq2.2}{\rm R}_{a,b-1,0}^{s+1}{\rm R}_{a,b,0}^s&=\frac{{\rm R}_{a-1,b,0}^{s+1}+{\rm R}_{a,b-1,1}^s}{({\rm R}_{a+1,b-1,0}^s)^{-1}},    &s=m-1-a-b, ~a>0,\\
\la{schutzenberger.eq2.3}{\rm R}_{0,b-1,0}^{s+1}{\rm R}_{0,b,0}^s&=\frac{{\rm R}_{0,b-1,1}^s}{({\rm R}_{1,b-1,0}^s)^{-1}},    &s=m-1-b.
\end{align}
\el
\begin{proof} 
Using Figure \ref{CCtetraAST},
let us assign variables $(B,\ldots, J)$ to the 9 vertices satisfying the relations
$$
JH=BF+EC, \hskip 7mm IG=BD.
$$ 
Let us assign ratios to the 5 red edges
$$
R_1=\frac{J}{I}, ~~R_2=\frac{H}{G}, ~~R_3=\frac{C}{B}, ~~R_4=\frac{E}{D},~~ R_5=\frac{F}{E}.
$$
\begin{figure}[h]
\epsfxsize180pt
\centerline{\epsfbox{CCtetraAST.eps}}
\caption{}
\label{CCtetraAST}
\end{figure}
Then
\be
\la{schutzenberger.eq2}
R_1R_2=\frac{JH}{IG}=\frac{BF+EC}{BD}=\frac{F/E+C/B}{D/E}=\frac{R_5+R_3}{R_4^{-1}}
\ee
For functions assigned to vertices on the face $c=0$, by Lemma \ref{lem.boundary.6.20.10.46h}, we have 
\be
\Delta_{a,b,0}^{m-a-b}\Delta_{a,b-1,0}^{m+1-a-b}=\Delta_{a+1, b-1, 0}^{m-a-b}\Delta_{a-1, b, 0}^{m+1-a-b}.
\ee
Combining with the octahedral relations \eqref{oct.formula.6.17.11.44h}, we get the second identity. The proofs for the first and the third identities are similar. 
\end{proof}

\bt
The tropicalization of the involution \eqref{sch.9.10.9.40hh} is the Sch\"utzenberger involution of the Gelfand-Tsetlin's patterns.
\et

\begin{proof} 
Recall the Sch\"utzenberger involution $\eta$ defined by Berenstein-Zelevinsky \cite[(8.5)]{BZ}.
Tropicalizing the formula \eqref{schutzenberger.eq}, we get
\begin{align}
\la{9.10.9.57.15hhh}
R_1^t&=\min\{R_6^t, ~R_4^t\}-\min\{-R_5^t, ~-R_3^t\}-R_2^t 
=\min\{R_6^t, ~R_4^t\}+\max\{R_5^t, ~R_3^t\}-R_2^t.
\end{align}
 Note that Formula \eqref{9.10.9.57.15hhh} is exactly Formula (8.4) in {\it loc.cit.}. 
The tropicalizations of \eqref{schutzenberger.eq2.1}-\eqref{schutzenberger.eq2.3} give (degenerate) formulas of \eqref{9.10.9.57.15hhh}.
Recall the cluster transformation ${\cal C}$ in \eqref{cluster.sch}. 
The Theorem is proved by comparing ${\cal C}$ with the involution $\eta$ of Berenstein-Zelevinsky.
\end{proof}

\section{Donaldson-Thomas transformation on ${\cal X}_{{\rm PGL}_m, \bS}$}\la{secDT}
 Let $\bS$ be an admissible decorated surface.
 Recall the transformation
\[
{\rm C}_\bS:=\ast \circ r_\bS\circ {\bf w}_0.
\]

\bt
\la{clus.C.6.28.4.18h}
The action ${\rm C}_\bS$ is a cluster transformation. 
\et
\begin{proof} 
Note that $r_\bS$ is an element of the mapping class group of $\bS$. By Corollary \ref{cluster.nature.of.flip}, $r_\bS$ is a
 cluster transformation. By Theorem \ref{2.17.22.42h}, the action ${\bf w}_0$ on ${\cal X}_{{\rm PGL}_m, \bS}$ is a cluster transformation.\footnote{When $\bS$ is a sphere with 3 punctures and $\G={\rm PGL}_2$, the corresponding quiver has 3 vertices but no arrows. In this case, the Weyl group action at a single puncture is not cluster. However, the action ${\bf w}_0$ is still cluster, which mutates at each of the three vertices once.}
By Theorem \ref{cluster.ast.6.16.16.07h}, the involution $\ast$ is a cluster transoformation. 
\end{proof}

\bt
\la{main.result.11.14hh}
The action ${\rm C}_\bS$ is the Donaldson-Thomas transformation on ${\cal X}_{{\rm PGL}_m, \bS}$.
\et
We prove Theorem \ref{main.result.11.14hh} in the rest of this section.

By Theorem \ref{clus.C.6.28.4.18h}, it suffices to prove that ${\rm C}_\bS$ maps basic positive laminations to basic negative laminations. 
The latter follows from  Theorem \ref{covering.map.dt},  Theorem \ref{main.thm.1}, and Theorem \ref{ast.trop.17.43t}.

 \subsection{Cluster nature of the mapping class group action}
 \la{cluster.mapping.class.group}
Let ${\cal T}$ and ${\cal T}'$ be two ideal triangulations of $\bS$ without self-folded triangles. 
We assign an $m$-triangulation to each ideal triangle in ${\cal T}$, obtaining a quiver ${\bf q}$. Each vertex $v$ of ${\bf q}$ gives rise to a function $X_v$ of ${\cal X}_{{\rm PGL}_m,\bS}$. The set  ${\bf c}_{\bf q}=\{X_v\}$ is a coordinate chart of ${\cal X}_{{\rm PGL}_m,\bS}$. In the same way, the refined $m$-triangulation ${\bf q}'$ of ${\cal T}'$ gives rise to a chart ${\bf c}_{{\bf q}'}$ for ${\cal X}_{{\rm PGL}_m, \bS}$.

\bt[{\cite[Section 10]{FG1}}] 
\la{cluster.nature.of.flip}
There is a cluster transformation from ${\bf q}$ to ${\bf q}'$ such that the transition map between ${\bf c}_{{\bf q}}$ and ${\bf c}_{{\bf q}'}$ coincides with the one provided by the cluster transformation.
\et

\begin{remark} Let $e$ be a diagonal of an ideal quadrilateral in ${\cal T}$. A {\it flip at  $e$} removes  $e$ and adds the other diagonal of the ideal quadrilateral to ${\cal T}$. 
Note that any two ideal triangulations without self-folded triangles can be connected by a sequence of flips that only involves ideal triangulations without self-folded 
triangles. 
Therefore it suffices to show that every flip in the sequence is a cluster transformation. 
Since a flip only involves a local quadrilateral, it is enough to prove Theorem \ref{cluster.nature.of.flip} for the case when $\bS$ is a quadrilateral.
\end{remark}
\begin{figure}[h]
\epsfxsize250pt
\centerline{\epsfbox{flip2.eps}}
\caption{Flip.}
\label{flip2}
\end{figure}

\begin{proof} 
For future use,  we present below a sequence of quiver mutations that takes left quiver on Figure \ref{flip2} to the right  by induction on $m$. 
We refer the reader to \cite[Section 10]{FG1} for showing that it gives the transition map between ${\bf c}_{\bf q}$ and ${\bf c}_{{\bf q}'}$.
 
Consider the integral points inside of the tetrahedron
\[
{\bf T}_m:=\{(x_1,x_2, x_3, x_4)\in \R^4 ~|~ \sum_{i=1}^4 x_i=m, ~~x_i\geq 0\}.
\]
We identify the vertices of the left quiver on Figure \ref{flip2} with the integral points on the faces of ${\bf T}_m$ when $x_1=0$ or $x_2=0$,  
and the vertices of the right  with the integral points on the the faces  when $x_3=0$ or $x_4=0$. 
First we focus on the top tetrahedron of size $m-1$.  By induction, after a sequence of mutations, we obtain the second graph of Figure \ref{tetraflip}. Then we mutate at the vertices on the last layer, obtaining the third graph.
\begin{figure}[h]
\epsfxsize300pt
\centerline{\epsfbox{tetraflip.eps}}
\caption{}
\label{tetraflip}
\end{figure}
Using  the language of quivers, we first mutate the sub quiver consisting of vertices on the top square of size $m-1$. See the first quiver on Figure \ref{Flip27}. By induction, we obtain the second quiver on Figure \ref{Flip27}. Then we mutate at the vertices contained in the bottom triangle, in the order of row by row, from bottom left to top right, obtaining the final quiver. \footnote{In fact, every quiver mutation in the sequence gives a two by two move on the bipartite graph of Figure \ref{gra10i}.}

\begin{figure}[h]
\epsfxsize350pt
\centerline{\epsfbox{Flip27.eps}}
\caption{}
\label{Flip27}
\end{figure}
\end{proof}

\bc 
\la{cluster.nature.of.flip22}
The mapping class group $\Gamma_{\bS}$ of $\bS$ acts on ${\cal X}_{{\rm PGL}_m, \bS}$ by cluster transformations.
\ec

\begin{proof} Let ${\cal T}$ be an ideal triangulation of $\bS$ without self-folded triangles. Each element $\gamma\in \Gamma_{\bS}$ maps ${\cal T}$ to another ideal triangulation $\gamma({\cal T})$ without self-folded triangles. By Theorem \ref{cluster.nature.of.flip}, ${\cal T}$ and $\gamma({\cal T})$ are connected by cluster transformations.
\end{proof}

From now on let us assume that ${\cal T}'$ is obtained from ${\cal T}$  by a flip at $e$.
Recall the basic laminations  $l_v^+$  in the coordinate chart ${\bf c}_{\bf q}$ (Definition \ref{def.basic.laminations.t}).
We study their coordinates in ${\bf c}_{{\bf q}'}$. 
\paragraph{Notation.} The coordinates of ${\cal X}$-laminations will be  illustrated as in Figure \ref{flip1}: the $\circ$-vertices  with $``+"$ give 1, the $\circ$-vertices with $``-"$ give -1, and the rest give 0.
\bl
\la{trop.flip.bas.t}
\begin{enumerate}
\item If  $v$ is an inner vertex of an ideal triangle containing $e$, then the coordinates of $l_v^{+}$ in ${\bf c}_{{\bf q}'}$ are illustrated  by the second graph of Figure \ref{flip1}.
\item If  $v$ is on the edge $e$, then the coordinates of $l_v^+$ in ${\bf c}_{{\bf q}'}$ are illustrated by the fourth graph of Figure \ref{flip1}.
\item For the rest vertices $v$, the coordinates of $l_v^+$ remain intact.
\end{enumerate}
\el
 \begin{figure}[h]
\epsfxsize500pt
\centerline{\epsfbox{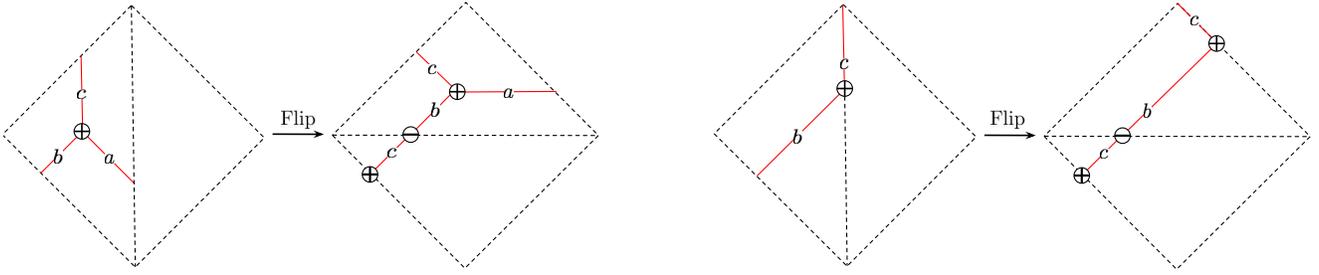}}
\caption{Basic laminations under a flip.}
\label{flip1}
\end{figure}

\begin{proof} Part 3 is clear. Part  2 is a special case of 1 when  $a=0$. 

The proof uses  induction on $m$ in the same way as the proof of Theorem \ref{cluster.nature.of.flip}. 

If $m=2$, then $(a,b,c)=(0,1,1)$. The Lemma follows due to direct calculation.

If $m>2$, we prove the case when $b>1$ (the proof for $b=1$ is similar but easier). 
First we apply cluster mutations to the top square of size $m-1$. Using induction, the coordinates of the basic lamination are shown on the second graph of Figure \ref{tetraflip2}. Then we mutate at the vertices contained  in the bottom triangle, in such an  order illustrated by Figure \ref{Flip27}. By an easy calculation, we get the last graph of Figure \ref{tetraflip2}.
\begin{figure}[H]
\epsfxsize450pt
\centerline{\epsfbox{tetraflip2.eps}}
\caption{}
\label{tetraflip2}
\end{figure}
\end{proof}

\subsection{Covering map of decorated surfaces}

\paragraph{The cone of positive laminations.} Let ${\cal X}$ be a cluster Poisson variety assigned to a quiver ${\bf q}$ with $N$ vertices indexed by $I=\{1, \ldots, N\}$. The chart ${\bf c}_{\bf q}:=\{X_i\}$ provides a bijection
\[
{\bf c}_{\bf q}^t:  {\cal X}(\Z^t)\stackrel{\sim}{\lra} \Z^N, \hskip 7mm l \lms (X_1^t(l), \ldots, X_N^t(l)).
\] 
Let ${\Bbb N}$ be the set of non-negative integers. We consider the cone of positive laminations in ${\bf c}_{\bf q}$
\[
{\cal X}_{\bf q}^+(\Z^t): =  \big({\bf c}_{\bf q}^t\big)^{-1}({\Bbb N}^N).
\]

\bl 
\la{lem.dt.pos.cone}
If ${\bf K}$ is the cluster DT-transformation on ${\cal X}$, then it maps positive laminations to negative laminations:
\be
{\bf c}_{\bf q}^t ({\bf K}^t(l)) =- {\bf c}_{\bf q}^t(l), \hskip 7mm 
\forall l \in {\cal X}_{\bf q}^+(\Z^t).
\ee
\el
\begin{proof} By the commutative version of Formula \eqref{sepf2}, we have \footnote{See \cite[Prop 3.13]{FZIV} for the commutative version. See \cite[Theorem 1.7]{DWZ2} for the proof that $F$-polynomials have constant 1.  As an example, if ${\bf q}$ is a cycle,  then ${\bf K}^\ast X_i $ is given by \eqref{basic.example.dt.cyc}.}
\be
{\bf K}^\ast X_i = X_i^{-1} \prod_{j\in I} F_{j}^{\varepsilon_{ij}}, \hskip 7mm \mbox{where the $F$-polynomials $F_j$ are of constant term 1. }
\ee
If $l\in {\cal X}_{\bf q}^+(\Z^t)$, then $F^t_j(l)=0$. We have
\[ X_i^t({\bf K}^t(l))=\big( {\bf K}^\ast X_i \big)^t(l) = -X_i^t(l)+\sum_j {\varepsilon_{ij}} F_j^t(l)= -X_i^t(l).\]
\end{proof}

\paragraph{Covering map.}
Let $\pi: \tilde{\bS}\ra {\bS}$ be a covering map of decorated surfaces. By pulling back, it induces a natural positive embedding 
$j:  ~{\cal X}_{\G, \bS}\rightarrow {\cal X}_{\G, \tilde{\bS}}.$

\bl 
\la{commu.diag.3.10}
The following diagram commutes
\begin{displaymath}
    \xymatrix{
    {\cal X}_{\G, \bS}    \ar[d]_{{\rm C}_{\bS}}   \ar[r]^j &  {\cal X}_{\G, \tilde{\bS}} \ar[d]^{{\rm C}_{\tilde{\bS}}} \\
       {\cal X}_{\G, \bS}       \ar[r]^j   &  {\cal X}_{\G, \tilde{\bS}} }.
\end{displaymath}
\el
\begin{proof}
It follows directly from the geometric meaning of ${\rm C}_{\bS}$.
\end{proof}

\bt
\la{covering.map.dt}
If ${\rm C}_{\tilde{\bS}}$ is the cluster DT-transformation on ${\cal X}_{\G, \tilde{\bS}}$, then so is ${\rm C}_{{\bS}} $ on ${\cal X}_{\G, {\bS}}$.
\et

\begin{proof}
Thanks to Theorem \ref{clus.C.6.28.4.18h}, it remains to prove that ${\rm C}_\bS$ maps basic positive laminations to basic negative laminations. 

Let ${\cal T}$ be an ideal triangulation of $\bS$ without self-folded triangles. Its $m$-refined triangulation gives a quiver ${\bf q}$ with vertices indexed by $I$.
By pulling back to $\tilde{\bS}$, we get an ideal triangulation $\tilde{\cal T}$ of $\tilde{\bS}$, and a quiver $\tilde{\bf q}$ with vertices indexed by $\tilde{I}$. There is a natural projection
$\pi: \tilde{I}\rightarrow I$. 
Using the coordinate charts ${\bf c}_{\bf q}$ and ${\bf c}_{\tilde{\bf q}}$, the embedding $j$ is  given by
\be
\la{j.coord. 11.14h}
j:~ ~{\cal X}_{\G, \bS}\lra {\cal X}_{\G, \tilde{\bS}}, \hskip 7mm j^*(X_{v})=X_{\pi(v)}, ~~\forall v\in \tilde{I}.
\ee
Its tropicalization is an injection
$j^t: ~{\cal X}_{\G, \bS}(\Z^t)\hookrightarrow {\cal X}_{\G, \tilde{\bS}}(\Z^t).
$

Let $l_i^\pm~(i\in I)$ be  basic  laminations  in the coordinate chart ${\bf c}_{\bf q}$. By \eqref{j.coord. 11.14h},  $j^t(l_i^+)$ is a positive lamination in the cone ${\cal X}_{\tilde{\bf q}}^+(\Z^t)$.  
If $ {\rm C}_{\tilde{\bS}}$ is the cluster DT-transformation, by Lemma \ref{lem.dt.pos.cone}, we have
$
{\rm C}_{\tilde{\bS}}^t \big(j^t(l_i^+)\big)=j^t(l_i^-).
$
By the commutative diagram in Lemma \ref{commu.diag.3.10}, we have
$
j^t\big({\rm C}_{\bS}^t(l_i^+)\big) = {\rm C}_{\tilde{\bS}}^t \big(j^t(l_i^+)\big).
$
Therefore $j^t\big({\rm C}_{\bS}^t(l_i^+)\big)= j^t(l_i^-)$. Since $j^t$ is an injection, we get ${\rm C}_{\bS}^t(l_i^+)=l_i^-$.
\end{proof}

\subsection{The action ${\bf w}:=r_{\bS} \circ {\bf w}_0$ on ${\cal X}_{{\rm PGL}_m, \bS}$}
\la{sec.action.rot.weyl}

From now on, let us assume that $\bS$ admits an ideal triangulation ${\cal T}$ such that every edge in ${\cal T}$ connects two different marked points. By Theorem \ref{covering.map.dt}, it is enough to prove Theorem \ref{main.result.11.14hh} for such decorated surfaces.

Let ${\bf c}_{\bf q}:=\{X_v\}$ be the cluster chart of ${\cal X}_{{\rm PGL}_m, \bS}$ given by the $m$-triangulation of ${\cal T}$.
The tropicalization of ${\bf w}:=r_{\bS} \circ {\bf w}_0$ is an isomorphism
\be
{\bf w}^t:~ {\cal X}_{{\rm PGL}_m, \bS}(\Z^t)\stackrel{\sim}{\lra} {\cal X}_{{\rm PGL}_m, \bS}(\Z^t)
\ee
We consider the images of  the basic positive ${\cal X}$-laminations $l_v^+$ under ${\bf w}^t$.
There are two cases.
\bt
\la{main.thm.1}
1. If $v$ is  a vertex on an edge $e$ of ${\cal T}$, labelled by $(a,b)$, then  the coordinates of ${\bf w}^t(l_{v}^{+})$ are illustrated by Figure \ref{edgept}. 
\begin{figure}[H]
\epsfxsize120pt
\centerline{\epsfbox{edgept.eps}}
\caption{ }
\label{edgept}
\end{figure}

2. If $v$ is  inside an ideal triangle ${\rm t}$ of ${\cal T}$, labelled by $(a,b,c)\in {\Gamma}_m$, then the coordinates of ${\bf w}^t(l_{v}^{+})$ are illustrated by Figure \ref{innerpt}. 
 \begin{figure}[H]
\epsfxsize250pt
\centerline{\epsfbox{innerpt.eps}}
\caption{ }
\label{innerpt}
\end{figure}
\et

We prove Theorem \ref{main.thm.1} in the rest of Section \ref{sec.action.rot.weyl}.

 \subsubsection{Tropicalization of Weyl group actions}
Recall the quivers ${\bf q}_N\subset {\bf q}$ in Theorem \ref{2.8.21.24.h}. 
The vertices of ${\bf q}_N$ are labelled by ${\rm I}=\{1,..., N\}$ clockwise.
\bl
\la{6.25.10.16t}
Let $i\in I$ be a vertex of ${\bf q}_N$. Let $l\in {\cal X}_{|{\bf q}|}(\Z^t)$ such that
$$
X_k^t(l)=\left\{ \begin{array}{ll}
      1 & \mbox{if $k=i$}, \\
      0 & \mbox{if $k\in {\rm I}$ and $k\neq i$}, \\
      l_k & \mbox{if $k\notin {\rm I}$}.\\
   \end{array}
   \right.       
$$
We have
$$
X_k^t\big(\tau^t(l)\big)=\left\{ \begin{array}{ll}
      -1 & \mbox{if $k=i+1$}, \\
      0 & \mbox{if $k\in {\rm I}$ and $k\neq i+1$}, \\
      l_k+c_{ki} & \mbox{if $k\notin {\rm I}$}.\\
   \end{array}
   \right.       
$$
\el
\begin{proof} Recall $F_j$, $Y_j$ in Theorem \ref{2.8.21.24.h}. By definition, 
$
F_j^t(l)=0$ for all $j\in {\rm I}$. 
Therefore 
$$
Y_i^t(l)=1; \hskip 7mm Y_j^t(l)=0,\hskip 7mm \forall j\in {\rm I}-\{i\}. 
$$
Note that $X_k^t\big(\tau^t(l)\big) = (\tau^* X_k)^t(l)$. The Lemma follows from Theorem \ref{2.8.21.24.h}.
\end{proof}

\vskip 3mm

Recall the Weyl group action on ${\cal X}_{{\rm PGL}_m, \bS}$ assigned to  a puncture  $p$  of $\bS$.
By Theorem \ref{2.17.22.42h}, the action of the simple reflection $s_{p,i}$ is exactly the cluster transformation $\tau_{p,i}$. Set
\be
\la{weyl.action.d.t}
d_{k,p}:=s_{p, m-k}\circ \ldots \circ s_{p,m-2}\circ s_{p, m-1}.
\ee
Let $v$ be a vertex of distance $(m-i)$ to $p$. 
Using Lemma \ref{6.25.10.16t} repeatedly,  the coordinates of $l_v^+$ under the action $d_{m-1,p}$  are illustrated by Figure \ref{ww2}.

 \begin{figure}[H]
\epsfxsize450pt
\centerline{\epsfbox{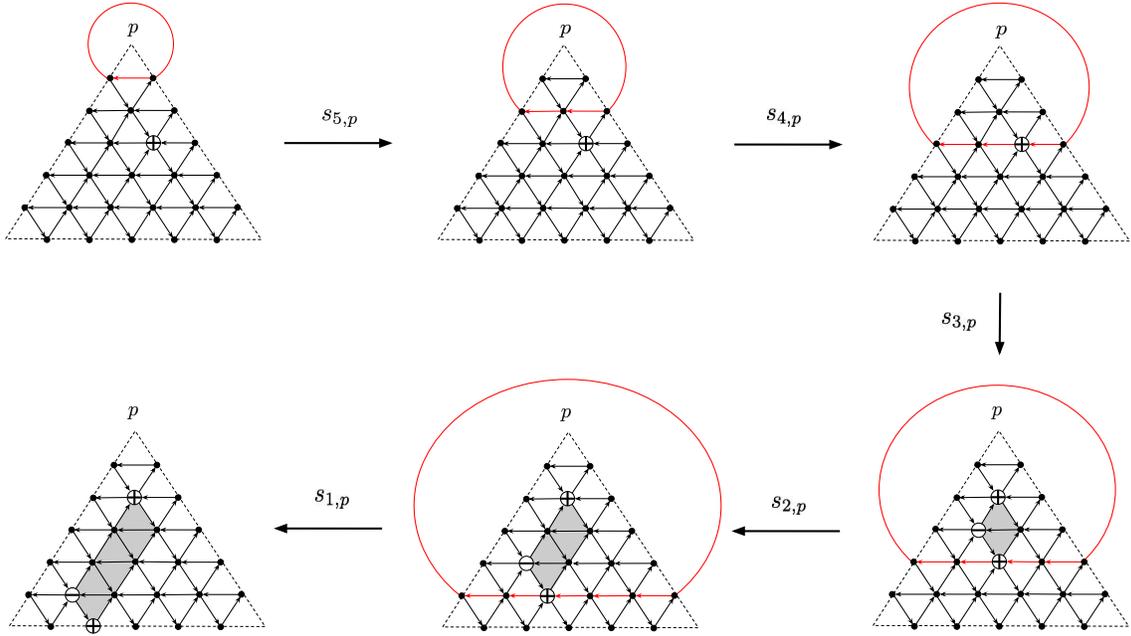}}
\caption{Here $m=6$, $d_{5,p}=s_{p,1}\circ \ldots \circ s_{p,5}$. }
\label{ww2}
\end{figure}

\subsubsection{Part 1 of Theorem \ref{main.thm.1}.}
We have the following three cases.
\paragraph{1. The edge $e$ connects two different special points $m_1$ and $m_2$. }
Let $m_i'$ be the previous special point of $m_i$. By flips at edges other than $e$, we get an ideal triangulation ${\cal T}'$ containing the ideal quadrilateral of vertices $(m_1, m_1', m_2, m_2')$ as the left graph of Figure \ref{mm}. Note that flips at edges other than $e$ keep the coordinates of $l_v^+$ intact. So $l_v^+$ is still a basic positive lamination  in the coordinate chart given by ${\cal T}'$.
 
 We flip at the edge $e$. By Lemma \ref{trop.flip.bas.t},  the coordinates of $l_v^+$ are illustrated by the second  graph\footnote{The edges connecting $m_i$ and $m_i'$ are boundary intervals. Since we consider the moduli space ${\cal X}_{{\rm PGL}_m, \bS}$, there are no frozen vertices assigned to the boundary intervals.} of Figure \ref{mm}.  
The action $r_\bS$ of transporting framings rotates the edge $m_1'm_2'$ back to $e$. Finally, again by flips at edges other than $e$, we return to  the original ideal triangulation ${\cal T}$. 

The actions at other marked points preserve the lamination of the last graph of Figure \ref{mm}. 
So the coordinates of ${\bf w}_0^t(l_v)$ is the same as predicted by the Theorem.
\begin{figure}[H]
\epsfxsize350pt
\centerline{\epsfbox{mm.eps}}
\caption{ }
\label{mm}
\end{figure}
\paragraph{2. The edge $e$ connects a puncture $p$ and a special point $m$. }
We consider the ideal triangle of vertices $p, m, m'$. Recall the action $d_{k,p}$ in \eqref{weyl.action.d.t}. The action of the longest Weyl group element on $p$ is
\be
\la{logn.wel.6.28.4}
w_{0,p}:=d_{1,p}\circ d_{2,p}\circ \ldots \circ d_{m-1,p}
\ee
Using repeatedly the process illustrated by Figure \ref{ww2}, $w_{0,p}$  maps $l_v^+$ to the lamination shown on the second graph of Figure \ref{pm}. The action $r_\bS$ rotates the edge $pm'$ back to $e$.
\begin{figure}[H]
\epsfxsize350pt
\centerline{\epsfbox{pm.eps}}
\caption{ }
\label{pm}
\end{figure}
\paragraph{3. The edge $e$ connects two different punctures $p_1$ and $p_2$.}
By the same process described in Case 2,  the action of $w_0$ on $p_1$ maps $l^+_v$ to the second graph of Figure \ref{w2}. The action of $w_0$ on $p_2$ maps it to the last graph. The action at other point preserve the lamination of the last graph.
 \begin{figure}[H]
\epsfxsize350pt
\centerline{\epsfbox{w2.eps}}
\caption{l}
\label{w2}
\end{figure}

\subsubsection{Part 2 of Theorem \ref{main.thm.1}.}
We have the following four cases. 

\paragraph{1. The vertices of ${\rm t}$ consists of three  different punctures $p_1, p_2, p_3$. }
Set
\[
w_{2,p_2}:=d_{m-c,p_2}\circ \ldots \circ d_{m-1,p_2}, \hskip 1cm w_{1, p_2}=d_{1,p_2}\circ \ldots \circ d_{a+b-1,p_2}.
\]
By \eqref{logn.wel.6.28.4}, the action of $w_0$ on $p_2$  is
\[
w_{0,p_2}=w_{1,p_2}\circ w_{2, p_2}.
\]
The actions on the other punctures/special points will not change the coordinates of $l_v$. So it suffices to consider the action $w_0$ on $p_1, p_2, p_3$. Note that Weyl group actions on different punctures always commute. The action
$
w_{1,p_2}\circ w_{0, p_1}\circ w_{0,p_3}\circ w_{2, p_2}
$
is illustrated by Figure \ref{w1}, which coincides with Figure \ref{innerpt}. 
\begin{figure}[H]
\epsfxsize350pt
\centerline{\epsfbox{w1.eps}}
\caption{ }
\label{w1}
\end{figure}

\paragraph{2. The vertices of ${\rm t}$ consist of two punctures $p_1,p_2$ and a special point $m$. }
Let $m'$ be the previous special point of $m$. Set
\[
w_{2,p_2}:=d_{m-b,p_2}\circ \ldots \circ d_{m-1,p_2}, \hskip 1cm w_{1, p_2}=d_{1,p_2}\circ \ldots \circ d_{a+c-1,p_2}.
\]
It suffices to consider the ideal quadrilateral of vertices $(p_1,p_2,m,m')$. Figure \ref{ppm} illustrates the change of coordinates of $l_v^+$ after  $w_0$ actions on $p_1$ and $p_2$ and a flip the edge $p_1m$. The action $r_\bS$ rotates the triangle $p_1p_2m'$ back to ${\rm t}$. The actions on the other marked points will preserve the coordinates of $l_v^+$. 
\begin{figure}[H]
\epsfxsize350pt
\centerline{\epsfbox{ppm.eps}}
\caption{ }
\label{ppm}
\end{figure}

\paragraph{3. The vertices of ${\rm t}$ consist of a puncture $p$ and two special point $m_1$, $m_2$.} Figure \ref{pmm} illustrates the change of coordinates of $l_v^+$ after the $w_0$ action on $p$ and flips at two edge. The action $r_\bS$ rotates the triangle $pm_1'm_2'$ back to ${\rm t}$. The actions on the other marked points will preserve the coordinates of $l_v^+$. 
\begin{figure}[H]
\epsfxsize375pt
\centerline{\epsfbox{pmm.eps}}
\caption{ }
\label{pmm}
\end{figure}

\paragraph{4. The vertices of ${\rm t}$ consist of three special point $m_1$, $m_2$ and $m_3$.} Figure \ref{pmm} illustrates the change of coordinates of $l_v$ after flips at four edges. The action $r_\bS$ rotates the triangle $m_1'm_2'm_3'$ back to ${\rm t}$. The actions on the other marked points will preserve  the coordinates of $l_v$. 
\begin{figure}[H]
\epsfxsize400pt
\centerline{\epsfbox{mmm.eps}}
\caption{ }
\label{mmm}
\end{figure}

\subsection{The involution $\ast$ on ${\cal X}_{{\rm PGL}_m, \bS }$}
\bt 
\la{ast.trop.17.43t}
The involution $\ast$ maps the laminations in Figures \ref{ast1} to basic negative laminations.
\begin{figure}[h]
\epsfxsize350pt
\centerline{\epsfbox{astpt.eps}}
\caption{ }
\label{ast1}
\end{figure}
\et

\begin{proof} The first case is clear. We prove the second case by induction on $m$.

If $m=3$, then $a=b=c=1$. The second case is clear.

If $m>3$, without loss of generality, let us assume that $c>1$. By Proposition \ref{basic.ast.prop.510}, the involution $\ast$ locally equals
$${\cal D}:=\sigma\circ {\cal S}_{m-2}\circ {\cal S}_{m-3} \ldots \circ {\cal S}_1.$$ 
Let us assume that Theorem \ref{ast.trop.17.43t} holds for $m-1$. Then ${\cal C}':={\cal S}_{m-3} \ldots \circ {\cal S}_1$ maps the first graph to the second one. Recall the exact sequence of ${\cal S}_{m-2}$ as illustrated by Figure \ref{astclumut5}. It follows directly that ${\cal S}_{m-2}$ maps the second graph to the third one. Finally, we flip the third graph horizontally, getting the last graph that we want.  

\begin{figure}[H]
\epsfxsize500pt
\centerline{\epsfbox{astpt2.eps}}
\caption{ }
\label{ast2}
\end{figure}
\end{proof}

\end{document}